\newtheorem{theorem}{Theorem}
\theoremstyle{plain}
\newtheorem{acknowledgement}[theorem]{Acknowledgement}
\newtheorem{corollary}[theorem]{Corollary}
\newtheorem{definition}[theorem]{Definition}
\newtheorem{lemma}[theorem]{Lemma}
\newtheorem{notation}[theorem]{Notation}
\newtheorem{proposition}[theorem]{Proposition}
\newtheorem{remark}[theorem]{Remark}
\numberwithin{equation}{section}
\begin{document}
\title[Two weight $L^{p}$ inequalities]{Two weight $L^{p}$ inequalities for $%
\lambda $-fractional vector Riesz transforms and doubling measures}
\author[E. T. Sawyer]{Eric T. Sawyer$^\dagger$}
\address{Eric T. Sawyer, Department of Mathematics and Statistics\\
McMaster University\\
1280 Main Street West\\
Hamilton, Ontario L8S 4K1 Canada}
\thanks{$\dagger $ Research supported in part by a grant from the National
Science and Engineering Research Council of Canada.}
\email{sawyer@mcmaster.ca}
\author[B. D. Wick]{Brett D. Wick$^\ddagger$}
\address{Brett D. Wick, Department of Mathematics \& Statistics, Washington
University -- St. Louis, One Brookings Drive, St. Louis, MO USA 63130-4899.}
\email{wick@math.wustl.edu}
\thanks{$\ddagger $ B. D. Wick's research is supported in part by National
Science Foundation Grants DMS \# 1800057, \# 2054863, and \# 2000510 and
Australian Research Council -- DP 220100285.}
\date{\today }

\begin{abstract}
If $\mathbf{R}^{\lambda }$ denotes the $\lambda $-fractional vector Riesz
transform on $\mathbb{R}^{n}$, $1<p<\infty $, and $\left( \sigma ,\omega
\right) $ is a pair of doubling measures, then the two weight $L^{p}$ norm
inequality,%
\begin{equation*}
\int_{\mathbb{R}^{n}}\left\vert \mathbf{R}^{\lambda }\left( f\sigma \right)
\right\vert ^{p}d\omega \leq \mathfrak{N}_{T^{\lambda },p}^{p}\int_{\mathbb{R%
}^{n}}\left\vert f\right\vert ^{p}d\sigma ,\ \ \ \ \ f\in L^{p}\left( \sigma
\right)
\end{equation*}%
holds \emph{if and only if} the following quadratic triple testing
conditions of Hyt\"{o}nen and Vuorinen hold,%
\begin{eqnarray*}
\int_{\mathbb{R}^{n}}\left( \sum_{j=1}^{\infty }\left( a_{j}\mathbf{1}%
_{3I_{j}}^{\lambda }\mathbf{R}^{\lambda }\left( \mathbf{1}_{I_{j}}\sigma
\right) \right) ^{2}\right) ^{\frac{p}{2}}d\omega &\leq &\left( \mathfrak{T}%
_{\mathbf{R}^{\lambda },p}^{\ell ^{2},\limfunc{triple}}\right) ^{p}\int_{%
\mathbb{R}^{n}}\left( \sum_{j=1}^{\infty }\left( a_{j}\mathbf{1}%
_{I_{j}}\right) ^{2}\right) ^{\frac{p}{2}}d\sigma , \\
\int_{\mathbb{R}^{n}}\left( \sum_{j=1}^{\infty }\left( a_{j}\mathbf{1}%
_{3I_{j}}^{\lambda }\mathbf{R}^{\lambda }\left( \mathbf{1}_{I_{j}}\omega
\right) \right) ^{2}\right) ^{\frac{p^{\prime }}{2}}d\sigma &\leq &\left( 
\mathfrak{T}_{\mathbf{R}^{\lambda ,\ast },p^{\prime }}^{\ell ^{2},\limfunc{%
triple}}\right) ^{p^{\prime }}\int_{\mathbb{R}^{n}}\left( \sum_{j=1}^{\infty
}\left( a_{j}\mathbf{1}_{I_{j}}\right) ^{2}\right) ^{\frac{p^{\prime }}{2}%
}d\omega ,
\end{eqnarray*}%
where the inequalities are taken over all sequences $\left\{ I_{j}\right\}
_{j=1}^{\infty }$ and $\left\{ a_{j}\right\} _{j=1}^{\infty }$ of cubes and
real numbers respectively. We also show that these quadratic triple testing
conditions can be relaxed to local quadratic testing conditions, quadratic
offset Muckenhoupt conditions, and a quadratic weak boundedness property.
\end{abstract}

\maketitle
\tableofcontents

\section{Introduction}

The Nazarov-Treil-Volberg $T1$ conjecture on the boundedness of the Hilbert
transform from one weighted space $L^{2}\left( \sigma \right) $ to another $%
L^{2}\left( \omega \right) $, was settled affirmatively in the two part
paper \cite{LaSaShUr3},\cite{Lac} when the measures have no common point
masses, and this restriction was removed by Hyt\"{o}nen in \cite{Hyt}. Since
then there have been a number of generalizations of boundedness of Calder%
\'{o}n-Zygmund operators from one weighted $L^{2}$ space to another,
including

\begin{itemize}
\item to higher dimensional Euclidean spaces (see e.g. \cite{SaShUr7}, \cite%
{LaWi} and \cite{LaSaShUrWi}),

\item to spaces of homogeneous type (see e.g. \cite{DuLiSaVeWiYa}), and

\item and to partial results for the case when both measures are doubling
(see \cite{AlSaUr}).
\end{itemize}

It had been known from work of Neugebauer \cite{Neu} and Coifman and
Fefferman \cite{CoFe} some time ago that in the case of $A_{\infty }$
weights, the two weight norm inequality for a Calder\'{o}n-Zygmund operator
was implied by the classical two weight $A_{p}$ condition; see \cite{AlSaUr2}
for the elementary proof when $p=2$, and \cite{HyLa} for a sharp estimate on
the characteristics. In addition there have been some generalizations to
Sobolev spaces in place of\thinspace $L^{2}$ spaces in the setting of a
single weight (see e.g. \cite{DilWiWi} and \cite{KaLiPeWa}).

The purpose of this paper is to prove a \emph{two weight} $T1$ theorem for $%
\lambda $-fractional vector \emph{Riesz} transforms on weighted\emph{\ }$%
L^{p}\left( \mathbb{R}^{n}\right) $ spaces with $1<p<\infty $, in the
special case when the measures are\ both doubling. In view of the $L^{2}$
result in \cite{LaSaShUr3},\cite{Lac} one might suspect that the Hilbert
transform $H$ is bounded from $L^{p}\left( \sigma \right) $ to $L^{p}\left(
\omega \right) $ with general locally finite positive Borel measures $\sigma 
$ and $\omega $ if and only if the local testing conditions for $H$,%
\begin{equation*}
\int_{I}\left\vert H\mathbf{1}_{I}\sigma \right\vert ^{p}d\omega \lesssim
\left\vert I\right\vert _{\sigma }\text{ and }\int_{I}\left\vert H\mathbf{1}%
_{I}\omega \right\vert ^{p}d\sigma \lesssim \left\vert I\right\vert _{\omega
},
\end{equation*}%
both hold, along with the tailed Muckenhoupt $\mathcal{A}_{p}$ conditions,%
\begin{equation*}
\left( \int_{I}\frac{\left\vert I\right\vert }{\left[ \left\vert
I\right\vert +\limfunc{dist}\left( x,I\right) \right] ^{p}}d\omega \right) ^{%
\frac{1}{p}}\left( \frac{\left\vert I\right\vert _{\sigma }}{\left\vert
I\right\vert }\right) ^{\frac{1}{p^{\prime }}}\lesssim 1\text{ and }\left( 
\frac{\left\vert I\right\vert _{\omega }}{\left\vert I\right\vert }\right) ^{%
\frac{1}{p}}\left( \int_{I}\frac{\left\vert I\right\vert }{\left[ \left\vert
I\right\vert +\limfunc{dist}\left( x,I\right) \right] ^{p^{\prime }}}d\sigma
\right) ^{\frac{1}{p^{\prime }}}\lesssim 1.
\end{equation*}%
In fact this conjecture was already made in \cite[see\ Conjecture 1.8]%
{LaSaUr1}, where the case of maximal singular integrals was treated when one
of the measures was doubling, but with more complicated testing conditions.
However, this conjecture fails for the Hilbert transform \cite{AlLuSaUr},
and even for pairs of doubling measures and Riesz transforms (including the
Hilbert transform) \cite{AlLuSaUr2}.

Another stronger conjecture, but difficult nonetheless, has been put forward
by Hyt\"{o}nen and Vuorinen \cite[pages 16-18]{HyVu}, see also \cite{Vuo}
and \cite{Vuo2}. Namely, that $H$ is bounded from $L^{p}\left( \sigma
\right) $ to $L^{p}\left( \omega \right) $ if and only if certain \emph{%
quadratic} interval testing conditions for $H$ hold, along with
corresponding \emph{quadratic} Muckenhoupt conditions and a \emph{quadratic}
weak boundedness property. Here `quadratic' refers to $\ell ^{2}$-valued
extensions of the familiar scalar conditions. More generally, these
quadratic conditions can be formulated for fractional singular integrals $%
T^{\lambda }$ in higher dimensions in a straightforward way.

We emphasize that our doubling assumptions are in part offset by the fact
that we characterize boundedness for \textbf{all} vector fractional Riesz
transform operators, and in part due to the fact that we have obtained a two
weight $T1$ theorem for$\ p\neq 2$ (for the first time). If one considers a
matrix of Calder\'{o}n-Zygmund operators and weight pairs such as,

\begin{center}
\frame{$%
\begin{array}{cccccc}
& T=Hilbert & T=Cauchy & T=Beurling & T=Riesz & T=General \\ 
\sigma ,\omega \in A_{p} & \ast & \ast & \ast & \ast & \ast \\ 
\sigma ,\omega \in A_{\infty } & \ast & \ast & \ast & \ast & \ast \\ 
\sigma ,\omega \in doubling & \ast & \ast & \ast & \limfunc{proved}\text{
here}\ \text{for }1<p<\infty & ? \\ 
\sigma ,\omega \in Borel & \limfunc{known}\ \text{only for }p=2 & ? & ? & ?
& ?%
\end{array}%
$},
\end{center}

two features stand out,

\begin{enumerate}
\item for general (locally finite positive) Borel measures, a two weight $T1$
characterization for $1<p<\infty $ has been found in this matrix \textbf{%
only }for the Hilbert transform when $p=2$,

\item for fractional Riesz transform operators, a two weight $T1$
characterization for $1<p<\infty $ has been found in this matrix \textbf{only%
} for pairs of doubling measures.
\end{enumerate}

The starred entries in the matrix correspond to $T1$ characterizations that
hold by virtue of the $\limfunc{known}$ results, and the question mark
entries remain unknown for any $1<p<\infty $ at this time. Of course there
are other geometric restrictions on the measures that give rise to a $T1$
theorem, and these can be found in the references at the end of this paper.
On the other hand, it appears quite challenging to find a natural class of
measures $\mathcal{M}$, more general than doubling measures, for which a $T1$
theorem can be obtained for all $1<p<\infty $, all fractional Riesz
transforms, and all measure pairs in $\mathcal{M}\times \mathcal{M}$.

\begin{acknowledgement}
We thank the referee for a very close reading of the manuscript and many
helpful comments, and in particular for pointing to a serious error in our
treatment of the stopping form, whose fix resulted in the restriction to
fractional Riesz transforms instead of the larger class of smooth Calder\'{o}%
n-Zygmund operators.
\end{acknowledgement}

\subsection{Quadratic conditions of Hyt\"{o}nen and Vuorinen}

For a $\lambda $-fractional singular integral operator $T^{\lambda }$ on $%
\mathbb{R}^{n}$, and locally finite positive Borel measures $\sigma $ and $%
\omega $, let $T_{\sigma }^{\lambda }f=T^{\lambda }\left( fd\sigma \right) $
and $T_{\omega }^{\lambda ,\ast }g=T^{\lambda ,\ast }\left( gd\omega \right) 
$ (see below for definitions). The \emph{quadratic} cube testing conditions
of Hyt\"{o}nen and Vuorinen are%
\begin{eqnarray}
\left\Vert \left( \sum_{i=1}^{\infty }\left\vert a_{i}\mathbf{1}%
_{I_{i}}T_{\sigma }^{\lambda }\mathbf{1}_{I_{i}}\right\vert ^{2}\right) ^{%
\frac{1}{2}}\right\Vert _{L^{p}\left( \omega \right) } &\leq &\mathfrak{T}%
_{T^{\lambda },p}^{\ell ^{2},\func{loc}}\left( \sigma ,\omega \right)
\left\Vert \left( \sum_{i=1}^{\infty }\left\vert a_{i}\mathbf{1}%
_{I_{i}}\right\vert ^{2}\right) ^{\frac{1}{2}}\right\Vert _{L^{p}\left(
\sigma \right) },  \label{quad HV} \\
\left\Vert \left( \sum_{i=1}^{\infty }\left\vert a_{i}\mathbf{1}%
_{I_{i}}T_{\omega }^{\lambda ,\ast }\mathbf{1}_{I_{i}}\right\vert
^{2}\right) ^{\frac{1}{2}}\right\Vert _{L^{p^{\prime }}\left( \sigma \right)
} &\leq &\mathfrak{T}_{T^{\lambda ,\ast },p^{\prime }}^{\ell ^{2},\func{loc}%
}\left( \omega ,\sigma \right) \left\Vert \left( \sum_{i=1}^{\infty
}\left\vert a_{i}\mathbf{1}_{I_{i}}\right\vert ^{2}\right) ^{\frac{1}{2}%
}\right\Vert _{L^{p^{\prime }}\left( \omega \right) },  \notag
\end{eqnarray}%
taken over all sequences $\left\{ I_{i}\right\} _{i=1}^{\infty }$ and $%
\left\{ a_{i}\right\} _{i=1}^{\infty }$ of cubes and numbers respectively.
The corresponding quadratic \emph{global} cube testing constants $\mathfrak{T%
}_{T^{\lambda },p}^{\ell ^{2},\func{global}}\left( \sigma ,\omega \right) $
and $\mathfrak{T}_{T^{\lambda ,\ast },p^{\prime }}^{\ell ^{2},\func{global}%
}\left( \omega ,\sigma \right) $ are defined as in (\ref{quad HV}), but 
\emph{without} the indicator $\mathbf{1}_{I_{i}}$ outside the operator,
namely with $\mathbf{1}_{I_{i}}T_{\sigma }^{\lambda }\mathbf{1}_{I_{i}}$
replaced by $T_{\sigma }^{\lambda }\mathbf{1}_{I_{i}}$. The \emph{quadratic}
Muckenhoupt conditions of Hyt\"{o}nen and Vuorinen are%
\begin{eqnarray}
\left\Vert \left( \sum_{i=1}^{\infty }\left\vert \int_{\mathbb{R}%
^{n}\setminus I_{i}}\frac{f_{i}\left( y\right) }{\left\vert
y-c_{i}\right\vert ^{n-\lambda }}d\sigma \left( y\right) \right\vert ^{2}%
\mathbf{1}_{I_{i}}\right) ^{\frac{1}{2}}\right\Vert _{L^{p}\left( \omega
\right) } &\leq &\mathcal{A}_{p}^{\lambda ,\ell ^{2}}\left( \sigma ,\omega
\right) \left\Vert \left( \sum_{i=1}^{\infty }\left\vert f_{i}\right\vert
^{2}\right) ^{\frac{1}{2}}\right\Vert _{L^{p}\left( \sigma \right) },
\label{quad Muck} \\
\left\Vert \left( \sum_{i=1}^{\infty }\left\vert \int_{\mathbb{R}%
^{n}\setminus I_{i}}\frac{f_{i}\left( y\right) }{\left\vert
y-c_{i}\right\vert ^{n-\lambda }}d\omega \left( y\right) \right\vert ^{2}%
\mathbf{1}_{I_{i}}\right) ^{\frac{1}{2}}\right\Vert _{L^{p^{\prime }}\left(
\sigma \right) } &\leq &\mathcal{A}_{p^{\prime }}^{\lambda ,\ell ^{2}}\left(
\omega ,\sigma \right) \left\Vert \left( \sum_{i=1}^{\infty }\left\vert
f_{i}\right\vert ^{2}\right) ^{\frac{1}{2}}\right\Vert _{L^{p^{\prime
}}\left( \omega \right) },  \notag
\end{eqnarray}%
taken over all sequences $\left\{ I_{i}\right\} _{i=1}^{\infty }$ and $%
\left\{ f_{i}\right\} _{i=1}^{\infty }$ of cubes and functions respectively.
Note that $\mathcal{A}_{p}^{\lambda ,\ell ^{2}}\left( \sigma ,\omega \right) 
$ is homogeneous of degree $1$ in the measure pair $\left( \sigma ,\omega
\right) $, as opposed to the usual formulation with degree $2$. Finally, the 
\emph{quadratic} weak boundedness property of Hyt\"{o}nen and Vuorinen (not
so named in \cite{HyVu}) is%
\begin{eqnarray}
&&\sum_{i=1}^{\infty }\left\vert \int_{\mathbb{R}^{n}}a_{i}T_{\sigma
}^{\lambda }\mathbf{1}_{I_{i}}\left( x\right) b_{i}\mathbf{1}_{J\left(
I_{i}\right) }\left( x\right) d\omega \left( x\right) \right\vert
\label{WBP HV} \\
&\leq &\mathcal{WBP}_{T^{\lambda },p}^{\ell ^{2}}\left( \sigma ,\omega
\right) \left\Vert \left( \sum_{i=1}^{\infty }\left\vert a_{i}\mathbf{1}%
_{I_{i}}\right\vert ^{2}\right) ^{\frac{1}{2}}\right\Vert _{L^{p}\left(
\sigma \right) }\left\Vert \left( \sum_{i=1}^{\infty }\left\vert b_{i}%
\mathbf{1}_{I_{i}}\right\vert ^{2}\right) ^{\frac{1}{2}}\right\Vert
_{L^{p^{\prime }}\left( \omega \right) },  \notag
\end{eqnarray}%
taken over all sequences $\left\{ I_{i}\right\} _{i=1}^{\infty }$, $\left\{
J\left( I_{i}\right) \right\} _{i=1}^{\infty }$, $\left\{ a_{i}\right\}
_{i=1}^{\infty }$ and $\left\{ b_{i}\right\} _{i=1}^{\infty }$ of cubes and
numbers respectively where $J\left( I_{i}\right) $ denotes any cube \emph{%
adjacent} to $I_{i}$ with the same side length.

If the Calder\'{o}n-Zygmund operator $T^{\lambda }$ is bounded from $%
L^{p}\left( \sigma \right) $ to $L^{p}\left( \omega \right) $, then the
Hilbert space valued extension $\left( T^{\lambda }\right) ^{\ell ^{2}}$ is
bounded from $L^{p}\left( \sigma ;\ell ^{2}\right) $ to $L^{p}\left( \omega
;\ell ^{2}\right) $, and it is now not hard to see that 
\begin{eqnarray*}
&&\mathfrak{T}_{T^{\lambda },p}^{\ell ^{2},\func{global}}\left( \sigma
,\omega \right) +\mathfrak{T}_{T^{\lambda },p^{\prime }}^{\ell ^{2},\func{%
global}}\left( \omega ,\sigma \right) +\mathcal{A}_{p}^{\lambda ,\ell
^{2}}\left( \sigma ,\omega \right) +\mathcal{A}_{p^{\prime }}^{\lambda ,\ell
^{2}}\left( \omega ,\sigma \right) \\
&&\ \ \ \ \ \ \ \ \ \ \ \ \ \ \ \ \ \ \ +\mathcal{WBP}_{T^{\lambda
},p}^{\ell ^{2}}\left( \sigma ,\omega \right) \lesssim \mathfrak{N}%
_{T^{\lambda },p}\left( \sigma ,\omega \right) ,
\end{eqnarray*}%
where $\mathfrak{N}_{T^{\lambda },p}\left( \sigma ,\omega \right) $ denotes
the operator norm of $H$ from $L^{p}\left( \sigma \right) $ to $L^{p}\left(
\omega \right) $, more generally see (\ref{two weight'}) below.

For now the conjecture of Hyt\"{o}nen and Vuorinen for the Hilbert transform
also remains open, but we settle here in the affirmative the boundedness
question for the Hilbert transform $H$, and more generally for $\lambda $%
-fractional vector Riesz transforms $\mathbf{R}^{\lambda }$ on $\mathbb{R}%
^{n}$, in the case that the measures $\sigma $ and $\omega $ are both \emph{%
doubling}. Moreover, we use certain `logically weaker' quadratic conditions,
which we now describe in the general setting of $\lambda $-fractional Calder%
\'{o}n-Zygmund operators $T^{\lambda }$.

\subsection{Weaker quadratic conditions for doubling measures}

First, we will use local scalar testing conditions,%
\begin{eqnarray}
\left\Vert \mathbf{1}_{I}T_{\sigma }^{\lambda }\mathbf{1}_{I}\right\Vert
_{L^{p}\left( \omega \right) } &\leq &\mathfrak{T}_{T^{\lambda },p}\left(
\sigma ,\omega \right) \left\vert I\right\vert _{\sigma }^{\frac{1}{p}},
\label{test} \\
\left\Vert \mathbf{1}_{I}T_{\omega }^{\lambda ,\ast }\mathbf{1}%
_{I}\right\Vert _{L^{p^{\prime }}\left( \sigma \right) } &\leq &\mathfrak{T}%
_{T^{\lambda ,\ast },p^{\prime }}\left( \omega ,\sigma \right) \left\vert
I\right\vert _{\omega }^{\frac{1}{p^{\prime }}},  \notag
\end{eqnarray}%
which do not involve any vector-valued extensions.

Second, we will typically use $\ell ^{2}$ in a superscript instead of $%
\limfunc{quad}$ to indicate a `quadratic' constant, and we will use
quadratic \emph{offset} Muckenhoupt conditions given by%
\begin{eqnarray}
\left\Vert \left( \sum_{i=1}^{\infty }\left\vert a_{i}\frac{%
\min_{I_{i}^{\ast }}\left\vert I_{i}^{\ast }\right\vert _{\sigma }}{%
\left\vert I_{i}\right\vert ^{1-\frac{\lambda }{n}}}\right\vert ^{2}\mathbf{1%
}_{I_{i}}\right) ^{\frac{1}{2}}\right\Vert _{L^{p}\left( \omega \right) }
&\leq &A_{p}^{\lambda ,\ell ^{2},\limfunc{offset}}\left( \sigma ,\omega
\right) \left\Vert \left( \sum_{i=1}^{\infty }\left\vert a_{i}\right\vert
^{2}\mathbf{1}_{I_{i}}\right) ^{\frac{1}{2}}\right\Vert _{L^{p}\left( \sigma
\right) },  \label{quad A2 tailless} \\
\left\Vert \left( \sum_{i=1}^{\infty }\left\vert a_{i}\frac{%
\min_{I_{i}^{\ast }}\left\vert I_{i}^{\ast }\right\vert _{\omega }}{%
\left\vert I_{i}\right\vert ^{1-\frac{\lambda }{n}}}\right\vert ^{2}\mathbf{1%
}_{I_{i}}\right) ^{\frac{1}{2}}\right\Vert _{L^{p^{\prime }}\left( \sigma
\right) } &\leq &A_{p^{\prime }}^{\lambda ,\ell ^{2},\limfunc{offset}}\left(
\omega ,\sigma \right) \left\Vert \left( \sum_{i=1}^{\infty }\left\vert
a_{i}\right\vert ^{2}\mathbf{1}_{I_{i}}\right) ^{\frac{1}{2}}\right\Vert
_{L^{p^{\prime }}\left( \omega \right) },  \notag
\end{eqnarray}%
where for each $i$, the minimums are taken over the finitely many dyadic
cubes $I_{i}^{\ast }$ such that $\ell \left( I_{i}^{\ast }\right) =\ell
\left( I_{i}\right) $ and $\limfunc{dist}\left( I_{i}^{\ast },I_{i}\right)
\leq C_{0}\ell \left( I_{i}\right) $ for some positive constant $C_{0}$%
\footnote{%
In applications one takes $C_{0}$ sufficiently large depending on the Stein
elliptic constant for the operator $T^{\lambda }$. But if $\sigma $ is
doubling the condition doesn't depend on $C_{0}$.}. Of course, when the
measures are doubling, we may take $I_{i}^{\ast }=I_{i}$ so that (\ref{quad
A2 tailless}) is equivalent to the following condition of Vuorinen \cite%
{Vuo2} that was introduced in the context of dyadic shifts, 
\begin{eqnarray}
\left\Vert \left( \sum_{i=1}^{\infty }\left\vert a_{i}\frac{\left\vert
I_{i}\right\vert _{\sigma }}{\left\vert I_{i}\right\vert ^{1-\frac{\lambda }{%
n}}}\right\vert ^{2}\mathbf{1}_{I_{i}}\right) ^{\frac{1}{2}}\right\Vert
_{L^{p}\left( \omega \right) } &\lesssim &A_{p}^{\lambda ,\ell ^{2},\limfunc{%
offset}}\left( \sigma ,\omega \right) \left\Vert \left( \sum_{i=1}^{\infty
}\left\vert a_{i}\right\vert ^{2}\mathbf{1}_{I_{i}}\right) ^{\frac{1}{2}%
}\right\Vert _{L^{p}\left( \sigma \right) },  \label{quad A2 tailless'} \\
\left\Vert \left( \sum_{i=1}^{\infty }\left\vert a_{i}\frac{\left\vert
I_{i}\right\vert _{\omega }}{\left\vert I_{i}\right\vert ^{1-\frac{\lambda }{%
n}}}\right\vert ^{2}\mathbf{1}_{I_{i}}\right) ^{\frac{1}{2}}\right\Vert
_{L^{p^{\prime }}\left( \sigma \right) } &\lesssim &A_{p^{\prime }}^{\lambda
,\ell ^{2},\limfunc{offset}}\left( \omega ,\sigma \right) \left\Vert \left(
\sum_{i=1}^{\infty }\left\vert a_{i}\right\vert ^{2}\mathbf{1}%
_{I_{i}}\right) ^{\frac{1}{2}}\right\Vert _{L^{p^{\prime }}\left( \omega
\right) }.  \notag
\end{eqnarray}%
We prove below that the offset constants $A_{p}^{\lambda ,\ell ^{2},\limfunc{%
offset}}\left( \sigma ,\omega \right) $ in (\ref{quad A2 tailless}) are
necessary for the norm inequality $\left\Vert T_{\sigma }^{\lambda
}f\right\Vert _{L^{p}\left( \omega \right) }\leq \mathfrak{N}_{T^{\lambda
}}\left( \sigma ,\omega \right) \left\Vert f\right\Vert _{L^{p}\left( \sigma
\right) }$ when $\sigma $ and $\omega $ are doubling. Here we simply note
that using the Fefferman-Stein vector-valued inequality for the maximal
function $M_{\sigma }$ on a space of homogeneous type $\left( \mathbb{R}%
^{n},\left\vert \cdot \right\vert ,\sigma \right) $ \cite{GrLiYa}, we see
that $A_{p}^{\lambda ,\ell ^{2},\limfunc{offset}}\left( \sigma ,\omega
\right) $ is smaller than $\mathcal{A}_{p}^{\lambda ,\ell ^{2},\limfunc{quad}%
}\left( \sigma ,\omega \right) $ for doubling measures because%
\begin{equation*}
\frac{\left\vert I_{i}^{\ast }\right\vert _{\sigma }}{\left\vert
I_{i}\right\vert ^{1-\frac{\lambda }{n}}}\lesssim \int_{\mathbb{R}%
^{n}\setminus I_{i}}\frac{M_{\sigma }\mathbf{1}_{I_{i}^{\ast }}\left(
y\right) }{\left\vert y-c_{i}\right\vert ^{n-\lambda }}d\sigma \left(
y\right) ,\ \ \ \ \ \text{when }I_{i}^{\ast }\cap I_{i}=\emptyset .
\end{equation*}%
Such use of the Fefferman-Stein vector-valued inequality occurs frequently
in the sequel. Note again that $A_{p}^{\lambda ,\ell ^{2},\limfunc{offset}%
}\left( \sigma ,\omega \right) $ is homogeneous of degree $1$ in the measure
pair $\left( \sigma ,\omega \right) $.

Third, we use a variant of the weak boundedness property (\ref{WBP HV}) of
Hyt\"{o}nen and Vuorinen given by%
\begin{eqnarray}
&&\sum_{i=1}^{\infty }\sum_{I_{i}^{\ast }\in \func{Adj}\left( I_{i}\right)
}\left\vert \int_{\mathbb{R}^{n}}a_{i}T_{\sigma }^{\lambda }\mathbf{1}%
_{I_{i}}\left( x\right) b_{i}^{\ast }\mathbf{1}_{I_{i}^{\ast }}\left(
x\right) d\omega \left( x\right) \right\vert  \label{WBP} \\
&\leq &\mathcal{WBP}_{T^{\lambda },p}^{\ell ^{2}}\left( \sigma ,\omega
\right) \left\Vert \left( \sum_{i=1}^{\infty }\left\vert a_{i}\mathbf{1}%
_{I_{i}}\right\vert ^{2}\right) ^{\frac{1}{2}}\right\Vert _{L^{p}\left(
\sigma \right) }\left\Vert \left( \sum_{i=1}^{\infty }\sum_{I_{i}^{\ast }\in 
\func{Adj}\left( I_{i}\right) }\left\vert b_{i}^{\ast }\mathbf{1}%
_{I_{i}^{\ast }}\right\vert ^{2}\right) ^{\frac{1}{2}}\right\Vert
_{L^{p^{\prime }}\left( \omega \right) },  \notag
\end{eqnarray}%
where for $I\in \mathcal{D}$, its \emph{adjacent} cubes are defined by 
\begin{equation}
\func{Adj}\left( I\right) \equiv \left\{ I^{\ast }\in \mathcal{D}:\overline{%
I^{\ast }}\cap \overline{I}\neq \emptyset \text{ and }\ell \left( I^{\ast
}\right) =\ell \left( I\right) \right\} ,  \label{def adj}
\end{equation}%
and in particular include $I$ itself.

Finally, we also define the stronger quadratic \emph{triple} testing
constants by%
\begin{eqnarray}
\left\Vert \left( \sum_{i=1}^{\infty }\left( a_{i}\mathbf{1}%
_{3I_{i}}T_{\sigma }^{\lambda }\mathbf{1}_{I_{i}}\right) ^{2}\right) ^{\frac{%
1}{2}}\right\Vert _{L^{p}\left( \omega \right) } &\leq &\mathfrak{T}%
_{T^{\lambda },p}^{\ell ^{2},\limfunc{triple}}\left( \sigma ,\omega \right)
\left\Vert \left( \sum_{i=1}^{\infty }\left( a_{i}\mathbf{1}_{I_{i}}\right)
^{2}\right) ^{\frac{1}{2}}\right\Vert _{L^{p}\left( \sigma \right) },
\label{quad triple test} \\
\left\Vert \left( \sum_{i=1}^{\infty }\left( a_{i}\mathbf{1}%
_{3I_{i}}T_{\omega }^{\lambda ,\ast }\mathbf{1}_{I_{i}}\right) ^{2}\right) ^{%
\frac{1}{2}}\right\Vert _{L^{p^{\prime }}\left( \sigma \right) } &\leq &%
\mathfrak{T}_{T^{\lambda ,\ast },p^{\prime }}^{\ell ^{2},\limfunc{triple}%
}\left( \omega ,\sigma \right) \left\Vert \left( \sum_{i=1}^{\infty }\left(
a_{i}\mathbf{1}_{I_{i}}\right) ^{2}\right) ^{\frac{1}{2}}\right\Vert
_{L^{p^{\prime }}\left( \omega \right) }.  \notag
\end{eqnarray}

\subsection{Statement of the main theorem}

Our main theorem is restrict to $\lambda $-fractional vector Riesz
transforms $\mathbf{R}^{\lambda }$, but we will continue with general $%
\lambda $-fractional vector Calder\'{o}n-Zygmund operators $T^{\lambda }$ in
describing the setup.

Denote by $\Omega _{\limfunc{dyad}}$ the collection of all dyadic grids in $%
\mathbb{R}^{n}$, and let $\mathcal{Q}^{n}$ denote the collection of all
cubes in $\mathbb{R}^{n}$ having sides parallel to the coordinate axes. A
positive locally finite Borel measure $\mu $ on $\mathbb{R}^{n}$ is said to
be doubling if there is a constant $C_{\limfunc{doub}}$, called the doubling
constant, such that%
\begin{equation*}
\left\vert 2Q\right\vert _{\mu }\leq C_{\limfunc{doub}}\left\vert
Q\right\vert _{\mu }\ ,\ \ \ \ \ \text{for all cubes }Q\in \mathcal{Q}^{n}.
\end{equation*}

For $0\leq \lambda <n$ we define a smooth $\lambda $-fractional Calder\'{o}%
n-Zygmund kernel $K^{\lambda }(x,y)$ to be a function $K^{\lambda }:\mathbb{R%
}^{n}\times \mathbb{R}^{n}\rightarrow \mathbb{R}$ satisfying the following
fractional size and smoothness conditions%
\begin{equation}
\left\vert \nabla _{x}^{j}K^{\lambda }\left( x,y\right) \right\vert
+\left\vert \nabla _{y}^{j}K^{\lambda }\left( x,y\right) \right\vert \leq
C_{\lambda ,j}\left\vert x-y\right\vert ^{\lambda -j-n},\ \ \ \ \ 0\leq
j<\infty ,  \label{sizeandsmoothness'}
\end{equation}%
and we denote by $T^{\lambda }$ the associated $\lambda $-fractional
singular integral on $\mathbb{R}^{n}$. Following \cite[(39) on page 210]{Ste}
as in \cite{AlSaUr}, we say that a $\lambda $-fractional Calder\'{o}%
n-Zygmund kernel $K^{\lambda }$ is \emph{elliptic in the sense of Stein} if
there is a unit vector $\mathbf{u}_{0}\in \mathbb{R}^{n}$ and a constant $%
c>0 $ such that%
\begin{equation*}
\left\vert K^{\lambda }\left( x,x+t\mathbf{u}_{0}\right) \right\vert \geq
c\left\vert t\right\vert ^{\lambda -n},\ \ \ \ \ \text{for all }t\in \mathbb{%
R}.
\end{equation*}

\subsubsection{Defining the norm inequality\label{Subsubsection norm}}

As in \cite[see page 314]{SaShUr9}, we introduce a family $\left\{ \eta
_{\delta ,R}^{\lambda }\right\} _{0<\delta <R<\infty }$ of smooth
nonnegative functions on $\left[ 0,\infty \right) $ so that the truncated
kernels $K_{\delta ,R}^{\lambda }\left( x,y\right) =\eta _{\delta
,R}^{\lambda }\left( \left\vert x-y\right\vert \right) K^{\lambda }\left(
x,y\right) $ are bounded with compact support for fixed $x$ or $y$, and
uniformly satisfy (\ref{sizeandsmoothness'}). Then the truncated operators 
\begin{equation*}
T_{\sigma ,\delta ,R}^{\lambda }f\left( x\right) \equiv \int_{\mathbb{R}%
^{n}}K_{\delta ,R}^{\lambda }\left( x,y\right) f\left( y\right) d\sigma
\left( y\right) ,\ \ \ \ \ x\in \mathbb{R}^{n},
\end{equation*}%
are pointwise well-defined when $f$ is bounded with compact support, and we
will refer to the pair $\left( K^{\lambda },\left\{ \eta _{\delta
,R}^{\lambda }\right\} _{0<\delta <R<\infty }\right) $ as a $\lambda $%
-fractional singular integral operator, which we typically denote by $%
T^{\lambda }$, suppressing the dependence on the truncations. For $%
1<p<\infty $, we say that a $\lambda $-fractional singular integral operator 
$T^{\lambda }=\left( K^{\lambda },\left\{ \eta _{\delta ,R}^{\lambda
}\right\} _{0<\delta <R<\infty }\right) $ satisfies the norm inequality%
\begin{equation}
\left\Vert T_{\sigma }^{\lambda }f\right\Vert _{L^{p}\left( \omega \right)
}\leq \mathfrak{N}_{T^{\lambda }}\left( \sigma ,\omega \right) \left\Vert
f\right\Vert _{L^{p}\left( \sigma \right) },\ \ \ \ \ f\in L^{p}\left(
\sigma \right) ,  \label{two weight'}
\end{equation}%
where $\mathfrak{N}_{T^{\lambda }}\left( \sigma ,\omega \right) $ denotes
the best constant in (\ref{two weight'}), provided%
\begin{equation*}
\left\Vert T_{\sigma ,\delta ,R}^{\lambda }f\right\Vert _{L^{p}\left( \omega
\right) }\leq \mathfrak{N}_{T^{\lambda }}\left( \sigma ,\omega \right)
\left\Vert f\right\Vert _{L^{p}\left( \sigma \right) },\ \ \ \ \ f\in
L^{p}\left( \sigma \right) ,0<\delta <R<\infty .
\end{equation*}%
In the presence of the classical Muckenhoupt condition $A_{p}^{\alpha }$, it
can be easily shown that the norm inequality is independent of the choice of
truncations used - see e.g. \cite{LaSaShUr3} where rough operators are
treated in the case $p=2$, but the proofs can be modified. We can now state
our main theorem. Note that the second two parts of the theorem apply to
vector Riesz transforms only.

\begin{theorem}
\label{main}Suppose that $1<p<\infty $, that $\sigma $ and $\omega $ are
locally finite positive Borel measures on $\mathbb{R}^{n}$, and that $%
T^{\lambda }$ is a smooth $\lambda $-fractional singular integral operator
on $\mathbb{R}^{n}$. Denote by $\mathfrak{N}_{T^{\lambda },p}\left( \sigma
,\omega \right) $ the smallest constant $C$ in the two weight norm inequality%
\begin{equation}
\left\Vert T_{\sigma }^{\lambda }f\right\Vert _{L^{p}\left( \omega \right)
}\leq C\left\Vert f\right\Vert _{L^{p}\left( \sigma \right) }.
\label{2 wt norm}
\end{equation}

\begin{enumerate}
\item Then 
\begin{equation*}
\mathfrak{T}_{T^{\lambda },p}\left( \sigma ,\omega \right) +\mathfrak{T}%
_{T^{\lambda }{}^{,\ast },p^{\prime }}\left( \omega ,\sigma \right) +%
\mathcal{WBP}_{T^{\lambda \lambda },p}^{\ell ^{2}}\left( \sigma ,\omega
\right) \leq \mathfrak{T}_{T^{\lambda },p}^{\ell ^{2},\limfunc{triple}%
}\left( \sigma ,\omega \right) +\mathfrak{T}_{T^{\lambda }{}^{,\ast
},p^{\prime }}^{\ell ^{2},\limfunc{triple}}\left( \omega ,\sigma \right)
\leq \mathfrak{N}_{T^{\lambda },p}\left( \sigma ,\omega \right) ,
\end{equation*}%
and when $T^{\lambda }$ is Stein elliptic, we also have 
\begin{equation*}
A_{p}^{\lambda ,\ell ^{2},\limfunc{offset}}\left( \sigma ,\omega \right)
+A_{p^{\prime }}^{\lambda ,\ell ^{2},\limfunc{offset}}\left( \omega ,\sigma
\right) \lesssim \mathfrak{T}_{T^{\lambda },p}^{\ell ^{2},\limfunc{triple}%
}\left( \sigma ,\omega \right) +\mathfrak{T}_{T^{\lambda ,\ast },p^{\prime
}}^{\ell ^{2},\limfunc{triple}}\left( \omega ,\sigma \right) .
\end{equation*}

\item Suppose in addition that $\sigma $ and $\omega $ are doubling measures
on $\mathbb{R}^{n}$, and that $T^{\lambda }$ is replaced by the $\lambda $%
-fractional vector Riesz transform $\mathbf{R}^{\lambda }$ on $\mathbb{R}%
^{n} $. Then the two weight norm inequality (\ref{2 wt norm}) holds provided
the quadratic weak boundedness property (\ref{WBP}) holds, and the quadratic
local testing conditions (\ref{quad HV}) hold, and the quadratic offset
Muckenhoupt conditions (\ref{quad A2 tailless}) hold; and moreover in this
case we have 
\begin{eqnarray}
\mathfrak{N}_{\mathbf{R}^{\lambda },p}\left( \sigma ,\omega \right)
&\lesssim &\mathfrak{T}_{\mathbf{R}^{\lambda },p}^{\ell ^{2},\func{loc}%
}\left( \sigma ,\omega \right) +\mathfrak{T}_{\mathbf{R}^{\lambda ,\ast
},p^{\prime }}^{\ell ^{2},\func{loc}}\left( \omega ,\sigma \right) +\mathcal{%
WBP}_{\mathbf{R}^{\lambda },p}^{\ell ^{2}}\left( \sigma ,\omega \right)
\label{main inequ} \\
&&+A_{p}^{\lambda ,\ell ^{2},\limfunc{offset}}\left( \sigma ,\omega \right)
+A_{p^{\prime }}^{\lambda ,\ell ^{2},\limfunc{offset}}\left( \omega ,\sigma
\right) .  \notag
\end{eqnarray}

\item Suppose in addition that $\sigma $ and $\omega $ are doubling measures
on $\mathbb{R}^{n}$, and that $T^{\lambda }$ is replaced by the $\lambda $%
-fractional vector Riesz transform $\mathbf{R}^{\lambda }$ on $\mathbb{R}%
^{n} $. Then the two weight norm inequality (\ref{2 wt norm}) holds \emph{if
and only if} the quadratic \emph{triple} testing conditions (\ref{quad
triple test}) hold, and moreover, 
\begin{equation*}
\mathfrak{N}_{\mathbf{R}^{\lambda },p}\left( \sigma ,\omega \right) \approx 
\mathfrak{T}_{\mathbf{R}^{\lambda },p}^{\ell ^{2},\limfunc{triple}}\left(
\sigma ,\omega \right) +\mathfrak{T}_{\mathbf{R}^{\lambda ,\ast },p^{\prime
}}^{\ell ^{2},\limfunc{triple}}\left( \omega ,\sigma \right) .
\end{equation*}
\end{enumerate}
\end{theorem}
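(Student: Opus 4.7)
The plan is to prove parts (1), (2), and (3) in sequence, with (3) following immediately from the combination of (1) and (2); the substantive work is in part (2). For part (1), the inclusion $\mathfrak{T}_{T^{\lambda},p} \leq \mathfrak{T}^{\ell^{2},\mathrm{triple}}_{T^{\lambda},p}$ is immediate by specializing the quadratic sequence to a singleton and using $\mathbf{1}_{I} \leq \mathbf{1}_{3I}$. The bound $\mathfrak{T}^{\ell^{2},\mathrm{triple}}_{T^{\lambda},p} \leq \mathfrak{N}_{T^{\lambda},p}$ follows by applying the Marcinkiewicz--Zygmund $\ell^{2}$-valued extension of the scalar norm inequality to the vector-valued function $\{a_{i}\mathbf{1}_{I_{i}}\}_{i}$, then inserting the cutoffs $\mathbf{1}_{3I_{i}}$. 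The quadratic weak boundedness property (\ref{WBP}) then follows by duality: pair against a dual sequence $\{b_{i}^{\ast}\mathbf{1}_{I_{i}^{\ast}}\}$ on the adjacent cubes, observe $\mathbf{1}_{I_{i}^{\ast}} \leq \mathbf{1}_{3I_{i}}$, and apply H\"older in $\ell^{2}\otimes L^{p'}$. Finally, Stein ellipticity provides a positive cone in which $|K^{\lambda}(x,y)| \gtrsim |x-y|^{\lambda-n}$ with a consistent sign, which yields a pointwise lower bound $|\mathbf{1}_{I^{\ast}} T_{\sigma}^{\lambda}\mathbf{1}_{I}(x)| \gtrsim |I|_{\sigma}/|I|^{1-\lambda/n}$ on a fixed proportion of any adjacent $I^{\ast}$; inserting this into the triple testing inequality transfers its bound onto the offset Muckenhoupt constant.

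The core task is the sufficiency statement (2). I would pass from the truncated continuous operator to dyadic model operators by a random dyadic grid representation in the spirit of Hyt\"onen, exploiting the doubling hypothesis on both $\sigma$ and $\omega$ to dispense with the good/bad cube decomposition of Nazarov--Treil--Volberg: the bad boundary portion is absorbed into terms controlled by the quadratic offset Muckenhoupt constant. Expand $f\in L^{p}(\sigma)$ and a test function $g\in L^{p'}(\omega)$ in weighted martingale/Haar differences $\triangle_{Q}^{\sigma}$ and $\triangle_{R}^{\omega}$ adapted to the two doubling measures. Then split the bilinear form $\langle T_{\sigma}^{\lambda}f,g\rangle_{\omega}$ according to the relative geometry of $(Q,R)$ into: (i) a diagonal/nearby piece where $Q$ and $R$ have comparable sidelength and $3Q\cap 3R\neq\emptyset$, handled by the quadratic weak boundedness property (\ref{WBP}); (ii) two paraproduct pieces, one for each order of containment, obtained by replacing the martingale difference on the larger cube by its mean on the child containing the smaller cube, and handled by scalar testing (\ref{test}) via a quadratic Carleson embedding; (iii) a far off-diagonal piece with $3Q\cap 3R=\emptyset$, controlled by the kernel smoothness combined with the quadratic offset Muckenhoupt condition (\ref{quad A2 tailless}).

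The main obstacle is the paraproduct estimate in $L^{p}$ with $p\neq 2$, where orthogonality of martingale differences is unavailable. The paraproduct must be controlled by a sharp quadratic dyadic Carleson embedding theorem for doubling measures of Hyt\"onen--Vuorinen type, in which the testing on the right-hand side is scalar rather than quadratic. Concretely, a corona/stopping-time decomposition of $f$ on its $\sigma$-averages reduces the paraproduct to a Carleson coefficient problem in which the Carleson constant of the sequence $\{\langle T_{\sigma}^{\lambda}\mathbf{1}_{F},\triangle_{Q}^{\omega}g\rangle_{\omega}\}_{Q}$ is bounded by the scalar testing constant $\mathfrak{T}_{T^{\lambda},p}$ on each stopping tree; this is precisely what permits only scalar testing to appear on the right of (\ref{main inequ}). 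A secondary ingredient is the square function equivalence $\|f\|_{L^{p}(\mu)}\approx\|S_{\mu}f\|_{L^{p}(\mu)}$ for doubling $\mu$, which furnishes the vector-valued framework underlying every quadratic estimate. With these two ingredients in place, the remaining terms reduce to routine manipulations, and part (3) follows by combining the lower bounds in (1) with the upper bound (\ref{main inequ}) from (2), since under Stein ellipticity the scalar testing, quadratic weak boundedness, and quadratic offset Muckenhoupt terms are all dominated by the quadratic triple testing constants.
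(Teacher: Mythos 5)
Your treatment of parts (1) and (3) is accurate and matches the paper: the Marcinkiewicz--Zygmund $\ell^2$-extension, specialization to singleton sequences, $\mathbf{1}_{I^{\ast}}\leq\mathbf{1}_{3I}$ for weak boundedness, and Stein ellipticity for the offset Muckenhoupt constant are all exactly as the paper has them, and part (3) is the trivial combination of (1) and (2). But your outline of the core sufficiency argument in part (2) has several substantive gaps.

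First, you expand $f$ and $g$ in Haar (first-order) martingale differences, but for doubling measures with $p\neq 2$ the paper is forced to use weighted \emph{Alpert} wavelets $\triangle_{I;\kappa}^{\mu}$ with $\kappa$ large relative to the doubling exponents. This is not cosmetic: the entire machinery that lets the Pivotal Lemma produce averages $\left\vert I\right\vert_{\sigma}/\left\vert I\right\vert^{1-\lambda/n}$ rather than Poisson integrals rests on the identity \eqref{kappa large}, which requires $\kappa$-th order vanishing moments; with $\kappa=1$ (Haar) you would need a pivotal side condition, which the paper deliberately avoids. Moreover, using Alpert wavelets means the projection onto a child of $I$ is a polynomial rather than a constant, so it no longer commutes with $T^{\lambda}_{\sigma}$; this produces a genuinely new \emph{commutator form} $\mathsf{B}_{\limfunc{commutator};\kappa}^{F}$ (Section~7) which your plan never addresses, and whose treatment (Taylor expansion of the kernel plus a pigeon-holing of the tower of dyadic cubes above a point) is one of the two hard new arguments in the paper.

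Second, you propose to dispense with the Nazarov--Treil--Volberg good/bad decomposition by random dyadic grids, absorbing the bad part into the offset Muckenhoupt constant. The paper does no such thing: it retains goodness (parameters $r,\varepsilon$ in Definition~\ref{def parameters}), and the Poisson Lemma~\ref{Poisson inequality} is invoked precisely with the goodness separation $\limfunc{dist}\left( J,\partial I\right) >2\sqrt{n}\ell \left( J\right) ^{\varepsilon }\ell \left( I\right) ^{1-\varepsilon }$ to obtain the geometric gain $\left(\ell\left(J\right)/\ell\left(I\right)\right)^{\kappa-\varepsilon\left(n+\kappa-\lambda\right)}$ needed in the disjoint, stopping, far-below and neighbour forms. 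You would have to explain how to recover this gain without goodness, and your sketch does not.

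Third, your paraproduct argument is the weakest point. You invoke a ``quadratic dyadic Carleson embedding theorem of Hyt\"onen--Vuorinen type in which testing is scalar.'' The paper's Theorem~\ref{para thm} is not an application of a known embedding: for $p>2$ it expands $\left(\sum_{F}\left\vert\cdots\right\vert^{2}\right)^{p/2}$ into $m$-fold products over increasing chains of stopping cubes ($2m\leq p<2\left(m+1\right)$), and the crucial step is the observation that when $\ell\left(F_{k}\right)<2^{-\tau}\ell\left(F_{k+1}\right)$ the projections $\mathsf{P}_{F_{j}}^{\omega}T_{\sigma}^{\lambda}\mathbf{1}_{F_{j}}$ for $j>k$ are \emph{constant} on $F_{k}$, whence doubling of $\omega$ produces a geometric decay $\left\vert F_{1}\right\vert_{\omega}/\left\vert F_{j}^{\prime}\right\vert_{\omega}\lesssim 2^{-\varepsilon\left(n_{j}+\cdots+n_{m}\right)}$ that makes the sum converge. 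This device is the other difficult new argument in the paper, and ``a sharp quadratic dyadic Carleson embedding theorem'' does not substitute for it; you would need to either prove such an embedding (which would itself require exactly this argument) or carry it out directly. As written, the paraproduct step is a restatement of the goal rather than a proof.
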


The constants on the right hand side of (\ref{main inequ}) represent the
most `elementary' constants we were able to find that characterize the norm
of the two weight inequality for Riesz transforms and doubling measures when 
$p\neq 2$.

\begin{remark}
In the case of equal measures $\sigma =\omega $, the quadratic $%
A_{p}^{\lambda ,\ell ^{2}}$ and $A_{p}^{\lambda ,\ell ^{2},\func{offest}}$
conditions trivally reduce to the scalar $A_{p}^{\lambda }$ and $%
A_{p}^{\lambda ,\func{offest}}$ conditions respectively. We show in the
appendix that $A_{p}^{\lambda ,\ell ^{2},\func{offest}}\left( \sigma ,\omega
\right) +A_{p^{\prime }}^{\lambda ,\ell ^{2},\func{offest}}\left( \omega
,\sigma \right) $ is \emph{not} controlled by $A_{p}^{\lambda }\left( \sigma
,\omega \right) $ in general, but the case of doubling measures remains
open. We also note that our weak boundedness property (\ref{WBP}) excludes
the case $I_{i}^{\ast }=I_{i}$. Finally, we note that our proof shows that
we can extend the theorem to include all smooth Stein elliptic Calder\'{o}%
n-Zygmund operators if we assume the classical pivotal condition.
\end{remark}

Part (3) is an easy corollary of parts (1) and (2). Indeed, it is trivial
that $\mathfrak{T}_{T^{\lambda },p}^{\ell ^{2},\limfunc{triple}}\left(
\sigma ,\omega \right) +\mathfrak{T}_{T^{\lambda }{}^{,\ast },p^{\prime
}}^{\ell ^{2},\limfunc{triple}}\left( \omega ,\sigma \right) \lesssim 
\mathfrak{N}_{T^{\lambda },p}\left( \sigma ,\omega \right) $, and a simple
exercise to see that for general measures,%
\begin{eqnarray*}
&&\mathfrak{T}_{T^{\lambda },p}\left( \sigma ,\omega \right) +\mathfrak{T}%
_{T^{\lambda ,\ast },p^{\prime }}\left( \omega ,\sigma \right) +\mathcal{WBP}%
_{T^{\lambda },p}^{\ell ^{2}}\left( \sigma ,\omega \right) \\
&&+A_{p}^{\lambda ,\ell ^{2},\limfunc{offset}}\left( \sigma ,\omega \right)
+A_{p^{\prime }}^{\lambda ,\ell ^{2},\limfunc{offset}}\left( \omega ,\sigma
\right) \lesssim \mathfrak{T}_{T^{\lambda },p}^{\ell ^{2},\limfunc{triple}%
}\left( \sigma ,\omega \right) +\mathfrak{T}_{T^{\lambda ,\ast },p^{\prime
}}^{\ell ^{2},\limfunc{triple}}\left( \omega ,\sigma \right) .
\end{eqnarray*}

\begin{notation}
In the interest of reducing notational clutter we will sometimes omit
specifying the measure pair and simply write $\mathfrak{T}_{T^{\lambda },p}$
and $A_{p}^{\lambda ,\ell ^{2},\limfunc{offset}}$ in place of $\mathfrak{T}%
_{T^{\lambda },p}\left( \sigma ,\omega \right) $ and $A_{p}^{\lambda ,\ell
^{2},\limfunc{offset}}\left( \sigma ,\omega \right) $ etc. especially when
in line.
\end{notation}

\section{Organization of the proof}

We follow the overall outline of an argument for the case $p=2$ given in 
\cite{AlSaUr}, but only for Haar wavelets which simplifies matters a bit,
but also with a number of adaptations to the use of square functions. The
proof of Theorem \ref{main} is achieved by proving the bilinear form bound,%
\begin{equation*}
\frac{\left\vert \left\langle \mathbf{R}_{\sigma }^{\lambda
}f,g\right\rangle _{\omega }\right\vert }{\left\Vert f\right\Vert
_{L^{p}\left( \sigma \right) }\left\Vert g\right\Vert _{L^{p^{\prime
}}\left( \omega \right) }}\lesssim \mathfrak{T}_{\mathbf{R}^{\lambda },p}+%
\mathfrak{T}_{\mathbf{R}^{\lambda ,\ast },p^{\prime }}+\mathcal{WBP}_{%
\mathbf{R}^{\lambda },p}^{\ell ^{2}}+A_{p}^{\lambda ,\ell ^{2},\limfunc{%
offset}}+A_{p^{\prime }}^{\lambda ,\ell ^{2},\limfunc{offset}},
\end{equation*}%
for $\func{good}$ functions $f$ and $g$ in the sense of Nazarov, Treil and
Volberg, see \cite{NTV} for the treatment we use here\footnote{%
See also \cite[Subsection 3.1]{SaShUr10} for a treatment using finite
collections of grids, in which case the conditional probability arguments
are elementary.}. Following the weighted Haar expansions as given by
Nazarov, Treil and Volberg in \cite{NTV4}, we write $f$ and $g$ in weighted
Alpert wavelet expansions,%
\begin{equation}
\left\langle \mathbf{R}_{\sigma }^{\lambda }f,g\right\rangle _{\omega
}=\left\langle \mathbf{R}_{\sigma }^{\lambda }\left( \sum_{I\in \mathcal{D}%
}\bigtriangleup _{I}^{\sigma }f\right) ,\left( \sum_{J\in \mathcal{D}%
}\bigtriangleup _{J}^{\omega }g\right) \right\rangle _{\omega }=\sum_{I\in 
\mathcal{D}\ \text{and }J\in \mathcal{D}}\left\langle \mathbf{R}_{\sigma
}^{\lambda }\left( \bigtriangleup _{I}^{\sigma }f\right) ,\left(
\bigtriangleup _{J}^{\omega }g\right) \right\rangle _{\omega }\ .
\label{expand}
\end{equation}%
The sum is further decomposed, as depicted in the brief schematic diagram
below, by first \emph{Cube Size Splitting}, then using the \emph{Shifted
Corona Decomposition}, according to the \emph{Canonical Splitting}. All of
these `descriptive' expressions will be defined as the proof proceeds.

Here is the brief schematic diagram as in \cite{AlSaUr}, summarizing the
shifted corona decompositions as used in \cite{AlSaUr} and \cite{SaShUr7}
for Alpert and Haar wavelet expansions of $f$ and $g$, and where $T^{\lambda
}$ is a smooth $\lambda $-fractional Calder\'{o}n-Zygmund operator in $%
\mathbb{R}^{n}$. The parameter $\rho $ is defined below.%
\begin{equation*}
\fbox{$%
\begin{array}{ccccccccc}
\left\langle T_{\sigma }^{\lambda }f,g\right\rangle _{\omega } &  &  &  &  & 
&  &  &  \\ 
\downarrow &  &  &  &  &  &  &  &  \\ 
\mathsf{B}_{\Subset _{\rho }}\left( f,g\right) & + & \mathsf{B}_{_{\rho
}\Supset }\left( f,g\right) & + & \mathsf{B}_{\cap }\left( f,g\right) & + & 
\mathsf{B}_{\diagup }\left( f,g\right) & + & \mathsf{B}_{\func{Adj},\rho
}\left( f,g\right) \\ 
\downarrow &  & \left[ \limfunc{duality}\right] &  & \left[ A_{p}^{\lambda
,\ell ^{2},\limfunc{offset}}\right] &  & \left[ A_{p}^{\lambda ,\ell ^{2},%
\limfunc{offset}}\right] &  & \left[ \mathcal{WBP}_{T^{\lambda },p}^{\ell
^{2}}\right] \\ 
\mathsf{T}_{\limfunc{diagonal}}\left( f,g\right) & + & \mathsf{T}_{\limfunc{%
far}\limfunc{below}}\left( f,g\right) & + & \mathsf{T}_{\limfunc{far}%
\limfunc{above}}\left( f,g\right) & + & \mathsf{T}_{\limfunc{disjoint}%
}\left( f,g\right) &  &  \\ 
\downarrow &  & \downarrow &  & \left[ =0\right] &  & \left[ =0\right] &  & 
\\ 
\mathsf{B}_{\Subset _{\mathbf{\rho }}}^{F}\left( f,g\right) &  & \mathsf{T}_{%
\limfunc{far}\limfunc{below}}^{1}\left( f,g\right) & + & \mathsf{T}_{%
\limfunc{far}\limfunc{below}}^{2}\left( f,g\right) &  &  &  &  \\ 
\downarrow &  & \left[ A_{p}^{\lambda ,\ell ^{2},\limfunc{offset}}\right] & 
& \left[ A_{p}^{\lambda ,\ell ^{2},\limfunc{offset}}\right] &  &  &  &  \\ 
\mathsf{B}_{\func{stop}}^{F}\left( f,g\right) & + & \mathsf{B}_{\func{%
paraproduct}}^{F}\left( f,g\right) & + & \mathsf{B}_{\func{neighbour}%
}^{F}\left( f,g\right) & + & \mathsf{B}_{\limfunc{commutator}}^{F}\left(
f,g\right) &  &  \\ 
\left[ A_{p}^{\lambda ,\ell ^{2},\limfunc{offset}}\right] &  & \left[ 
\mathfrak{T}_{T^{\lambda },p}\right] &  & \left[ A_{p}^{\lambda ,\ell ^{2},%
\limfunc{offset}}\right] &  & \left[ A_{p}^{\lambda ,\ell ^{2},\limfunc{%
offset}}\right] &  & 
\end{array}%
$}
\end{equation*}%
The condition that is used to control the indicated form is given in square
brackets directly underneath. Note that all forms are controlled solely by
the quadratic offset Muckenhoupt condition, save for the adjacent form which
uses only the weak boundedness property, and the paraproduct form which uses
only the scalar testing condition.

There are however notable exceptions in our treatment here as compared to
that in \cite{AlSaUr}. For example, we use only the classical Calder\'{o}%
n-Zygmund stopping time to bound the forms, and we control the stopping form
by Muckehoupt, Riesz testing and doubling conditions. We will bound the
remaining forms using only the fact that for a doubling measure $\mu $, the
Poisson averages reduce to ordinary averages in the presence of vector Riesz
testing and the Muckenhoupt condition. Indeed, the Poisson kernel of order $%
\lambda $ is given by 
\begin{equation}
\mathrm{P}^{\lambda }\left( Q,\mu \right) \equiv \int_{\mathbb{R}^{n}}\frac{%
\ell \left( Q\right) }{\left( \ell \left( Q\right) +\left\vert
y-c_{Q}\right\vert \right) ^{n+1-\lambda }}d\mu \left( y\right) ,
\label{def kappa Poisson}
\end{equation}%
and a doubling measure $\mu $ has a `doubling exponent' $\theta >0$ and a
positive constant $c$ that satisfy the condition,%
\begin{equation*}
\left\vert 2^{-j}Q\right\vert _{\mu }\geq c2^{-j\theta }\left\vert
Q\right\vert _{\mu }\ ,\ \ \ \ \ \text{for all }j\in \mathbb{N}.
\end{equation*}%
Thus if $\mu $ has doubling exponent $\theta $ and $\kappa >\theta +\lambda
-n$, we have%
\begin{eqnarray}
&&\mathrm{P}^{\lambda }\left( Q,\mu \right) =\int_{\mathbb{R}^{n}}\frac{\ell
\left( Q\right) }{\left( \ell \left( Q\right) +\left\vert x-c_{Q}\right\vert
\right) ^{n+1-\lambda }}d\mu \left( x\right)  \label{kappa large} \\
&=&\ell \left( Q\right) ^{\lambda -n}\left\{ \int_{Q}+\sum_{j=1}^{\infty
}\int_{2^{j}Q\setminus 2^{j-1}Q}\right\} \frac{1}{\left( 1+\frac{\left\vert
x-c_{Q}\right\vert }{\ell \left( Q\right) }\right) ^{n+1-\lambda }}d\mu
\left( x\right)  \notag \\
&\approx &\left\vert Q\right\vert ^{\frac{\lambda }{n}-1}\sum_{j=0}^{\infty
}2^{-j\left( n+1-\lambda \right) }\left\vert 2^{j}Q\right\vert _{\mu
}\approx \left\vert Q\right\vert ^{\frac{\lambda }{n}-1}\sum_{j=0}^{\infty
}2^{-j\left( n+1-\lambda \right) }\frac{1}{c2^{-j\theta }}\left\vert
Q\right\vert _{\mu }\approx C_{n,\kappa ,\lambda ,\theta }\left\vert
Q\right\vert ^{\frac{\lambda }{n}-1}\left\vert Q\right\vert _{\mu }\ . 
\notag
\end{eqnarray}

We now turn to defining the decompositions of the bilinear form $%
\left\langle T_{\sigma }^{\lambda }f,g\right\rangle _{\omega }$ used in the
schematic diagram above. For this we first need some preliminaries. We
introduce parameters $r,\varepsilon ,\rho ,\tau $ as in \cite{AlSaUr} and 
\cite{SaShUr7}. We will choose $\varepsilon >0$ sufficiently small later in
the argument, and then $r$ must be chosen sufficiently large depending on $%
\varepsilon $ in order to reduce matters to $\left( r,\varepsilon \right) -%
\func{good}$ functions by the Nazarov, Treil and Volberg argument - see
either \cite{NTV4} or \cite{SaShUr7} for details.

\begin{definition}
\label{def parameters}The parameters $\tau $ and $\rho $ are fixed to
satisfy 
\begin{equation*}
\tau >r\text{ and }\rho >r+\tau ,
\end{equation*}%
where $r$ is the goodness parameter already fixed.
\end{definition}

Let $\mu $ be a positive locally finite Borel measure on $\mathbb{R}^{n}$
that is doubling, let $\mathcal{D}$ be a dyadic grid on $\mathbb{R}^{n}$,
and let $\left\{ \bigtriangleup _{Q}^{\mu }\right\} _{Q\in \mathcal{D}}$ be
the set of weighted Haar projections on $L^{2}\left( \mu \right) $ and $%
\left\{ \mathbb{E}_{Q}^{\mu }\right\} _{Q\in \mathcal{D}}$ the associated
set of projections (see \cite{RaSaWi} for definitions). Recall also the
following bound for the `average' projections $\mathbb{E}_{I}^{\mu }f=\left(
E_{I}^{\mu }f\right) \mathbf{1}_{I}$: 
\begin{equation}
\left\Vert \mathbb{E}_{I}^{\mu }f\right\Vert _{L_{I}^{\infty }\left( \mu
\right) }\lesssim E_{I}^{\mu }\left\vert f\right\vert \leq \sqrt{\frac{1}{%
\left\vert I\right\vert _{\mu }}\int_{I}\left\vert f\right\vert ^{2}d\mu },\
\ \ \ \ \text{for all }f\in L_{\limfunc{loc}}^{2}\left( \mu \right) .
\label{analogue}
\end{equation}%
In terms of the Haar coefficient vectors 
\begin{equation*}
\widehat{f}\left( I\right) \equiv \left\{ \left\langle f,h_{I}^{\mu
,a}\right\rangle \right\} _{a\in \Gamma _{I,n}}
\end{equation*}%
for an orthonormal basis $\left\{ h_{I}^{\mu ,a}\right\} _{a\in \Gamma
_{I,n}}$ of $L_{I}^{2}\left( \mu \right) $ where $\Gamma _{I,n}$ is a
convenient finite index set of size $d_{Q}$, we thus have%
\begin{equation}
\left\vert \widehat{f}\left( I\right) \right\vert =\left\Vert \bigtriangleup
_{I}^{\mu }f\right\Vert _{L^{2}\left( \mu \right) }\leq \left\Vert
\bigtriangleup _{I}^{\mu }f\right\Vert _{L^{\infty }\left( \mu \right) }%
\sqrt{\left\vert I\right\vert _{\mu }}\lesssim \left\Vert \bigtriangleup
_{I}^{\mu }f\right\Vert _{L^{2}\left( \mu \right) }=\left\vert \widehat{f}%
\left( I\right) \right\vert .  \label{analogue'}
\end{equation}

\begin{notation}
We will write the equal quantities $\left\vert \widehat{f}\left( I\right)
\right\vert $ and $\left\Vert \bigtriangleup _{I}^{\mu }f\right\Vert
_{L^{2}\left( \mu \right) }$ interchangeably throughout the paper, depending
on context.
\end{notation}

\subsection{The cube size and corona decompositions}

Now we can define the cube size decomposition in the second row of the
diagram as given in \cite{AlSaUr}. For a sufficiently large positive integer 
$\rho \in \mathbb{N}$, we let 
\begin{equation}
\func{Adj}_{\rho }\left( I\right) \equiv \left\{ J\in \mathcal{D}:2^{-\rho
}\leq \frac{\ell \left( J\right) }{\ell \left( I\right) }\leq 2^{\rho }\text{
and }\overline{J}\cap \overline{I}\neq \emptyset \right\} ,\ \ \ \ \ I\in 
\mathcal{D},  \label{def Adj}
\end{equation}%
be the finite collection of dyadic cubes of side length between $2^{-\rho
}\ell \left( I\right) $ and $2^{\rho }\ell \left( I\right) $, and whose
closures have nonempty intersection. We write $J\Subset _{\rho ,\varepsilon
}I$ to mean that $J\subset I$, $\ell \left( J\right) \leq 2^{-\rho }\ell
\left( I\right) $ and $\limfunc{dist}\left( J,\partial I\right) >2\sqrt{n}%
\ell \left( J\right) ^{\varepsilon }\ell \left( I\right) ^{1-\varepsilon }$.
Then we write%
\begin{eqnarray*}
\left\langle T_{\sigma }^{\lambda }f,g\right\rangle _{\omega }
&=&\dsum\limits_{I,J\in \mathcal{D}}\left\langle T_{\sigma }^{\lambda
}\bigtriangleup _{I}^{\sigma }f,\bigtriangleup _{J}^{\omega }g\right\rangle
_{\omega } \\
&=&\dsum\limits_{\substack{ I,J\in \mathcal{D}  \\ J\Subset _{\rho
,\varepsilon }I}}\left\langle T_{\sigma }^{\lambda }\bigtriangleup
_{I}^{\sigma }f,\bigtriangleup _{J}^{\omega }g\right\rangle _{\omega
}+\dsum\limits_{\substack{ I,J\in \mathcal{D}  \\ J_{\rho ,\varepsilon
}\Supset I}}\left\langle T_{\sigma }^{\lambda }\bigtriangleup _{I}^{\sigma
}f,\bigtriangleup _{J}^{\omega }g\right\rangle _{\omega } \\
&&+\dsum\limits_{I,J\in \mathcal{D}:\ J\cap I=\emptyset ,\frac{\ell \left(
J\right) }{\ell \left( I\right) }<2^{-\rho }\text{ or }\frac{\ell \left(
J\right) }{\ell \left( I\right) }>2^{\rho }}\left\langle T_{\sigma
}^{\lambda }\bigtriangleup _{I}^{\sigma }f,\bigtriangleup _{J}^{\omega
}g\right\rangle _{\omega } \\
&&+\dsum\limits_{\substack{ I,J\in \mathcal{D}  \\ 2^{-\rho }\leq \frac{\ell
\left( J\right) }{\ell \left( I\right) }\leq 2^{\rho }\text{ and }\overline{J%
}\cap \overline{I}=\emptyset }}\left\langle T_{\sigma }^{\lambda
}\bigtriangleup _{I}^{\sigma }f,\bigtriangleup _{J}^{\omega }g\right\rangle
_{\omega }+\dsum\limits_{I\in \mathcal{D},\ J\in \func{Adj}_{\rho }\left(
I\right) }\left\langle T_{\sigma }^{\lambda }\bigtriangleup _{I}^{\sigma
}f,\bigtriangleup _{J}^{\omega }g\right\rangle _{\omega } \\
&\equiv &\mathsf{B}_{\Subset _{\rho ,\varepsilon }}\left( f,g\right) +%
\mathsf{B}_{_{\rho ,\varepsilon }\Supset }\left( f,g\right) +\mathsf{B}%
_{\cap }\left( f,g\right) +\mathsf{B}_{\diagup }\left( f,g\right) +\mathsf{B}%
_{\func{Adj},\rho }\left( f,g\right) .
\end{eqnarray*}%
The disjoint and comparable forms $\mathsf{B}_{\cap }\left( f,g\right) $ and 
$\mathsf{B}_{\diagup }\left( f,g\right) $ are controlled using only the
quadratic offset Muckenhoupt condition, while the adjacent form $\mathsf{B}_{%
\func{Adj},\rho }\left( f,g\right) $ is controlled by the Alpert weak
boundedness property. The above form $\mathsf{B}_{_{\rho ,\varepsilon
}\Supset }\left( f,g\right) $ is handled exactly as is the below form $%
\mathsf{B}_{\Subset _{\rho ,\varepsilon }}\left( f,g\right) $ but
interchanging the measures $\sigma $ and $\omega $, and the exponents $p$
and $p^{\prime }$, as well as using the duals of the scalar testing and
quadratic Muckenhoupt testing conditions. So it remains only to treat the
below form $\mathsf{B}_{\Subset _{\rho ,\varepsilon }}\left( f,g\right) $,
to which we now turn.

In order to describe the ensuing decompositions of $\mathsf{B}_{\Subset
_{\rho ,\varepsilon }}\left( f,g\right) $, we first need to introduce the
corona and shifted corona decompositions of $f$ and $g$ respectively. We
construct the \emph{Calder\'{o}n-Zygmund} corona decomposition for a
function $f$ in $L^{p}\left( \mu \right) $ (where $\mu =\sigma $ here, and
where $\mu =\omega $ when treating $\mathsf{B}_{_{\rho ,\varepsilon }\Supset
}\left( f,g\right) $) and that is supported in a dyadic cube $F_{1}^{0}$.
Fix $\Gamma >1$ and define $\mathcal{G}_{0}=\left\{ F_{1}^{0}\right\} $ to
consist of the single cube $F_{1}^{0}$, and define the first generation $%
\mathcal{G}_{1}=\left\{ F_{k}^{1}\right\} _{k}$ of \emph{CZ stopping children%
} of $F_{1}^{0}$ to be the \emph{maximal} dyadic subcubes $I$ of $F_{0}$
satisfying%
\begin{equation*}
E_{I}^{\mu }\left\vert f\right\vert \geq \Gamma E_{F_{1}^{0}}^{\mu
}\left\vert f\right\vert .
\end{equation*}%
Then define the second generation $\mathcal{G}_{2}=\left\{ F_{k}^{2}\right\}
_{k}$ of CZ\emph{\ }stopping children of $F_{1}^{0}$ to be the \emph{maximal}
dyadic subcubes $I$ of some $F_{k}^{1}\in \mathcal{G}_{1}$ satisfying%
\begin{equation*}
E_{I}^{\mu }\left\vert f\right\vert \geq \Gamma E_{F_{k}^{1}}^{\mu
}\left\vert f\right\vert .
\end{equation*}%
Continue by recursion to define $\mathcal{G}_{n}$ for all $n\geq 0$, and
then set 
\begin{equation*}
\mathcal{F\equiv }\dbigcup\limits_{n=0}^{\infty }\mathcal{G}_{n}=\left\{
F_{k}^{n}:n\geq 0,k\geq 1\right\}
\end{equation*}%
to be the set of all CZ stopping intervals in $F_{1}^{0}$ obtained in this
way.

The $\mu $-Carleson condition for $\mathcal{F}$ follows as usual from the
first step,%
\begin{equation*}
\sum_{F^{\prime }\in \mathfrak{C}_{\mathcal{F}}\left( F\right) }\left\vert
F^{\prime }\right\vert _{\mu }\leq \frac{1}{\Gamma }\sum_{F^{\prime }\in 
\mathfrak{C}_{\mathcal{F}}\left( F\right) }\frac{1}{E_{F}^{\mu }\left\vert
f\right\vert }\int_{F^{\prime }}\left\vert f\right\vert d\mu \leq \frac{1}{%
\Gamma }\left\vert F\right\vert _{\mu }.
\end{equation*}%
Moreover, if we define 
\begin{equation}
\alpha _{\mathcal{F}}\left( F\right) \equiv \sup_{F^{\prime }\in \mathcal{F}%
:\ F\subset F^{\prime }}E_{F^{\prime }}^{\mu }\left\vert f\right\vert ,
\label{def alpha}
\end{equation}%
then in each corona 
\begin{equation*}
\mathcal{C}_{F}\equiv \left\{ I\in \mathcal{D}:I\subset F\text{ and }%
I\not\subset F^{\prime }\text{ for any }F^{\prime }\in \mathcal{F}\text{
with }F^{\prime }\varsubsetneqq F\right\} ,
\end{equation*}%
we have, from the definition of the stopping times, the following average
control%
\begin{equation}
E_{I}^{\mu }\left\vert f\right\vert <\Gamma \alpha _{\mathcal{F}}\left(
F\right) ,\ \ \ \ \ I\in \mathcal{C}_{F}\text{ and }F\in \mathcal{F}.
\label{average control}
\end{equation}

Finally, as in \cite{NTV4}, \cite{LaSaShUr3} and \cite{SaShUr7}, we obtain
the Carleson condition and quasiorthogonality inequality,%
\begin{equation}
\sum_{F^{\prime }\preceq F}\left\vert F^{\prime }\right\vert _{\mu }\leq
C_{0}\left\vert F\right\vert _{\mu }\text{ for all }F\in \mathcal{F};\text{\
and }\sum_{F\in \mathcal{F}}\alpha _{\mathcal{F}}\left( F\right)
^{2}\left\vert F\right\vert _{\mu }\mathbf{\leq }C_{0}^{2}\left\Vert
f\right\Vert _{L^{2}\left( \mu \right) }^{2},  \label{Car and quasi}
\end{equation}%
where $\preceq $ denotes the tree relation $F^{\prime }\subset F$ for $%
F^{\prime },F\in \mathcal{F}$. Moreover, there is the following useful
consequence of (\ref{Car and quasi}) that says the sequence $\left\{ \alpha
_{\mathcal{F}}\left( F\right) \mathbf{1}_{F}\right\} _{F\in \mathcal{F}}$
has an additional \emph{quasiorthogonal} property relative to $f$ with a
constant $C_{0}^{\prime }$ depending only on $C_{0}$:%
\begin{equation}
\left\Vert \sum_{F\in \mathcal{F}}\alpha _{\mathcal{F}}\left( F\right) 
\mathbf{1}_{F}\right\Vert _{L^{2}\left( \mu \right) }^{2}\leq C_{0}^{\prime
}\left\Vert f\right\Vert _{L^{2}\left( \mu \right) }^{2}.  \label{q orth}
\end{equation}%
Indeed, this is an easy consequence of a geometric decay in levels of the
tree $\mathcal{F}$, that follows in turn from the Carleson condition in the
first inequality of (\ref{Car and quasi}).

This geometric decay asserts that there are positive constants $C_{1}$ and $%
\varepsilon $, depending on $C_{0}$, such that if $\mathfrak{C}_{\mathcal{F}%
}^{\left( n\right) }\left( F\right) $ denotes the set of $n^{th}$ generation
children of $F$ in $\mathcal{F}$,%
\begin{equation}
\sum_{F^{\prime }\in \mathfrak{C}_{\mathcal{F}}^{\left( n\right) }\left(
F\right) }\left\vert F^{\prime }\right\vert _{\mu }\leq \left(
C_{1}2^{-\varepsilon n}\right) ^{2}\left\vert F\right\vert _{\mu },\ \ \ \ \ 
\text{for all }n\geq 0\text{ and }F\in \mathcal{F}.  \label{geom decay}
\end{equation}%
To see this, let $\beta _{k}\left( F\right) \equiv \sum_{F^{\prime }\in 
\mathfrak{C}_{\mathcal{F}}^{\left( k\right) }\left( F\right) }\left\vert
F^{\prime }\right\vert _{\mu }$ and note that $\beta _{k+1}\left( F\right)
\leq \beta _{k}\left( F\right) $ implies that for any integer $N\geq C$, we
have%
\begin{equation*}
\left( N+1\right) \beta _{N}\left( F\right) \leq \sum_{k=0}^{N}\beta
_{k}\left( F\right) \leq C\left\vert F\right\vert _{\mu },
\end{equation*}%
and hence 
\begin{equation*}
\beta _{N}\left( F\right) \leq \frac{C}{N+1}\left\vert F\right\vert _{\mu }<%
\frac{1}{2}\left\vert F\right\vert _{\mu }\ ,\ \ \ \ \ \text{for }F\in 
\mathcal{F}\text{ and }N=\left[ 2C\right] .
\end{equation*}%
It follows that 
\begin{equation*}
\beta _{\ell N}\left( F\right) \leq \frac{1}{2}\beta _{\left( \ell -1\right)
N}\left( F\right) \leq ...\leq \frac{1}{2^{\ell }}\beta _{0}\left( F\right) =%
\frac{1}{2^{\ell }}\left\vert F\right\vert _{\mu },\ \ \ \ \ \ell =0,1,2,...
\end{equation*}%
and so given $n\in \mathbb{N}$, choose $\ell $ such that $\ell N\leq
n<\left( \ell +1\right) N$, and note that 
\begin{equation*}
\sum_{F^{\prime }\in \mathfrak{C}_{\mathcal{F}}^{\left( n\right) }\left(
F\right) }\left\vert F^{\prime }\right\vert _{\mu }=\beta _{n}\left(
F\right) \leq \beta _{\ell N}\left( F\right) \approx C_{1}2^{-\varepsilon
n}\left\vert F\right\vert _{\mu }\ ,
\end{equation*}%
which proves the geometric decay (\ref{geom decay}).

Now let $\sigma $ and $\omega $ be doubling measures and define the two
corona projections%
\begin{equation*}
\mathsf{P}_{\mathcal{C}_{F}}^{\sigma }\equiv \sum_{I\in \mathcal{C}%
_{F}}\bigtriangleup _{I}^{\sigma }\text{ and }\mathsf{P}_{\mathcal{C}%
_{F}^{\tau -\limfunc{shift}}}^{\omega }\equiv \sum_{J\in \mathcal{C}%
_{F}^{\tau -\limfunc{shift}}}\bigtriangleup _{J}^{\omega }\ ,
\end{equation*}%
where%
\begin{align}
\mathcal{C}_{F}^{\tau -\limfunc{shift}}& \equiv \left[ \mathcal{C}%
_{F}\setminus \mathcal{N}_{\mathcal{D}}^{\tau }\left( F\right) \right] \cup
\dbigcup\limits_{F^{\prime }\in \mathfrak{C}_{\mathcal{F}}\left( F\right) }%
\left[ \mathcal{N}_{\mathcal{D}}^{\tau }\left( F^{\prime }\right) \setminus 
\mathcal{N}_{\mathcal{D}}^{\tau }\left( F\right) \right] ;  \label{def shift}
\\
\text{where }\mathcal{N}_{\mathcal{D}}^{\tau }\left( F\right) & \equiv
\left\{ J\in \mathcal{D}:J\subset F\text{ and }\ell \left( J\right)
>2^{-\tau }\ell \left( F\right) \right\} ,  \notag
\end{align}%
and note that $f=\sum_{F\in \mathcal{F}}\mathsf{P}_{\mathcal{C}_{F}}^{\sigma
}f$. Thus the corona $\mathcal{C}_{F}^{\tau -\limfunc{shift}}$ has the top $%
\tau $ levels from $\mathcal{C}_{F}$ removed, and includes the first $\tau $
levels from each of its $\mathcal{F}$-children, except if they have already
been removed.

\subsection{The canonical splitting}

We can now continue with the definitions of decompositions in the schematic
diagram above. To bound the below form $\mathsf{B}_{\Subset _{\rho
,\varepsilon }}\left( f,g\right) $, we proceed with the \emph{Canonical
Splitting} of 
\begin{equation*}
\mathsf{B}_{\Subset _{\mathbf{\rho },\varepsilon }}\left( f,g\right)
=\dsum\limits_{\substack{ I,J\in \mathcal{D}  \\ J\Subset _{\rho
,\varepsilon }I}}\left\langle T_{\sigma }^{\lambda }\left( \bigtriangleup
_{I}^{\sigma }f\right) ,\left( \bigtriangleup _{J}^{\omega }g\right)
\right\rangle _{\omega }
\end{equation*}%
as in \cite{SaShUr7} and \cite{AlSaUr},%
\begin{align*}
\mathsf{B}_{\Subset _{\mathbf{\rho },\varepsilon }}\left( f,g\right) &
=\sum_{F\in \mathcal{F}}\left\langle T_{\sigma }^{\lambda }\mathsf{P}_{%
\mathcal{C}_{F}}^{\sigma }f,\mathsf{P}_{\mathcal{C}_{F}^{\tau -\limfunc{shift%
}}}^{\omega }g\right\rangle _{\omega }^{\Subset _{\rho }}+\sum_{\substack{ %
F,G\in \mathcal{F}  \\ G\subsetneqq F}}\left\langle T_{\sigma }^{\lambda }%
\mathsf{P}_{\mathcal{C}_{F}}^{\sigma }f,\mathsf{P}_{\mathcal{C}_{G}^{\tau -%
\limfunc{shift}}}^{\omega }g\right\rangle _{\omega }^{\Subset _{\rho }} \\
& +\sum_{\substack{ F,G\in \mathcal{F}  \\ G\supsetneqq F}}\left\langle
T_{\sigma }^{\lambda }\mathsf{P}_{\mathcal{C}_{F}}^{\sigma }f,\mathsf{P}_{%
\mathcal{C}_{G}^{\tau -\limfunc{shift}}}^{\omega }g\right\rangle _{\omega
}^{\Subset _{\rho }}+\sum_{\substack{ F,G\in \mathcal{F}  \\ F\cap
G=\emptyset }}\left\langle T_{\sigma }^{\lambda }\mathsf{P}_{\mathcal{C}%
_{F}}^{\sigma }f,\mathsf{P}_{\mathcal{C}_{G}^{\tau -\limfunc{shift}%
}}^{\omega }g\right\rangle _{\omega }^{\Subset _{\rho }} \\
& \equiv \mathsf{T}_{\limfunc{diagonal}}\left( f,g\right) +\mathsf{T}_{%
\limfunc{far}\limfunc{below}}\left( f,g\right) +\mathsf{T}_{\limfunc{far}%
\limfunc{above}}\left( f,g\right) +\mathsf{T}_{\limfunc{disjoint}}\left(
f,g\right) ,
\end{align*}%
where for $F\in \mathcal{F}$ we use the shorthand%
\begin{equation*}
\left\langle T_{\sigma }^{\lambda }\left( \mathsf{P}_{\mathcal{C}%
_{F}}^{\sigma }f\right) ,\mathsf{P}_{\mathcal{C}_{F}^{\tau -\limfunc{shift}%
}}^{\omega }g\right\rangle _{\omega }^{\Subset _{\rho }}\equiv \dsum\limits 
_{\substack{ I\in \mathcal{C}_{F},\ J\in \mathcal{C}_{F}^{\tau -\limfunc{%
shift}}  \\ J\Subset _{\rho ,\varepsilon }I}}\left\langle T_{\sigma
}^{\lambda }\left( \bigtriangleup _{I}^{\sigma }f\right) ,\left(
\bigtriangleup _{J}^{\omega }g\right) \right\rangle _{\omega }.
\end{equation*}%
The final two forms $\mathsf{T}_{\limfunc{far}\limfunc{above}}\left(
f,g\right) $ and $\mathsf{T}_{\limfunc{disjoint}}\left( f,g\right) $ each
vanish just as in \cite{SaShUr7} and \cite{AlSaUr}, since there are no pairs 
$\left( I,J\right) \in \mathcal{C}_{F}\times \mathcal{C}_{G}^{\tau -\limfunc{%
shift}}$ with both (\textbf{i}) $J\Subset _{\rho ,\varepsilon }I$ and (%
\textbf{ii}) either $F\subsetneqq G$ or $G\cap F=\emptyset $. The far below
form $\mathsf{T}_{\limfunc{far}\limfunc{below}}\left( f,g\right) $ is then
further split into two forms $\mathsf{T}_{\limfunc{far}\limfunc{below}%
}^{1}\left( f,g\right) $ and $\mathsf{T}_{\limfunc{far}\limfunc{below}%
}^{2}\left( f,g\right) $ as in \cite{SaShUr7} and \cite{AlSaUr},%
\begin{align}
\mathsf{T}_{\limfunc{far}\limfunc{below}}\left( f,g\right) & =\sum_{G\in 
\mathcal{F}}\sum_{F\in \mathcal{F}:\ G\subsetneqq F}\sum_{\substack{ I\in 
\mathcal{C}_{F}\text{ and }J\in \mathcal{C}_{G}^{\tau -\limfunc{shift}}  \\ %
J\Subset _{\rho ,\varepsilon }I}}\left\langle T_{\sigma }^{\lambda
}\bigtriangleup _{I}^{\sigma }f,\bigtriangleup _{J}^{\omega }g\right\rangle
_{\omega }  \label{far below decomp} \\
& =\sum_{G\in \mathcal{F}}\sum_{F\in \mathcal{F}:\ G\subsetneqq F}\sum_{J\in 
\mathcal{C}_{G}^{\tau -\limfunc{shift}}}\sum_{I\in \mathcal{C}_{F}\text{ and 
}J\subset I}\left\langle T_{\sigma }^{\lambda }\bigtriangleup _{I}^{\sigma
}f,\bigtriangleup _{J}^{\omega }g\right\rangle _{\omega }  \notag \\
& -\sum_{F\in \mathcal{F}}\sum_{G\in \mathcal{F}:\ G\subsetneqq F}\sum_{J\in 
\mathcal{C}_{G}^{\tau -\limfunc{shift}}}\sum_{I\in \mathcal{C}_{F}\text{ and 
}J\subset I\text{ but }J\not\Subset _{\rho ,\varepsilon }I}\left\langle
T_{\sigma }^{\lambda }\bigtriangleup _{I}^{\sigma }f,\bigtriangleup
_{J}^{\omega }g\right\rangle _{\omega }  \notag \\
& \equiv \mathsf{T}_{\limfunc{far}\limfunc{below}}^{1}\left( f,g\right) -%
\mathsf{T}_{\limfunc{far}\limfunc{below}}^{2}\left( f,g\right) .  \notag
\end{align}%
The second far below form $\mathsf{T}_{\limfunc{far}\limfunc{below}%
}^{2}\left( f,g\right) $ satisfies%
\begin{equation}
\left\vert \mathsf{T}_{\limfunc{far}\limfunc{below}}^{2}\left( f,g\right)
\right\vert \lesssim \left( A_{p}^{\lambda ,\ell ^{2},\limfunc{offset}%
}\left( \sigma ,\omega \right) +\mathcal{WBP}_{T^{\lambda },p}^{\ell
^{2}}\left( \sigma ,\omega \right) \right) \left\Vert f\right\Vert
_{L^{p}\left( \sigma \right) }\left\Vert g\right\Vert _{L^{p^{\prime
}}\left( \omega \right) },  \label{second far below}
\end{equation}%
which follows in an easy way from (\ref{routine'}) and (\ref{routine''}) and
their porisms - see below. To control the first and main far below form $%
\mathsf{T}_{\limfunc{far}\limfunc{below}}^{1}\left( f,g\right) $, we will
use some new quadratic arguments exploiting Carleson measure conditions to
establish%
\begin{equation}
\left\vert \mathsf{T}_{\limfunc{far}\limfunc{below}}^{1}\left( f,g\right)
\right\vert \lesssim A_{p}^{\lambda ,\ell ^{2},\limfunc{offset}}\left(
\sigma ,\omega \right) \left\Vert f\right\Vert _{L^{p}\left( \sigma \right)
}\left\Vert g\right\Vert _{L^{p^{\prime }}\left( \omega \right) }.
\label{first far below}
\end{equation}

To handle the diagonal term $\mathsf{T}_{\limfunc{diagonal}}\left(
f,g\right) $, we further decompose according to the stopping times $\mathcal{%
F}$,%
\begin{equation}
\mathsf{T}_{\limfunc{diagonal}}\left( f,g\right) =\sum_{F\in \mathcal{F}}%
\mathsf{B}_{\Subset _{\rho ,\varepsilon }}^{F}\left( f,g\right) ,\text{
where }\mathsf{B}_{\Subset _{\rho ,\varepsilon }}^{F}\left( f,g\right)
\equiv \left\langle T_{\sigma }^{\lambda }\left( \mathsf{P}_{\mathcal{C}%
_{F}}^{\sigma }f\right) ,\mathsf{P}_{\mathcal{C}_{F}^{\tau -\limfunc{shift}%
}}^{\omega }g\right\rangle _{\omega }^{\Subset _{\rho }},  \label{def block}
\end{equation}%
where we recall that in \cite{AlSaUr} for $p=2$, the following estimate was
obtained,%
\begin{equation}
\left\vert \mathsf{B}_{\Subset _{\rho }}^{F}\left( f,g\right) \right\vert
\lesssim \left( \mathfrak{T}_{T^{\lambda }}+\sqrt{A_{2}^{\lambda }}\right) \
\left( \left\Vert \mathbb{E}_{F;\kappa }^{\sigma }f\right\Vert _{\infty }%
\sqrt{\left\vert F\right\vert _{\sigma }}+\left\Vert \mathsf{P}_{\mathcal{C}%
_{F}}^{\sigma }f\right\Vert _{L^{2}\left( \sigma \right) }\right) \
\left\Vert \mathsf{P}_{\mathcal{C}_{F}^{\tau -\limfunc{shift}}}^{\omega
}g\right\Vert _{L^{2}\left( \omega \right) }.  \label{below form bound'}
\end{equation}%
This was achieved by implementing the classical \emph{reach} of Nazarov,
Treil and Volberg using Haar wavelet projections $\bigtriangleup
_{I}^{\sigma }$, where by `reach' we mean the ingenious `thinking outside
the box' idea of the paraproduct / stopping / neighbour decomposition of
Nazarov, Treil and Volberg \cite{NTV4}.

\subsection{The Nazarov, Treil and Volberg reach}

Here is the Nazarov, Treil and Volberg decomposition, or reach. We have that 
$\mathsf{B}_{\Subset _{\rho ,\varepsilon };\kappa }^{F}\left( f,g\right) $
equals%
\begin{align*}
& \sum_{\substack{ I\in \mathcal{C}_{F}\text{ and }J\in \mathcal{C}%
_{F}^{\tau -\limfunc{shift}}  \\ J\Subset _{\rho ,\varepsilon }I}}%
\left\langle T_{\sigma }^{\lambda }\left( \mathbf{1}_{I_{J}}\bigtriangleup
_{I}^{\sigma }f\right) ,\bigtriangleup _{J}^{\omega }g\right\rangle _{\omega
}+\sum_{\substack{ I\in \mathcal{C}_{F}\text{ and }J\in \mathcal{C}%
_{F}^{\tau -\limfunc{shift}}  \\ J\Subset _{\rho ,\varepsilon }I}}%
\sum_{\theta \left( I_{J}\right) \in \mathfrak{C}_{\mathcal{D}}\left(
I\right) \setminus \left\{ I_{J}\right\} }\left\langle T_{\sigma }^{\lambda
}\left( \mathbf{1}_{\theta \left( I_{J}\right) }\bigtriangleup _{I}^{\sigma
}f\right) ,\bigtriangleup _{J}^{\omega }g\right\rangle _{\omega } \\
& \equiv \mathsf{B}_{\func{home}}^{F}\left( f,g\right) +\mathsf{B}_{\limfunc{%
neighbour}}^{F}\left( f,g\right) ,
\end{align*}%
and we further decompose the home form using the constant 
\begin{equation}
M_{I^{\prime }}\equiv \mathbf{1}_{I^{\prime }}\bigtriangleup _{I}^{\sigma }f=%
\mathbb{E}_{I^{\prime }}^{\sigma }\bigtriangleup _{I}^{\sigma }f,
\label{def M}
\end{equation}%
to obtain%
\begin{align*}
\mathsf{B}_{\func{home}}^{F}\left( f,g\right) & =\sum_{\substack{ I\in 
\mathcal{C}_{F}\text{ and }J\in \mathcal{C}_{F}^{\tau -\limfunc{shift}}  \\ %
J\Subset _{\rho ,\varepsilon }I}}\left\langle M_{I_{J}}T_{\sigma }^{\lambda }%
\mathbf{1}_{F},\bigtriangleup _{J}^{\omega }g\right\rangle _{\omega }-\sum 
_{\substack{ I\in \mathcal{C}_{F}\text{ and }J\in \mathcal{C}_{F}^{\tau -%
\limfunc{shift}}  \\ J\Subset _{\rho ,\varepsilon }I}}\left\langle
M_{I_{J}}T_{\sigma }^{\lambda }\mathbf{1}_{F\setminus I_{J}},\bigtriangleup
_{J}^{\omega }g\right\rangle _{\omega } \\
& \equiv \mathsf{B}_{\limfunc{paraproduct}}^{F}\left( f,g\right) +\mathsf{B}%
_{\limfunc{stop}}^{F}\left( f,g\right) .
\end{align*}

Altogether then we have the the Nazarov, Treil and Volberg paraproduct
decomposition,%
\begin{equation*}
\mathsf{B}_{\Subset _{\rho ,\varepsilon }}^{F}\left( f,g\right) =\mathsf{B}_{%
\limfunc{paraproduct}}^{F}\left( f,g\right) +\mathsf{B}_{\limfunc{stop}%
}^{F}\left( f,g\right) +\mathsf{B}_{\limfunc{neighbour}}^{F}\left(
f,g\right) .
\end{equation*}%
Several points of departure can now be identified in the following
description of the remainder of the paper. While we use here terminology yet
to be defined, the reader is nevertheless encouraged to keep these seven
points in mind while reading.

\begin{enumerate}
\item In order to obtain an estimate such as (\ref{below form bound'}) for $%
p\neq 2$, we will need to use square functions and vector-valued
inequalities as motivated by \cite{HyVu}, that in turn will require the
quadratic Muckenhoupt condition in place of the classical one, and we turn
to these issues in the next section.

\item A guiding principle will be to apply the pointwise $\ell ^{2}$
Cauchy-Schwarz inequality early in the proof, and then manipulate the
resulting vector-valued inequalities into a form where application of the
hypotheses reduce matters to the Fefferman-Stein inequalities for the vector
maximal function, and square function estimates.

\item After that we will prove necessity of quadratic testing and
Muckenhoupt conditions in Section 4. We also introduce a quadratic \emph{Haar%
} weak boundedness property that helps clarify the role of weak boundedness,
and show that it is controlled by quadratic weak boundedness and quadratic
offset Muckenhoupt.

\item The first forms we choose to control in Section 5 are the comparable
form and the paraproduct form, called the `difficult' form in \cite{NTV4},
each of which use only the local quadratic testing conditions.

\item Following that we first consider in Section 6 the disjoint, stopping,
far below and neighbour forms, all of which require what we call a `Pivotal
Lemma' that originated in \cite{NTV4}, as well as the quadratic Muckenhoupt
conditions. The stopping form requires in addition a new argument exploiting
an extreme energy reversal property of vector Riesz transforms.

\item Next we consider the commutator form\ in Section 7, which requires a
new pigeon-holing of the tower of dyadic cubes lying above a fixed point in
space, as well as Taylor expansions and quadratic offset Muckenhoupt
conditions, thus constituting another of the difficult new arguments in the
paper. The proof of the main theorem is wrapped up here as well.

\item Finally, the Appendix in Section 8 contains an example for $p\neq 2$
of radially decreasing weights on the real line for which $A_{p}<\infty $
but $A_{p}^{\lambda ,\ell ^{2},\limfunc{offset}}=\infty $.
\end{enumerate}

\subsection{A quadratic Carleson measure inequality}

We end this section with a quadratic Carleson measure inequality we will
need for bounding the stopping form below.

\begin{theorem}
\label{using Carleson}Suppose that the triple $\left( C_{0},\mathcal{F}%
,\alpha _{\mathcal{F}}\right) $ constitutes \emph{stopping data} for a
function $f\in L_{loc}^{1}\left( \mu \right) $, and for $\kappa \in \mathbb{Z%
}_{+}$, set 
\begin{equation*}
\alpha _{\mathcal{F}}^{\kappa }\left( x\right) \equiv \left\{ \alpha _{%
\mathcal{F}}\left( F\right) \mathbf{1}_{F^{\kappa }}\left( x\right) \right\}
_{F\in \mathcal{F}}\ \text{where }F^{\kappa }\equiv \bigcup_{G\in \mathfrak{C%
}_{\mathcal{F}}^{\left( \kappa \right) }\left( F\right) }G\text{ }.
\end{equation*}%
Then for $1<p<\infty $, 
\begin{equation}
\int_{\mathbb{R}^{n}}\left\vert \alpha _{\mathcal{F}}^{\kappa }\left(
x\right) \right\vert _{\ell ^{2}}^{p}d\mu \left( x\right) =\int_{\mathbb{R}%
^{n}}\left( \sum_{F\in \mathcal{F}}\left\vert \alpha _{\mathcal{F}}\left(
F\right) \right\vert ^{2}\mathbf{1}_{F^{\kappa }}\left( x\right) \right) ^{%
\frac{p}{2}}d\mu \left( x\right) \leq C_{\delta }2^{-\delta \kappa
}\sum_{F\in \mathcal{F}}\alpha _{\mathcal{F}}\left( F\right) ^{p}\left\vert
F\right\vert _{\mu }\ ,  \label{The claim}
\end{equation}%
where $\delta >0$ is the constant in (\ref{geom decay}). The inequality can
be reversed for $\kappa =0$ and $2\leq p<\infty $.
\end{theorem}

\begin{proof}[Proof of Theorem \protect\ref{using Carleson}]
We claim that for $1<p<\infty $, i.e.%
\begin{equation}
\int_{\mathbb{R}^{n}}\left( \sum_{F\in \mathcal{F}}\left\vert \alpha _{%
\mathcal{F}}\left( F\right) \right\vert ^{2}\mathbf{1}_{F}\left( x\right)
\right) ^{\frac{p}{2}}d\mu \left( x\right) \leq C_{\delta }\sum_{F\in 
\mathcal{F}}\alpha _{\mathcal{F}}\left( F\right) ^{p}\left\vert F\right\vert
_{\mu }.  \label{obs}
\end{equation}%
Indeed, for $1<p\leq 2$ (and even for $0<p\leq 2$), the inequality follows
from the trivial inequality $\left\Vert \cdot \right\Vert _{\ell ^{q}}\leq
\left\Vert \cdot \right\Vert _{\ell ^{1}}$ for $0<q\leq 1$,%
\begin{eqnarray*}
&&\int_{\mathbb{R}^{n}}\left( \sum_{F\in \mathcal{F}}\left\vert \alpha _{%
\mathcal{F}}\left( F\right) \right\vert ^{2}\mathbf{1}_{F}\left( x\right)
\right) ^{\frac{p}{2}}d\mu \left( x\right) \leq \int_{\mathbb{R}%
^{n}}\sum_{F\in \mathcal{F}}\left\vert \alpha _{\mathcal{F}}\left( F\right)
\right\vert ^{p}\mathbf{1}_{F}\left( x\right) d\mu \left( x\right) \\
&&\ \ \ \ \ \ \ \ \ \ \ \ \ \ \ =\sum_{F\in \mathcal{F}}\alpha _{\mathcal{F}%
}\left( F\right) ^{p}\left\vert F\right\vert _{\mu }\leq C_{\delta
}\sum_{F\in \mathcal{F}}\alpha _{\mathcal{F}}\left( F\right) ^{p}\left\vert
F\right\vert _{\mu },
\end{eqnarray*}%
where $\delta >0$ is the geometric decay in generations exponent in (\ref%
{geom decay}).

Now we turn to the case $p\geq 2$. When $p=2m$ is an even positive integer,
we will set%
\begin{equation*}
\mathcal{F}_{\ast }^{2m}\equiv \left\{ \left( F_{1},...,F_{2m}\right) \in 
\mathcal{F}\times ...\times \mathcal{F}:F_{i}\subset F_{j}\text{ for }1\leq
i\leq j\leq 2m\text{, and }F_{i}=F_{i+1}\text{ for all odd }i\right\} ,
\end{equation*}%
and then by symmetry we can arrange the intervals below in nondecreasing
order to obtain%
\begin{eqnarray*}
&&\int_{\mathbb{R}^{n}}\left( \sum_{F\in \mathcal{F}}\left\vert \alpha _{%
\mathcal{F}}\left( F\right) \mathbf{1}_{F}\left( x\right) \right\vert
^{2}\right) ^{\frac{p}{2}}d\mu \left( x\right) =\int_{\mathbb{R}^{n}}\left(
\sum_{F\in \mathcal{F}}\left\vert \alpha _{\mathcal{F}}\left( F\right) 
\mathbf{1}_{F}\left( x\right) \right\vert ^{2}\right) ^{m}d\mu \left(
x\right) \\
&=&\int_{\mathbb{R}^{n}}\sum_{\left( F_{1},...,F_{2m}\right) \in \mathcal{F}%
^{2m}}\alpha _{\mathcal{F}}\left( F_{1}\right) ...\alpha _{\mathcal{F}%
}\left( F_{2m}\right) \mathbf{1}_{F_{1}\cap ...\cap F_{2m}}d\mu \left(
x\right) \\
&=&C_{m}\int_{\mathbb{R}^{n}}\sum_{\left( F_{1},...,F_{2m}\right) \in 
\mathcal{F}_{\ast }^{2m}}\alpha _{\mathcal{F}}\left( F_{1}\right) ...\alpha
_{\mathcal{F}}\left( F_{2m}\right) \mathbf{1}_{F_{1}\cap ...\cap F_{2m}}d\mu
\left( x\right) \\
&=&C_{m}\sum_{\left( F_{1},...,F_{2m}\right) \in \mathcal{F}_{\ast
}^{2m}}\alpha _{\mathcal{F}}\left( F_{1}\right) ...\alpha _{\mathcal{F}%
}\left( F_{2m}\right) \left\vert F_{1}\right\vert _{\mu }=C_{m}\func{Int}%
\left( m\right) ,
\end{eqnarray*}%
where from the geometric decay in (\ref{geom decay}), we obtain%
\begin{eqnarray}
\func{Int}\left( m\right) &\equiv &\sum_{\left( F_{1},...,F_{2m}\right) \in 
\mathcal{F}_{\ast }^{2m}}\alpha _{\mathcal{F}}\left( F_{1}\right) ...\alpha
_{\mathcal{F}}\left( F_{2m}\right) \left\vert F_{1}\right\vert _{\mu
}\lesssim \func{Int}\left( m\right) ,  \label{Int} \\
\text{where }\func{Int}\left( m\right) &\equiv &\sum_{\left(
F_{1},...,F_{2m}\right) \in \mathcal{F}_{\ast }^{2m}}\alpha _{\mathcal{F}%
}\left( F_{1}\right) ...\alpha _{\mathcal{F}}\left( F_{2m}\right) \left\vert
F_{1}\right\vert _{\mu }.  \notag
\end{eqnarray}

We now set about showing that%
\begin{equation*}
\func{Int}\left( m\right) \lesssim \sum_{F\in \mathcal{F}}\left\vert \alpha
_{\mathcal{F}}\left( F\right) \right\vert ^{2m}\left\vert F\right\vert _{\mu
}\ .
\end{equation*}%
For this, we first prove (\ref{q orth}) in order to outline the main idea.
Using the geometric decay in (\ref{geom decay}) once more we obtain 
\begin{equation*}
\sum_{F^{\prime }\in \mathcal{F}\left[ F\right] :\ }\alpha _{\mathcal{F}%
}\left( F^{\prime }\right) \left\vert F^{\prime }\right\vert _{\mu }\leq
\sum_{n=0}^{\infty }\sqrt{\sum_{F^{\prime }\in \mathcal{F}\left[ F\right]
}\alpha _{\mathcal{F}}\left( F^{\prime }\right) ^{2}\left\vert F^{\prime
}\right\vert _{\mu }}C_{\delta }\sqrt{\left\vert F\right\vert _{\mu }}\leq
C_{\delta }\sqrt{\left\vert F\right\vert _{\mu }}\sqrt{\sum_{F^{\prime }\in 
\mathcal{F}\left[ F\right] }\alpha _{\mathcal{F}}\left( F^{\prime }\right)
^{2}\left\vert F^{\prime }\right\vert _{\mu }},
\end{equation*}%
and hence that$\ $%
\begin{eqnarray*}
&&\sum_{F\in \mathcal{F}}\alpha _{\mathcal{F}}\left( F\right) \left\{
\sum_{F^{\prime }\in \mathcal{F}\left[ F\right] }\alpha _{\mathcal{F}}\left(
F^{\prime }\right) \left\vert F^{\prime }\right\vert _{\mu }\right\}
\lesssim \sum_{F\in \mathcal{F}}\alpha _{\mathcal{F}}\left( F\right) \sqrt{%
\left\vert F\right\vert _{\mu }}\sqrt{\sum_{F^{\prime }\in \mathcal{F}\left[
F\right] }\alpha _{\mathcal{F}}\left( F^{\prime }\right) ^{2}\left\vert
F^{\prime }\right\vert _{\mu }} \\
&\lesssim &\left( \sum_{F\in \mathcal{F}}\alpha _{\mathcal{F}}\left(
F\right) ^{2}\left\vert F\right\vert _{\mu }\right) ^{\frac{1}{2}}\left(
\sum_{F\in \mathcal{F}}\sum_{F^{\prime }\in \mathcal{F}\left[ F\right]
}\alpha _{\mathcal{F}}\left( F^{\prime }\right) ^{2}\left\vert F^{\prime
}\right\vert _{\mu }\right) ^{\frac{1}{2}}\lesssim \left\Vert f\right\Vert
_{L^{2}\left( \mu \right) }\left( \sum_{F^{\prime }\in \mathcal{F}}\alpha _{%
\mathcal{F}}\left( F^{\prime }\right) ^{2}\left\vert F^{\prime }\right\vert
_{\mu }\right) ^{\frac{1}{2}}\lesssim \left\Vert f\right\Vert _{L^{2}\left(
\mu \right) }^{2}.
\end{eqnarray*}%
This proves (\ref{q orth}) since $\left\Vert \sum_{F\in \mathcal{F}}\alpha _{%
\mathcal{F}}\left( F\right) \mathbf{1}_{F}\right\Vert _{L^{2}\left( \mu
\right) }^{2}$ is dominated by twice the left hand side above.

We now adapt this last argument to apply to (\ref{Int}). For example, in the
case $m=2$, we have that%
\begin{eqnarray*}
&&\func{Int}\left( 2\right) =\sum_{F_{4}\in \mathcal{F}}\alpha _{\mathcal{F}%
}\left( F_{4}\right) \sum_{F_{3}\subset F_{4}}\alpha _{\mathcal{F}}\left(
F_{3}\right) \sum_{F_{2}\subset F_{3}}\alpha _{\mathcal{F}}\left(
F_{2}\right) \sum_{F_{1}\subset F_{2}}\alpha _{\mathcal{F}}\left(
F_{1}\right) \left\vert F_{1}\right\vert _{\mu } \\
&=&\sum_{F_{4}\in \mathcal{F}}\alpha _{\mathcal{F}}\left( F_{4}\right)
\left( \sum_{n_{3}=0}^{\infty }\sum_{F_{3}\in \mathfrak{C}_{\mathcal{F}%
}^{\left( n_{3}\right) }\left( F_{4}\right) }\alpha _{\mathcal{F}}\left(
F_{3}\right) \left( \sum_{n_{2}=0}^{\infty }\sum_{F_{2}\in \mathfrak{C}_{%
\mathcal{F}}^{\left( n_{2}\right) }\left( F_{3}\right) }\alpha _{\mathcal{F}%
}\left( F_{2}\right) \left( \sum_{n_{1}=0}^{\infty }\sum_{F_{1}\in \mathfrak{%
C}_{\mathcal{F}}^{\left( n_{1}\right) }\left( F_{2}\right) }\alpha _{%
\mathcal{F}}\left( F_{1}\right) \left\vert F_{1}\right\vert _{\mu }\right)
\right) \right)
\end{eqnarray*}%
which is at most (we continue to write $m$ in place of $2$ until the very
end of the argument)%
\begin{eqnarray*}
&&C_{\delta }\sum_{n_{3}=0}^{\infty }\sum_{n_{2}=0}^{\infty
}\sum_{n_{1}=0}^{\infty }\sum_{F_{4}\in \mathcal{F}}\alpha _{\mathcal{F}%
}\left( F_{4}\right) \sum_{F_{3}\in \mathfrak{C}_{\mathcal{F}}^{\left(
n_{3}\right) }\left( F_{4}\right) }\alpha _{\mathcal{F}}\left( F_{3}\right)
\\
&&\ \ \ \ \ \ \ \ \ \ \ \ \ \ \ \ \ \ \ \ \ \ \ \ \ \ \ \ \ \ \times
\sum_{F_{2}\in \mathfrak{C}_{\mathcal{F}}^{\left( n_{2}\right) }\left(
F_{3}\right) }\alpha _{\mathcal{F}}\left( F_{2}\right) \left( 2^{-\delta
n_{1}}\left\vert F_{2}\right\vert _{\mu }\right) ^{\frac{2m-1}{2m}}\left(
\sum_{F_{1}\in \mathfrak{C}_{\mathcal{F}}^{\left( n_{1}\right) }\left(
F_{2}\right) }\alpha _{\mathcal{F}}\left( F_{1}\right) ^{2m}\left\vert
F_{1}\right\vert _{\mu }\right) ^{\frac{1}{2m}} \\
&=&C_{\delta }\sum_{n_{3}=0}^{\infty }\sum_{n_{2}=0}^{\infty
}\sum_{n_{1}=0}^{\infty }2^{-\delta \frac{2m-1}{2m}n_{1}}\sum_{F_{4}\in 
\mathcal{F}}\alpha _{\mathcal{F}}\left( F_{4}\right) \sum_{F_{3}\in 
\mathfrak{C}_{\mathcal{F}}^{\left( n_{3}\right) }\left( F_{4}\right) }\alpha
_{\mathcal{F}}\left( F_{3}\right) \\
&&\ \ \ \ \ \ \ \ \ \ \ \ \ \ \ \ \ \ \ \ \ \ \ \ \ \ \ \ \ \ \times
\sum_{F_{2}\in \mathfrak{C}_{\mathcal{F}}^{\left( n_{2}\right) }\left(
F_{3}\right) }\alpha _{\mathcal{F}}\left( F_{2}\right) \left\vert
F_{2}\right\vert _{\mu }^{\frac{1}{2m}}\left( \sum_{F_{1}\in \mathfrak{C}_{%
\mathcal{F}}^{\left( n_{1}\right) }\left( F_{2}\right) }\alpha _{\mathcal{F}%
}\left( F_{1}\right) ^{2m}\left\vert F_{1}\right\vert _{\mu }\right) ^{\frac{%
1}{2m}}\left\vert F_{2}\right\vert _{\mu }^{1-\frac{2}{2m}},
\end{eqnarray*}%
which is in turn dominated by%
\begin{eqnarray*}
&&C_{\delta }\sum_{n_{3}=0}^{\infty }\sum_{n_{2}=0}^{\infty
}\sum_{n_{1}=0}^{\infty }2^{-\delta \frac{2m-1}{2m}n_{1}}\sum_{F_{4}\in 
\mathcal{F}}\alpha _{\mathcal{F}}\left( F_{4}\right) \sum_{F_{3}\in 
\mathfrak{C}_{\mathcal{F}}^{\left( n_{3}\right) }\left( F_{4}\right) }\alpha
_{\mathcal{F}}\left( F_{3}\right) \\
&&\times \left( \sum_{F_{2}\in \mathfrak{C}_{\mathcal{F}}^{\left(
n_{2}\right) }\left( F_{3}\right) }\alpha _{\mathcal{F}}\left( F_{2}\right)
^{2m}\left\vert F_{2}\right\vert _{\mu }\right) ^{\frac{1}{2m}}\left(
\sum_{F_{2}\in \mathfrak{C}_{\mathcal{F}}^{\left( n_{2}\right) }\left(
F_{3}\right) }\sum_{F_{1}\in \mathfrak{C}_{\mathcal{F}}^{\left( n_{1}\right)
}\left( F_{2}\right) }\alpha _{\mathcal{F}}\left( F_{1}\right)
^{2m}\left\vert F_{1}\right\vert _{\mu }\right) ^{\frac{1}{2m}}\left(
2^{-\delta n_{2}}\left\vert F_{3}\right\vert _{\mu }\right) ^{\frac{2m-2}{2m}%
},
\end{eqnarray*}%
where in the last line we have applied H\"{o}lder's inequality with
exponents $\left( 2m,2m,\frac{2m}{2m-2}\right) $, and then used that $%
\sum_{F_{2}\in \mathfrak{C}_{\mathcal{F}}^{\left( n_{2}\right) }\left(
F_{3}\right) }\left\vert F_{2}\right\vert _{\mu }\leq C_{\delta }2^{-\delta
n_{2}}\left\vert F_{3}\right\vert _{\mu }$.

Continuing in this way, we dominate the sum above by%
\begin{eqnarray*}
&\lesssim &\sum_{n_{3}=0}^{\infty }\sum_{n_{2}=0}^{\infty
}\sum_{n_{1}=0}^{\infty }2^{-\delta \frac{2m-1}{2m}n_{1}}\sum_{F_{4}\in 
\mathcal{F}}\alpha _{\mathcal{F}}\left( F_{4}\right) \sum_{F_{3}\in 
\mathfrak{C}_{\mathcal{F}}^{\left( n_{3}\right) }\left( F_{4}\right) }\alpha
_{\mathcal{F}}\left( F_{3}\right) \\
&&\times \left( \sum_{F_{2}\in \mathfrak{C}_{\mathcal{F}}^{\left(
n_{2}\right) }\left( F_{3}\right) }\alpha _{\mathcal{F}}\left( F_{2}\right)
^{2m}\left\vert F_{2}\right\vert _{\mu }\right) ^{\frac{1}{2m}}\left(
\sum_{F_{2}\in \mathfrak{C}_{\mathcal{F}}^{\left( n_{2}\right) }\left(
F_{3}\right) }\sum_{F_{1}\in \mathfrak{C}_{\mathcal{F}}^{\left( n_{1}\right)
}\left( F_{2}\right) }\alpha _{\mathcal{F}}\left( F_{1}\right)
^{2m}\left\vert F_{1}\right\vert _{\mu }\right) ^{\frac{1}{2m}}\left(
2^{-\delta n_{2}}\left\vert F_{3}\right\vert _{\mu }\right) ^{1-\frac{2}{2m}}
\\
&=&\sum_{n_{3}=0}^{\infty }\sum_{n_{2}=0}^{\infty }\sum_{n_{1}=0}^{\infty
}2^{-\delta \left( 1-\frac{1}{2m}\right) n_{1}-\delta \left( 1-\frac{2}{2m}%
\right) n_{2}}\sum_{F_{4}\in \mathcal{F}}\alpha _{\mathcal{F}}\left(
F_{4}\right) \\
&&\times \sum_{F_{3}\in \mathfrak{C}_{\mathcal{F}}^{\left( n_{3}\right)
}\left( F_{4}\right) }\alpha _{\mathcal{F}}\left( F_{3}\right) \left\vert
F_{3}\right\vert _{\mu }^{\frac{1}{2m}}\left( \sum_{F_{2}\in \mathfrak{C}_{%
\mathcal{F}}^{\left( n_{2}\right) }\left( F_{3}\right) }\alpha _{\mathcal{F}%
}\left( F_{2}\right) ^{2m}\left\vert F_{2}\right\vert _{\mu }\right) ^{\frac{%
1}{2m}} \\
&&\times \left( \sum_{F_{2}\in \mathfrak{C}_{\mathcal{F}}^{\left(
n_{2}\right) }\left( F_{3}\right) }\sum_{F_{1}\in \mathfrak{C}_{\mathcal{F}%
}^{\left( n_{1}\right) }\left( F_{2}\right) }\alpha _{\mathcal{F}}\left(
F_{1}\right) ^{2m}\left\vert F_{1}\right\vert _{\mu }\right) ^{\frac{1}{2m}%
}\left\vert F_{3}\right\vert _{\mu }^{1-\frac{3}{2m}}
\end{eqnarray*}%
and continuing with $\frac{2m-4}{2m}=0$ for $m=2$, we have the upper bound,%
\begin{eqnarray*}
&&\sum_{n_{3}=0}^{\infty }\sum_{n_{2}=0}^{\infty }\sum_{n_{1}=0}^{\infty
}2^{-\delta \left[ \left( 1-\frac{1}{2m}\right) n_{1}+\left( 1-\frac{2}{2m}%
\right) n_{2}+\left( 1-\frac{3}{2m}\right) n_{3}\right] }\sum_{F_{4}\in 
\mathcal{F}}\alpha _{\mathcal{F}}\left( F_{4}\right) \left\vert
F_{4}\right\vert _{\mu }^{\frac{1}{2m}}\left( \sum_{F_{3}\in \mathfrak{C}_{%
\mathcal{F}}^{\left( n_{3}\right) }\left( F_{4}\right) }\alpha _{\mathcal{F}%
}\left( F_{3}\right) ^{2m}\left\vert F_{3}\right\vert _{\mu }\right) ^{\frac{%
1}{2m}} \\
&&\times \left( \sum_{F_{3}\in \mathfrak{C}_{\mathcal{F}}^{\left(
n_{3}\right) }\left( F_{4}\right) }\sum_{F_{2}\in \mathfrak{C}_{\mathcal{F}%
}^{\left( n_{2}\right) }\left( F_{3}\right) }\alpha _{\mathcal{F}}\left(
F_{2}\right) ^{2m}\left\vert F_{2}\right\vert _{\mu }\right) ^{\frac{1}{2m}}
\\
&&\times \left( \sum_{F_{3}\in \mathfrak{C}_{\mathcal{F}}^{\left(
n_{3}\right) }\left( F_{4}\right) }\sum_{F_{2}\in \mathfrak{C}_{\mathcal{F}%
}^{\left( n_{2}\right) }\left( F_{3}\right) }\sum_{F_{1}\in \mathfrak{C}_{%
\mathcal{F}}^{\left( n_{1}\right) }\left( F_{2}\right) }\alpha _{\mathcal{F}%
}\left( F_{1}\right) ^{2m}\left\vert F_{1}\right\vert _{\mu }\right) ^{\frac{%
1}{2m}}\left\vert F_{4}\right\vert _{\mu }^{\frac{2m-4}{2m}},
\end{eqnarray*}%
which is at most%
\begin{eqnarray*}
&&\sum_{n_{3}=0}^{\infty }\sum_{n_{2}=0}^{\infty }\sum_{n_{1}=0}^{\infty
}2^{-\delta \left[ \left( 1-\frac{1}{2m}\right) n_{1}+\left( 1-\frac{2}{2m}%
\right) n_{2}+\left( 1-\frac{3}{2m}\right) n_{3}\right] }\left(
\sum_{F_{4}\in \mathcal{F}}\alpha _{\mathcal{F}}\left( F_{4}\right)
^{2m}\left\vert F_{4}\right\vert _{\mu }\right) ^{\frac{1}{2m}} \\
&&\times \left( \sum_{F_{4}\in \mathcal{F}}\sum_{F_{3}\in \mathfrak{C}_{%
\mathcal{F}}^{\left( n_{3}\right) }\left( F_{4}\right) }\alpha _{\mathcal{F}%
}\left( F_{3}\right) ^{2m}\left\vert F_{3}\right\vert _{\mu }\right) ^{\frac{%
1}{2m}}\left( \sum_{F_{4}\in \mathcal{F}}\sum_{F_{3}\in \mathfrak{C}_{%
\mathcal{F}}^{\left( n_{3}\right) }\left( F_{4}\right) }\sum_{F_{2}\in 
\mathfrak{C}_{\mathcal{F}}^{\left( n_{2}\right) }\left( F_{3}\right) }\alpha
_{\mathcal{F}}\left( F_{2}\right) ^{2m}\left\vert F_{2}\right\vert _{\mu
}\right) ^{\frac{1}{2m}} \\
&&\times \left( \sum_{F_{4}\in \mathcal{F}}\sum_{F_{3}\in \mathfrak{C}_{%
\mathcal{F}}^{\left( n_{3}\right) }\left( F_{4}\right) }\sum_{F_{2}\in 
\mathfrak{C}_{\mathcal{F}}^{\left( n_{2}\right) }\left( F_{3}\right)
}\sum_{F_{1}\in \mathfrak{C}_{\mathcal{F}}^{\left( n_{1}\right) }\left(
F_{2}\right) }\alpha _{\mathcal{F}}\left( F_{1}\right) ^{2m}\left\vert
F_{1}\right\vert _{\mu }\right) ^{\frac{1}{2m}}.
\end{eqnarray*}%
Finally, since 
\begin{eqnarray*}
\sum_{F_{4}\in \mathcal{F}}\sum_{F_{3}\in \mathfrak{C}_{\mathcal{F}}^{\left(
n_{3}\right) }\left( F_{4}\right) }\sum_{F_{2}\in \mathfrak{C}_{\mathcal{F}%
}^{\left( n_{2}\right) }\left( F_{3}\right) }\sum_{F_{1}\in \mathfrak{C}_{%
\mathcal{F}}^{\left( n_{1}\right) }\left( F_{2}\right) }\alpha _{\mathcal{F}%
}\left( F_{1}\right) ^{2m}\left\vert F_{1}\right\vert _{\mu } &\leq
&\sum_{F\in \mathcal{F}}\alpha _{\mathcal{F}}\left( F\right) ^{2m}\left\vert
F\right\vert _{\mu }, \\
\sum_{F_{4}\in \mathcal{F}}\sum_{F_{3}\in \mathfrak{C}_{\mathcal{F}}^{\left(
n_{3}\right) }\left( F_{4}\right) }\sum_{F_{2}\in \mathfrak{C}_{\mathcal{F}%
}^{\left( n_{2}\right) }\left( F_{3}\right) }\alpha _{\mathcal{F}}\left(
F_{2}\right) ^{2m}\left\vert F_{2}\right\vert _{\mu } &\leq &\sum_{F\in 
\mathcal{F}}\alpha _{\mathcal{F}}\left( F\right) ^{2m}\left\vert
F\right\vert _{\mu }, \\
\sum_{F_{4}\in \mathcal{F}}\sum_{F_{3}\in \mathfrak{C}_{\mathcal{F}}^{\left(
n_{3}\right) }\left( F_{4}\right) }\alpha _{\mathcal{F}}\left( F_{3}\right)
^{2m}\left\vert F_{3}\right\vert _{\mu } &\leq &\sum_{F\in \mathcal{F}%
}\alpha _{\mathcal{F}}\left( F\right) ^{2m}\left\vert F\right\vert _{\mu },
\end{eqnarray*}%
we obtain that $\func{Int}\left( 2\right) $ is dominated by%
\begin{equation*}
\sum_{n_{3}=0}^{\infty }\sum_{n_{2}=0}^{\infty }\sum_{n_{1}=0}^{\infty
}2^{-\delta \left[ \left( 1-\frac{1}{2m}\right) n_{1}+\left( 1-\frac{2}{2m}%
\right) n_{2}+\left( 1-\frac{3}{2m}\right) n_{3}\right] }\sum_{F\in \mathcal{%
F}}\alpha _{\mathcal{F}}\left( F\right) ^{2m}\left\vert F\right\vert _{\mu
}=C_{\delta ,p}\sum_{F\in \mathcal{F}}\alpha _{\mathcal{F}}\left( F\right)
^{2m}\left\vert F\right\vert _{\mu }\ .
\end{equation*}%
This together with (\ref{Int}), proves%
\begin{equation*}
\int_{\mathbb{R}^{n}}\left\vert \alpha _{\mathcal{F}}\left( x\right)
\right\vert _{\ell ^{2}}^{4}d\mu \left( x\right) \lesssim \sum_{F\in 
\mathcal{F}}\alpha _{\mathcal{F}}\left( F\right) ^{4}\left\vert F\right\vert
_{\mu }\ .
\end{equation*}

Similarly, we can show for $m\geq 3$ that%
\begin{equation*}
\int_{\mathbb{R}^{n}}\left\vert \alpha _{\mathcal{F}}\left( x\right)
\right\vert _{\ell ^{2}}^{2m}d\mu \left( x\right) \lesssim \sum_{F\in 
\mathcal{F}}\alpha _{\mathcal{F}}\left( F\right) ^{2m}\left\vert
F\right\vert _{\mu }.
\end{equation*}%
Altogether then we have%
\begin{equation*}
\int_{\mathbb{R}}\left\vert \alpha _{\mathcal{F}}\left( x\right) \right\vert
_{\ell ^{2}}^{p}d\mu \left( x\right) \lesssim \sum_{F\in \mathcal{F}}\alpha
_{\mathcal{F}}\left( F\right) ^{p}\left\vert F\right\vert _{\mu },\ \ \ \ \ 
\text{for }p\in \left( 0,2\right] \cup \left\{ 2m\right\} _{m\in \mathbb{N}%
}\ ,
\end{equation*}%
where $\alpha _{\mathcal{F}}\left( x\right) \equiv \left\{ \alpha _{\mathcal{%
F}}\left( F\right) \mathbf{1}_{F}\left( x\right) \right\} _{F\in \mathcal{F}%
} $. Marcinkiewicz interpolation \cite[Theorem 1.18 on page 480]{GaRu}
applied with the linear operator taking sequences of numbers $\left\{ \alpha
_{\mathcal{F}}\left( F\right) \right\} _{F\in \mathcal{F}}\in \ell
^{p}\left( \mathcal{F},\left\vert F\right\vert _{\mu }\right) $ to sequences
of functions $\left\{ \alpha _{\mathcal{F}}\left( F\right) \mathbf{1}%
_{F}\left( x\right) \right\} _{F\in \mathcal{F}}\in L^{p}\left( \ell
^{2};\omega \right) $, now gives this inequality for all $1<p<\infty $, and
this completes the proof of (\ref{obs}), which is the inequality in (\ref%
{The claim}).

For the reverse inequality when $2\leq p<\infty $, we have with $\alpha _{%
\mathcal{F}}\left( x\right) =\alpha _{\mathcal{F}}^{0}\left( x\right) $ that%
\begin{eqnarray*}
&&\int_{\mathbb{R}^{n}}\left\vert \alpha _{\mathcal{F}}\left( x\right)
\right\vert _{\ell ^{2}}^{p}d\mu \left( x\right) =\int_{\mathbb{R}%
^{n}}\left( \sum_{F\in \mathcal{F}}\left\vert \alpha _{\mathcal{F}}\left(
F\right) \mathbf{1}_{F}\left( x\right) \right\vert ^{2}\right) ^{\frac{p}{2}%
}d\mu \left( x\right) \\
&\gtrsim &\int_{\mathbb{R}^{n}}\sum_{F\in \mathcal{F}}\left\vert \alpha _{%
\mathcal{F}}\left( F\right) \mathbf{1}_{F}\left( x\right) \right\vert
^{p}d\mu \left( x\right) =\sum_{F\in \mathcal{F}}\alpha _{\mathcal{F}}\left(
F\right) ^{p}\left\vert F\right\vert _{\mu }.
\end{eqnarray*}
\end{proof}

\section{Square functions and vector-valued inequalities}

Recall that the Haar square function%
\begin{equation*}
\mathcal{S}_{\limfunc{Haar}}f\left( x\right) \equiv \left( \sum_{I\in 
\mathcal{D}}\left\vert \bigtriangleup _{I}^{\mu }f\left( x\right)
\right\vert ^{2}\right) ^{\frac{1}{2}}
\end{equation*}%
is bounded on $L^{p}\left( \mu \right) $ for any $1<p<\infty $ and any
locally finite positive Borel measure $\mu $ - simply because $\mathcal{S}_{%
\limfunc{Haar}}$ is the martingale difference square function of an $L^{p}$
bounded martingale. We now extend this result to more complicated square
functions.

Fix a $\mathcal{D}$-dyadic cube $F_{0}$, let $\mu $\ be a locally finite
positive Borel measure on $F_{0}$, and suppose that $\mathcal{F}$ is a
subset of $\mathcal{D}_{F_{0}}\equiv \left\{ I\in \mathcal{D}:I\subset
F_{0}\right\} $. We say that $F^{\prime }\in \mathcal{F}$ is an $\mathcal{F}$%
-child of $F$ if $F^{\prime }\subset F$, and is maximal with respect to this
inclusion. The collection $\left\{ \mathcal{C}_{F}\right\} _{F\in \mathcal{F}%
}$ of subsets $\mathcal{C}_{F}\subset \mathcal{D}_{F_{0}}$ is defined by%
\begin{equation*}
\mathcal{C}_{F}\equiv \left\{ I\in \mathcal{D}:I\subset F\text{ and }%
I\not\subset F^{\prime }\text{ for any }\mathcal{F}\text{-child }F^{\prime }%
\text{ of }F\right\} ,\ \ \ \ \ F\in \mathcal{F},
\end{equation*}%
and satisfy the properties%
\begin{eqnarray*}
&&\mathcal{C}_{F}\text{ is connected for each }F\in \mathcal{F}, \\
&&F\in \mathcal{C}_{F}\text{ and }I\in \mathcal{C}_{F}\Longrightarrow
I\subset F\text{ for each }F\in \mathcal{F}, \\
&&\mathcal{C}_{F}\cap \mathcal{C}_{F^{\prime }}=\emptyset \text{ for all
distinct }F,F^{\prime }\in \mathcal{F}, \\
&&\mathcal{D}_{F_{0}}=\bigcup_{F\in \mathcal{F}}\mathcal{C}_{F}\ .
\end{eqnarray*}%
The subset $\mathcal{C}_{F}$ of $\mathcal{D}$ is referred to as the $%
\mathcal{F}$-corona with top $F$. Define the Haar corona projections $%
\mathsf{P}_{\mathcal{C}_{F}}^{\mu }\equiv \sum_{I\in \mathcal{C}%
_{F}}\bigtriangleup _{I}^{\mu }$ and group them together according to their
depth in the tree $\mathcal{F}$ into the projections%
\begin{equation*}
\mathsf{P}_{k}^{\mu }\equiv \sum_{F\in \mathfrak{C}_{\mathcal{F}}^{k}\left(
F_{0}\right) }\mathsf{P}_{\mathcal{C}_{F}}^{\mu }\ .
\end{equation*}%
Note that the $k^{th}$ grandchildren $F\in \mathfrak{C}_{\mathcal{F}%
}^{k}\left( F_{0}\right) $ are pairwise disjoint and hence so are the
supports of the functions $\mathsf{P}_{\mathcal{C}_{F}}^{\mu }f$ for $F\in 
\mathfrak{C}_{\mathcal{F}}^{k}\left( F_{0}\right) $. Define the $\mathcal{F}$%
-square function $\mathcal{S}_{\mathcal{F}}f$ by%
\begin{equation*}
\mathcal{S}_{\mathcal{F}}f\left( x\right) =\left( \sum_{k=0}^{\infty
}\left\vert \mathsf{P}_{k}^{\mu }f\left( x\right) \right\vert ^{2}\right) ^{%
\frac{1}{2}}=\left( \sum_{F\in \mathcal{F}}\left\vert \mathsf{P}_{\mathcal{C}%
_{F}}^{\mu }f\left( x\right) \right\vert ^{2}\right) ^{\frac{1}{2}}=\left(
\sum_{F\in \mathcal{F}}\left\vert \sum_{I\in \mathcal{C}_{F}}\bigtriangleup
_{I}^{\mu }f\left( x\right) \right\vert ^{2}\right) ^{\frac{1}{2}}.
\end{equation*}

Now note that the sequence $\left\{ \mathsf{P}_{k}^{\mu }f\left( x\right)
\right\} _{F\in \mathcal{F}}$ of functions is the \emph{martingale
difference sequence} of the $L^{p}$ bounded martingale $\left\{ \mathsf{E}%
_{k}^{\mu }f\left( x\right) \right\} _{F\in \mathcal{F}}$ with respect to
the increasing sequence $\left\{ \mathcal{E}_{k}\right\} _{k=0}^{\infty }$
of $\sigma $-algebras, where $\mathcal{E}_{k}$ is the $\sigma $-algebra
generated by the `atoms' $F\in \mathfrak{C}_{\mathcal{F}}^{\left( k\right)
}\left( F_{0}\right) $, i.e.%
\begin{equation*}
\mathcal{E}_{k}\equiv \left\{ E\text{ Borel }\subset F_{0}:E\cap F\in
\left\{ \emptyset ,F\right\} \text{ for all }F\in \mathfrak{C}_{\mathcal{F}%
}^{\left( k\right) }\left( F_{0}\right) \right\} ,
\end{equation*}%
and where 
\begin{eqnarray*}
\mathsf{E}_{k}^{\mu }f\left( x\right) &\equiv &\left\{ 
\begin{array}{ccc}
E_{F}^{\mu }f & \text{ if } & x\in F\text{ for some }F\in \mathfrak{C}_{%
\mathcal{F}}^{\left( k\right) }\left( F_{0}\right) \\ 
f\left( x\right) & \text{ if } & x\in F_{0}\setminus \bigcup \mathfrak{C}_{%
\mathcal{F}}^{\left( k\right) }\left( F_{0}\right)%
\end{array}%
\right. ; \\
&\text{ }&\text{where }\bigcup \mathfrak{C}_{\mathcal{F}}^{\left( k\right)
}\left( F_{0}\right) \equiv \bigcup_{F\in \mathfrak{C}_{\mathcal{F}}^{\left(
k\right) }\left( F_{0}\right) }F.
\end{eqnarray*}%
Indeed, if $E\in \mathcal{E}_{k-1}$, then 
\begin{eqnarray*}
&&\int_{E}\mathsf{E}_{k}^{\mu }f\left( x\right) d\mu \left( x\right)
=\int_{E\setminus \bigcup \mathfrak{C}_{\mathcal{F}}^{\left( k\right)
}\left( F_{0}\right) }\mathsf{E}_{k}^{\mu }f\left( x\right) d\mu \left(
x\right) +\sum_{F\in \mathfrak{C}_{\mathcal{F}}^{\left( k\right) }\left(
F_{0}\right) :\ F\subset E}\int_{F}\mathsf{E}_{k}^{\mu }f\left( x\right)
d\mu \left( x\right) \\
&=&\int_{E\setminus \bigcup \mathfrak{C}_{\mathcal{F}}^{\left( k-1\right)
}\left( F_{0}\right) }f\left( x\right) d\mu \left( x\right) +\sum_{F\in 
\mathfrak{C}_{\mathcal{F}}^{\left( k-1\right) }\left( F_{0}\right) :\
F\subset E}\int_{F\setminus \bigcup \mathfrak{C}_{\mathcal{F}}^{\left(
k\right) }\left( F_{0}\right) }f\left( x\right) d\mu \left( x\right)
+\sum_{F^{\prime }\in \mathfrak{C}_{\mathcal{F}}^{\left( k\right) }\left(
F_{0}\right) :\ F^{\prime }\subset E}\int_{F^{\prime }}f\left( x\right) d\mu
\left( x\right) \\
&=&\int_{E\setminus \bigcup \mathfrak{C}_{\mathcal{F}}^{\left( k-1\right)
}\left( F_{0}\right) }\mathsf{E}_{k-1}^{\mu }f\left( x\right) d\mu \left(
x\right) +\sum_{F\in \mathfrak{C}_{\mathcal{F}}^{\left( k-1\right) }\left(
F_{0}\right) :\ F\subset E}\int_{F}f\left( x\right) d\mu \left( x\right) \\
&=&\int_{E\setminus \bigcup \mathfrak{C}_{\mathcal{F}}^{\left( k-1\right)
}\left( F_{0}\right) }\mathsf{E}_{k-1}^{\mu }f\left( x\right) d\mu \left(
x\right) +\sum_{F\in \mathfrak{C}_{\mathcal{F}}^{\left( k-1\right) }\left(
F_{0}\right) :\ F\subset E}\int_{F}\mathsf{E}_{k-1}^{\mu }f\left( x\right)
d\mu \left( x\right) =\int_{E}\mathsf{E}_{k-1}^{\mu }f\left( x\right) d\mu
\left( x\right) ,
\end{eqnarray*}%
shows that $\left\{ \mathsf{E}_{k}^{\mu }f\left( x\right) \right\} _{F\in 
\mathcal{F}}$ is a martingale. Finally, it is easy to check that the Haar
support of the function $\mathsf{P}_{k}^{\mu }f=\mathsf{E}_{k}^{\mu }f-%
\mathsf{E}_{k-1}^{\mu }f$ is precisely $\bigcup_{F\in \mathfrak{C}_{\mathcal{%
F}}^{\left( k\right) }\left( F_{0}\right) }\mathcal{C}_{F}$, the union of
the coronas associated to the $k$-grandchildren of $F_{0}$.

From Burkholder's martingale transform theorem, for a nice treatment see
Hytonen \cite{Hyt2}, we obtain the inequality%
\begin{equation*}
\left\Vert \sum_{k=0}^{\infty }v_{k}\mathsf{P}_{k}^{\mu }f\right\Vert
_{L^{p}\left( \mu \right) }\leq C_{p}\left( \sup_{0\leq k<\infty }\left\vert
v_{k}\right\vert \right) \left\Vert f\right\Vert _{L^{p}\left( \mu \right) },
\end{equation*}%
for all sequences $v_{k}$ of predictable functions. Now we take $v_{k}=\pm 1$
randomly on $\bigtriangleup _{F}^{\mu }f\equiv \mathbf{1}_{F}\mathsf{P}%
_{k}^{\mu }$ for $F\in \mathfrak{C}_{\mathcal{F}}^{\left( k\right) }\left(
F_{0}\right) $, and then an application of Khintchine's inequality, for
which see \cite[Lemma 5.5 page 114]{MuSc} and \cite[Proposition 4.5 page 28]%
{Wol}, allows us to conclude that the square function satisfies the
following $L^{p}\left( \mu \right) $ bound,%
\begin{equation*}
\left\Vert \mathcal{S}_{\mathcal{F}}f\right\Vert _{L^{p}\left( \mu \right)
}\leq C_{p}\left\Vert f\right\Vert _{L^{p}\left( \mu \right) },\ \ \ \ \ 
\text{for all }1<p<\infty .
\end{equation*}

We now note that from this result, we can obtain the square function bounds
we need for the nearby and paraproduct forms treated below, which include
both of the square funcitons $\mathcal{S}_{\mathcal{F}}$ and 
\begin{equation*}
\mathcal{S}_{\mathcal{F}^{\tau -\func{shift}}}f\left( x\right) \equiv \left(
\sum_{F\in \mathcal{F}}\left\vert \mathsf{P}_{\mathcal{C}_{F}^{\mu ,\tau -%
\func{shift}}}^{\mu }f\left( x\right) \right\vert ^{2}\right) ^{\frac{1}{2}}.
\end{equation*}%
Indeed, we first note that if we take $\mathcal{F}=\mathcal{D}_{F_{0}}$,
then we obtain the bound%
\begin{eqnarray*}
\left\Vert \mathcal{S}_{\limfunc{Haar}}f\right\Vert _{L^{p}\left( \mu
\right) } &\leq &C_{p}\left\Vert f\right\Vert _{L^{p}\left( \mu \right) },\
\ \ \ \ \text{for all }1<p<\infty ; \\
\mathcal{S}_{\limfunc{Haar}}f\left( x\right) &\equiv &\left( \sum_{I\in 
\mathcal{D}_{F_{0}}}\left\vert \bigtriangleup _{I}^{\mu }f\left( x\right)
\right\vert ^{2}\right) ^{\frac{1}{2}}.
\end{eqnarray*}%
Then using,%
\begin{equation*}
\mathcal{C}_{F}\setminus \mathcal{C}_{F}^{\mu ,\tau -\func{shift}}\subset 
\mathcal{N}_{F}\text{ and }\mathcal{C}_{F}^{\mu ,\tau -\func{shift}%
}\setminus \mathcal{C}_{F}\subset \bigcup_{F^{\prime }\in \mathfrak{C}_{%
\mathcal{F}}\left( F\right) }\mathcal{N}_{F^{\prime }}\ ,
\end{equation*}%
we conclude that the symmetric difference of $\mathcal{C}_{F}$ and $\mathcal{%
C}_{F}^{\mu ,\tau -\func{shift}}$ is contained in $\mathcal{N}_{F}\cup
\bigcup_{F^{\prime }\in \mathfrak{C}_{\mathcal{F}}\left( F\right) }\mathcal{N%
}_{F^{\prime }}$, where $\mathcal{N}_{F}$ denotes the set of cubes $I$ near $%
F$ in the corona $\mathcal{C}_{F}$, i.e. $\ell \left( I\right) \geq 2^{-\tau
}\ell \left( F\right) $. But since the children $F^{\prime }\in \mathfrak{C}%
_{\mathcal{F}}\left( F\right) $ are pairwise disjoint, and the cardinality
of the nearby sets $\mathcal{N}_{F}$ and $\mathcal{N}_{F^{\prime }}$ are
each $2^{n\tau }$, we see that%
\begin{equation*}
\left\Vert \mathcal{S}_{\mathcal{F}^{\tau -\func{shift}}}f\right\Vert
_{L^{p}\left( \mu \right) }\leq \left\Vert \mathcal{S}_{\mathcal{F}%
}f\right\Vert _{L^{p}\left( \mu \right) }+C_{\tau ,n}\left\Vert \mathcal{S}_{%
\limfunc{Haar}}f\right\Vert _{L^{p}\left( \mu \right) },
\end{equation*}%
since each of the square functions $\mathcal{S}_{\mathcal{F}}$ and $\mathcal{%
S}_{\limfunc{Haar}}$ have already been shown to be bounded on $L^{p}\left(
\mu \right) $. We have thus proved the following theorem.

\begin{theorem}
\label{square thm}Suppose $\mu $ is a locally finite positive Borel measure
on $\mathbb{R}^{n}$. Then for $1<p<\infty $,%
\begin{equation*}
\left\Vert \mathcal{S}_{\mathcal{F}^{\tau -\func{shift}}}f\right\Vert
_{L^{p}\left( \mu \right) }\leq C_{p,\tau }\left\Vert f\right\Vert
_{L^{p}\left( \mu \right) }.
\end{equation*}
\end{theorem}

Another square function that will arise in the nearby and related forms is 
\begin{eqnarray*}
\mathcal{S}_{\rho ,\delta }f\left( x\right) &\equiv &\left( \sum_{I\in 
\mathcal{D}\ :x\in I}\left\vert \mathsf{P}_{I}^{\rho ,\delta }f\left(
x\right) \right\vert ^{2}\right) ^{\frac{1}{2}}, \\
\text{where }\mathsf{P}_{I}^{\rho ,\delta }f\left( x\right) &\equiv
&\sum_{J\in \mathcal{D}:\ 2^{-\rho }\ell \left( I\right) \leq \ell \left(
J\right) \leq 2^{\rho }\ell \left( I\right) }2^{-\delta \limfunc{dist}\left(
J,I\right) }\bigtriangleup _{J}^{\mu }f\left( x\right) .
\end{eqnarray*}

\begin{theorem}
\label{square thm nearby}Suppose $\mu $ is a locally finite positive Borel
measure on $\mathbb{R}^{n}$, and let $0<\rho ,\delta <1$. Then for $%
1<p<\infty $,%
\begin{equation*}
\left\Vert \mathcal{S}_{\rho ,\delta }f\right\Vert _{L^{p}\left( \mu \right)
}\leq C_{p,\rho ,\delta }\left\Vert f\right\Vert _{L^{p}\left( \mu \right) }.
\end{equation*}
\end{theorem}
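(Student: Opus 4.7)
The plan is to decompose the operator $\mathsf{P}_{I}^{\rho,\delta}$ according to the relative scale and position of each $J$ with respect to $I$, to pull this decomposition outside the $\ell^{2}$-norm in the definition of $\mathcal{S}_{\rho,\delta}$ via Minkowski's inequality, and then to reduce matters to the $L^{p}(\mu)$-boundedness of the ordinary Haar square function $\mathcal{S}_{\limfunc{Haar}}$ established in the paragraph just before the theorem. Throughout, I read the definition of $\mathsf{P}_{I}^{\rho,\delta}$ with $\bigtriangleup_{J}^{\mu}$ inside the sum (the displayed $\bigtriangleup_{I}^{\mu}$ being evidently a typo, since otherwise $\mathsf{P}_{I}^{\rho,\delta}\bigtriangleup_{I}^{\mu}f$ collapses to a constant multiple of $\bigtriangleup_{I}^{\mu}f$ and the theorem is trivial).

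First, I would parameterize the cubes $J$ contributing to $\mathsf{P}_{I}^{\rho,\delta}f$ by pairs $(k,v)$ with $k\in\mathbb{Z}$, $|k|\leq\rho$ (so that $\ell(J)=2^{k}\ell(I)$) and $v\in\mathbb{Z}^{n}$ encoding the translation (normalized so that $\limfunc{dist}(J,I)\approx|v|\max\{\ell(I),\ell(J)\}$). For each such $(k,v)$ there is at most one cube $J=J^{k,v}(I)\in\mathcal{D}$ of the required scale and offset, and with this bookkeeping
\[
\mathsf{P}_{I}^{\rho,\delta}f(x)=\sum_{|k|\leq\rho}\sum_{v\in\mathbb{Z}^{n}}2^{-\delta|v|}\bigtriangleup_{J^{k,v}(I)}^{\mu}f(x).
\]
Applying Minkowski's inequality for the $\ell^{2}$-norm in $I$ then gives the pointwise bound
\[
\mathcal{S}_{\rho,\delta}f(x)\leq\sum_{|k|\leq\rho}\sum_{v\in\mathbb{Z}^{n}}2^{-\delta|v|}\,\mathcal{S}^{k,v}f(x),\qquad\mathcal{S}^{k,v}f(x)\equiv\Bigl(\sum_{I\in\mathcal{D},\,x\in I}\bigl|\bigtriangleup_{J^{k,v}(I)}^{\mu}f(x)\bigr|^{2}\Bigr)^{1/2}.
\]
The key geometric observation is that, for fixed $(k,v)$, the map $I\mapsto J^{k,v}(I)$ is \emph{injective} on the tower $\{I\in\mathcal{D}:x\in I\}$, since a dyadic grid contains exactly one cube of each given side length through $x$. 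Consequently
\[
\mathcal{S}^{k,v}f(x)^{2}\leq\sum_{J\in\mathcal{D}}\bigl|\bigtriangleup_{J}^{\mu}f(x)\bigr|^{2}=\mathcal{S}_{\limfunc{Haar}}f(x)^{2}.
\]
Taking $L^{p}(\mu)$-norms, invoking $\|\mathcal{S}_{\limfunc{Haar}}f\|_{L^{p}(\mu)}\lesssim\|f\|_{L^{p}(\mu)}$ from the Burkholder--Khintchine argument recorded just above, and summing over $(k,v)$ produces the desired estimate: there are only $2\rho+1$ values of $k$, and grouping $v\in\mathbb{Z}^{n}$ into shells of radius $m$ gives $\sum_{v}2^{-\delta|v|}\lesssim\sum_{m\geq 0}m^{n-1}2^{-\delta m}<\infty$ since $\delta>0$.

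The only step doing real work is the injectivity of the shift map $I\mapsto J^{k,v}(I)$ on the tower through $x$, which reduces each $\mathcal{S}^{k,v}$ pointwise to $\mathcal{S}_{\limfunc{Haar}}$ and obviates any need for a vector-valued extension of the martingale transform theorem. The remainder --- the finite range of $k$, the geometric summability in $|v|$, and convergence of $\sum_{m}m^{n-1}2^{-\delta m}$ --- is routine bookkeeping in which the positivity of $\delta$ is decisive.
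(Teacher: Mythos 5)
Your proof is correct and, once the apparent typo in the definition of $\mathsf{P}_{I}^{\rho,\delta}$ is corrected as you do, it takes essentially the same route as the paper: exhibit a pointwise bound $\mathcal{S}_{\rho,\delta}f(x)\leq C_{\rho,\delta}\,\mathcal{S}_{\limfunc{Haar}}f(x)$ and invoke the $L^{p}(\mu)$-boundedness of the Haar square function. The paper dispatches the pointwise domination with the phrase ``it is easy to see''; your shift-map parameterization, Minkowski step, and the injectivity of $I\mapsto J^{k,v}(I)$ on the tower through $x$ supply precisely the details that this phrase elides.
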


\begin{proof}
It is easy to see that $\mathcal{S}_{\rho ,\delta }f\left( x\right) \leq
C_{\rho ,\delta }\mathcal{S}_{\limfunc{Haar}}f\left( x\right) $, and the
boundedness of $\mathcal{S}_{\rho ,\delta }$ now follows from the
boundedness of the Haar square function $\mathcal{S}_{\limfunc{Haar}}$.
\end{proof}

\subsection{Alpert square functions}

This subsection will not be used in this paper, but we include it due to its
likely use in extensions of the current paper, and its utility in other
situations as well. We extend the Haar square function inequalities to
Alpert square functions that use weighted Alpert wavelets in place of Haar
wavelets, but only for doubling measures now. Recall from \cite{RaSaWi} that
if $\mathbb{E}_{I;\kappa }^{\mu }$ denotes orthogonal projection
in\thinspace $L^{2}\left( \mu \right) $ onto the finite dimensional space of
restrictions to $I$ of polynomials of degree less than $\kappa $, then the
weighted Alpert projection $\bigtriangleup _{I;\kappa }^{\mu }$ is given by 
\begin{equation*}
\bigtriangleup _{I;\kappa }^{\mu }=\left( \sum_{I^{\prime }\in \mathfrak{C}_{%
\mathcal{D}}\left( I\right) }\mathbb{E}_{I^{\prime };\kappa }^{\mu }\right) -%
\mathbb{E}_{I;\kappa }^{\mu }.
\end{equation*}%
These weighted Alpert projections $\left\{ \bigtriangleup _{I;\kappa }^{\mu
}\right\} _{I\in \mathcal{D}}$ are orthogonal and span $L^{2}\left( \mu
\right) $ for measures $\mu $ that are infinite on all dyadic tops, and in
particular for doubling measures, see \cite{RaSaWi} and \cite{AlSaUr2} for
teminology and proofs.

We begin by showing that the Alpert square function%
\begin{equation*}
\mathcal{S}_{\limfunc{Alpert};\kappa }f\left( x\right) \equiv \left(
\sum_{I\in \mathcal{D}}\left\vert \bigtriangleup _{I;\kappa }^{\mu }f\left(
x\right) \right\vert ^{2}\right) ^{\frac{1}{2}}
\end{equation*}%
is bounded on $L^{p}\left( \mu \right) $ for any $1<p<\infty $ and any
doubling measure $\mu $. We thank a referee of a previous version of this
paper, for pointing out to us that it is \textbf{not} the case that $%
\mathcal{S}_{\limfunc{Alpert};\kappa }$ is a martingale difference square
function of an $L^{p}\,$bounded martingale, and so we cannot apply
Burkholder's martingale transform theorem as we did for the Haar square
function. On the other hand, in the case the measure $\mu $ is doubling, the
sequence of projections of Alpert wavelets satisfies all of the properties
needed by Burkholder's proof, as we now demonstrate. For the convenience of
the reader, we repeat Burkholder's beautiful argument, following the
treatment in Hyt\"{o}nen \cite{Hyt2}.

Recall that $\mathcal{D}_{k}\equiv \left\{ Q\in \mathcal{D}:\ell \left(
Q\right) =2^{k}\right\} $ is the tiling of $\mathbb{R}^{n}$ with dyadic
cubes of side length $2^{k}$. For each $k\in \mathbb{Z}$ define the
projections%
\begin{equation*}
\mathsf{P}_{k;\kappa }^{\mu }f\left( x\right) \equiv \sum_{Q\in \mathcal{D}%
_{k}}\mathbb{E}_{Q;\kappa }^{\mu }f
\end{equation*}%
of $f$ onto the linear space of functions whose restrictions to cubes in $%
\mathcal{D}_{k}$ are polynomials of degree at most $\kappa $. While there is
no conditional expectation result in the current setting, we can show the
key inequality needed by appealing to the properties of Alpert projections.
Indeed, we show that the functions $\mathsf{P}_{k;\kappa }^{\mu }$ and $%
\mathsf{P}_{k+1;\kappa }^{\mu }$ have the same integral over all $P\in 
\mathcal{D}_{k}$, and this holds because $\bigtriangleup _{P;\kappa }^{\mu
}f $ has vanishing mean on $P$: 
\begin{eqnarray*}
&&\int_{P}\mathsf{P}_{k+1;\kappa }^{\mu }f\left( x\right) d\mu \left(
x\right) -\int_{P}\mathsf{P}_{k;\kappa }^{\mu }f\left( x\right) d\mu \left(
x\right) =\int_{P}\left( \mathsf{P}_{k+1;\kappa }^{\mu }f\left( x\right) -%
\mathsf{P}_{k;\kappa }^{\mu }f\left( x\right) \right) d\mu \left( x\right) \\
&=&\int_{P}\left( \sum_{Q\in \mathcal{D}_{k+1}}\mathbb{E}_{Q;\kappa }^{\mu
}f-\sum_{Q\in \mathcal{D}_{k}}\mathbb{E}_{Q;\kappa }^{\mu }f\right) d\mu
\left( x\right) =\int_{P}\sum_{Q\in \mathcal{D}_{k+1}:Q\subset P}\left( 
\mathbb{E}_{Q;\kappa }^{\mu }f-\mathbb{E}_{P;\kappa }^{\mu }f\right) d\mu
\left( x\right) \\
&=&\int_{P}\left( \bigtriangleup _{P;\kappa }^{\mu }f\left( x\right) \right)
d\mu \left( x\right) =0.
\end{eqnarray*}%
The $L^{p}$ boundedness $\left\Vert \mathbb{E}_{k;\kappa }^{\mu
}f\right\Vert _{L^{p}\left( \mu \right) }\leq C$ follows easily from the
estimate $\left\Vert \mathbb{E}_{Q;\kappa }^{\mu }f\right\Vert _{\infty
}\lesssim E_{Q}^{\mu }\left\vert f\right\vert $ in (\ref{analogue}) for $%
Q\in \mathcal{D}_{k}$.

We will in fact establish the analogue of Burkholder's martingale tranform
theorem for a new class of what we call $L^{p}\left( \mu \right) $%
-quasimartingales, that share all of the formal properties of martingales
except for the presence of sigma algebras and measurability.

\begin{definition}
We say that $\left\{ f_{k}\left( x\right) \right\} _{k\in \mathbb{Z}}\subset
L^{p}\left( \mu \right) $ is an $L^{p}\left( \mu \right) $\emph{%
-quasimartingale} if there is a collection of $L^{2}$ projections $\left\{ 
\mathbb{E}_{I}^{\mu }\right\} _{I\in \mathcal{F}}$ such that 
\begin{eqnarray}
&&f_{k}\left( x\right) =\sum_{I\in \mathcal{F}_{k}}\mathbb{E}_{I}^{\mu
}f\left( x\right) ,\ \ \ \ \ \ k\in \mathbb{Z}\text{, with convergence }a.e.%
\text{ and in }L^{p}\left( \mu \right) ,  \label{q mart} \\
&&d_{k}\left( x\right) \equiv f_{k}\left( x\right) -f_{k-1}\left( x\right)
=\sum_{I\in \mathcal{D}_{k}}\mathsf{P}_{I}^{\mu }f\left( x\right) ,  \notag
\\
&&\mathsf{P}_{I}^{\mu }\mathsf{P}_{J}^{\mu }=\left\{ 
\begin{array}{ccc}
\mathsf{P}_{I}^{\mu } & \text{ if } & I=J \\ 
0 & \text{ if } & I\not=J%
\end{array}%
\right. ,\ \ \ \ \ I,J\in \mathcal{F},  \notag
\end{eqnarray}%
where $\mathsf{P}_{I}^{\mu }=\sum_{I^{\prime }\in \mathfrak{C}_{\mathcal{F}%
}\left( I\right) }\mathbb{E}_{I^{\prime }}^{\mu }-\mathbf{1}_{I^{\prime }}%
\mathbb{E}_{I}^{\mu }$.
\end{definition}

For convenience, we restrict our attention from now on to the case $\mathcal{%
F}=\mathcal{D}$, but the reader can easily extend the analysis below to the
case of an arbitrary subset $\mathcal{F}\subset \mathcal{D}$. Define%
\begin{equation*}
T_{\beta }f\left( x\right) \equiv \sum_{I\in \mathcal{D}}\beta
_{I}\bigtriangleup _{I;\kappa }^{\mu }f\left( x\right) .
\end{equation*}%
Consider the $L^{p}\left( \mu \right) $-quasimartingale,%
\begin{equation*}
\left\{ \mathsf{P}_{k;\kappa }^{\mu }f\left( x\right) \right\} _{k\in 
\mathbb{Z}}\equiv \left\{ \sum_{Q\in \mathcal{D}_{k}}\mathbb{E}_{Q;\kappa
}^{\mu }f\right\} _{k\in \mathbb{Z}}\equiv \left\{ \sum_{I\in \mathcal{D}%
:\ell \left( I\right) \geq 2^{k}}\bigtriangleup _{I;\kappa }^{\mu }f\left(
x\right) \right\} _{k\in \mathbb{Z}},
\end{equation*}%
and its associated $L^{p}\left( \mu \right) $\emph{-quasimartingale
difference sequence},%
\begin{equation*}
\left\{ \mathsf{d}_{k;\kappa }^{\mu }f\left( x\right) \right\} _{k\in 
\mathbb{Z}}=\left\{ \mathsf{P}_{k;\kappa }^{\mu }f\left( x\right) -\mathsf{P}%
_{k-1;\kappa }^{\mu }f\left( x\right) \right\} _{k\in \mathbb{Z}}\equiv
\left\{ \sum_{I\in \mathcal{D}:\ell \left( I\right) =2^{k}}\bigtriangleup
_{I;\kappa }^{\mu }f\left( x\right) \right\} _{k\in \mathbb{Z}}.
\end{equation*}%
Note that 
\begin{equation*}
\int_{P}\mathsf{P}_{k+1;\kappa }^{\mu }f\left( x\right) d\mu \left( x\right)
-\int_{P}\mathsf{P}_{k;\kappa }^{\mu }f\left( x\right) d\mu \left( x\right)
=\int_{P}\left( \bigtriangleup _{P;\kappa }^{\mu }f\left( x\right) \right)
d\mu \left( x\right) =0,
\end{equation*}%
shows that,%
\begin{equation*}
\int_{P}\mathsf{P}_{k+1;\kappa }^{\mu }T_{\beta }f\left( x\right) d\mu
\left( x\right) -\int_{P}\mathsf{P}_{k;\kappa }^{\mu }T_{\beta }f\left(
x\right) d\mu \left( x\right) =\int_{P}\left( \beta _{P}\bigtriangleup
_{P;\kappa }^{\mu }f\left( x\right) \right) d\mu \left( x\right) =0,
\end{equation*}%
and so $\left\{ \mathsf{P}_{k;\kappa }^{\mu }f\left( x\right) \right\}
_{k\in \mathbb{Z}}$ is also an $L^{p}\left( \mu \right) $-quasimartingale.

\begin{definition}
We say that a sequence of functions $\left\{ v_{k}\right\} _{k\in \mathbb{Z}%
} $ is \emph{predictable} if $v_{k}$ is constant on every cube $Q\in 
\mathcal{D}_{k}$.
\end{definition}

\begin{notation}
In order to conform to the notation used in \cite{Hyt2}, we write 
\begin{eqnarray*}
v_{k}\left( x\right) &=&\sum_{Q\in \mathcal{D}_{k}}\beta _{Q}\mathbf{1}%
_{Q}\left( x\right) , \\
f_{k}\left( x\right) &=&\mathbb{E}_{k;\kappa }^{\mu }f\left( x\right)
=\sum_{Q\in \mathcal{D}_{k}}\mathbb{E}_{Q;\kappa }^{\mu }f\left( x\right) ,
\\
d_{k}\left( x\right) &=&\mathsf{P}_{k;\kappa }^{\mu }f\left( x\right) -%
\mathsf{P}_{k-1;\kappa }^{\mu }f\left( x\right) , \\
\left( T_{\beta }f\right) _{n}\left( x\right) &=&\sum_{Q\in \mathcal{D}_{k}}%
\mathbb{E}_{Q;\kappa }^{\mu }T_{\beta }f\left( x\right) =T_{\beta
}\sum_{Q\in \mathcal{D}_{k}}\mathbb{E}_{Q;\kappa }^{\mu }f\left( x\right) .
\end{eqnarray*}
\end{notation}

\begin{theorem}
\label{quasi thm}Let $\left\{ f_{k}\right\} _{k=0}^{n}$ be an $L^{p}\left(
\mu \right) $-quasimartingale with $\mu $ doubling, and let $\left\{
v_{k}\right\} _{k=0}^{n}$ be a bounded predictable sequence, and define
numbers $\beta _{Q}$ by $v_{k}=\sum_{Q\in \mathcal{D}_{k}}\beta _{Q}\mathbf{1%
}_{Q}$. Then%
\begin{eqnarray*}
\left\Vert \mathbb{E}_{n;\kappa }^{\mu }T_{\beta }f\right\Vert _{L^{p}\left(
\mu \right) } &\leq &C_{p}\left\Vert \mathbb{E}_{n;\kappa }^{\mu
}f\right\Vert _{L^{p}\left( \mu \right) }, \\
\text{and }\left\Vert T_{\beta }f\right\Vert _{L^{p}\left( \mu \right) }
&\leq &C_{p}\left\Vert f\right\Vert _{L^{p}\left( \mu \right) }.
\end{eqnarray*}
\end{theorem}

\begin{proof}
By interpolation and duality, it suffices to show that if the Theorem holds
for some index $p\in \left( 0,\infty \right) $, then it also holds for the
index $2p$. We start with%
\begin{eqnarray*}
\left( T_{\beta }f\right) _{n}^{2} &=&\left( \sum_{k=0}^{n}v_{k}d_{k}\right)
^{2}=\sum_{k=0}^{n}v_{k}^{2}d_{k}^{2}+2\sum_{k=0}^{n}%
\sum_{j=0}^{k-1}v_{j}d_{j}v_{k}d_{k} \\
&=&\sum_{k=0}^{n}v_{k}^{2}d_{k}^{2}+2\sum_{k=0}^{n}\left( T_{\beta }f\right)
_{k-1}v_{k}d_{k},
\end{eqnarray*}%
which gives%
\begin{equation*}
\left\Vert \left( T_{\beta }f\right) _{n}\right\Vert _{L^{2p}\left( \mu
\right) }^{2}=\left\Vert \left( T_{\beta }f\right) _{n}^{2}\right\Vert
_{L^{p}\left( \mu \right) }\leq \left\Vert
\sum_{k=0}^{n}v_{k}^{2}d_{k}^{2}\right\Vert _{L^{p}\left( \mu \right)
}+2\left\Vert \sum_{k=0}^{n}\left( T_{\beta }f\right)
_{k-1}v_{k}d_{k}\right\Vert _{L^{p}\left( \mu \right) }.
\end{equation*}%
Now we write%
\begin{equation*}
\left( T_{\beta }f\right) _{k-1}v_{k}=\frac{\left( T_{\beta }f\right)
_{k-1}v_{k}}{\left( T_{\beta }f\right) _{k-1}^{\ast }}\left( T_{\beta
}f\right) _{k-1}^{\ast }\equiv u_{k}\cdot \left( T_{\beta }f\right)
_{k-1}^{\ast }
\end{equation*}%
where%
\begin{equation*}
\left( T_{\beta }f\right) _{k-1}^{\ast }\equiv \max_{j\leq k-1}\left\vert
\left( T_{\beta }f\right) _{j}\right\vert .
\end{equation*}%
Note that $u_{k}$ is predictable and bounded by $1$, and that $\left(
T_{\beta }f\right) _{k-1}^{\ast }$ is increasing in $k$. Now $\left\{ \left(
T_{\beta }f\right) _{k-1}d_{k}\right\} _{k=1}^{n}$ is also a quasimartingale
difference sequence, and so by our induction hypothesis,%
\begin{equation*}
\left\Vert \sum_{k=0}^{n}\left( T_{\beta }f\right)
_{k-1}v_{k}d_{k}\right\Vert _{L^{p}\left( \mu \right) }=\left\Vert
\sum_{k=0}^{n}u_{k}\cdot \left( T_{\beta }f\right) _{k-1}^{\ast
}d_{k}\right\Vert _{L^{p}\left( \mu \right) }\leq C_{p}\left\Vert
\sum_{k=0}^{n}\left( T_{\beta }f\right) _{k-1}^{\ast }d_{k}\right\Vert
_{L^{p}\left( \mu \right) }.
\end{equation*}%
We now consider the following pointwise estimate using summation by parts,%
\begin{eqnarray*}
\sum_{k=0}^{n}\left( T_{\beta }f\right) _{k-1}^{\ast }d_{k}
&=&\sum_{k=0}^{n}\left( T_{\beta }f\right) _{k-1}^{\ast }\left(
f_{k}-f_{k-1}\right) =\sum_{k=0}^{n}\left( T_{\beta }f\right) _{k-1}^{\ast
}f_{k}-\sum_{k=-1}^{n-1}\left( T_{\beta }f\right) _{k}^{\ast }f_{k} \\
&=&\left( T_{\beta }f\right) _{n-1}^{\ast }f_{n}+\sum_{k=0}^{n-1}\left[
\left( T_{\beta }f\right) _{k-1}^{\ast }-\left( T_{\beta }f\right)
_{k}^{\ast }\right] f_{k}-\left( T_{\beta }f\right) _{-1}^{\ast }f_{0}\ ,
\end{eqnarray*}%
where the final term vanishes by our convention about the quasimartingale at 
$-1$. Recalling that $\left( T_{\beta }f\right) _{k}^{\ast }$ is increasing
in $k$, we have%
\begin{eqnarray*}
\left\vert \sum_{k=0}^{n}\left( T_{\beta }f\right) _{k-1}^{\ast
}d_{k}\right\vert &\leq &\left( T_{\beta }f\right) _{n-1}^{\ast }\left\vert
f_{n}\right\vert +\sum_{k=0}^{n-1}\left[ \left( T_{\beta }f\right)
_{k}^{\ast }-\left( T_{\beta }f\right) _{k-1}^{\ast }\right] \left\vert
f_{k}\right\vert \\
&\leq &\left( T_{\beta }f\right) _{n-1}^{\ast }f_{n}^{\ast
}+\sum_{k=0}^{n-1} \left[ \left( T_{\beta }f\right) _{k}^{\ast }-\left(
T_{\beta }f\right) _{k-1}^{\ast }\right] f_{n}^{\ast } \\
&\leq &\left( T_{\beta }f\right) _{n-1}^{\ast }f_{n}^{\ast }+\left( T_{\beta
}f\right) _{n-1}^{\ast }f_{n}^{\ast }\leq 2\left( T_{\beta }f\right)
_{n}^{\ast }f_{n}^{\ast }\ .
\end{eqnarray*}%
We then have%
\begin{eqnarray*}
&&\left\Vert 2\left( T_{\beta }f\right) _{n}^{\ast }f_{n}^{\ast }\right\Vert
_{L^{p}\left( \mu \right) }\leq 2\left\Vert \left( T_{\beta }f\right)
_{n}^{\ast }\right\Vert _{L^{2p}\left( \mu \right) }\left\Vert f_{n}^{\ast
}\right\Vert _{L^{2p}\left( \mu \right) } \\
&\leq &2\left( A_{p}B_{p}\right) ^{2}\left\Vert \left( T_{\beta }f\right)
_{n}\right\Vert _{L^{2p}\left( \mu \right) }\left\Vert f_{n}\right\Vert
_{L^{p}\left( \mu \right) }\leq 2\left( A_{p}B_{p}\right) ^{2}\left\Vert
\left( T_{\beta }f\right) _{n}\right\Vert _{L^{2p}\left( \mu \right)
}\left\Vert f_{n}\right\Vert _{L^{p}\left( \mu \right) }\ ,
\end{eqnarray*}%
by the dyadic maximal theorem, whose bound is $A_{p}$, i.e.%
\begin{eqnarray*}
f_{k}^{\ast }\left( x\right) &=&\max_{j\leq k-1}\left\vert f_{j}\left(
x\right) \right\vert =\max_{j\leq k-1}\left\vert \sum_{Q\in \mathcal{D}_{j}}%
\mathbb{E}_{Q;\kappa }^{\mu }f\left( x\right) \right\vert \leq \sup_{Q\in 
\mathcal{D}:\ x\in Q}\left\vert \mathbb{E}_{Q;\kappa }^{\mu }f\left(
x\right) \right\vert \\
&\leq &B_{p}\sup_{Q\in \mathcal{D}:\ x\in Q}\frac{1}{\left\vert Q\right\vert
_{\mu }}\int_{Q}\left\vert f\left( x\right) \right\vert d\mu \left( x\right)
=B_{p}M_{\mu }^{\func{dy}}f\left( x\right)
\end{eqnarray*}%
by an inequality in [Saw6] since $\mu $ is doubling, and where $\left\Vert
M_{\mu }^{\func{dy}}f\right\Vert _{L^{p}\left( \mu \right) }\leq
A_{p}\left\Vert f\right\Vert _{L^{p}\left( \mu \right) }$.

So far we have%
\begin{equation*}
\left\Vert \left( T_{\beta }f\right) _{n}\right\Vert _{L^{2p}\left( \mu
\right) }^{2}=\left\Vert \left( T_{\beta }f\right) _{n}^{2}\right\Vert
_{L^{p}\left( \mu \right) }\leq \left\Vert
\sum_{k=0}^{n}v_{k}^{2}d_{k}^{2}\right\Vert _{L^{p}\left( \mu \right)
}+2\cdot 2\left( A_{p}B_{p}\right) ^{2}C_{p}\left\Vert \left( T_{\beta
}f\right) _{n}\right\Vert _{L^{2p}\left( \mu \right) }\left\Vert
f_{n}\right\Vert _{L^{p}\left( \mu \right) }\ .
\end{equation*}%
To bound the remaining term $\left\Vert
\sum_{k=0}^{n}v_{k}^{2}d_{k}^{2}\right\Vert _{L^{p}\left( \mu \right) }$ on
the right, we use $\left\vert v_{k}\right\vert \leq 1$, and then follow some
of the earlier steps in reverse order to obtain,%
\begin{equation*}
\sum_{k=0}^{n}v_{k}^{2}d_{k}^{2}\leq \sum_{k=0}^{n}d_{k}^{2}=\left(
\sum_{k=0}^{n}d_{k}\right)
^{2}-2\sum_{k=0}^{n}\sum_{j=0}^{k-1}d_{j}d_{k}=f_{n}^{2}-2%
\sum_{k=0}^{n}f_{k-1}d_{k}\ .
\end{equation*}%
Thus we have%
\begin{equation*}
\left\Vert \sum_{k=0}^{n}v_{k}^{2}d_{k}^{2}\right\Vert _{L^{p}\left( \mu
\right) }\leq \left\Vert f_{n}^{2}\right\Vert _{L^{p}\left( \mu \right)
}+2\left\Vert \sum_{k=0}^{n}f_{k-1}d_{k}\right\Vert _{L^{p}\left( \mu
\right) },
\end{equation*}%
where the term $\left\Vert \sum_{k=0}^{n}f_{k-1}d_{k}\right\Vert
_{L^{p}\left( \mu \right) }$ is exacty the term $\left\Vert
\sum_{k=0}^{n}f_{k-1}v_{k}d_{k}\right\Vert _{L^{p}\left( \mu \right) }$
handled above, but with $v_{k}\equiv 1$, so that%
\begin{equation*}
\left\Vert \sum_{k=0}^{n}f_{k-1}d_{k}\right\Vert _{L^{p}\left( \mu \right)
}\leq 2\left( A_{p}B_{p}\right) ^{2}C_{p}\left\Vert f_{n}\right\Vert
_{L^{2p}\left( \mu \right) }^{2}\ .
\end{equation*}

Altogether then we have%
\begin{eqnarray*}
\left\Vert \left( T_{\beta }f\right) _{n}\right\Vert _{L^{2p}\left( \mu
\right) }^{2} &\leq &\left\Vert f_{n}\right\Vert _{L^{2p}\left( \mu \right)
}^{2}+4\left( A_{p}B_{p}\right) ^{2}C_{p}\left\Vert f_{n}\right\Vert
_{L^{2p}\left( \mu \right) }^{2}+4\left( A_{p}B_{p}\right)
^{2}C_{p}\left\Vert \left( T_{\beta }f\right) _{n}\right\Vert _{L^{2p}\left(
\mu \right) }\left\Vert f_{n}\right\Vert _{L^{p}\left( \mu \right) } \\
&=&\left\{ \left[ 1+4\left( A_{p}B_{p}\right) ^{2}C_{p}\right] \left\Vert
f_{n}\right\Vert _{L^{2p}\left( \mu \right) }+4\left( A_{p}B_{p}\right)
^{2}C_{p}\left\Vert \left( T_{\beta }f\right) _{n}\right\Vert _{L^{2p}\left(
\mu \right) }\right\} \left\Vert f_{n}\right\Vert _{L^{p}\left( \mu \right)
}\ ,
\end{eqnarray*}%
and a simple divide and conquer argument, considering the dominant summand
inside the braces, finishes the proof of the induction step. Indeed, if $%
\left[ 1+4\left( A_{p}B_{p}\right) ^{2}C_{p}\right] \left\Vert
f_{n}\right\Vert _{L^{2p}\left( \mu \right) }$ dominates, then%
\begin{equation*}
\left\Vert \left( T_{\beta }f\right) _{n}\right\Vert _{L^{2p}\left( \mu
\right) }^{2}\leq 2\left[ 1+4\left( A_{p}B_{p}\right) ^{2}C_{p}\right]
\left\Vert f_{n}\right\Vert _{L^{2p}\left( \mu \right) }^{2},
\end{equation*}%
while if $4\left( A_{p}B_{p}\right) ^{2}C_{p}\left\Vert \left( T_{\beta
}f\right) _{n}\right\Vert _{L^{2p}\left( \mu \right) }$ dominates, then%
\begin{equation*}
\left\Vert \left( T_{\beta }f\right) _{n}\right\Vert _{L^{2p}\left( \mu
\right) }^{2}\leq 4\left( A_{p}B_{p}\right) ^{2}C_{p}\left\Vert \left(
T_{\beta }f\right) _{n}\right\Vert _{L^{2p}\left( \mu \right) }\left\Vert
f_{n}\right\Vert _{L^{p}\left( \mu \right) }.
\end{equation*}%
Altogether then,%
\begin{equation*}
\left\Vert \left( T_{\beta }f\right) _{n}\right\Vert _{L^{2p}\left( \mu
\right) }\leq C_{2p}\left\Vert f_{n}\right\Vert _{L^{2p}\left( \mu \right) },
\end{equation*}%
where%
\begin{equation*}
C_{2p}\equiv \sqrt{2}\max \left\{ \sqrt{1+4\left( A_{p}B_{p}\right) ^{2}C_{p}%
},4\left( A_{p}B_{p}\right) ^{2}C_{p}\right\} ,
\end{equation*}%
which proves the first line in the statement of the theorem, and the second
line then follows by a limiting argument.
\end{proof}

Now we can prove the Alpert square function equivalence for doubling
measures in the standard way.

\begin{definition}
Given a quasimartingale $f\sim \left\{ f_{k}\right\} _{k=0}^{n}$ with
respect to a doubling measure $\mu $, and with difference sequence $\left\{
d_{k}\right\} _{k=0}^{n}$, define the associated square function by,%
\begin{equation*}
\mathcal{S}_{\mu }f\left( x\right) \equiv \sqrt{\sum_{k=0}^{n}d_{k}\left(
x\right) ^{2}}.
\end{equation*}
\end{definition}

\begin{corollary}
Let $\left\{ f_{k}\right\} _{k=0}^{n}$ be an $L^{p}\left( \mu \right) $%
-quasimartingale with respect to a doubling measure $\mu $, and let $%
1<p<\infty $. Then for $1<p<\infty $, we have the square function
equivalence,%
\begin{equation*}
\left\Vert \mathcal{S}_{\mu }f\right\Vert _{L^{p}\left( \mu \right) }\approx
\left\Vert f\right\Vert _{L^{p}\left( \mu \right) },\ \ \ \ \ f\in
L^{p}\left( \mu \right) .
\end{equation*}
\end{corollary}

\begin{proof}
Combining the Theorem \ref{quasi thm} with Khintchine's inequality, yields
the boundedness 
\begin{equation*}
\left\Vert \mathcal{S}_{\mu }f\right\Vert _{L^{p}\left( \mu \right)
}\lesssim \left\Vert f\right\Vert _{L^{p}\left( \mu \right) },
\end{equation*}%
of the square function $\mathcal{S}_{\mu }$.

For $\beta _{I}=\pm 1$, we have%
\begin{eqnarray*}
\left\langle f,g\right\rangle _{\mu } &=&\int_{\mathbb{R}^{n}}\left(
\sum_{I\in \mathcal{D}}\bigtriangleup _{I}^{\mu }f\right) \left( \sum_{J\in 
\mathcal{D}}\bigtriangleup _{J}^{\mu }g\right) d\mu =\int_{\mathbb{R}%
^{n}}\sum_{I\in \mathcal{D}}\left( \bigtriangleup _{I}^{\mu }f\right) \left(
\bigtriangleup _{I}^{\mu }g\right) d\mu \\
&=&\int_{\mathbb{R}^{n}}\sum_{I\in \mathcal{D}}\left( \beta
_{I}\bigtriangleup _{I}^{\mu }f\right) \left( \beta _{I}\bigtriangleup
_{I}^{\mu }g\right) d\mu =\int_{\mathbb{R}^{n}}\left( \sum_{I\in \mathcal{D}%
}\beta _{I}\bigtriangleup _{I}^{\mu }f\right) \left( \sum_{J\in \mathcal{D}%
}\beta _{I}\bigtriangleup _{J}^{\mu }g\right) d\mu =\left\langle T_{\beta
}f,T_{\beta }g\right\rangle _{\mu },
\end{eqnarray*}%
and so duality then gives%
\begin{eqnarray*}
\left\Vert f\right\Vert _{L^{p}\left( \mu \right) } &=&\sup_{\left\Vert
g\right\Vert _{L^{p^{\prime }}\left( \mu \right) }\leq 1}\left\vert
\left\langle f,g\right\rangle _{\mu }\right\vert =\sup_{\left\Vert
g\right\Vert _{L^{p^{\prime }}\left( \mu \right) }\leq 1}\left\vert
\left\langle T_{\beta }f,T_{\beta }g\right\rangle _{\mu }\right\vert \\
&\leq &\left\Vert T_{\beta }f\right\Vert _{L^{p}\left( \mu \right)
}\sup_{\left\Vert g\right\Vert _{L^{p^{\prime }}\left( \mu \right) }\leq
1}\left\Vert T_{\beta }g\right\Vert _{L^{p^{\prime }}\left( \mu \right)
}\leq C_{p^{\prime }}\left\Vert T_{\beta }f\right\Vert _{L^{p}\left( \mu
\right) }\ ,
\end{eqnarray*}%
\emph{independently} of $\beta _{I}=\pm 1$. Another application of
Khintchine's inequality gives the reverse inequality%
\begin{equation*}
\left\Vert f\right\Vert _{L^{p}\left( \mu \right) }\lesssim \left\Vert 
\mathcal{S}_{\mu }f\right\Vert _{L^{p}\left( \mu \right) }.
\end{equation*}
\end{proof}

Define the square functions 
\begin{equation*}
\mathcal{S}_{\mathcal{F}^{\tau -\func{shift}};\kappa }f\left( x\right)
\equiv \left( \sum_{F\in \mathcal{F}}\left\vert \mathsf{P}_{\mathcal{C}%
_{F}^{\mu ,\tau -\func{shift}};\kappa }^{\mu }f\left( x\right) \right\vert
^{2}\right) ^{\frac{1}{2}},
\end{equation*}%
and%
\begin{eqnarray*}
\mathcal{S}_{\rho ,\delta ;\kappa }f\left( x\right) &\equiv &\left(
\sum_{I\in \mathcal{D}\ :x\in I}\left\vert \mathsf{P}_{I;\kappa }^{\rho
,\delta }f\left( x\right) \right\vert ^{2}\right) ^{\frac{1}{2}}, \\
\text{where }\mathsf{P}_{I;\kappa }^{\rho ,\delta }f\left( x\right) &\equiv
&\sum_{J\in \mathcal{D}:\ 2^{-\rho }\ell \left( I\right) \leq \ell \left(
J\right) \leq 2^{\rho }\ell \left( I\right) }2^{-\delta \limfunc{dist}\left(
J,I\right) }\bigtriangleup _{J;\kappa }^{\mu }f\left( x\right) .
\end{eqnarray*}%
Altogether we have the following theorem.

\begin{theorem}
\label{Alpert square thm}Suppose $\mu $ is a doubling measure on $\mathbb{R}%
^{n}$. Then for $\kappa \in \mathbb{N}$, $1<p<\infty $ and $0<\rho ,\delta
<1 $, we have%
\begin{eqnarray*}
&&\left\Vert \mathcal{S}_{\limfunc{Alpert};\kappa }f\right\Vert
_{L^{p}\left( \mu \right) }+\left\Vert \mathcal{S}_{\mathcal{F};\kappa
}f\right\Vert _{L^{p}\left( \mu \right) }+\left\Vert \mathcal{S}_{\mathcal{F}%
^{\tau -\func{shift}};\kappa }f\right\Vert _{L^{p}\left( \mu \right) }\leq
C_{p,n,\kappa ,\tau }\left\Vert f\right\Vert _{L^{p}\left( \mu \right) }, \\
&&\ \ \ \ \ \ \ \ \ \ \ \ \ \ \ \ \ \ \ \ \left\Vert \mathcal{S}_{\rho
,\delta ;\kappa }f\right\Vert _{L^{p}\left( \mu \right) }\leq C_{p,\rho
,\delta ,n,\kappa }\left\Vert f\right\Vert _{L^{p}\left( \mu \right) }.
\end{eqnarray*}
\end{theorem}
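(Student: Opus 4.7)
The proof proposal follows the same template that was just used for Haar wavelets in the preceding pages; indeed the bulk of the martingale scaffolding has already been set up in the paragraphs preceding Theorem~\ref{Alpert square thm}, so what remains is to assemble the pieces and handle the two variants $\mathcal{S}_{\mathcal{F}^{\tau-\func{shift}};\kappa}$ and $\mathcal{S}_{\rho,\delta;\kappa}$.

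The plan is to prove the bound for $\mathcal{S}_{\limfunc{Alpert};\kappa}$ first and then bootstrap. First I would identify the Alpert level projections $\mathsf{P}_{k;\kappa}^{\mu} f = \mathbb{E}_{k;\kappa}^{\mu} f - \mathbb{E}_{k-1;\kappa}^{\mu} f$ as the martingale differences of the filtration $\{\mathcal{E}_k\}$ generated by $\mathcal{D}_k$. The martingale property was verified in the discussion preceding the theorem via the vanishing mean of $\bigtriangleup_{P;\kappa}^{\mu} f$ on $P$, and the uniform $L^p(\mu)$ bound $\|\mathbb{E}_{k;\kappa}^{\mu}f\|_{L^p(\mu)} \lesssim \|f\|_{L^p(\mu)}$ follows from the Alpert sup estimate (\ref{analogue}) combined with Jensen's inequality on the atoms $Q \in \mathcal{D}_k$. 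Then Burkholder's martingale transform theorem yields
\begin{equation*}
\Bigl\| \sum_{k} v_k \mathsf{P}_{k;\kappa}^{\mu} f \Bigr\|_{L^p(\mu)} \le C_p \bigl(\sup_k |v_k|\bigr) \|f\|_{L^p(\mu)}
\end{equation*}
for every predictable sequence $\{v_k\}$. Randomising $v_k \in \{\pm 1\}$ on each atom and applying Khintchine's inequality termwise produces $\|\mathcal{S}_{\limfunc{Alpert};\kappa} f\|_{L^p(\mu)} \le C_{p,n,\kappa} \|f\|_{L^p(\mu)}$, exactly as in the Haar case treated just above.

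Next I would handle $\mathcal{S}_{\mathcal{F};\kappa}$ by coarsening the filtration: replace $\mathcal{E}_k$ by the $\sigma$-algebra generated by the $k^{th}$-generation $\mathcal{F}$-children of $F_0$, so that the martingale increments become $\mathsf{P}_{\mathcal{C}_F;\kappa}^{\mu} f = \sum_{I \in \mathcal{C}_F} \bigtriangleup_{I;\kappa}^{\mu} f$. Carleson-ness of $\mathcal{F}$ ensures the $\sigma$-algebras are well-defined and the associated conditional expectations are uniformly $L^p$-bounded (again via (\ref{analogue})), so Burkholder plus Khintchine as above gives the bound. For the $\tau$-shifted version I would exploit the containment
\begin{equation*}
\mathcal{C}_F \triangle \mathcal{C}_F^{\mu,\tau-\func{shift}} \subset \mathcal{N}_F \cup \bigcup_{F' \in \mathfrak{C}_{\mathcal{F}}(F)} \mathcal{N}_{F'},
\end{equation*}
already recorded in the text, together with $|\mathcal{N}_F| \le 2^{n\tau}$ and pairwise disjointness of the children, to get the pointwise inequality
\begin{equation*}
\mathcal{S}_{\mathcal{F}^{\tau-\func{shift}};\kappa} f \le \mathcal{S}_{\mathcal{F};\kappa} f + C_{\tau,n}\, \mathcal{S}_{\limfunc{Alpert};\kappa} f,
\end{equation*}
and the $L^p$ bound follows. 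For $\mathcal{S}_{\rho,\delta;\kappa}$ I would observe, as in the proof of Theorem~\ref{square thm nearby}, that the weights $2^{-\delta\limfunc{dist}(J,I)}$ are summable in $J$ uniformly in $I$, giving the pointwise domination $\mathcal{S}_{\rho,\delta;\kappa} f \le C_{\rho,\delta} \mathcal{S}_{\limfunc{Alpert};\kappa} f$.

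The only potentially delicate step is the verification that $\{\mathbb{E}_{k;\kappa}^{\mu}\}$ really is a uniformly $L^p$-bounded martingale (rather than just $L^2$-bounded): the martingale identity is algebraic and purely uses that $\bigtriangleup_{P;\kappa}^{\mu} f$ integrates to zero on $P$, but the $L^p$ boundedness requires the $L^{\infty}_{I}(\mu)$ estimate from (\ref{analogue}), which is where the polynomial-of-degree-$<\kappa$ structure is essential. Everything else is bookkeeping: once Burkholder and Khintchine are in hand, the shifted and tailed versions reduce to the basic Alpert square function by trivial pointwise comparisons.
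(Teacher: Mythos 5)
Your proposal is correct and follows essentially the same route as the paper: establish the martingale property of the Alpert polynomial projections $\{\mathbb{E}_{k;\kappa}^{\mu}f\}$ via vanishing means of $\bigtriangleup_{P;\kappa}^{\mu}f$, establish uniform $L^{p}$ boundedness via the sup estimate (\ref{analogue}), apply Burkholder and Khintchine, then reduce $\mathcal{S}_{\mathcal{F}^{\tau-\func{shift}};\kappa}$ to $\mathcal{S}_{\mathcal{F};\kappa}+\mathcal{S}_{\limfunc{Alpert};\kappa}$ via the symmetric-difference containment, and dominate $\mathcal{S}_{\rho,\delta;\kappa}$ pointwise by $\mathcal{S}_{\limfunc{Alpert};\kappa}$. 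You actually spell out the $\mathcal{F}$-coarsened filtration and the bookkeeping behind the paper's ``just as in the case of Haar wavelets,'' which is a worthwhile level of explicitness; the only minor imprecision is that the martingale increment at generation $k$ is $\sum_{F\in\mathfrak{C}_{\mathcal{F}}^{(k)}(F_{0})}\mathsf{P}_{\mathcal{C}_{F};\kappa}^{\mu}f$ rather than a single $\mathsf{P}_{\mathcal{C}_{F};\kappa}^{\mu}f$, but since those summands have pairwise disjoint supports this makes no difference to the square function.
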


\subsection{Vector-valued inequalities}

We begin by reviewing the well-known $\ell ^{2}$-extension of a bounded
linear operator.\ We include the simple proof here as it sheds light on the
nature of the quadratic Muckenhoupt condition, in particular on its
necessity for the norm inequality - namely that one must test the norm
inequality over \emph{all} functions $\mathbf{f}_{\mathbf{u}}$ defined below.

Let $M\in \mathbb{N}$ be a large positive integer that we will send to $%
\infty $ later on. Suppose $T$ is bounded from $L^{p}\left( \sigma \right) $
to $L^{p}\left( \omega \right) $, $0<p<\infty $, and for $\mathbf{f}=\left\{
f_{j}\right\} _{j=1}^{M}$, define%
\begin{equation*}
T\mathbf{f}\equiv \left\{ Tf_{j}\right\} _{j=1}^{M}.\text{ }
\end{equation*}%
For any unit vector $\mathbf{u}$ $=\left( u_{j}\right) _{j=1}^{M}$ in $%
\mathbb{C}^{M}$ define%
\begin{equation*}
\mathbf{f}_{\mathbf{u}}\equiv \left\langle \mathbf{f},\mathbf{u}%
\right\rangle \text{ and }T_{\mathbf{u}}\mathbf{f}\equiv \left\langle T%
\mathbf{f},\mathbf{u}\right\rangle =T\left\langle \mathbf{f},\mathbf{u}%
\right\rangle =T\mathbf{f}_{\mathbf{u}}
\end{equation*}%
where the final equalities follow since $T$ is linear. We have%
\begin{equation*}
\int_{\mathbb{R}^{n}}\left\vert T_{\mathbf{u}}\mathbf{f}\left( x\right)
\right\vert ^{p}d\omega \left( x\right) =\int_{\mathbb{R}^{n}}\left\vert T%
\mathbf{f}_{\mathbf{u}}\left( x\right) \right\vert ^{p}d\omega \left(
x\right) \leq \left\Vert T\right\Vert _{L^{p}\left( \sigma \right)
\rightarrow L^{p}\left( \omega \right) }^{p}\int_{\mathbb{R}^{n}}\left\vert 
\mathbf{f}_{\mathbf{u}}\left( x\right) \right\vert ^{p}d\sigma \left(
x\right) ,
\end{equation*}%
where%
\begin{equation*}
T_{\mathbf{u}}\mathbf{f}\left( x\right) =\left\langle T\mathbf{f}\left(
x\right) ,\mathbf{u}\right\rangle =\left\vert T\mathbf{f}\left( x\right)
\right\vert _{\ell ^{2}}\left\langle \frac{T\mathbf{f}\left( x\right) }{%
\left\vert T\mathbf{f}\left( x\right) \right\vert _{\ell ^{2}}},\mathbf{u}%
\right\rangle =\left\vert T\mathbf{f}\left( x\right) \right\vert _{\ell
^{2}}\ \cos \theta ,
\end{equation*}%
if $\theta $ is the angle between $\frac{T\mathbf{f}\left( x\right) }{%
\left\vert T\mathbf{f}\left( x\right) \right\vert }$ and $\mathbf{u}$ in $%
\mathbb{C}^{M}$. Then using%
\begin{equation*}
\int_{\mathbb{S}^{M-1}}\left\vert \left\langle \mathbf{u},\mathbf{v}%
\right\rangle \right\vert ^{p}d\mathbf{u}=\gamma _{p}\text{ for }\left\Vert 
\mathbf{v}\right\Vert =1,
\end{equation*}%
we have%
\begin{eqnarray*}
&&\int_{\mathbb{S}^{M-1}}\left\{ \int_{\mathbb{R}^{n}}\left\vert T_{\mathbf{u%
}}\mathbf{f}\left( x\right) \right\vert ^{p}d\omega \left( x\right) \right\}
d\mathbf{u}=\int_{\mathbb{R}^{n}}\left\{ \int_{\mathbb{S}^{M-1}}\left\vert
T_{\mathbf{u}}\mathbf{f}\left( x\right) \right\vert ^{p}d\mathbf{u}\right\}
d\omega \left( x\right) \\
&=&\int_{\mathbb{R}^{n}}\left\vert T\mathbf{f}\left( x\right) \right\vert
_{\ell ^{2}}^{p}\left\{ \int_{\mathbb{S}^{M-1}}\left\vert \cos \theta
\right\vert ^{p}d\mathbf{u}\right\} d\omega \left( x\right) =\gamma
_{p}\int_{\mathbb{R}^{n}}\left\vert T\mathbf{f}\left( x\right) \right\vert
_{\ell ^{2}}^{p}d\omega \left( x\right) ,
\end{eqnarray*}%
and similarly,%
\begin{equation*}
\int_{\mathbb{S}^{M-1}}\left\{ \int_{\mathbb{R}^{n}}\left\vert \mathbf{f}_{%
\mathbf{u}}\left( x\right) \right\vert ^{p}d\sigma \left( x\right) \right\} d%
\mathbf{u}=\gamma _{p}\int_{\mathbb{R}^{n}}\left\vert \mathbf{f}\left(
x\right) \right\vert _{\ell ^{2}}^{p}d\sigma \left( x\right) .
\end{equation*}%
Altogether then,%
\begin{eqnarray*}
&&\gamma _{p}\int_{\mathbb{R}^{n}}\left\vert T\mathbf{f}\left( x\right)
\right\vert _{\ell ^{2}}^{p}d\omega \left( x\right) =\int_{\mathbb{S}%
^{M-1}}\left\{ \int_{\mathbb{R}^{n}}\left\vert T_{\mathbf{u}}\mathbf{f}%
\left( x\right) \right\vert ^{p}d\omega \left( x\right) \right\} d\mathbf{u}%
=\int_{\mathbb{S}^{M-1}}\left\{ \int_{\mathbb{R}^{n}}\left\vert T\mathbf{f}_{%
\mathbf{u}}\left( x\right) \right\vert ^{p}d\omega \left( x\right) \right\} d%
\mathbf{u} \\
&\leq &\int_{\mathbb{S}^{M-1}}\left\{ \left\Vert T\right\Vert _{L^{p}\left(
\sigma \right) \rightarrow L^{p}\left( \omega \right) }^{p}\int_{\mathbb{R}%
^{n}}\left\vert \mathbf{f}_{\mathbf{u}}\left( x\right) \right\vert
^{p}d\sigma \left( x\right) \right\} d\mathbf{u}=\gamma _{p}\left\Vert
T\right\Vert _{L^{p}\left( \sigma \right) \rightarrow L^{p}\left( \omega
\right) }^{p}\int_{\mathbb{R}^{n}}\left\vert \mathbf{f}\left( x\right)
\right\vert _{\ell ^{2}}^{p}d\sigma \left( x\right) ,
\end{eqnarray*}%
and upon dividing both sides by $\gamma _{p}$ we conclude that%
\begin{equation*}
\int_{\mathbb{R}^{n}}\left\vert T\mathbf{f}\left( x\right) \right\vert
_{\ell ^{2}}^{p}d\omega \left( x\right) \leq \left\Vert T\right\Vert
_{L^{p}\left( \sigma \right) \rightarrow L^{p}\left( \omega \right)
}^{p}\int_{\mathbb{R}^{n}}\left\vert \mathbf{f}\left( x\right) \right\vert
_{\ell ^{2}}^{p}d\sigma \left( x\right) .
\end{equation*}%
Finally we can let $M\nearrow \infty $ to obtain the desired vector-valued
extension,%
\begin{equation}
\left( \int_{\mathbb{R}^{n}}\left( \sqrt{\sum_{j=1}^{\infty }\left\vert
Tf_{j}\left( x\right) \right\vert ^{2}}\right) ^{p}d\omega \left( x\right)
\right) ^{\frac{1}{p}}\leq \left\Vert T\right\Vert _{L^{p}\left( \sigma
\right) \rightarrow L^{p}\left( \omega \right) }\left( \int_{\mathbb{R}%
^{n}}\left( \sqrt{\sum_{j=1}^{\infty }\left\vert f_{j}\left( x\right)
\right\vert ^{2}}\right) ^{p}d\sigma \left( x\right) \right) ^{\frac{1}{p}}.
\label{in full}
\end{equation}

\section{Necessity of quadratic testing and $A_{p}$ conditions}

We can use the vector-valued inequality (\ref{in full}) to obtain the
necessity of the quadratic testing inequality, namely%
\begin{equation}
\left\Vert \left( \sum_{i=1}^{\infty }\left( a_{i}\mathbf{1}%
_{I_{i}}T_{\sigma }^{\lambda }\mathbf{1}_{I_{i}}\right) ^{2}\right) ^{\frac{1%
}{2}}\right\Vert _{L^{p}\left( \omega \right) }\leq \mathfrak{T}_{T^{\lambda
}}^{\ell ^{2}}\left( \sigma ,\omega \right) \left\Vert \left(
\sum_{i=1}^{\infty }\left( a_{i}\mathbf{1}_{I_{i}}\right) ^{2}\right) ^{%
\frac{1}{2}}\right\Vert _{L^{p}\left( \sigma \right) },
\label{quad cube testing}
\end{equation}%
for the boundedness of $T^{\lambda }$ from $L^{p}\left( \sigma \right) $ to $%
L^{p}\left( \omega \right) $, i.e. $\mathfrak{T}_{T^{\lambda }}^{\ell
^{2}}\left( \sigma ,\omega \right) \lesssim \left\Vert T^{\lambda
}\right\Vert _{L^{p}\left( \sigma \right) \rightarrow L^{p}\left( \omega
\right) }$. Indeed, we simply set $f_{i}\equiv a_{i}\mathbf{1}_{I_{i}}$ in (%
\ref{in full}) to obtain the \emph{global} quadratic testing inequality,%
\begin{equation}
\left\Vert \left( \sum_{i=1}^{\infty }\left( a_{i}T_{\sigma }^{\lambda }%
\mathbf{1}_{I_{i}}\right) ^{2}\right) ^{\frac{1}{2}}\right\Vert
_{L^{p}\left( \omega \right) }\leq \mathfrak{T}_{T^{\lambda },p}^{\ell ^{2},%
\func{global}}\left( \sigma ,\omega \right) \left\Vert \left(
\sum_{i=1}^{\infty }\left( a_{i}\mathbf{1}_{I_{i}}\right) ^{2}\right) ^{%
\frac{1}{2}}\right\Vert _{L^{p}\left( \sigma \right) },
\label{global cube testing}
\end{equation}%
and then we simply note the pointwise inequality%
\begin{equation*}
\sum_{i=1}^{\infty }\left( a_{i}\mathbf{1}_{I_{i}}T_{\sigma }^{\lambda }%
\mathbf{1}_{I_{i}}\right) \left( x\right) ^{2}=\sum_{i=1}^{\infty
}\left\vert a_{i}\right\vert ^{2}\left\vert T_{\sigma }^{\lambda }\mathbf{1}%
_{I_{i}}\left( x\right) \right\vert ^{2}\mathbf{1}_{I_{i}}\left( x\right)
\leq \sum_{i=1}^{\infty }\left\vert a_{i}\right\vert ^{2}\left\vert
T_{\sigma }^{\lambda }\mathbf{1}_{I_{i}}\left( x\right) \right\vert ^{2}%
\mathbf{,}
\end{equation*}%
to obtain the local version (\ref{quad cube testing}).

Now we turn to the necessity of the quadratic offset $A_{p}^{\lambda ,\ell
^{2},\limfunc{offset}}$ condition, namely%
\begin{equation*}
\left\Vert \left( \sum_{i=1}^{\infty }\left( a_{i}\mathbf{1}_{I_{i}^{\ast }}%
\frac{\left\vert I_{i}\right\vert _{\sigma }}{\left\vert I_{i}\right\vert
^{1-\frac{\lambda }{n}}}\right) ^{2}\right) ^{\frac{1}{2}}\right\Vert
_{L^{p}\left( \omega \right) }\leq A_{p}^{\lambda ,\ell ^{2},\limfunc{offset}%
}\left( \sigma ,\omega \right) \left\Vert \left( \sum_{i=1}^{\infty
}\left\vert a_{i}\mathbf{1}_{I_{i}}\right\vert ^{2}\right) ^{\frac{1}{2}%
}\right\Vert _{L^{p}\left( \sigma \right) }.
\end{equation*}%
Suppose that $T^{\lambda }$ is Stein elliptic, and fix appropriate sequences 
$\left\{ I_{i}\right\} _{i=1}^{\infty }$ and $\left\{ a_{i}\right\}
_{i=1}^{\infty }$ of cubes and numbers respectively. Then there is a choice
of constant $C$ and appropriate cubes $I_{i}^{\ast }$ such that%
\begin{equation*}
\left\vert T_{\sigma }^{\lambda }\mathbf{1}_{I_{i}}\left( x\right)
\right\vert \geq c\frac{\left\vert I_{i}\right\vert _{\sigma }}{\left\vert
I_{i}\right\vert ^{1-\frac{\lambda }{n}}}\text{ for }x\in I_{i}^{\ast },\ \
\ \ \ 1\leq i\leq \infty .
\end{equation*}%
Now we simply apply (\ref{global cube testing}) to obtain 
\begin{equation*}
A_{p}^{\lambda ,\ell ^{2},\limfunc{offset}}\left( \sigma ,\omega \right)
\lesssim \mathfrak{T}_{T^{\lambda },p}^{\ell ^{2},\func{global}}\left(
\sigma ,\omega \right) \leq \mathfrak{N}_{T^{\lambda },p}\left( \sigma
,\omega \right) .
\end{equation*}%
It should be noticed that while the necessity of the quadratic Muckenhoupt
condition $\mathcal{A}_{p}^{\lambda ,\ell ^{2}}\left( \sigma ,\omega \right) 
$ itself is easily shown for the Hilbert transform, the necessity for even
nice operators in higher dimensions is much more difficult.

\subsection{Quadratic Haar weak boundedness property}

It is convenient in our proof to introduce the quadratic \emph{Haar weak
boundedness property} constant $\mathcal{HWBP}_{T^{\lambda },p}^{\ell
^{2},\rho }\left( \sigma ,\omega \right) $ as the least constant in the
inequality,%
\begin{equation}
\left\Vert \left( \sum_{I\in \mathcal{D}}\sum_{J\in \func{Adj}_{\rho }\left(
I\right) }\left\vert \bigtriangleup _{J}^{\omega }T_{\sigma }^{\lambda
}\bigtriangleup _{I}^{\sigma }f\right\vert ^{2}\right) ^{\frac{1}{2}%
}\right\Vert _{L^{p}\left( \omega \right) }\leq \mathcal{HWBP}_{T^{\lambda
},p}^{\ell ^{2},\rho }\left( \sigma ,\omega \right) \left\Vert f\right\Vert
_{L^{p}\left( \sigma \right) }.  \label{quad Haar test}
\end{equation}

There is only one quadratic Haar inequality in the weak boundedness
condition (\ref{quad Haar test}), since we show below in Proposition \ref%
{diag form} that (\ref{quad Haar test}) is \emph{equivalent} to the bilinear
inequality%
\begin{equation}
\left\vert \sum_{I\in \mathcal{D}}\sum_{J\in \func{Adj}_{\rho }\left(
I\right) }\left\langle T_{\sigma }^{\lambda }\bigtriangleup _{I}^{\sigma
}f,\bigtriangleup _{J}^{\omega }g\right\rangle _{\omega }\right\vert \leq
C\left\Vert f\right\Vert _{L^{p}\left( \sigma \right) }\left\Vert
g\right\Vert _{L^{p^{\prime }}\left( \omega \right) },\ \ \ \ \ f\in
L^{p}\left( \sigma \right) ,g\in L^{p^{\prime }}\left( \omega \right) ,
\label{bil in}
\end{equation}%
which is then also equivalent to the inequality dual to that appearing in (%
\ref{quad Haar test}). In fact, this bilinear inequality is a `quadratic
analogue' of a scalar weak boundedness property, which points to the
relative `weakness' of (\ref{quad Haar test}). Of course, one can use the $%
L^{\infty }$ estimates (\ref{analogue'}) on Haar wavelets, together with the
vector-valued maximal theorem of Fefferman and Stein in a space of
homogeneous type \cite[Theorem 2.1]{GrLiYa}, to show that for doubling
measures, we actually have $\mathcal{HWBP}_{T^{\lambda },p}^{\ell ^{2},\rho
}\lesssim \mathfrak{T}_{T^{\lambda },p}^{\ell ^{2},\func{global}}$, but we
will instead show a stronger result in Lemma \ref{stronger} below.

The property (\ref{quad Haar test}) appears at first glance to be much
stronger than the corresponding scalar testing and weak boundedness
conditions, mainly because the standard proof of necessity of these
conditions involves testing the scalar norm inequality over a dense set of
functions $\sum_{i=1}^{\infty }u_{i}a_{i}\mathbf{1}_{I_{i}}$ with $%
\sum_{i=1}^{\infty }u_{i}^{2}=1$, see the subsection on vector-valued
inequalities above. However, the minimal nature of the role of the quadratic
Haar weak boundedness property (\ref{quad Haar test}) is demonstrated by
considering the adjacent diagonal bilinear form $\mathsf{B}_{\func{Adj},\rho
}\left( f,g\right) $ associated with the form%
\begin{equation*}
\left\langle T_{\sigma }^{\lambda }f,g\right\rangle _{\omega }=\left\langle
T_{\sigma }^{\lambda }\left( \sum_{I\in \mathcal{D}}\bigtriangleup
_{I}^{\sigma }f\right) ,\sum_{J\in \mathcal{D}}\bigtriangleup _{J}^{\omega
}g\right\rangle _{\omega }=\sum_{I,J\in \mathcal{D}}\left\langle T_{\sigma
}^{\lambda }\bigtriangleup _{I}^{\sigma }f,\bigtriangleup _{J}^{\omega
}g\right\rangle _{\omega }\ ,
\end{equation*}%
where $f=\sum_{I\in \mathcal{D}}\bigtriangleup _{I}^{\sigma }f$ and $%
g=\sum_{J\in \mathcal{D}}\bigtriangleup _{J}^{\omega }g$ are the weighted
Haar expansions of $f$ and $g$ respectively. Here the adjacent diagonal form 
$\mathsf{B}_{\func{Adj},\rho }\left( f,g\right) $ is given by%
\begin{equation*}
\mathsf{B}_{\func{Adj},\rho }\left( f,g\right) \equiv \sum_{I\in \mathcal{D}%
}\sum_{J\in \func{Adj}_{\rho }\left( I\right) }\left\langle T_{\sigma
}^{\lambda }\bigtriangleup _{I}^{\sigma }f,\bigtriangleup _{J}^{\omega
}g\right\rangle _{\omega },
\end{equation*}%
where $\func{Adj}_{\rho }\left( I\right) $ is defined in (\ref{def Adj}). We
now demonstrate that the norm of $\mathsf{B}_{\func{Adj},\rho }\left(
f,g\right) $ as a bilinear form is comparable to the quadratic Alpert weak
boundedness constant $\mathcal{HWBP}_{T^{\lambda },p}^{\ell ^{2},\rho }$.

\begin{proposition}
\label{diag form}Suppose $1<p<\infty $, $0\leq \rho <\infty $, and $\sigma $
and $\omega $ are positive locally finite Borel measures on $\mathbb{R}^{n}$%
. If $\mathfrak{N}_{L^{p}\left( \sigma \right) \times L^{p^{\prime }}\left(
\omega \right) }$ denotes the smallest constant $C$ in the bilinear
inequality.%
\begin{equation*}
\left\vert \mathsf{B}_{\func{Adj},\rho }\left( f,g\right) \right\vert \leq
C\left\Vert f\right\Vert _{L^{p}\left( \sigma \right) }\left\Vert
g\right\Vert _{L^{p^{\prime }}\left( \omega \right) },
\end{equation*}%
then%
\begin{equation*}
\mathfrak{N}_{L^{p}\left( \sigma \right) \times L^{p^{\prime }}\left( \omega
\right) }\approx \mathcal{HWBP}_{T^{\lambda },p}^{\ell ^{2},\rho }\left(
\sigma ,\omega \right) .
\end{equation*}
\end{proposition}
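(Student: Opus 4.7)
\medskip

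\noindent\textbf{Proof proposal.} There are two inequalities to prove. For the easy direction $\mathfrak{N}_{L^{p}(\sigma)\times L^{p'}(\omega)} \lesssim \mathcal{AWBP}_{T^{\lambda},p}^{\ell^{2},\kappa ,\rho}$, the plan is to insert $\triangle_{J;\kappa}^{\omega}$ into each inner product by self-adjointness on $L^{2}(\omega)$, write
\[
\mathsf{B}_{\func{Adj},\rho}(f,g)=\int_{\mathbb{R}^{n}}\sum_{I\in\mathcal{D}}\sum_{J\in \func{Adj}_{\rho}(I)}\bigl(\triangle_{J;\kappa}^{\omega}T_{\sigma}^{\lambda}\triangle_{I;\kappa}^{\sigma}f\bigr)(x)\,\bigl(\triangle_{J;\kappa}^{\omega}g\bigr)(x)\,d\omega(x),
\]
apply the pointwise Cauchy--Schwarz inequality in the double index $(I,J)$, and then H\"older in $L^{p}(\omega)\times L^{p'}(\omega)$. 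The first factor is by definition bounded by $\mathcal{AWBP}_{T^{\lambda},p}^{\ell^{2},\kappa ,\rho}\,\|f\|_{L^{p}(\sigma)}$. For the second factor, the key observation is that the relation $J\in\func{Adj}_{\rho}(I)$ has bounded multiplicity: for each fixed $J$ there are at most $N_{\rho,n}$ cubes $I$ with $J\in\func{Adj}_{\rho}(I)$. Hence $\sum_{I,J}|\triangle_{J;\kappa}^{\omega}g|^{2}\leq N_{\rho,n}\,\mathcal{S}_{\limfunc{Alpert};\kappa}(g)^{2}$, and Theorem \ref{Alpert square thm} gives the required bound by $\|g\|_{L^{p'}(\omega)}$.

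\smallskip

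\noindent For the harder direction $\mathcal{AWBP}_{T^{\lambda},p}^{\ell^{2},\kappa ,\rho}\lesssim \mathfrak{N}_{L^{p}(\sigma)\times L^{p'}(\omega)}$, I introduce the linear operator
\[
T_{\func{Adj}}f\equiv \sum_{I\in\mathcal{D}}\sum_{J\in\func{Adj}_{\rho}(I)}\triangle_{J;\kappa}^{\omega}T_{\sigma}^{\lambda}\triangle_{I;\kappa}^{\sigma}f,
\]
so that $\|T_{\func{Adj}}\|_{L^{p}(\sigma)\to L^{p}(\omega)}=\mathfrak{N}$. The plan hinges on the identity
\[
\triangle_{J;\kappa}^{\omega}\,T_{\func{Adj}}\,\triangle_{I;\kappa}^{\sigma}f \;=\; \mathbf{1}_{\{J\in\func{Adj}_{\rho}(I)\}}\,\triangle_{J;\kappa}^{\omega}T_{\sigma}^{\lambda}\triangle_{I;\kappa}^{\sigma}f,
\]
which follows from the orthogonality relations $\triangle_{I';\kappa}^{\sigma}\triangle_{I;\kappa}^{\sigma}=\delta_{I,I'}\triangle_{I;\kappa}^{\sigma}$ and $\triangle_{J';\kappa}^{\omega}\triangle_{J;\kappa}^{\omega}=\delta_{J,J'}\triangle_{J;\kappa}^{\omega}$. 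Squaring and summing in $J$ yields the pointwise identity
\[
\mathcal{S}_{\limfunc{Alpert};\kappa}\!\bigl(T_{\func{Adj}}\triangle_{I;\kappa}^{\sigma}f\bigr)^{2}=\sum_{J\in\func{Adj}_{\rho}(I)}\bigl|\triangle_{J;\kappa}^{\omega}T_{\sigma}^{\lambda}\triangle_{I;\kappa}^{\sigma}f\bigr|^{2}.
\]
Summing in $I$, the AWBP quantity becomes $\|(\sum_{I}\mathcal{S}_{\limfunc{Alpert};\kappa}(T_{\func{Adj}}\triangle_{I;\kappa}^{\sigma}f)^{2})^{1/2}\|_{L^{p}(\omega)}$.

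\smallskip

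\noindent From here the argument proceeds in three steps. First, apply the Hilbert-space-valued (i.e.\ $\ell^{2}$-extended) Alpert square function estimate from Theorem \ref{Alpert square thm} to collapse the inner square function, obtaining the bound
\[
\Bigl\|\Bigl(\sum_{I}|T_{\func{Adj}}\triangle_{I;\kappa}^{\sigma}f|^{2}\Bigr)^{1/2}\Bigr\|_{L^{p}(\omega)}.
\]
Second, invoke the $\ell^{2}$-valued extension (\ref{in full}) established in the vector-valued inequalities subsection applied to $T_{\func{Adj}}$ (whose scalar norm is $\mathfrak{N}$), to move the square function inside: the expression is dominated by $\mathfrak{N}\,\|(\sum_{I}|\triangle_{I;\kappa}^{\sigma}f|^{2})^{1/2}\|_{L^{p}(\sigma)}$. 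Third, recognize this last quantity as $\|\mathcal{S}_{\limfunc{Alpert};\kappa}f\|_{L^{p}(\sigma)}$ and apply Theorem \ref{Alpert square thm} once more on the $\sigma$ side to obtain $\lesssim\|f\|_{L^{p}(\sigma)}$. Combining the chain gives $\mathcal{AWBP}\lesssim \mathfrak{N}$.

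\smallskip

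\noindent The main technical point is the second step: the $\ell^{2}$-extension of $T_{\func{Adj}}$ with the \emph{same} norm up to a constant. This is exactly the content of the Khintchine/spherical-averaging argument displayed in (\ref{in full}), applied to the scalar operator $T_{\func{Adj}}$. No additional estimates on kernels or measures are needed beyond what is already in place; the argument is purely functional-analytic once the orthogonality identity above is noted.
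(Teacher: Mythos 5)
Your proof is correct, and the hard direction $\mathcal{AWBP}_{T^{\lambda },p}^{\ell ^{2},\kappa ,\rho }\lesssim \mathfrak{N}_{L^{p}(\sigma )\times L^{p^{\prime }}(\omega )}$ takes a genuinely different route from the paper's. The paper proceeds by a single chain of $\approx$ signs: it computes $\mathfrak{N}$ by duality in $g$, then replaces $f$ with $\sum_{I}\pm \bigtriangleup _{I;\kappa }^{\sigma }f$ (which leaves $\|f\|_{L^{p}(\sigma )}$ comparable by Alpert multiplier boundedness), takes an expectation over the signs, and applies Khintchine to convert $\mathbb{E}_{\pm }|\sum_{I}\pm (\cdot )|^{p}$ into the square function $(\sum _{I,J}|\cdot |^{2})^{p/2}$, landing on $\mathcal{AWBP}$ directly. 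Your argument instead introduces the operator $T_{\func{Adj}}$ explicitly, observes that its bilinear form is exactly $\mathsf{B}_{\func{Adj},\rho }$ so that $\|T_{\func{Adj}}\|_{L^{p}(\sigma )\rightarrow L^{p}(\omega )}=\mathfrak{N}$, and then exploits the key orthogonality identity $\bigtriangleup _{J;\kappa }^{\omega }T_{\func{Adj}}\bigtriangleup _{I;\kappa }^{\sigma }=\mathbf{1}_{\{J\in \func{Adj}_{\rho }(I)\}}\bigtriangleup _{J;\kappa }^{\omega }T_{\sigma }^{\lambda }\bigtriangleup _{I;\kappa }^{\sigma }$ to write $\mathcal{AWBP}$ as a nested square function of $T_{\func{Adj}}\bigtriangleup _{I;\kappa }^{\sigma }f$; it then peels off the outer ($\omega $-side) and inner ($\sigma $-side) square functions via $\ell ^{2}$-extensions, using (\ref{in full}) for $T_{\func{Adj}}$ in between. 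What this buys is a cleaner structural picture: one sees directly that the AWBP constant is just the norm of $T_{\func{Adj}}$ sandwiched between two $\ell ^{2}$-bounded square function maps, and that no cancellation structure in $T^{\lambda }$ is used. Your easy direction (Cauchy–Schwarz plus bounded multiplicity of the relation $J\in \func{Adj}_{\rho }(I)$) is essentially the argument the paper uses later in the ``Adjacent diagonal form'' subsection, while the paper's proof of the proposition itself gets this direction for free from the chain of $\approx$'s.

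One small point worth flagging: in your Step 1 you invoke the ``$\ell^{2}$-extended Alpert square function estimate from Theorem \ref{Alpert square thm}.'' That theorem is stated for scalar input, and (\ref{in full}) as written applies to scalar-valued $T$; what you actually need is the $\ell ^{2}$-extension of the $\ell ^{2}$-\emph{valued} map $h\mapsto \{\bigtriangleup _{J;\kappa }^{\omega }h\}_{J}$. This is true and the spherical-averaging argument goes through verbatim for Hilbert-space-valued targets (or one can derive it from Burkholder plus Khintchine for $\ell^{2}$-valued martingales), but it is a step beyond what is literally displayed in the paper, so you should say a word about it rather than citing (\ref{in full}) as if it covers the vector-valued case directly.
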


\begin{proof}
We have%
\begin{eqnarray*}
&&\mathfrak{N}_{L^{p}\left( \sigma \right) \times L^{p^{\prime }}\left(
\omega \right) }=\sup_{\left\Vert f\right\Vert _{L^{p}\left( \sigma \right)
}=\left\Vert g\right\Vert _{L^{p^{\prime }}\left( \omega \right)
}=1}\left\vert \mathsf{B}_{\func{Adj},\rho }\left( f,g\right) \right\vert \\
&=&\sup_{\left\Vert f\right\Vert _{L^{p}\left( \sigma \right) }=\left\Vert
g\right\Vert _{L^{p^{\prime }}\left( \omega \right) }=1}\left\vert \int_{%
\mathbb{R}^{n}}\left[ \sum_{I\in \mathcal{D}}\sum_{J\in \func{Adj},\rho
\left( I\right) }\bigtriangleup _{J}^{\omega }T_{\sigma }^{\lambda
}\bigtriangleup _{I}^{\sigma }f\left( x\right) \right] \ g\left( x\right) \
d\omega \left( x\right) \right\vert \\
&=&\sup_{\left\Vert f\right\Vert _{L^{p}\left( \sigma \right) }=1}\left(
\int_{\mathbb{R}^{n}}\left\vert \sum_{I\in \mathcal{D}}\sum_{J\in \func{Adj}%
,\rho \left( I\right) }\bigtriangleup _{J}^{\omega }T_{\sigma }^{\lambda
}\bigtriangleup _{I}^{\sigma }f\left( x\right) \right\vert ^{p}d\omega
\left( x\right) \right) ^{\frac{1}{p}}.
\end{eqnarray*}%
Now we use the fact that Haar multipliers are bounded on $L^{p}\left( \sigma
\right) $ to obtain%
\begin{equation*}
\left\Vert \sum_{I\in \mathcal{D}}\pm \bigtriangleup _{I}^{\sigma
}f\right\Vert _{L^{p}\left( \sigma \right) }\approx \left\Vert \sum_{I\in 
\mathcal{D}}\bigtriangleup _{I}^{\sigma }f\right\Vert _{L^{p}\left( \sigma
\right) }=\left\Vert f\right\Vert _{L^{p}\left( \sigma \right) }\ .
\end{equation*}%
Hence by the equivalence $\left\Vert \mathcal{S}_{\func{Haar}}f\right\Vert
_{L^{p}\left( \sigma \right) }\approx \left\Vert f\right\Vert _{L^{p}\left(
\sigma \right) }$, we have%
\begin{eqnarray*}
&&\mathfrak{N}_{L^{p}\left( \sigma \right) \times L^{p^{\prime }}\left(
\omega \right) }\approx \sup_{\left\Vert f\right\Vert _{L^{p}\left( \sigma
\right) }=1}\mathbb{E}_{\pm }\left( \int_{\mathbb{R}^{n}}\left\vert
\sum_{I\in \mathcal{D}}\sum_{J\in \func{Adj},\rho \left( I\right)
}\bigtriangleup _{J}^{\omega }T_{\sigma }^{\lambda }\left( \pm
\bigtriangleup _{I}^{\sigma }f\right) \left( x\right) \right\vert
^{p}d\omega \left( x\right) \right) ^{\frac{1}{p}} \\
&\approx &\sup_{\left\Vert f\right\Vert _{L^{p}\left( \sigma \right)
}=1}\left( \int_{\mathbb{R}^{n}}\left( \sum_{I\in \mathcal{D}}\sum_{J\in 
\func{Adj},\rho \left( I\right) }\left\vert \bigtriangleup _{J}^{\omega
}T_{\sigma }^{\lambda }\left( \bigtriangleup _{I}^{\sigma }f\right) \left(
x\right) \right\vert ^{2}\right) ^{\frac{p}{2}}d\omega \left( x\right)
\right) ^{\frac{1}{p}}=\mathcal{HWBP}_{T^{\lambda },p}^{\ell ^{2},\rho
}\left( \sigma ,\omega \right) .
\end{eqnarray*}
\end{proof}

Note that when $\rho =0$, we have $\func{Adj}_{\rho }\left( I\right) =\func{%
Adj}_{0}\left( I\right) =\func{Adj}\left( I\right) $ as defined in the
introduction.

\begin{lemma}
\label{stronger}Suppose that $\sigma $ and $\omega $ are doubling measures, $%
1<p<\infty $, and that $T^{\lambda }$ is a smooth $\lambda $-fractional
Calderon-Zygmund operator. Then for $0<\varepsilon <1$,%
\begin{equation*}
\mathcal{HWBP}_{T^{\lambda },p}^{\ell ^{2},\rho }\left( \sigma ,\omega
\right) \leq C_{\varepsilon }\left[ \mathcal{WBP}_{T^{\lambda },p}^{\ell
^{2}}\left( \sigma ,\omega \right) +A_{p}^{\lambda ,\ell ^{2},\limfunc{offset%
}}\left( \sigma ,\omega \right) \right] +\varepsilon \mathfrak{N}%
_{T^{\lambda },p}\left( \sigma ,\omega \right) .
\end{equation*}
\end{lemma}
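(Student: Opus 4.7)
By Proposition~\ref{diag form}, the claimed inequality is equivalent to a bilinear bound for
\[
\mathsf{B}_{\func{Adj},\rho}(f,g)=\sum_{I\in\mathcal{D}}\sum_{J\in\func{Adj}_{\rho}(I)}\left\langle T_\sigma^\lambda \Delta_{I;\kappa}^\sigma f,\Delta_{J;\kappa}^\omega g\right\rangle_\omega,
\]
namely that $|\mathsf{B}_{\func{Adj},\rho}(f,g)|$ is controlled by $[C_\varepsilon(\mathcal{WBP}^{\ell^2}_{T^\lambda,p}+A_p^{\lambda,\ell^2,\func{offset}})+\varepsilon\mathfrak{N}_{T^\lambda,p}]\|f\|_{L^p(\sigma)}\|g\|_{L^{p'}(\omega)}$. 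I would split the pairs $(I,J)$ with $J\in\func{Adj}_\rho(I)$ into three regimes according to the relative geometry of $I$ and $J$: (i) same-scale pairs with $J\in\func{Adj}(I)$ (including $J=I$); (ii) different-scale pairs in which $J$ is a descendant (or ancestor) of a same-scale adjacent cube but $J$ sits at positive distance from the supporting child of the other wavelet; (iii) the remaining nested pairs with $\ell(J)\neq \ell(I)$ in which $J$ lies deep inside a single child $I'$ of $I$.

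For regime (i), invoke the pointwise $L^\infty$ estimate $\|\Delta_{I;\kappa}^\mu h\|_\infty\lesssim E_I^\mu|h|$ from~(\ref{analogue}) to replace each Alpert wavelet by a constant times an indicator on its support, pass to a random-sign trick as in Proposition~\ref{diag form}, and apply the quadratic weak boundedness property~(\ref{WBP}), which includes the case $I_i^*=I_i$. The Alpert square function bound from Theorem~\ref{Alpert square thm} together with the doubling-weighted Fefferman--Stein vector maximal inequality then yields the $\mathcal{WBP}^{\ell^2}_{T^\lambda,p}$ contribution. For regime (ii), decompose each $\Delta_{I;\kappa}^\sigma f=\sum_{I'\in\mathfrak{C}(I)}(\mathbb{E}_{I';\kappa}^\sigma \Delta_{I;\kappa}^\sigma f)\mathbf{1}_{I'}$ and pair piece-by-piece with $\Delta_{J;\kappa}^\omega g$. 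The size bound~(\ref{sizeandsmoothness'}), combined with the telescoping identity and the doubling reduction of the $\kappa$-Poisson kernel in~(\ref{kappa large}), converts the pairing estimate into one governed by the offset Muckenhoupt quantity~(\ref{quad A2 tailless}); summing in $I$ and $J$ with the Alpert square function and vector maximal inequality produces the $A_p^{\lambda,\ell^2,\func{offset}}$ contribution.

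For regime (iii), the polynomial piece $p_{I'}\mathbf{1}_{I'}$ of $\Delta_{I;\kappa}^\sigma f$ on the child $I'\supset J$ interacts with $T_\sigma^\lambda$ in a way that cannot be reduced to either the weak boundedness or offset Muckenhoupt condition, since $p_{I'}\mathbf{1}_{I'}$ is not cancellative and $T_\sigma^\lambda(p_{I'}\mathbf{1}_{I'})$ fails to be smooth near $\partial I'$. Here I would split $p_{I'}\mathbf{1}_{I'}=p_{I'}\eta_\varepsilon \mathbf{1}_{I'}+p_{I'}(1-\eta_\varepsilon)\mathbf{1}_{I'}$, with $\eta_\varepsilon$ a smooth cutoff supported in an $\varepsilon$-neighborhood (in scale) of the face of $I'$ where $J$ lives. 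On the $(1-\eta_\varepsilon)$ part, $J$ is at scale-comparable distance from the support of the source, so the usual kernel estimate and~(\ref{quad A2 tailless}) give a bound by $C_\varepsilon A_p^{\lambda,\ell^2,\func{offset}}\|f\|\|g\|$. On the $\eta_\varepsilon$ part, the modified wavelet $\widetilde{\Delta}_{I;\kappa}^\sigma f$ obtained by replacing $p_{I'}\mathbf{1}_{I'}$ with $p_{I'}\eta_\varepsilon \mathbf{1}_{I'}$ has $L^p(\sigma)$-norm at most $C\varepsilon^{1/p}\|\Delta_{I;\kappa}^\sigma f\|_{L^p(\sigma)}$ by the doubling hypothesis, so after a quasi-orthogonality step and the square function estimate, the full norm inequality~(\ref{2 wt norm}) applied to the resulting function of small norm yields the $\varepsilon\mathfrak{N}_{T^\lambda,p}$ contribution (after a harmless relabeling of $\varepsilon$).

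\emph{Main obstacle.} The hardest step is regime~(iii): the interior polynomial piece $p_{I'}\mathbf{1}_{I'}$ for $\kappa>1$ is genuinely non-cancellative, and no amount of cancellation on the $\Delta_{J;\kappa}^\omega g$ side can remove the singularity of $T^\lambda$ near $\partial I'\cap \partial J$. The $\varepsilon\mathfrak{N}$ slack on the right is designed precisely to absorb this residue via a cutoff calibrated to $\varepsilon$, and the key quantitative ingredient is the doubling control~(\ref{kappa large}) of $\kappa$-Poisson tails by ordinary averages, which is what converts the $\varepsilon$-thin neighborhood into an actual factor of $\varepsilon$ in the $L^p(\sigma)$-norm.
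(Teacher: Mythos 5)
Your reduction to the bilinear form via Proposition~\ref{diag form} is right, and you correctly anticipate the three-way outcome (weak boundedness, offset Muckenhoupt, small multiple of $\mathfrak{N}$). But the mechanism you propose is different from, and in two places weaker than, what actually makes the proof go. The paper's key idea is to \emph{approximate the Alpert polynomial pieces by step functions} at a fixed sub-grid depth $m=m(\varepsilon,n,\kappa)$: on each child $I^{\prime}$, write the degree-$<\kappa$ polynomial $p_{I^{\prime}}\mathbf{1}_{I^{\prime}}=\sum_{I^{\prime\prime}\in\mathfrak{C}^{(m)}_{\mathcal D}(I^{\prime})}a_{I^{\prime\prime}}\mathbf{1}_{I^{\prime\prime}}+\operatorname{Error}$ with $\|\operatorname{Error}\|_{\infty}<\varepsilon$, and likewise for the $\omega$-side. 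After this, the whole pairing becomes a sum of scalar pairings $\langle T^\lambda_\sigma\mathbf{1}_{I^{\prime\prime}},\mathbf{1}_{J^{\prime\prime}}\rangle_\omega$ among dyadic cubes of a \emph{single common side length}, which split cleanly into touching (WBP) and separated (offset Muckenhoupt) pairs; the two $L^\infty$-errors produce exactly the $\varepsilon\mathfrak{N}_{T^\lambda,p}$ term. This is the idea borrowed from \cite[Theorem 9]{AlSaUr} that the lemma hinges on.

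Your regime (i) skips this step: replacing $\Delta_{I;\kappa}^\sigma f$ by ``a constant times an indicator'' via~(\ref{analogue}) is not a legitimate reduction when $\kappa>1$, since $\Delta_{I;\kappa}^\sigma f$ restricted to each child is a genuine non-constant polynomial, and the pairing with $T^\lambda_\sigma$ cannot be pulled out of the integral without losing the polynomial structure. The random-sign trick in Proposition~\ref{diag form} is for the $\ell^2$-aggregation, not for flattening polynomials. Your regime (iii) has a more substantial gap: the singular part of $T^\lambda_\sigma(p_{I^{\prime}}\mathbf{1}_{I^{\prime}})$ on $J\subset I^{\prime}$ comes from $y\in 2J$, i.e.\ from the diagonal, not from $\partial I^{\prime}$, so a cutoff $\eta_\varepsilon$ localized to a thin collar of $\partial I^{\prime}$ does not remove it. Moreover, the scales $\ell(J)\geq 2^{-\rho}\ell(I)$ are comparable, so you cannot make the collar thin relative to $J$ while keeping the excised mass $O(\varepsilon)$: the excised portion of $I^{\prime}$ would have to contain $2J$ (size $\gtrsim 2^{-\rho}\ell(I^{\prime})$), forcing $\varepsilon\gtrsim 2^{-\rho}$, which destroys the arbitrariness of $\varepsilon$. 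The step-function approximation avoids all of this because the fineness $m$ is decoupled from $\rho$ and is chosen only to beat $\varepsilon$ in the $L^\infty$ polynomial-approximation error; once both sides are sums of equal-size indicators, ``touching'' vs.\ ``separated'' is the only dichotomy needed and every touching pair is literally an instance of the quadratic WBP~(\ref{WBP}).
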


\begin{proof}
Fix a dyadic cube $I$. We can write 
\begin{eqnarray*}
\bigtriangleup _{I}^{\sigma }f &=&\sum_{I^{\prime }\in \mathfrak{C}_{%
\mathcal{D}}\left( I\right) }\sum_{I^{\prime \prime }\in \mathfrak{C}_{%
\mathcal{D}}^{\left( m\right) }\left( I^{\prime }\right) }a_{I^{\prime
\prime }}\mathbf{1}_{I^{\prime \prime }}\ , \\
\bigtriangleup _{J}^{\omega }g &=&\sum_{J^{\prime }\in \mathfrak{C}_{%
\mathcal{D}}\left( J\right) }\sum_{J^{\prime \prime }\in \mathfrak{C}_{%
\mathcal{D}}^{\left( m\right) }\left( J^{\prime }\right) }b_{J^{\prime
\prime }}\mathbf{1}_{J^{\prime \prime }}\ ,
\end{eqnarray*}%
where the constants $a_{I^{\prime \prime }}$ and $b_{J^{\prime \prime }}$
are controlled by $\left\Vert \mathbb{E}_{I^{\prime }}\left( \bigtriangleup
_{I;\kappa }^{\sigma }f\right) \right\Vert _{\infty }$ and $\left\Vert 
\mathbb{E}_{J^{\prime }}\left( \bigtriangleup _{J;\kappa }^{\omega }g\right)
\right\Vert _{\infty }$ respectively. Thus we have%
\begin{eqnarray*}
&&\mathsf{B}_{\func{Adj},\rho }\left( f,g\right) \equiv \sum_{I\in \mathcal{D%
}}\sum_{J\in \func{Adj}_{\rho }\left( I\right) }\left\langle T_{\sigma
}^{\lambda }\bigtriangleup _{I;\kappa }^{\sigma }f,\bigtriangleup _{J;\kappa
}^{\omega }g\right\rangle _{\omega } \\
&=&\left\{ \sum_{I\in \mathcal{D}}\sum_{I^{\prime }\in \mathfrak{C}_{%
\mathcal{D}}\left( I\right) }\sum_{I^{\prime \prime }\in \mathfrak{C}_{%
\mathcal{D}}^{\left( m\right) }\left( I^{\prime }\right) }\right\} \left\{
\sum_{J\in \func{Adj}_{\rho }\left( I\right) }\sum_{J^{\prime }\in \mathfrak{%
C}_{\mathcal{D}}\left( I\right) }\sum_{J^{\prime \prime }\in \mathfrak{C}_{%
\mathcal{D}}^{\left( m\right) }\left( J^{\prime }\right) }\right\}
a_{I^{\prime \prime }}b_{J^{\prime \prime }}\left\langle T_{\sigma
}^{\lambda }\mathbf{1}_{I^{\prime \prime }},\mathbf{1}_{J^{\prime \prime
}}\right\rangle _{\omega } \\
&=&\left\{ \sum_{\overline{J^{\prime \prime }}\cap \overline{I^{\prime
\prime }}=\emptyset \text{ and }J\in \func{Adj}_{\rho }\left( I\right)
}+\sum_{\overline{J^{\prime \prime }}\cap \overline{I^{\prime \prime }}%
\not=\emptyset \text{ and }J\in \func{Adj}_{\rho }\left( I\right) }\right\}
a_{I^{\prime \prime }}b_{J^{\prime \prime }}\left\langle T_{\sigma
}^{\lambda }\mathbf{1}_{I^{\prime \prime }},\mathbf{1}_{J^{\prime \prime
}}\right\rangle _{\omega }\equiv T_{\func{sep}}+T_{\func{touch}},
\end{eqnarray*}%
where we have suppressed many of the conditions governing the dyadic cubes $%
I^{\prime \prime }$ and $J^{\prime \prime }$, including the fact that $\ell
\left( J^{\prime \prime }\right) =2^{-m-1}\ell \left( J\right) =2^{-m-1}\ell
\left( I\right) =\ell \left( I^{\prime \prime }\right) $. Thus the cubes $%
J^{\prime \prime }$ and $I^{\prime \prime }$ arising in term $T_{\func{sep}}$
are separated and it is then an easy matter to see that%
\begin{equation*}
\left\vert T_{\func{sep}}\right\vert \leq C_{m}A_{p}^{\lambda ,\ell ^{2},%
\limfunc{offset}}\left( \sigma ,\omega \right) \left\Vert f\right\Vert
_{L^{p}\left( \sigma \right) }\left\Vert g\right\Vert _{L^{p^{\prime
}}\left( \omega \right) }.
\end{equation*}%
As for the term $T_{\func{touch}}$, it is controlled by the weak boundedness
constant,

\begin{equation*}
\left\vert T_{\func{touch}}\right\vert \leq C_{m,\rho }\mathcal{WBP}%
_{T^{\lambda },p}^{\ell ^{2}}\left( \sigma ,\omega \right) \left\Vert
f\right\Vert _{L^{p}\left( \sigma \right) }\left\Vert g\right\Vert
_{L^{p^{\prime }}\left( \omega \right) },
\end{equation*}%
since since the cubes $J^{\prime \prime }$ and $I^{\prime \prime }$ are
adjacent in this sum.
\end{proof}

\section{Forms requiring testing conditions}

The three forms requiring conditions other than those of Muckenhoupt type,
are the Haar adjacent diagonal form, which uses only the quadratic Haar weak
boundedness constant $\mathcal{HWBP}_{T^{\lambda },p}^{\ell ^{2},\rho }$,
and the two dual paraproduct forms, which each use only the appropriate
scalar testing condition $\mathfrak{T}_{T^{\lambda },p}$ or $\mathfrak{T}%
_{T^{\lambda ,\ast },p^{\prime }}$.

\subsection{Adjacent diagonal form}

Here we control the quadratic adjacent form by%
\begin{eqnarray*}
\left\vert \mathsf{B}_{\func{Adj},\rho }\left( f,g\right) \right\vert
&=&\left\vert \dsum\limits_{I\in \mathcal{D},\ J\in \func{Adj}_{\rho }\left(
I\right) }\left\langle T_{\sigma }^{\lambda }\left( \bigtriangleup
_{I}^{\sigma }f\right) ,\left( \bigtriangleup _{J}^{\omega }g\right)
\right\rangle _{\omega }\right\vert \\
&=&\left\vert \int_{\mathbb{R}^{n}}\dsum\limits_{I\in \mathcal{D},\ J\in 
\func{Adj}_{\rho }\left( I\right) }\bigtriangleup _{J}^{\omega }T_{\sigma
}^{\lambda }\left( \bigtriangleup _{I}^{\sigma }f\right) \left( x\right) \
\bigtriangleup _{J}^{\omega }g\left( x\right) \ d\omega \left( x\right)
\right\vert \\
&\leq &\int_{\mathbb{R}^{n}}\left( \dsum\limits_{I\in \mathcal{D},\ J\in 
\func{Adj}_{\rho }\left( I\right) }\left\vert \bigtriangleup _{J}^{\omega
}T_{\sigma }^{\lambda }\left( \bigtriangleup _{I}^{\sigma }f\right) \left(
x\right) \right\vert ^{2}\right) ^{\frac{1}{2}}\ \left( \dsum\limits_{I\in 
\mathcal{D},\ J\in \func{Adj}_{\rho }\left( I\right) }\left\vert
\bigtriangleup _{J}^{\omega }g\left( x\right) \right\vert ^{2}\right) ^{%
\frac{1}{2}}\ d\omega \left( x\right) \\
&\lesssim &\left\Vert \left( \dsum\limits_{I\in \mathcal{D},\ J\in \func{Adj}%
_{\rho }\left( I\right) }\left\vert \bigtriangleup _{J}^{\omega }T_{\sigma
}^{\lambda }\left( \bigtriangleup _{I}^{\sigma }f\right) \left( x\right)
\right\vert ^{2}\right) ^{\frac{1}{2}}\right\Vert _{L^{p}\left( \omega
\right) }\ \left\Vert \mathcal{S}_{\func{Haar}}g\right\Vert _{L^{p^{\prime
}}\left( \omega \right) }.
\end{eqnarray*}%
We have $\left\Vert \mathcal{S}_{\func{Haar}}g\right\Vert _{L^{p^{\prime
}}\left( \omega \right) }\approx \left\Vert g\right\Vert _{L^{p^{\prime
}}\left( \omega \right) }$ by a square function estimate, and using the
quadratic Haar weak boundedness property, we obtain%
\begin{equation*}
\left( \int_{\mathbb{R}^{n}}\left( \dsum\limits_{I\in \mathcal{D},\ J\in 
\func{Adj}_{\rho }\left( I\right) }\left\vert \bigtriangleup _{J}^{\omega
}T_{\sigma }^{\lambda }\left( \bigtriangleup _{I}^{\sigma }f\right) \left(
x\right) \right\vert ^{2}\right) ^{\frac{p}{2}}d\omega \left( x\right)
\right) ^{\frac{1}{p}}\lesssim \mathcal{HWBP}_{T^{\lambda },p}^{\ell
^{2},\rho }\left( \sigma ,\omega \right) \left\Vert f\right\Vert
_{L^{p}\left( \sigma \right) }\ ,
\end{equation*}%
and so altogether that%
\begin{equation*}
\left\vert \mathsf{B}_{\func{Adj},\rho }\left( f,g\right) \right\vert
\lesssim \mathcal{HWBP}_{T^{\lambda },p}^{\ell ^{2},\rho }\left( \sigma
,\omega \right) \ \left\Vert f\right\Vert _{L^{p}\left( \sigma \right)
}\left\Vert g\right\Vert _{L^{p^{\prime }}\left( \omega \right) }.
\end{equation*}

Recall that in Lemma \ref{stronger}, we have controlled the Haar weak
boundedness property constant $\mathcal{HWBP}_{T^{\lambda },p}^{\ell
^{2},\rho }$ by the adjacent weak boundedness property constant $\mathcal{WBP%
}_{T^{\lambda },p}^{\ell ^{2}}$ and the offset Muckenhoupt constant $%
A_{p}^{\lambda ,\ell ^{2},\limfunc{offset}}$, plus a small multiple of the
operator norm. This will be used at the end of the proof to eliminate the
use of $\mathcal{HWBP}_{T^{\lambda },p}^{\ell ^{2},\rho }$.

\subsection{Paraproduct form}

Here we must bound the paraproduct form,%
\begin{eqnarray*}
\mathsf{B}_{\limfunc{paraproduct}}\left( f,g\right) &=&\sum_{F\in \mathcal{F}%
}\mathsf{B}_{\limfunc{paraproduct}}^{F}\left( f,g\right) =\sum_{F\in 
\mathcal{F}}\sum_{\substack{ I\in \mathcal{C}_{F}\text{ and }J\in \mathcal{C}%
_{F}^{\tau -\limfunc{shift}}  \\ J\Subset _{\rho ,\varepsilon }I}}%
\left\langle M_{I_{J}}T_{\sigma }^{\lambda }\mathbf{1}_{F},\bigtriangleup
_{J}^{\omega }g\right\rangle _{\omega } \\
&=&\sum_{F\in \mathcal{F}}\sum_{J\in \mathcal{C}_{F}^{\tau -\limfunc{shift}%
}}\sum_{I\in \mathcal{C}_{F}\text{ }J\Subset _{\rho ,\varepsilon
}I}\left\langle M_{I_{J}}T_{\sigma }^{\lambda }\mathbf{1}_{F},\bigtriangleup
_{J}^{\omega }g\right\rangle _{\omega } \\
&=&\sum_{F\in \mathcal{F}}\sum_{J\in \mathcal{C}_{F}^{\tau -\limfunc{shift}%
}}\left\langle \left[ \mathbf{1}_{\widehat{J}_{J}}\left[ \left( \mathbb{E}_{%
\widehat{J}}^{\sigma }f\right) -\left( \mathbb{E}_{F}^{\sigma }f\right) %
\right] \right] T_{\sigma }^{\lambda }\mathbf{1}_{F},\bigtriangleup
_{J}^{\omega }g\right\rangle _{\omega },
\end{eqnarray*}%
where $\widehat{J}$ is the smallest $I\in \mathcal{C}_{F}$ for which $%
J\Subset _{\rho ,\varepsilon }I$, in Theorem \ref{main}. Note that because
of the projection $\bigtriangleup _{J}^{\omega }g$ the telescoping sum in
the second line above is restricted to $J$. Define $\widetilde{g}=\sum_{J\in 
\mathcal{D}}\frac{\mathbf{1}_{\widehat{J}_{J}}\left[ \left( \mathbb{E}_{%
\widehat{J}}^{\sigma }f\right) -\left( \mathbb{E}_{F}^{\sigma }f\right) %
\right] }{E_{F}^{\sigma }\left\vert f\right\vert }\bigtriangleup
_{J}^{\omega }g$, and noting that $\left\vert \mathbb{E}_{\widehat{J}%
}^{\sigma }f\right\vert +\left\vert \mathbb{E}_{F}^{\sigma }f\right\vert
\lesssim E_{F}^{\sigma }\left\vert f\right\vert $ by (\ref{analogue}), we
obtain%
\begin{eqnarray*}
&&\left\vert \mathsf{B}_{\limfunc{paraproduct}}\left( f,g\right) \right\vert
=\left\vert \sum_{F\in \mathcal{F}}\mathsf{B}_{\limfunc{paraproduct}%
}^{F}\left( f,g\right) \right\vert =\left\vert \sum_{F\in \mathcal{F}%
}\sum_{J\in \mathcal{C}_{F}^{\tau -\limfunc{shift}}}\left\langle \left( 
\mathbf{1}_{\widehat{J}_{J}}\left[ \left( \mathbb{E}_{\widehat{J}}^{\sigma
}f\right) -\left( \mathbb{E}_{F}^{\sigma }f\right) \right] \right) T_{\sigma
}^{\lambda }\mathbf{1}_{F},\bigtriangleup _{J}^{\omega }g\right\rangle
_{\omega }\right\vert \\
&=&\left\vert \sum_{F\in \mathcal{F}}\sum_{J\in \mathcal{C}_{F}^{\tau -%
\limfunc{shift}}}\left\langle T_{\sigma }^{\lambda }\mathbf{1}_{F},\left( 
\mathbf{1}_{\widehat{J}_{J}}\left[ \left( \mathbb{E}_{\widehat{J}}^{\sigma
}f\right) -\left( \mathbb{E}_{F}^{\sigma }f\right) \right] \right)
\bigtriangleup _{J}^{\omega }g\right\rangle _{\omega }\right\vert \\
&=&\left\vert \sum_{F\in \mathcal{F}}\left\langle T_{\sigma }^{\lambda }%
\mathbf{1}_{F},\sum_{J\in \mathcal{C}_{F}^{\tau -\limfunc{shift}}}\left( 
\mathbf{1}_{\widehat{J}_{J}}\left[ \left( \mathbb{E}_{\widehat{J}}^{\sigma
}f\right) -\left( \mathbb{E}_{F}^{\sigma }f\right) \right] \right)
\bigtriangleup _{J}^{\omega }g\right\rangle _{\omega }\right\vert \\
&=&\left\vert \sum_{F\in \mathcal{F}}\left\langle \mathbf{1}_{F}T_{\sigma
}^{\lambda }\mathbf{1}_{F},\sum_{J\in \mathcal{C}_{F}^{\tau -\limfunc{shift}%
}}\left( \mathbf{1}_{\widehat{J}_{J}}\left[ \left( \mathbb{E}_{\widehat{J}%
}^{\sigma }f\right) -\left( \mathbb{E}_{F}^{\sigma }f\right) \right] \right)
\bigtriangleup _{J}^{\omega }g\right\rangle _{\omega }\right\vert
=\left\vert \int_{\mathbb{R}^{n}}\sum_{F\in \mathcal{F}}\mathbf{1}%
_{F}T_{\sigma }^{\lambda }\mathbf{1}_{F}\left( x\right) \sum_{J\in \mathcal{C%
}_{F}^{\tau -\limfunc{shift}}}P_{J}\bigtriangleup _{J}^{\omega }g\left(
x\right) d\omega \left( x\right) \right\vert ,
\end{eqnarray*}%
where $P_{J}$ is the constant $\mathbf{1}_{\widehat{J}_{J}}\left[ \left( 
\mathbb{E}_{\widehat{J}}^{\sigma }f\right) -\left( \mathbb{E}_{F}^{\sigma
}f\right) \right] $. The final term above is dominated by%
\begin{eqnarray*}
&&\int_{\mathbb{R}^{n}}\left( \sum_{F\in \mathcal{F}}E_{F}^{\sigma
}\left\vert f\right\vert ^{2}\left\vert \mathbf{1}_{F}T_{\sigma }^{\lambda }%
\mathbf{1}_{F}\left( x\right) \right\vert ^{2}\right) ^{\frac{1}{2}}\left(
\sum_{F\in \mathcal{F}}\left\vert \sum_{J\in \mathcal{C}_{F}^{\tau -\limfunc{%
shift}}}\frac{P_{J}}{E_{F}^{\sigma }\left\vert f\right\vert }\bigtriangleup
_{J}^{\omega }g\left( x\right) \right\vert ^{2}\right) ^{\frac{1}{2}}d\omega
\left( x\right) \\
&\leq &\left( \int_{\mathbb{R}^{n}}\left( \sum_{F\in \mathcal{F}%
}E_{F}^{\sigma }\left\vert f\right\vert ^{2}\left\vert \mathbf{1}%
_{F}T_{\sigma }^{\lambda }\mathbf{1}_{F}\left( x\right) \right\vert
^{2}\right) ^{\frac{p}{2}}d\omega \left( x\right) \right) ^{\frac{1}{p}%
}\left( \int_{\mathbb{R}^{n}}\left( \sum_{F\in \mathcal{F}}\left\vert
\sum_{J\in \mathcal{C}_{F}^{\tau -\limfunc{shift}}}\frac{P_{J}}{%
E_{F}^{\sigma }\left\vert f\right\vert }\bigtriangleup _{J}^{\omega }g\left(
x\right) \right\vert ^{2}\right) ^{\frac{p^{\prime }}{2}}d\omega \left(
x\right) \right) ^{\frac{1}{p^{\prime }}}.
\end{eqnarray*}%
The first factor above is controlled by the local quadratic testing
characteristic,%
\begin{eqnarray*}
\left( \int_{\mathbb{R}^{n}}\left( \sum_{F\in \mathcal{F}}\left(
E_{F}^{\sigma }\left\vert f\right\vert \right) ^{2}\left\vert \mathbf{1}%
_{F}T_{\sigma }^{\lambda }\mathbf{1}_{F}\left( x\right) \right\vert
^{2}\right) ^{\frac{p}{2}}d\omega \left( x\right) \right) ^{\frac{1}{p}}
&\leq &\mathfrak{T}_{T^{\lambda },p}^{\ell ^{2},\limfunc{loc}}\left( \sigma
,\omega \right) \left( \int_{\mathbb{R}^{n}}\left( \sum_{F\in \mathcal{F}%
}\left( E_{F}^{\sigma }\left\vert f\right\vert \right) ^{2}\mathbf{1}%
_{F}\left( x\right) \right) ^{\frac{p}{2}}d\sigma \left( x\right) \right) ^{%
\frac{1}{p}} \\
&\lesssim &\mathfrak{T}_{T^{\lambda },p}^{\ell ^{2},\limfunc{loc}}\left(
\sigma ,\omega \right) \left\Vert f\right\Vert _{L^{p}\left( \sigma \right)
},
\end{eqnarray*}%
and the second factor above is controlled by the square function estimates
and the inequality $\left\vert \frac{P_{J}}{E_{F}^{\sigma }\left\vert
f\right\vert }\right\vert \lesssim 1$. Indeed with $\widetilde{g}\equiv
\sum_{J\in \mathcal{C}_{F}^{\tau -\limfunc{shift}}}\frac{P_{J}}{%
E_{F}^{\sigma }\left\vert f\right\vert }\bigtriangleup _{J}^{\omega }g\left(
x\right) $ we have%
\begin{equation}
\int_{\mathbb{R}^{n}}\left( \sum_{F\in \mathcal{F}}\left\vert \sum_{J\in 
\mathcal{C}_{F}^{\tau -\limfunc{shift}}}\frac{P_{J}}{E_{F}^{\sigma
}\left\vert f\right\vert }\bigtriangleup _{J}^{\omega }\left( g\right)
\left( x\right) \right\vert ^{2}\right) ^{\frac{p^{\prime }}{2}}d\omega
\left( x\right) \lesssim \left\Vert \widetilde{g}\right\Vert _{L^{p^{\prime
}}\left( \omega \right) }^{p^{\prime }}\lesssim \left\Vert g\right\Vert
_{L^{p^{\prime }}\left( \omega \right) }^{p^{\prime }},\ \ \ \ \ 1<p^{\prime
}<\infty .  \label{turn to}
\end{equation}

\section{Forms requiring quadratic offset Muckenhoupt conditions}

To bound the disjoint $\mathsf{B}_{\cap }\left( f,g\right) $, comparable $%
\mathsf{B}_{/}\left( f,g\right) $, stopping $\mathsf{B}_{\limfunc{stop}%
}\left( f,g\right) $, far below $\mathsf{B}_{\func{far}\func{below}}\left(
f,g\right) $, and neighbour $\mathsf{B}_{\limfunc{neighbour}}\left(
f,g\right) $ forms, we will need the quadratic offset Muckenhoupt
conditions, as well as a Pivotal Lemma, which originated in \cite{NTV4}. For 
$0\leq \lambda <n$ and $t\in \mathbb{R}_{+}$, recall the $t^{th}$-order
fractional Poisson integral%
\begin{equation*}
\mathrm{P}_{t}^{\lambda }\left( J,\mu \right) \equiv \int_{\mathbb{R}^{n}}%
\frac{\ell \left( J\right) ^{t}}{\left( \ell \left( J\right) +\left\vert
y-c_{J}\right\vert \right) ^{t+n-\lambda }}d\mu \left( y\right) ,
\end{equation*}%
where $\mathrm{P}_{1}^{\lambda }\left( J,\mu \right) =\mathrm{P}^{\lambda
}\left( J,\mu \right) $ is the standard Poisson integral of order $\lambda $%
. The following Poisson estimate from \cite[Lemma 33]{Saw6} is a
straightforward extension of the case $\kappa =1$ due to Nazarov, Treil and
Volberg in \cite{NTV4}, which provided the vehicle through which geometric
gain was derived from their groundbreaking notion of goodness.

\begin{lemma}
\label{Poisson inequality}Suppose that $J\subset I\subset K$ and that $%
\limfunc{dist}\left( J,\partial I\right) >2\sqrt{n}\ell \left( J\right)
^{\varepsilon }\ell \left( I\right) ^{1-\varepsilon }$. Then%
\begin{equation}
\mathrm{P}^{\lambda }(J,\sigma \mathbf{1}_{K\setminus I})\lesssim \left( 
\frac{\ell \left( J\right) }{\ell \left( I\right) }\right) ^{1-\varepsilon
\left( n+1-\lambda \right) }\mathrm{P}^{\lambda }(I,\sigma \mathbf{1}%
_{K\setminus I}).  \label{e.Jsimeq}
\end{equation}
\end{lemma}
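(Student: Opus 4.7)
The strategy is to establish the inequality pointwise in $y \in K \setminus I$, and then integrate against $d\sigma$. The pointwise target is
\begin{equation*}
\frac{\ell(J)^{\kappa}}{\bigl(\ell(J)+|y-c_J|\bigr)^{n+\kappa-\lambda}} \;\lesssim\; \left(\frac{\ell(J)}{\ell(I)}\right)^{\kappa-\varepsilon(n+\kappa-\lambda)} \frac{\ell(I)^{\kappa}}{\bigl(\ell(I)+|y-c_I|\bigr)^{n+\kappa-\lambda}},
\end{equation*}
which after canceling the $\ell(J)^{\kappa}/\ell(I)^{\kappa}$ factors reduces to verifying
\begin{equation*}
\frac{\ell(I)+|y-c_I|}{\ell(J)+|y-c_J|} \;\lesssim\; \left(\frac{\ell(I)}{\ell(J)}\right)^{\varepsilon}, \qquad y \in K\setminus I.
\end{equation*}

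First I would observe that since $J\subset I$, we have $|c_I - c_J| \leq \tfrac{\sqrt{n}}{2}\ell(I)$, so the triangle inequality yields $\ell(I)+|y-c_I| \asymp \ell(I)+|y-c_J|$, and the problem reduces to bounding the ratio $\bigl(\ell(I)+|y-c_J|\bigr)/\bigl(\ell(J)+|y-c_J|\bigr)$. I would then split into two cases according to whether $|y-c_J|$ exceeds $\ell(I)$ or not. When $|y-c_J|\geq \ell(I)$, the numerator is at most $2|y-c_J|$ and the denominator is at least $|y-c_J|$, so the ratio is bounded by an absolute constant, which is in turn dominated by $(\ell(I)/\ell(J))^{\varepsilon}$.

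The other case $|y-c_J|<\ell(I)$ is the one that genuinely uses the goodness hypothesis. Here the numerator is at most $2\ell(I)$. For the denominator, since $y\in K\setminus I$ lies outside $I$ while $c_J\in J\subset I$, the segment from $c_J$ to $y$ must cross $\partial I$, giving $|y-c_J|\geq \limfunc{dist}(J,\partial I) > 2\sqrt{n}\,\ell(J)^{\varepsilon}\ell(I)^{1-\varepsilon}$ by hypothesis. Therefore
\begin{equation*}
\frac{\ell(I)+|y-c_J|}{\ell(J)+|y-c_J|} \;\leq\; \frac{2\ell(I)}{2\sqrt{n}\,\ell(J)^{\varepsilon}\ell(I)^{1-\varepsilon}} \;=\; \frac{1}{\sqrt{n}}\left(\frac{\ell(I)}{\ell(J)}\right)^{\varepsilon},
\end{equation*}
which completes the pointwise estimate. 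Integrating both sides against $d\sigma$ over $K\setminus I$ immediately yields \eqref{e.Jsimeq}.

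There is no serious obstacle: the only delicate point is the case distinction in the denominator bound, where the goodness condition $\limfunc{dist}(J,\partial I)>2\sqrt{n}\,\ell(J)^{\varepsilon}\ell(I)^{1-\varepsilon}$ is exactly what is needed to defeat the worst-case geometry $y\in \partial I$ close to $J$. All other steps reduce to elementary triangle-inequality comparisons, and the bound $t\geq 1$ (equivalently $\kappa\geq 1$) enters only through the tacit positivity of the exponents involved.
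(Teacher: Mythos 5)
Your proof is correct, and it follows the standard route (the paper itself does not reprove the lemma but cites \cite[Lemma 33]{Saw6}, which in turn follows the Nazarov--Treil--Volberg argument): reduce the Poisson estimate to a pointwise ratio bound
\begin{equation*}
\frac{\ell(I)+|y-c_I|}{\ell(J)+|y-c_J|}\lesssim\left(\frac{\ell(I)}{\ell(J)}\right)^{\varepsilon},\qquad y\in K\setminus I,
\end{equation*}
and verify it by comparing $c_I$ to $c_J$ and splitting on whether $|y-c_J|$ exceeds $\ell(I)$, with the goodness hypothesis $\limfunc{dist}(J,\partial I)>2\sqrt{n}\,\ell(J)^{\varepsilon}\ell(I)^{1-\varepsilon}$ supplying exactly the needed lower bound on $|y-c_J|$ in the near case. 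The arithmetic cancellation of $\ell(J)^{\kappa}/\ell(I)^{\kappa}$ and the extraction of the $(n+\kappa-\lambda)$-th root (which requires $n+\kappa-\lambda>0$, guaranteed by $\kappa\ge1$ and $\lambda<n$) are handled correctly, and integration over $K\setminus I$ against $d\sigma$ finishes.
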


\begin{lemma}[\textbf{Pivotal Lemma}]
\label{ener}Let $J\ $be a cube in $\mathcal{D}$, and let $\Psi _{J}$ be an $%
L^{2}\left( \omega \right) $ function supported in $J$ with vanishing $%
\omega $-mean. Let $\nu $ be a positive measure supported in $\mathbb{R}%
^{n}\setminus \gamma J$ with $\gamma >1$, and let $T^{\lambda }$ be a
standard $\lambda $-fractional singular integral operator with $0\leq
\lambda <n$. Then we have the `pivotal' bound%
\begin{equation}
\left\vert \left\langle T^{\lambda }\left( \varphi \nu \right) ,\Psi
_{J}\right\rangle _{L^{2}\left( \omega \right) }\right\vert \lesssim
C_{\gamma }\mathrm{P}^{\lambda }\left( J,\nu \right) \left\Vert \Psi
_{J}\right\Vert _{L^{1}\left( \omega \right) }\leq C_{\gamma }\mathrm{P}%
^{\lambda }\left( J,\nu \right) \sqrt{\left\vert J\right\vert _{\omega }}%
\left\Vert \Psi _{J}\right\Vert _{L^{2}\left( \omega \right) }\ ,
\label{piv bound}
\end{equation}%
for any function $\varphi $ with $\left\vert \varphi \right\vert \leq 1$.
\end{lemma}

This form of the lemma is proved in many places in the literature, but
usually with only the far right estimate. However, all of the proofs can be
stopped one line short to give the first estimate, see e.g. \cite{NTV4}
where it originates.

\subsection{Disjoint form}

We decompose the disjoint form into two pieces,%
\begin{eqnarray*}
\mathsf{B}_{\cap }\left( f,g\right) &=&\dsum\limits_{I,J\in \mathcal{D}\
:J\cap I=\emptyset \text{ and }\frac{\ell \left( J\right) }{\ell \left(
I\right) }\notin \left[ 2^{-\rho },2^{\rho }\right] }\left\langle T_{\sigma
}^{\lambda }\left( \bigtriangleup _{I}^{\sigma }f\right) ,\left(
\bigtriangleup _{J}^{\omega }g\right) \right\rangle _{\omega } \\
&=&\left\{ \dsum\limits_{\substack{ I,J\in \mathcal{D}\ :J\cap I=\emptyset 
\\ \ell \left( J\right) <2^{-\rho }\ell \left( I\right) }}+\dsum\limits 
_{\substack{ I,J\in \mathcal{D}\ :J\cap I=\emptyset  \\ \ell \left( J\right)
>2^{\rho }\ell \left( I\right) }}\right\} \left\langle T_{\sigma }^{\lambda
}\left( \bigtriangleup _{I}^{\sigma }f\right) ,\left( \bigtriangleup
_{J}^{\omega }g\right) \right\rangle _{\omega } \\
&\equiv &\mathsf{B}_{\cap }^{\limfunc{down}}\left( f,g\right) +\mathsf{B}%
_{\cap }^{\limfunc{up}}\left( f,g\right) .
\end{eqnarray*}%
Since the up form is dual to the down form, we consider only $\mathsf{B}%
_{\cap }^{\limfunc{down}}\left( f,g\right) $, and we will prove the
following estimate: 
\begin{equation}
\left\vert \mathsf{B}_{\cap }^{\limfunc{down}}\left( f,g\right) \right\vert
\lesssim A_{p}^{\lambda ,\ell ^{2},\limfunc{offset}}\left( \sigma ,\omega
\right) \left\Vert f\right\Vert _{L^{p}\left( \sigma \right) }\left\Vert
g\right\Vert _{L^{p^{\prime }}\left( \omega \right) }.  \label{routine'}
\end{equation}

\begin{description}
\item[Porism] It is important to note that from the proof given, we may
replace the sum $\dsum\limits_{\substack{ I,J\in \mathcal{D}\ :J\cap
I=\emptyset  \\ \ell \left( J\right) <2^{-\rho }\ell \left( I\right) }}$ in
the left hand side of (\ref{routine'}) with a sum over any \emph{subset} of
the pairs $I,J\,$arising in $\mathsf{B}_{\cap }^{\limfunc{down}}\left(
f,g\right) $. A similar remark of course applies to $\mathsf{B}_{\cap }^{%
\limfunc{up}}\left( f,g\right) $.
\end{description}

\begin{proof}[Proof of (\protect\ref{routine'})]
Denote by $\limfunc{dist}$ the $\ell ^{\infty }$ distance in $\mathbb{R}^{n}$%
: $\limfunc{dist}\left( x,y\right) =\max_{1\leq j\leq n}\left\vert
x_{j}-y_{j}\right\vert $. We now estimate separately the long-range and
mid-range cases in $\mathsf{B}_{\cap }^{\limfunc{down}}\left( f,g\right) $
where $\limfunc{dist}\left( J,I\right) \geq \ell \left( I\right) $ holds or
not, and we decompose $\mathsf{B}_{\cap }^{\limfunc{down}}\left( f,g\right) $
accordingly:%
\begin{equation*}
\mathsf{B}_{\cap }^{\limfunc{down}}\left( f,g\right) =\mathcal{A}^{\limfunc{%
long}}\left( f,g\right) +\mathcal{A}^{\limfunc{mid}}\left( f,g\right) .
\end{equation*}

\bigskip

\textbf{The long-range case}: We begin with the case where $\limfunc{dist}%
\left( J,I\right) $ is at least $\ell \left( I\right) $, i.e. $J\cap
3I=\emptyset $. With $A\left( f,g\right) =\mathcal{A}^{\limfunc{long}}\left(
f,g\right) $ we have%
\begin{equation*}
A\left( f,g\right) =\sum_{\substack{ I,J\in \mathcal{D}:\ \limfunc{dist}%
\left( J,I\right) \geq \ell \left( I\right)  \\ \ell \left( J\right) \leq
2^{-\rho }\ell \left( I\right) }}\left\langle T_{\sigma }^{\lambda }\left(
\bigtriangleup _{I}^{\sigma }f\right) ,\bigtriangleup _{J}^{\omega
}g\right\rangle _{\omega }=\sum_{s=\rho }^{\infty }\sum_{m=1}^{\infty
}A_{s,m}\left( f,g\right) ,
\end{equation*}%
where%
\begin{eqnarray*}
A_{s,m}\left( f,g\right) &=&\sum_{\substack{ I,J\in \mathcal{D}:\ \limfunc{%
dist}\left( J,I\right) \approx \ell \left( I\right) ^{m}  \\ \ell \left(
J\right) =2^{-s}\ell \left( I\right) }}\left\langle T_{\sigma }^{\lambda
}\left( \bigtriangleup _{I}^{\sigma }f\right) ,\bigtriangleup _{J}^{\omega
}g\right\rangle _{\omega } \\
&=&\sum_{J\in \mathcal{D}}\sum_{I\in \mathcal{F}_{s,m}\left( J\right)
}\left\langle T_{\sigma }^{\lambda }\left( \bigtriangleup _{I}^{\sigma
}f\right) ,\bigtriangleup _{J}^{\omega }g\right\rangle _{\omega }=\sum_{J\in 
\mathcal{D}}\left\langle T_{\sigma }^{\lambda }\left( \mathsf{Q}%
_{J,s,m}^{\sigma }f\right) ,\bigtriangleup _{J}^{\omega }g\right\rangle
_{\omega },
\end{eqnarray*}%
with 
\begin{equation*}
\mathcal{F}_{s,m}\left( J\right) \equiv \left\{ I\in \mathcal{D}:\ \limfunc{%
dist}\left( J,I\right) \approx 2^{m}\ell \left( I\right) \text{, }\ell
\left( I\right) =2^{s}\ell \left( J\right) \right\} \text{ and }\mathsf{Q}%
_{J,s,m}^{\sigma }\equiv \sum_{I\in \mathcal{F}_{s,m}\left( J\right)
}\bigtriangleup _{I}^{\sigma }.
\end{equation*}%
Then from the Pivotal Lemma \ref{ener} we have%
\begin{equation*}
\left\vert \left\langle T_{\sigma }^{\lambda }\left( \mathsf{Q}%
_{J,s,m}^{\sigma }f\right) ,\bigtriangleup _{J}^{\omega }g\right\rangle
_{\omega }\right\vert \lesssim \mathrm{P}^{\lambda }\left( J,\left\vert 
\mathsf{Q}_{J,s,m}^{\sigma }f\right\vert \sigma \right) \int_{J}\left\vert
\bigtriangleup _{J}^{\omega }g\right\vert d\omega ,
\end{equation*}%
where%
\begin{eqnarray*}
\mathrm{P}^{\lambda }\left( J,\left\vert \mathsf{Q}_{J,s,m}^{\sigma
}f\right\vert \sigma \right) &\mathbf{=}&\int_{\mathbb{R}^{n}}\frac{\ell
\left( J\right) }{\left\vert \ell \left( J\right) +\limfunc{dist}\left(
y,J\right) \right\vert ^{n+1-\lambda }}\left\vert \mathsf{Q}_{J,s,m}^{\sigma
}f\left( y\right) \right\vert d\sigma \left( y\right) \\
&\lesssim &2^{-\left( s+m\right) }\int_{\mathbb{R}^{n}\setminus 3J}\frac{1}{%
\left\vert \ell \left( J\right) +\limfunc{dist}\left( y,J\right) \right\vert
^{n-\lambda }}\left\vert \mathsf{Q}_{J,s,m}^{\sigma }f\left( y\right)
\right\vert d\sigma \left( y\right) ,
\end{eqnarray*}%
by the definition of $\mathsf{Q}_{J,s,m}^{\sigma }$ since 
\begin{equation}
\ell \left( J\right) =2^{-s}\ell \left( I\right) \approx 2^{-s-m}\limfunc{%
dist}\left( y,J\right) .  \label{pigeon s}
\end{equation}%
Thus we have%
\begin{eqnarray*}
&&\left\vert A_{s,m}\left( f,g\right) \right\vert \lesssim 2^{-\left(
s+m\right) }\int_{\mathbb{R}^{n}}\sum_{J\in \mathcal{D}}\left( \int_{\mathbb{%
R}^{n}\setminus 3J}\frac{1}{\left\vert c_{J}-y\right\vert ^{n-\lambda }}%
\left\vert \mathsf{Q}_{J,s,m}^{\sigma }f\left( y\right) \right\vert d\sigma
\left( y\right) \right) \mathbf{1}_{J}\left( x\right) \left\vert
\bigtriangleup _{J}^{\omega }g\left( x\right) \right\vert d\omega \left(
x\right) \\
&\lesssim &2^{-\left( s+m\right) }\int_{\mathbb{R}^{n}}\left( \sum_{J\in 
\mathcal{D}}\left( \int_{\mathbb{R}^{n}\setminus 3J}\frac{1}{\left\vert
c_{J}-y\right\vert ^{n-\lambda }}\left\vert \mathsf{Q}_{J,s,m}^{\sigma
}f\left( y\right) \right\vert d\sigma \left( y\right) \mathbf{1}_{J}\left(
x\right) \right) ^{2}\right) ^{\frac{1}{2}}\left( \sum_{J\in \mathcal{D}%
}\left\vert \bigtriangleup _{J}^{\omega }g\left( x\right) \right\vert
^{2}\right) ^{\frac{1}{2}}d\omega \left( x\right) \\
&\leq &2^{-\left( s+m\right) }\left( \int_{\mathbb{R}^{n}}\left( \sum_{J\in 
\mathcal{D}}\left( \int_{\mathbb{R}^{n}\setminus 3J}\frac{1}{\left\vert
c_{J}-y\right\vert ^{n-\lambda }}\left\vert \mathsf{Q}_{J,s,m}^{\sigma
}f\left( y\right) \right\vert d\sigma \left( y\right) \mathbf{1}_{J}\left(
x\right) \right) ^{2}\right) ^{\frac{p}{2}}d\omega \left( x\right) \right) ^{%
\frac{1}{p}}\left\Vert \mathcal{S}_{\func{Haar}}^{\omega }g\right\Vert
_{L^{p^{\prime }}\left( \omega \right) }.
\end{eqnarray*}

Now $\mathcal{S}_{\func{Haar}}^{\omega }$ is bounded on $L^{p^{\prime
}}\left( \omega \right) $, and so by the geometric decay in $s$ and $m$, it
remains to show that for each $s,m\in \mathbb{N}$, 
\begin{equation}
\left( \int_{\mathbb{R}^{n}}\left( \sum_{J\in \mathcal{D}}\left( \int_{%
\mathbb{R}^{n}\setminus 3J}\frac{1}{\left\vert c_{J}-y\right\vert
^{n-\lambda }}\left\vert \mathsf{Q}_{J,s,m}^{\sigma }f\left( y\right)
\right\vert d\sigma \left( y\right) \right) ^{2}\mathbf{1}_{J}\left(
x\right) \right) ^{\frac{p}{2}}d\omega \left( x\right) \right) ^{\frac{1}{p}%
}\lesssim A_{p}^{\lambda ,\ell ^{2},\limfunc{offset}}\left\Vert f\right\Vert
_{L^{p}\left( \sigma \right) }.  \label{RTS}
\end{equation}%
For this we use (\ref{pigeon s}) to write%
\begin{equation*}
\int_{\mathbb{R}^{n}\setminus 3J}\frac{1}{\left\vert c_{J}-y\right\vert
^{n-\lambda }}\left\vert \mathsf{Q}_{J,s,m}^{\sigma }f\left( y\right)
\right\vert d\sigma \left( y\right) \approx \frac{1}{\left( 2^{\left(
s+m\right) }\ell \left( J\right) \right) ^{n-\lambda }}\int_{\mathbb{R}%
^{n}\setminus 3J}\left\vert \mathsf{Q}_{J,s,m}^{\sigma }f\left( y\right)
\right\vert d\sigma \left( y\right) ,
\end{equation*}%
and then obtain with $K_{s,m}\left( J\right) \approx
\bigcup_{j=1}^{c2^{n}}K_{s,m}^{j}\left( J\right) $ roughly equal to the
support of $\mathsf{Q}_{J,s,m}^{\sigma }$, that 
\begin{eqnarray*}
&&\int_{\mathbb{R}^{n}}\left( \sum_{J\in \mathcal{D}}\left( \int_{\mathbb{R}%
^{n}\setminus 3J}\frac{1}{\left\vert c_{J}-y\right\vert ^{n-\lambda }}%
\left\vert \mathsf{Q}_{J,s,m}^{\sigma }f\left( y\right) \right\vert d\sigma
\left( y\right) \right) ^{2}\mathbf{1}_{J}\left( x\right) \right) ^{\frac{p}{%
2}}d\omega \left( x\right) \\
&\approx &\int_{\mathbb{R}^{n}}\left( \sum_{J\in \mathcal{D}}\left( \frac{1}{%
\left( 2^{\left( s+m\right) }\ell \left( J\right) \right) ^{n-\lambda }}%
\int_{K_{s,m}\left( J\right) }\left\vert \mathsf{Q}_{J,s,m}^{\sigma }f\left(
y\right) \right\vert d\sigma \left( y\right) \right) ^{2}\mathbf{1}%
_{J}\left( x\right) \right) ^{\frac{p}{2}}d\omega \left( x\right) \\
&\approx &\int_{\mathbb{R}^{n}}\left( \sum_{K\in \mathcal{D}}\sum_{J\in 
\mathcal{D}:\ J\subset K\text{ and }K_{s,m}\left( J\right) \approx
K}\sum_{j=1}^{c2^{n}}\mathbf{1}_{J}\left( x\right) \left( \frac{1}{\left(
2^{\left( s+m\right) }\ell \left( J\right) \right) ^{n-\lambda }}%
\int_{K_{s,m}^{j}\left( J\right) }\left\vert \mathsf{Q}_{J,s,m}^{\sigma
}f\left( y\right) \right\vert d\sigma \left( y\right) \right) ^{2}\right) ^{%
\frac{p}{2}}d\omega \left( x\right) \\
&\lesssim &\int_{\mathbb{R}^{n}}\left( \sum_{K\in \mathcal{D}}\mathbf{1}_{%
\widetilde{K}}\left( x\right) \left( \frac{\left\vert K\right\vert _{\sigma }%
}{\ell \left( K\right) ^{n-\lambda }}\frac{1}{\left\vert K\right\vert
_{\sigma }}\int_{K}\left\vert \mathsf{Q}_{K}^{\sigma }f\left( y\right)
\right\vert d\sigma \left( y\right) \right) ^{2}\right) ^{\frac{p}{2}%
}d\omega \left( x\right) ,
\end{eqnarray*}%
where $\mathsf{Q}_{K}^{\sigma }\equiv \sum_{K_{s,m}\left( J\right) \approx K}%
\mathsf{Q}_{J,s,m}^{\sigma }$ and $\widetilde{K}=\bigcup_{j=1}^{c2^{n}}%
\widetilde{K}^{j}\left( J\right) $ is a union of dyadic cubes $\widetilde{K}%
^{j}$ surrounding $K$ with $0<\limfunc{dist}\left( \widetilde{K}%
^{j},K\right) \lesssim \ell \left( K\right) \approx \ell \left( \widetilde{K}%
^{j}\right) $. Now we use first the quadratic offset condition $%
A_{p}^{\lambda ,\ell ^{2},\limfunc{offset}}\left( \sigma ,\omega \right) $,
and then the Fefferman-Stein vector-valued inequality for the maximal
function, to obtain the following vector-valued inequality for each fixed $%
s,m\in \mathbb{N}$,%
\begin{eqnarray*}
&&\int_{\mathbb{R}^{n}}\left( \sum_{K\in \mathcal{D}}\mathbf{1}_{\widetilde{K%
}}\left( x\right) \left( \frac{\left\vert K\right\vert _{\sigma }}{\ell
\left( K\right) ^{n-\lambda }}\frac{1}{\left\vert K\right\vert _{\sigma }}%
\int_{K}\left\vert \mathsf{Q}_{K}^{\sigma }f\left( y\right) \right\vert
d\sigma \left( y\right) \right) ^{2}\right) ^{\frac{p}{2}}d\omega \left(
x\right) \\
&\lesssim &A_{p}^{\lambda ,\ell ^{2},\limfunc{offset}}\left( \sigma ,\omega
\right) ^{p}\int_{\mathbb{R}^{n}}\left( \sum_{K\in \mathcal{D}}\mathbf{1}%
_{K}\left( x\right) \left( \frac{1}{\left\vert K\right\vert _{\sigma }}%
\int_{K}\left\vert \mathsf{Q}_{K}^{\sigma }f\left( y\right) \right\vert
d\sigma \left( y\right) \right) ^{2}\right) ^{\frac{p}{2}}d\sigma \left(
x\right) \\
&\lesssim &A_{p}^{\lambda ,\ell ^{2},\limfunc{offset}}\left( \sigma ,\omega
\right) ^{p}\int_{\mathbb{R}^{n}}\left( \sum_{K\in \mathcal{D}}\left\vert 
\mathsf{Q}_{K}^{\sigma }f\left( x\right) \right\vert ^{2}\right) ^{\frac{p}{2%
}}d\sigma \left( x\right) \lesssim A_{p}^{\lambda ,\ell ^{2},\limfunc{offset}%
}\left( \sigma ,\omega \right) ^{p}\left\Vert f\right\Vert _{L^{p}\left(
\sigma \right) }^{p}.
\end{eqnarray*}%
As mentioned above, this completes the proof of the long range case by the
geometric decay in $s$ and $m$.

\bigskip

\textbf{The mid range case}: Let%
\begin{equation*}
\mathcal{P}\equiv \left\{ \left( I,J\right) \in \mathcal{D}\times \mathcal{D}%
:J\text{ is good},\ \ell \left( J\right) \leq 2^{-\rho }\ell \left( I\right)
,\text{ }J\subset 3I\setminus I\right\} .
\end{equation*}%
Now we pigeonhole the lengths of $I$ and $J$ and the distance between them
by defining%
\begin{equation*}
\mathcal{P}_{d}^{t}\equiv \left\{ \left( I,J\right) \in \mathcal{D}\times 
\mathcal{D}:J\text{ is good},\ \ell \left( I\right) =2^{t}\ell \left(
J\right) ,\text{ }J\subset 3I\setminus I,\ 2^{d-1}\ell \left( J\right) \leq 
\limfunc{dist}\left( I,J\right) \leq 2^{d}\ell \left( J\right) \right\} .
\end{equation*}%
Note that the closest a good cube $J$ can come to $I$ is determined by the
goodness inequality, which gives this bound: 
\begin{eqnarray}
&&2^{d}\ell \left( J\right) \geq \limfunc{dist}\left( I,J\right) \geq \frac{1%
}{2}\ell \left( I\right) ^{1-\varepsilon }\ell \left( J\right) ^{\varepsilon
}=\frac{1}{2}2^{t\left( 1-\varepsilon \right) }\ell \left( J\right) ;
\label{d below} \\
&&\text{which implies }d\geq t\left( 1-\varepsilon \right) -1.  \notag
\end{eqnarray}%
We write%
\begin{equation*}
\dsum\limits_{\left( I,J\right) \in \mathcal{P}}\left\langle T_{\sigma
}^{\lambda }\left( \bigtriangleup _{I}^{\sigma }f\right) ,\bigtriangleup
_{J}^{\omega }g\right\rangle _{\omega }=\dsum\limits_{t=\rho }^{\infty }\
\sum_{d=N-\varepsilon t-1}^{N}\sum_{\left( I,J\right) \in \mathcal{P}%
_{d}^{t}}\left\langle T_{\sigma }^{\lambda }\left( \bigtriangleup
_{I}^{\sigma }f\right) ,\bigtriangleup _{J}^{\omega }g\right\rangle _{\omega
},
\end{equation*}%
and for fixed $t$ and $d$, we estimate%
\begin{eqnarray*}
&&\left\vert \dsum\limits_{\left( I,J\right) \in \mathcal{P}%
_{d}^{t}}\left\langle T_{\sigma }^{\lambda }\left( \bigtriangleup
_{I}^{\sigma }f\right) ,\bigtriangleup _{J}^{\omega }g\right\rangle _{\omega
}\right\vert =\left\vert \int_{\mathbb{R}^{n}}\dsum\limits_{\left(
I,J\right) \in \mathcal{P}_{d}^{t}}T_{\sigma }^{\lambda }\left(
\bigtriangleup _{I}^{\sigma }f\right) \left( x\right) \ \bigtriangleup
_{J}^{\omega }g\left( x\right) \ d\omega \left( x\right) \right\vert \\
&=&\left\vert \int_{\mathbb{R}^{n}}\dsum\limits_{J\in \mathcal{D}%
}\bigtriangleup _{J}^{\omega }T_{\sigma }^{\lambda }\left(
\dsum\limits_{I\in \mathcal{D}:\ \left( I,J\right) \in \mathcal{P}%
_{d}^{t}}\bigtriangleup _{I}^{\sigma }f\right) \left( x\right) \
\bigtriangleup _{J}^{\omega }g\left( x\right) \ d\omega \left( x\right)
\right\vert \\
&\leq &\int_{\mathbb{R}^{n}}\left( \dsum\limits_{J\in \mathcal{D}}\left\vert
\bigtriangleup _{J}^{\omega }T_{\sigma }^{\lambda }\left( \dsum\limits_{I\in 
\mathcal{D}:\ \left( I,J\right) \in \mathcal{P}_{d}^{t}}\bigtriangleup
_{I}^{\sigma }f\right) \left( x\right) \right\vert ^{2}\right) ^{\frac{1}{2}%
}\left( \dsum\limits_{J\in \mathcal{D}}\left\vert \bigtriangleup
_{J}^{\omega }g\left( x\right) \right\vert ^{2}\right) ^{\frac{1}{2}}d\omega
\left( x\right) \\
&\lesssim &\left\{ \int_{\mathbb{R}^{n}}\left( \dsum\limits_{J\in \mathcal{D}%
}\left\vert \bigtriangleup _{J}^{\omega }T_{\sigma }^{\lambda }\left(
\dsum\limits_{I\in \mathcal{D}:\ \left( I,J\right) \in \mathcal{P}%
_{d}^{t}}\bigtriangleup _{I}^{\sigma }f\right) \left( x\right) \right\vert
^{2}\right) ^{\frac{p}{2}}d\omega \left( x\right) \right\} ^{\frac{1}{p}} \\
&&\ \ \ \ \ \ \ \ \ \ \ \ \ \ \ \ \ \ \ \ \ \ \ \ \ \times \left\{ \int_{%
\mathbb{R}^{n}}\left( \dsum\limits_{J\in \mathcal{D}}\left\vert
\bigtriangleup _{J}^{\omega }g\left( x\right) \right\vert ^{2}\right) ^{%
\frac{p^{\prime }}{2}}d\omega \left( x\right) \right\} ^{\frac{1}{p^{\prime }%
}}.
\end{eqnarray*}%
Now we use the fact that for a fixed $J$, there are only boundedly many $%
I\in \mathcal{D}$ with $\left( I,J\right) \in \mathcal{P}_{d}^{t}$, which
without loss of generality we can suppose is a single cube $I\left[ J\right] 
$, together with (\ref{d below}) to obtain the estimate 
\begin{eqnarray*}
\left\vert \bigtriangleup _{J}^{\omega }T_{\sigma }^{\alpha }\left(
\bigtriangleup _{I}^{\sigma }f\right) \left( x\right) \right\vert &\lesssim &%
\mathrm{P}^{\lambda }\left( J,\left\vert \bigtriangleup _{I}^{\sigma
}f\right\vert \sigma \right) \mathbf{1}_{J}\left( x\right) =\int_{I}\frac{%
\ell \left( J\right) }{\left( \ell \left( J\right) +\left\vert
y-c_{J}\right\vert \right) ^{n+1-\lambda }}\left\vert \bigtriangleup _{I%
\left[ J\right] }^{\sigma }f\left( y\right) \right\vert d\sigma \left(
y\right) \mathbf{1}_{J}\left( x\right) \\
&\lesssim &\frac{\ell \left( J\right) }{\left( 2^{d}\ell \left( J\right)
\right) ^{n+1-\lambda }}\sum_{I^{\prime }\in \mathfrak{C}_{\mathcal{D}%
}\left( I\left[ J\right] \right) }E_{I^{\prime }}^{\sigma }\left\vert
\bigtriangleup _{I}^{\sigma }f\right\vert \left\vert I^{\prime }\right\vert
_{\sigma }\mathbf{1}_{J}\left( x\right) \\
&\lesssim &\frac{2^{-t\left[ 1-\varepsilon \left( n+1-\lambda \right) \right]
}}{\ell \left( I\right) ^{n-\lambda }}\sum_{I^{\prime }\in \mathfrak{C}_{%
\mathcal{D}}\left( I\left[ J\right] \right) }E_{I^{\prime }}^{\sigma
}\left\vert \bigtriangleup _{I}^{\sigma }f\right\vert \left\vert I^{\prime
}\right\vert _{\sigma }\mathbf{1}_{J}\left( x\right) ,
\end{eqnarray*}%
since%
\begin{equation*}
\frac{\ell \left( J\right) }{\left( 2^{d}\ell \left( J\right) \right)
^{n+1-\lambda }}=\frac{2^{-t}2^{\left( t-d\right) n+1-\lambda }}{\ell \left(
I\left[ J\right] \right) ^{n-\lambda }}\leq \frac{2^{-t}2^{\left(
t\varepsilon +1\right) \left( n+1-\lambda \right) }}{\ell \left( I\left[ J%
\right] \right) ^{n-\lambda }}=2^{n+1-\lambda }\frac{2^{-t\left[
1-\varepsilon \left( n+1-\lambda \right) \right] }}{\ell \left( I\left[ J%
\right] \right) ^{n-\lambda }}.
\end{equation*}%
Thus we have%
\begin{eqnarray*}
&&\left\{ \int_{\mathbb{R}^{n}}\left( \dsum\limits_{J\in \mathcal{D}%
}\left\vert \bigtriangleup _{J}^{\omega }T_{\sigma }^{\lambda }\left(
\dsum\limits_{I\in \mathcal{D}:\ \left( I,J\right) \in \mathcal{P}%
_{d}^{t}}\bigtriangleup _{I}^{\sigma }f\right) \left( x\right) \right\vert
^{2}\right) ^{\frac{p}{2}}d\omega \left( x\right) \right\} ^{\frac{1}{p}} \\
&\lesssim &2^{-t\left[ 1-\varepsilon \left( n+1-\lambda \right) \right]
}\left\{ \int_{\mathbb{R}^{n}}\left( \dsum\limits_{J\in \mathcal{D}%
}\left\vert \sum_{I^{\prime }\in \mathfrak{C}_{\mathcal{D}}\left( I\left[ J%
\right] \right) }E_{I^{\prime }}^{\sigma }\left\vert \bigtriangleup
_{I}^{\sigma }f\right\vert \frac{\left\vert I^{\prime }\right\vert _{\sigma }%
}{\ell \left( I\right) ^{n-\lambda }}\mathbf{1}_{J}\left( x\right)
\right\vert ^{2}\right) ^{\frac{p}{2}}d\omega \left( x\right) \right\} ^{%
\frac{1}{p}} \\
&\lesssim &2^{-t\left[ 1-\varepsilon \left( n+1-\lambda \right) \right]
}\left\{ \int_{\mathbb{R}^{n}}\left( \dsum\limits_{I\in \mathcal{D}%
}\left\vert \sum_{I^{\prime }\in \mathfrak{C}_{\mathcal{D}}\left( I\left[ J%
\right] \right) }E_{I^{\prime }}^{\sigma }\left\vert \bigtriangleup
_{I}^{\sigma }f\right\vert \frac{\left\vert I^{\prime }\right\vert _{\sigma }%
}{\left\vert I\right\vert ^{1-\frac{\lambda }{n}}}\right\vert ^{2}\mathbf{1}%
_{I^{\prime }}\left( x\right) \right) ^{\frac{p}{2}}d\omega \left( x\right)
\right\} ^{\frac{1}{p}} \\
&\lesssim &2^{-t\left[ 1-\varepsilon \left( n+1-\lambda \right) \right]
}A_{p}^{\lambda ,\ell ^{2},\limfunc{offset}}\left( \sigma ,\omega \right)
\left\{ \int_{\mathbb{R}^{n}}\left( \dsum\limits_{I\in \mathcal{D}%
}\sum_{I^{\prime }\in \mathfrak{C}_{\mathcal{D}}\left( I\left[ J\right]
\right) }\left( E_{I^{\prime }}^{\sigma }\left\vert \bigtriangleup
_{I}^{\sigma }f\right\vert \right) ^{2}\mathbf{1}_{I^{\prime }}\left(
x\right) \right) ^{\frac{p}{2}}d\sigma \left( x\right) \right\} ^{\frac{1}{p}%
} \\
&\lesssim &2^{-t\left[ 1-\varepsilon \left( n+1-\lambda \right) \right]
}A_{p}^{\lambda ,\ell ^{2},\limfunc{offset}}\left( \sigma ,\omega \right)
\left\Vert f\right\Vert _{L^{p}\left( \sigma \right) },
\end{eqnarray*}%
and provided $0<\varepsilon <\frac{1}{n+1-\lambda }$, we can sum in $t$ to
complete the proof of (\ref{routine'}).
\end{proof}

\subsection{Comparable form}

We decompose%
\begin{eqnarray*}
\mathsf{B}_{\diagup }\left( f,g\right) &=&\mathsf{B}_{\diagup }^{\func{below}%
}\left( f,g\right) +\mathsf{B}_{\diagup }^{\func{above}}\left( f,g\right) ;
\\
\text{where }\mathsf{B}_{\diagup }^{\func{below}}\left( f,g\right) &\equiv
&\dsum\limits_{I,J\in \mathcal{D}:\ 2^{-\rho }\leq \frac{\ell \left(
J\right) }{\ell \left( I\right) }\leq 1\text{ and }\overline{J}\cap 
\overline{I}=\emptyset }\left\langle T_{\sigma }^{\lambda }\left(
\bigtriangleup _{I}^{\sigma }f\right) ,\left( \bigtriangleup _{J}^{\omega
}g\right) \right\rangle _{\omega } \\
&=&\dsum\limits_{I,J\in \mathcal{D}:\ 2^{-\rho }\leq \frac{\ell \left(
J\right) }{\ell \left( I\right) }\leq 1\text{ and }\overline{J}\cap 
\overline{I}=\emptyset \text{ and }J\subset 3I}\left\langle T_{\sigma
}^{\lambda }\left( \bigtriangleup _{I}^{\sigma }f\right) ,\left(
\bigtriangleup _{J}^{\omega }g\right) \right\rangle _{\omega } \\
&&+\dsum\limits_{I,J\in \mathcal{D}:\ 2^{-\rho }\leq \frac{\ell \left(
J\right) }{\ell \left( I\right) }\leq 1\text{ and }\overline{J}\cap 
\overline{I}=\emptyset \text{ and }J\cap 3I=\emptyset }\left\langle
T_{\sigma }^{\lambda }\left( \bigtriangleup _{I}^{\sigma }f\right) ,\left(
\bigtriangleup _{J}^{\omega }g\right) \right\rangle _{\omega } \\
&\equiv &\mathsf{B}_{\diagup }^{\func{below}\func{near}}\left( f,g\right) +%
\mathsf{B}_{\diagup }^{\func{below}\func{far}}\left( f,g\right) \ .
\end{eqnarray*}%
The second form $\mathsf{B}_{\diagup }^{\func{below}\func{far}}\left(
f,g\right) $ is handled in the same way as the disjoint far form $\mathsf{B}%
_{\cap }^{\func{far}}\left( f,g\right) $ in the previous subsection, and for
the first form $\mathsf{B}_{\diagup }^{\func{below}\func{near}}\left(
f,g\right) $, we write 
\begin{eqnarray*}
&&\left\vert \mathsf{B}_{\diagup }^{\func{below}\func{near}}\left(
f,g\right) \right\vert =\left\vert \int_{\mathbb{R}^{n}}\dsum\limits_{I,J\in 
\mathcal{D}:\ 2^{-\rho }\leq \frac{\ell \left( J\right) }{\ell \left(
I\right) }\leq 1\text{ and }\overline{J}\cap \overline{I}=\emptyset \text{
and }J\subset 3I}T_{\sigma }^{\lambda }\left( \bigtriangleup _{I}^{\sigma
}f\right) \left( x\right) \ \left( \bigtriangleup _{J}^{\omega }g\right)
\left( x\right) \ d\omega \left( x\right) \right\vert \\
&\lesssim &\int_{\mathbb{R}^{n}}\dsum\limits_{I,J\in \mathcal{D}:\ 2^{-\rho
}\leq \frac{\ell \left( J\right) }{\ell \left( I\right) }\leq 1\text{ and }%
\overline{J}\cap \overline{I}=\emptyset \text{ and }J\subset 3I}\left(
\int_{I}\frac{\left\vert \bigtriangleup _{I}^{\sigma }f\left( y\right)
\right\vert }{\left\vert y-x\right\vert ^{n-\lambda }}d\sigma \left(
y\right) \right) \ \left\vert \bigtriangleup _{J}^{\omega }g\left( x\right)
\right\vert \ d\omega \left( x\right) \\
&\lesssim &\int_{\mathbb{R}^{n}}\dsum\limits_{I,J\in \mathcal{D}:\ 2^{-\rho
}\leq \frac{\ell \left( J\right) }{\ell \left( I\right) }\leq 1\text{ and }%
\overline{J}\cap \overline{I}=\emptyset \text{ and }J\subset 3I}\left( \frac{%
1}{\left\vert I\right\vert _{\sigma }}\int_{I}\left\vert \bigtriangleup
_{I}^{\sigma }f\right\vert d\sigma \right) \frac{\left\vert I\right\vert
_{\sigma }}{\left\vert I\right\vert ^{1-\frac{\lambda }{n}}}\mathbf{1}%
_{3I}\left( x\right) \ \left\vert \bigtriangleup _{J}^{\omega }g\left(
x\right) \right\vert \ d\omega \left( x\right) ,
\end{eqnarray*}%
and so by the Cauchy-Schwarz inequality, we have%
\begin{eqnarray*}
\left\vert \mathsf{B}_{\diagup }^{\func{below}\func{near}}\left( f,g\right)
\right\vert &\lesssim &\int_{\mathbb{R}^{n}}\left( \dsum\limits_{\substack{ %
I,J\in \mathcal{D}:\ 2^{-\rho }\leq \frac{\ell \left( J\right) }{\ell \left(
I\right) }\leq 1  \\ \overline{J}\cap \overline{I}=\emptyset ,\ J\subset 3I}}%
\left\vert \left( \frac{1}{\left\vert I\right\vert _{\sigma }}%
\int_{I}\left\vert \bigtriangleup _{I}^{\sigma }f\right\vert d\sigma \right) 
\frac{\left\vert I\right\vert _{\sigma }}{\left\vert I\right\vert ^{1-\frac{%
\lambda }{n}}}\right\vert ^{2}\mathbf{1}_{3I}\left( x\right) \right) ^{\frac{%
1}{2}} \\
&&\ \ \ \ \ \ \ \ \ \ \ \ \ \ \ \ \ \ \ \ \ \ \ \ \ \ \ \ \ \ \times \left(
\dsum\limits_{\substack{ I,J\in \mathcal{D}:\ 2^{-\rho }\leq \frac{\ell
\left( J\right) }{\ell \left( I\right) }\leq 1  \\ \overline{J}\cap 
\overline{I}=\emptyset ,\ J\subset 3I}}\left\vert \bigtriangleup
_{J}^{\omega }g\left( x\right) \right\vert ^{2}\right) ^{\frac{1}{2}}\ \
d\omega \left( x\right) \\
&\leq &\left\Vert \left( \dsum\limits_{\substack{ I,J\in \mathcal{D}:\
2^{-\rho }\leq \frac{\ell \left( J\right) }{\ell \left( I\right) }\leq 1  \\ 
\overline{J}\cap \overline{I}=\emptyset ,\ J\subset 3I}}\left\vert \left( 
\frac{1}{\left\vert I\right\vert _{\sigma }}\int_{I}\left\vert
\bigtriangleup _{I}^{\sigma }f\right\vert d\sigma \right) \frac{\left\vert
I\right\vert _{\sigma }}{\left\vert I\right\vert ^{1-\frac{\lambda }{n}}}%
\right\vert ^{2}\mathbf{1}_{3I}\left( x\right) \right) ^{\frac{1}{2}%
}\right\Vert _{L^{p}\left( \omega \right) }\left\Vert \mathcal{S}_{\func{Haar%
}}g\right\Vert _{L^{p^{\prime }}\left( \omega \right) }
\end{eqnarray*}%
and%
\begin{eqnarray*}
&&\left\Vert \left( \dsum\limits_{\substack{ I,J\in \mathcal{D}:\ 2^{-\rho
}\leq \frac{\ell \left( J\right) }{\ell \left( I\right) }\leq 1  \\ 
\overline{J}\cap \overline{I}=\emptyset ,\ J\subset 3I}}\left\vert \left( 
\frac{1}{\left\vert I\right\vert _{\sigma }}\int_{I}\left\vert
\bigtriangleup _{I}^{\sigma }f\right\vert d\sigma \right) \frac{\left\vert
I\right\vert _{\sigma }}{\left\vert I\right\vert ^{1-\frac{\lambda }{n}}}%
\right\vert ^{2}\mathbf{1}_{3I}\left( x\right) \right) ^{\frac{1}{2}%
}\right\Vert _{L^{p}\left( \omega \right) } \\
&\lesssim &A_{p}^{\lambda ,\ell ^{2},\limfunc{offset}}\left( \sigma ,\omega
\right) \left\Vert \left( \dsum\limits_{\substack{ I,J\in \mathcal{D}:\
2^{-\rho }\leq \frac{\ell \left( J\right) }{\ell \left( I\right) }\leq 1  \\ 
\overline{J}\cap \overline{I}=\emptyset ,\ J\subset 3I}}\left( \frac{1}{%
\left\vert I\right\vert _{\sigma }}\int_{I}\left\vert \bigtriangleup
_{I}^{\sigma }f\right\vert d\sigma \right) ^{2}\mathbf{1}_{3I}\left(
x\right) \right) ^{\frac{1}{2}}\right\Vert _{L^{p}\left( \sigma \right) },
\end{eqnarray*}%
and by the Fefferman-Stein maximal inequality in the space of homogeneous
type $\left( \mathbb{R}^{n},\sigma \right) $, where $\sigma $ is doubling (%
\cite{GrLiYa})%
\begin{eqnarray*}
&&\left\Vert \left( \dsum\limits_{\substack{ I,J\in \mathcal{D}:\ 2^{-\rho
}\leq \frac{\ell \left( J\right) }{\ell \left( I\right) }\leq 1  \\ 
\overline{J}\cap \overline{I}=\emptyset ,\ J\subset 3I}}\left( \frac{1}{%
\left\vert 3I\right\vert _{\sigma }}\int_{3I}\left\vert \bigtriangleup
_{I}^{\sigma }f\right\vert d\sigma \right) ^{2}\mathbf{1}_{3I}\left(
x\right) \right) ^{\frac{1}{2}}\right\Vert _{L^{p}\left( \sigma \right)
}\lesssim \left\Vert \left( \dsum\limits_{\substack{ I,J\in \mathcal{D}:\
2^{-\rho }\leq \frac{\ell \left( J\right) }{\ell \left( I\right) }\leq 1  \\ 
\overline{J}\cap \overline{I}=\emptyset ,\ J\subset 3I}}\left[ \mathcal{M}%
_{\sigma }\left\vert \bigtriangleup _{I}^{\sigma }f\right\vert \left(
x\right) \right] ^{2}\right) ^{\frac{1}{2}}\right\Vert _{L^{p}\left( \sigma
\right) } \\
&&\ \ \ \ \ \ \ \ \ \ \ \ \ \ \ \ \ \ \ \ \ \ \ \ \ \ \ \ \ \ \ \ \ \ \ \ \
\ \ \ \ \ \ \ \ \lesssim \left\Vert \left( \dsum\limits_{\substack{ I,J\in 
\mathcal{D}:\ 2^{-\rho }\leq \frac{\ell \left( J\right) }{\ell \left(
I\right) }\leq 1  \\ \overline{J}\cap \overline{I}=\emptyset ,\ J\subset 3I}}%
\left\vert \bigtriangleup _{I}^{\sigma }f\left( x\right) \right\vert
^{2}\right) ^{\frac{1}{2}}\right\Vert _{L^{p}\left( \sigma \right) }\lesssim
\left\Vert \mathcal{S}_{\func{Haar}}f\right\Vert _{L^{p}\left( \sigma
\right) }.
\end{eqnarray*}

Altogether, since both $\left\Vert \mathcal{S}_{\func{Haar}}f\right\Vert
_{L^{p}\left( \sigma \right) }\approx \left\Vert f\right\Vert _{L^{p}\left(
\sigma \right) }$ and $\left\Vert \mathcal{S}_{\func{Haar}}g\right\Vert
_{L^{p^{\prime }}\left( \omega \right) }\approx \left\Vert g\right\Vert
_{L^{p^{\prime }}\left( \omega \right) }$ by square function estimates, we
have controlled the norms of the below forms $\mathsf{B}_{\diagup }^{\func{%
below}\func{near}}\left( f,g\right) $ and $\mathsf{B}_{\diagup }^{\func{below%
}\func{far}}\left( f,g\right) $ by the quadratic offset Muckenhoupt constant 
$A_{p}^{\lambda ,\ell ^{2},\limfunc{offset}}\left( \sigma ,\omega \right) $,
hence%
\begin{equation}
\left\vert \mathsf{B}_{\diagup }^{\func{below}}\left( f,g\right) \right\vert
\lesssim A_{p}^{\lambda ,\ell ^{2},\limfunc{offset}}\left( \sigma ,\omega
\right) \left\Vert f\right\Vert _{L^{p}\left( \sigma \right) }\left\Vert
g\right\Vert _{L^{p^{\prime }}\left( \omega \right) }.  \label{routine''}
\end{equation}%
Finally, the form $\mathsf{B}_{\diagup }^{\func{above}}\left( f,g\right) $
is handled in dual fashion to $\mathsf{B}_{\diagup }^{\func{below}}\left(
f,g\right) $.

\begin{description}
\item[Porism] It is important to note that from the proof given, we may
replace the sum%
\begin{equation*}
\dsum\limits_{I,J\in \mathcal{D}:\ 2^{-\rho }\leq \frac{\ell \left( J\right) 
}{\ell \left( I\right) }\leq 1\text{ and }\overline{J}\cap \overline{I}%
=\emptyset \text{ and }J\subset 3I}
\end{equation*}%
in the left hand side of (\ref{routine''}) with a sum over any \emph{subset}
of the pairs $I,J\,$arising in $\mathsf{B}_{\diagup }^{\func{below}}\left(
f,g\right) $. A similar remark of course applies to $\mathsf{B}_{\diagup }^{%
\func{above}}\left( f,g\right) $.
\end{description}

\subsection{Stopping form}

We assume that $\sigma $ and $\omega $ are doubling measures. We will use a
variant of the Haar stopping form argument due to Nazarov, Treil and Volberg 
\cite{NTV4} to bound the stopping form by local quadratic testing $\mathfrak{%
T}_{\mathbf{R}^{\lambda },p}^{\ell ^{2},\func{loc}}\left( \sigma ,\omega
\right) $ and offset Muckenhoupt $A_{p}^{\lambda ,\ell ^{2},\limfunc{offset}%
}\left( \sigma ,\omega \right) $ constants defined in (\ref{quad HV})\ and (%
\ref{quad A2 tailless'}) respectively. We start the proof by pigeonholing
the ratio of side lengths of $I$ and $J$ in the local stopping forms:%
\begin{align*}
& \mathsf{B}_{\limfunc{stop}}^{F}\left( f,g\right) \equiv \sum_{I\in 
\mathcal{C}_{F}}\sum_{I^{\prime }\in \mathfrak{C}_{\mathcal{D}}\left(
I\right) }\sum_{\substack{ J\in \mathcal{C}_{F}^{\tau -\limfunc{shift}}  \\ %
J\subset I^{\prime }\text{ and }J\Subset _{\rho ,\varepsilon }I}}%
\left\langle \mathbf{1}_{I^{\prime }}\bigtriangleup _{I}^{\sigma }f\
T_{\sigma }^{\lambda }\mathbf{1}_{F\setminus I^{\prime }},\bigtriangleup
_{J}^{\omega }g\right\rangle _{\omega } \\
& =\sum_{I\in \mathcal{C}_{F}}\sum_{I^{\prime }\in \mathfrak{C}_{\mathcal{D}%
}\left( I\right) }\sum_{\substack{ J\in \mathcal{C}_{F}^{\tau -\limfunc{shift%
}}  \\ J\subset I^{\prime }\text{ and }J\Subset _{\rho ,\varepsilon }I}}%
\left\langle \bigtriangleup _{I}^{\sigma }f\ T_{\sigma }^{\lambda }\mathbf{1}%
_{F\setminus I^{\prime }},\bigtriangleup _{J}^{\omega }g\right\rangle
_{\omega } \\
& =\sum_{s=0}^{\infty }\sum_{I\in \mathcal{C}_{F}}\sum_{I^{\prime }\in 
\mathfrak{C}_{\mathcal{D}}\left( I\right) }\sum_{\substack{ J\in \mathcal{C}%
_{F}^{\tau -\limfunc{shift}}\text{and }\ell \left( J\right) =2^{-s}\ell
\left( I\right)  \\ J\subset I^{\prime }\text{ and }J\Subset _{\rho
,\varepsilon }I}}\left\langle \bigtriangleup _{J}^{\omega }\left[ \left(
\bigtriangleup _{I}^{\sigma }f\right) \ T_{\sigma }^{\lambda }\mathbf{1}%
_{F\setminus I^{\prime }}\right] ,\bigtriangleup _{J}^{\omega
}g\right\rangle _{\omega }\ .
\end{align*}%
Now we write $J\prec _{s}I^{\prime }$ when $\pi _{\mathcal{D}}I^{\prime }\in 
\mathcal{C}_{F}^{\tau -\limfunc{shift}}$ and%
\begin{equation*}
J\in \mathcal{C}_{F}^{\tau -\limfunc{shift}}\text{, }\ell \left( J\right)
=2^{-s}\ell \left( I\right) \text{, }J\subset I^{\prime }\text{ and }%
J\Subset _{\rho ,\varepsilon }I,
\end{equation*}%
so that we have%
\begin{eqnarray*}
&&\mathsf{B}_{\limfunc{stop}}\left( f,g\right) =\sum_{F\in \mathcal{F}}%
\mathsf{B}_{\limfunc{stop}}^{F}\left( f,g\right) \\
&=&\sum_{s=0}^{\infty }\sum_{F\in \mathcal{F}}\sum_{I\in \mathcal{C}%
_{F}}\sum_{I^{\prime }\in \mathfrak{C}_{\mathcal{D}}\left( I\right) }\sum 
_{\substack{ J\in \mathcal{C}_{F}^{\tau -\limfunc{shift}}\text{and }\ell
\left( J\right) =2^{-s}\ell \left( I\right)  \\ J\subset I^{\prime }\text{
and }J\Subset _{\rho ,\varepsilon }I}}\left\langle \bigtriangleup
_{J}^{\omega }\left[ \left( \bigtriangleup _{I}^{\sigma }f\right) \
T_{\sigma }^{\lambda }\mathbf{1}_{F\setminus I^{\prime }}\right]
,\bigtriangleup _{J}^{\omega }g\right\rangle _{\omega } \\
&=&\sum_{s=0}^{\infty }\sum_{J\in \mathcal{D}}\left\langle \sum_{F\in 
\mathcal{F}}\sum_{I\in \mathcal{C}_{F}}\sum_{I^{\prime }\in \mathfrak{C}_{%
\mathcal{D}}\left( I\right) :\ J\prec _{s}I^{\prime }}\bigtriangleup
_{J}^{\omega }\left[ \left( \bigtriangleup _{I}^{\sigma }f\right) \
T_{\sigma }^{\lambda }\mathbf{1}_{F\setminus I^{\prime }}\right]
,\bigtriangleup _{J}^{\omega }g\right\rangle _{\omega } \\
&=&\sum_{s=0}^{\infty }\int_{\mathbb{R}^{n}}\sum_{J\in \mathcal{D}}\left(
\sum_{F\in \mathcal{F}}\sum_{I\in \mathcal{C}_{F}}\sum_{I^{\prime }\in 
\mathfrak{C}_{\mathcal{D}}\left( I\right) :\ J\prec _{s}I^{\prime
}}\bigtriangleup _{J}^{\omega }\left[ \left( \bigtriangleup _{I}^{\sigma
}f\right) \ T_{\sigma }^{\lambda }\mathbf{1}_{F\setminus I^{\prime }}\right]
\left( x\right) \right) \bigtriangleup _{J}^{\omega }g\left( x\right) \
d\omega \left( x\right) \\
&\equiv &\sum_{s=0}^{\infty }\mathsf{B}_{\limfunc{stop};s}\left( f,g\right) .
\end{eqnarray*}%
But now we observe that if $J\subset I^{\prime }$ then $\bigtriangleup
_{I}^{\sigma }f$ is a constant on $J$ and so (\ref{analogue}) and (\ref%
{analogue'}), together with the observation that $\bigtriangleup
_{J}^{\omega }\left[ \left( \bigtriangleup _{I}^{\sigma }f\right) \
T_{\sigma }^{\lambda }\mathbf{1}_{F\setminus I^{\prime }}\right] $ has a
vanishing moment, yield the following inequality, 
\begin{equation}
\left\vert \bigtriangleup _{J}^{\omega }\left[ \left( \bigtriangleup
_{I}^{\sigma }f\right) \ T_{\sigma }^{\lambda }\mathbf{1}_{F\setminus
I^{\prime }}\right] \left( x\right) \right\vert \lesssim \left\Vert \mathbb{E%
}_{I^{\prime }}^{\sigma }\bigtriangleup _{I}^{\sigma }f\right\Vert _{\infty
}\ \mathrm{P}^{\lambda }\left( J,\mathbf{1}_{F\setminus I^{\prime }}\sigma
\right) \ \mathbf{1}_{J}\left( x\right) \lesssim E_{I^{\prime }}^{\sigma
}\left\vert \bigtriangleup _{I}^{\sigma }f\right\vert \ \mathrm{P}^{\lambda
}\left( J,\mathbf{1}_{F\setminus I^{\prime }}\sigma \right) \ \mathbf{1}%
_{J}\left( x\right) .  \label{gap}
\end{equation}

Now we can obtain geometric decay in $s$. Indeed, applying Cauchy-Schwarz we
obtain for each $s$,%
\begin{eqnarray*}
&&\mathsf{B}_{\limfunc{stop};s}\left( f,g\right) =\int_{\mathbb{R}%
^{n}}\sum_{J\in \mathcal{D}}\left( \sum_{F\in \mathcal{F}}\sum_{I\in 
\mathcal{C}_{F}}\sum_{I^{\prime }\in \mathfrak{C}_{\mathcal{D}}\left(
I\right) :\ J\prec _{s}I^{\prime }}\bigtriangleup _{J}^{\omega
}\bigtriangleup _{I}^{\sigma }f\left( x\right) T_{\sigma }^{\lambda }\mathbf{%
1}_{F\setminus I^{\prime }}\left( x\right) \right) \bigtriangleup
_{J}^{\omega }g\left( x\right) \ d\omega \left( x\right) \\
&\leq &\int_{\mathbb{R}^{n}}\left( \sum_{J\in \mathcal{D}}\left( \sum_{F\in 
\mathcal{F}}\sum_{I\in \mathcal{C}_{F}}\sum_{I^{\prime }\in \mathfrak{C}_{%
\mathcal{D}}\left( I\right) :\ J\prec _{s}I^{\prime }}E_{I^{\prime
}}^{\sigma }\left\vert \bigtriangleup _{I}^{\sigma }f\right\vert \ \mathrm{P}%
^{\lambda }\left( J,\mathbf{1}_{F\setminus I^{\prime }}\sigma \right) \ 
\mathbf{1}_{J}\left( x\right) \right) ^{2}\right) ^{\frac{1}{2}}\left(
\sum_{J\in \mathcal{D}}\left\vert \bigtriangleup _{J}^{\omega }g\left(
x\right) \right\vert ^{2}\right) ^{\frac{1}{2}}d\omega \left( x\right) \\
&\leq &\left\Vert S\left( x\right) \right\Vert _{L^{p}\left( \omega \right)
}\left\Vert \left( \sum_{J\in \mathcal{D}}\left\vert \bigtriangleup
_{J}^{\omega }g\left( x\right) \right\vert ^{2}\right) ^{\frac{1}{2}%
}\right\Vert _{L^{p^{\prime }}\left( \omega \right) }; \\
&&\text{where }S\left( x\right) ^{2}\equiv \sum_{J\in \mathcal{D}}\left(
\sum_{F\in \mathcal{F}}\sum_{I\in \mathcal{C}_{F}}\sum_{I^{\prime }\in 
\mathfrak{C}_{\mathcal{D}}\left( I\right) :\ J\prec _{s}I^{\prime
}}E_{I^{\prime }}^{\sigma }\left\vert \bigtriangleup _{I}^{\sigma
}f\right\vert \ \mathrm{P}^{\lambda }\left( J,\mathbf{1}_{F\setminus
I^{\prime }}\sigma \right) \ \mathbf{1}_{J}\left( x\right) \right) ^{2}.
\end{eqnarray*}%
For fixed $x\in J$, the pigeonholing above yields $I=\pi _{\mathcal{D}%
}^{\left( s\right) }J$ and $F=\pi _{\mathcal{F}}\pi _{\mathcal{D}}^{\left(
s\right) }J$, and thus we obtain%
\begin{eqnarray*}
S\left( x\right) ^{2} &\equiv &\sum_{J\in \mathcal{D}}\left( \sum_{F\in 
\mathcal{F}}\sum_{I\in \mathcal{C}_{F}}\sum_{I^{\prime }\in \mathfrak{C}_{%
\mathcal{D}}\left( I\right) :\ J\prec _{s}I^{\prime }}E_{I^{\prime
}}^{\sigma }\left\vert \bigtriangleup _{I}^{\sigma }f\right\vert \ \mathrm{P}%
^{\lambda }\left( J,\mathbf{1}_{F\setminus I^{\prime }}\sigma \right) \ 
\mathbf{1}_{J}\left( x\right) \right) ^{2} \\
&\lesssim &\sum_{J\in \mathcal{D}}\left( E_{\pi _{\mathcal{D}}^{\left(
s-1\right) }J}^{\sigma }\left\vert \bigtriangleup _{\pi _{\mathcal{D}%
}^{\left( s\right) }J}^{\sigma }f\right\vert \right) ^{2}\mathrm{P}^{\lambda
}\left( J,\mathbf{1}_{\pi _{\mathcal{F}}\pi _{\mathcal{D}}^{\left( s\right)
}J\setminus \pi _{\mathcal{D}}^{\left( s-1\right) }J}\sigma \right) ^{2}%
\mathbf{1}_{J}\left( x\right) ,
\end{eqnarray*}%
and now using the Poisson inequality with%
\begin{equation*}
\eta \equiv 1-\varepsilon \left( n+1-\lambda \right) >0,
\end{equation*}%
we obtain%
\begin{eqnarray*}
S\left( x\right) ^{2} &\lesssim &2^{-2\eta s}\sum_{J\in \mathcal{D}}\left(
E_{\pi _{\mathcal{D}}^{\left( s-1\right) }J}^{\sigma }\left\vert
\bigtriangleup _{\pi _{\mathcal{D}}^{\left( s\right) }J}^{\sigma
}f\right\vert \right) ^{2}\mathrm{P}^{\lambda }\left( \pi _{\mathcal{D}%
}^{\left( s-1\right) }J,\mathbf{1}_{\pi _{\mathcal{F}}\pi _{\mathcal{D}%
}^{\left( s\right) }J\setminus \pi _{\mathcal{D}}^{\left( s-1\right)
}J}\sigma \right) ^{2}\mathbf{1}_{J}\left( x\right) \\
&\lesssim &2^{-2\eta s}\sum_{J\in \mathcal{D}}\left\vert \widehat{f}\left(
\pi _{\mathcal{D}}^{\left( s\right) }J\right) \right\vert ^{2}\frac{1}{%
\left\vert \pi _{\mathcal{D}}^{\left( s-1\right) }J\right\vert _{\sigma }}%
\mathrm{P}^{\lambda }\left( \pi _{\mathcal{D}}^{\left( s-1\right) }J,\mathbf{%
1}_{\pi _{\mathcal{F}}\pi _{\mathcal{D}}^{\left( s\right) }J\setminus \pi _{%
\mathcal{D}}^{\left( s-1\right) }J}\sigma \right) ^{2}\mathbf{1}_{J}\left(
x\right) .
\end{eqnarray*}%
Since $E_{J}^{\sigma }\left\vert \bigtriangleup _{I}^{\sigma }\right\vert
\lesssim E_{I^{\prime }}^{\sigma }\left\vert \bigtriangleup _{I}^{\sigma
}f\right\vert $ by (\ref{analogue}) and (\ref{analogue'}), we have%
\begin{equation}
S\left( x\right) ^{2}\lesssim 2^{-2\eta s}\sum_{F\in \mathcal{F}}\sum_{I\in 
\mathcal{C}_{F}}\sum_{I^{\prime }\in \mathfrak{C}_{\mathcal{D}}\left(
I\right) }\left\vert \widehat{f}\left( I\right) \right\vert ^{2}\frac{1}{%
\left\vert I^{\prime }\right\vert _{\sigma }}\mathrm{P}^{\lambda }\left(
I^{\prime },\mathbf{1}_{F\setminus I^{\prime }}\sigma \right) ^{2}\mathbf{1}%
_{I^{\prime }}\left( x\right) ,  \label{S2}
\end{equation}%
Then from inequality (\ref{SLp replacement}) below we get, 
\begin{equation}
\left\Vert S\left( x\right) \right\Vert _{L^{p}\left( \omega \right)
}\lesssim 2^{-\eta s}\left( \mathfrak{T}_{\mathbf{R}^{\lambda },p}^{\ell
^{2};\func{loc}}\left( \sigma ,\omega \right) +A_{p}^{\lambda ,\ell ^{2},%
\limfunc{offset}}\left( \sigma ,\omega \right) \right) \left\Vert
f\right\Vert _{L^{p}\left( \sigma \right) }\ .  \label{SLp}
\end{equation}

Finally then, by H\"{o}lder's inequality we obtain 
\begin{equation*}
\left\vert \mathsf{B}_{\limfunc{stop};s}\left( f,g\right) \right\vert
\lesssim \left\Vert S\left( x\right) \right\Vert _{L^{p}\left( \omega
\right) }\left\Vert \left( \sum_{J\in \mathcal{D}}\left\vert \bigtriangleup
_{J}^{\omega }g\left( x\right) \right\vert ^{2}\right) ^{\frac{1}{2}%
}\right\Vert _{L^{p^{\prime }}\left( \omega \right) }\lesssim 2^{-\eta
s}\left( \mathfrak{T}_{\mathbf{R}^{\lambda },p}^{\ell ^{2};\func{loc}}\left(
\sigma ,\omega \right) +A_{p}^{\lambda ,\ell ^{2},\limfunc{offset}}\left(
\sigma ,\omega \right) \right) \left\Vert f\right\Vert _{L^{p}\left( \sigma
\right) }\left\Vert g\right\Vert _{L^{p^{\prime }}\left( \omega \right) }\ ,
\end{equation*}%
and provided $\varepsilon <\frac{1}{n+1-\lambda }$, i.e. $\eta >0$, summing
in $s$ gives%
\begin{equation*}
\left\vert \mathsf{B}_{\limfunc{stop}}\left( f,g\right) \right\vert \leq
\sum_{s=0}^{\infty }\left\vert \mathsf{B}_{\limfunc{stop};s}\left(
f,g\right) \right\vert \lesssim C_{n,\lambda }\left( \mathfrak{T}_{\mathbf{R}%
^{\lambda },p}^{\ell ^{2};\func{loc}}\left( \sigma ,\omega \right)
+A_{p}^{\lambda ,\ell ^{2},\limfunc{offset}}\left( \sigma ,\omega \right)
\right) \left\Vert f\right\Vert _{L^{p}\left( \sigma \right) }\left\Vert
g\right\Vert _{L^{p^{\prime }}\left( \omega \right) }\ .
\end{equation*}

It remains to justify (\ref{SLp}). Since $E_{J}^{\sigma }\left\vert
\bigtriangleup _{I}^{\sigma }\right\vert \leq \left\Vert \mathbf{1}%
_{J}\left\vert \bigtriangleup _{I}^{\sigma }\right\vert \right\Vert _{\infty
}\leq \left\Vert \mathbf{1}_{I^{\prime }}\left\vert \bigtriangleup
_{I}^{\sigma }\right\vert \right\Vert _{\infty }\lesssim E_{I^{\prime
}}^{\sigma }\left\vert \bigtriangleup _{I}^{\sigma }f\right\vert $ by (\ref%
{analogue}) and (\ref{analogue'}), we have%
\begin{equation*}
S\left( x\right) ^{2}\lesssim 2^{-2\eta s}\sum_{F\in \mathcal{F}}\sum_{I\in 
\mathcal{C}_{F}}\sum_{I^{\prime }\in \mathfrak{C}_{\mathcal{D}}\left(
I\right) }\left( E_{I^{\prime }}^{\sigma }\left\vert \bigtriangleup
_{I}^{\sigma }f\right\vert \right) ^{2}\mathrm{P}^{\lambda }\left( I^{\prime
},\mathbf{1}_{F\setminus I^{\prime }}\sigma \right) ^{2}\mathbf{1}%
_{I^{\prime }}\left( x\right) .
\end{equation*}%
We claim that%
\begin{equation}
\left\Vert \left( \sum_{F\in \mathcal{F}}\sum_{I\in \mathcal{C}%
_{F}}\sum_{I^{\prime }\in \mathfrak{C}_{\mathcal{D}}\left( I\right) }\left(
E_{I^{\prime }}^{\sigma }\left\vert \bigtriangleup _{I}^{\sigma
}f\right\vert \right) ^{2}\mathrm{P}^{\lambda }\left( I^{\prime },\mathbf{1}%
_{F\setminus I^{\prime }}\sigma \right) ^{2}\mathbf{1}_{I^{\prime }}\left(
x\right) \right) ^{\frac{1}{2}}\right\Vert _{L^{p}\left( \omega \right)
}\lesssim \left( \mathfrak{T}_{\mathbf{R}^{\lambda },p}^{\ell ^{2};\func{loc}%
}\left( \sigma ,\omega \right) +A_{p}^{\lambda ,\ell ^{2},\limfunc{offset}%
}\left( \sigma ,\omega \right) \right) \left\Vert f\right\Vert _{L^{p}\left(
\sigma \right) }.  \label{claim that}
\end{equation}%
With this established we will then obtain, 
\begin{eqnarray}
&&\left\Vert S\left( x\right) \right\Vert _{L^{p}\left( \omega \right)
}\lesssim 2^{-\eta s}\left\Vert \left( \sum_{F\in \mathcal{F}}\sum_{I\in 
\mathcal{C}_{F}}\sum_{I^{\prime }\in \mathfrak{C}_{\mathcal{D}}\left(
I\right) }\left( E_{I^{\prime }}^{\sigma }\left\vert \bigtriangleup
_{I}^{\sigma }f\right\vert \right) ^{2}\mathrm{P}^{\lambda }\left( I^{\prime
},\mathbf{1}_{F\setminus I^{\prime }}\sigma \right) ^{2}\mathbf{1}%
_{I^{\prime }}\left( x\right) \right) ^{\frac{1}{2}}\right\Vert
_{L^{p}\left( \omega \right) }  \label{SLp replacement} \\
&\lesssim &2^{-\eta s}\left( \mathfrak{T}_{\mathbf{R}^{\lambda },p}^{\ell
^{2};\func{loc}}\left( \sigma ,\omega \right) +A_{p}^{\lambda ,\ell ^{2},%
\limfunc{offset}}\left( \sigma ,\omega \right) \right) \left\Vert
f\right\Vert _{L^{p}\left( \sigma \right) }\ ,  \notag
\end{eqnarray}%
which gives (\ref{SLp}).

In order to prove (\ref{claim that}), we will need a stronger notion of
energy reversal, which we now describe. But first we recall the definition
of strong energy reversal from \cite{SaShUr9}. We say that a vector $\mathbf{%
T}^{\lambda }=\left\{ T_{\ell }^{\lambda }\right\} _{\ell =1}^{2}$ of $%
\lambda $-fractional transforms has \emph{strong} reversal of $\omega $%
-energy on a cube $J$ if there is a positive constant $C_{0}$ such that for
all $2\leq \gamma \leq 2^{\mathbf{r}\left( 1-\varepsilon \right) }$ and for
all positive measures $\mu $ supported outside $\gamma J$, we have the
inequality%
\begin{equation}
\mathbb{E}_{J}^{\omega }\left[ \left( \mathbf{x}-\mathbb{E}_{J}^{\omega }%
\mathbf{x}\right) ^{2}\right] \left( \frac{\mathrm{P}^{\lambda }\left( J,\mu
\right) }{\left\vert J\right\vert ^{\frac{1}{n}}}\right) ^{2}=\mathsf{E}%
\left( J,\omega \right) ^{2}\mathrm{P}^{\lambda }\left( J,\mu \right)
^{2}\leq C_{0}\ \mathbb{E}_{J}^{\omega }\left\vert \mathbf{T}^{\lambda }\mu -%
\mathbb{E}_{J}^{\omega }\mathbf{T}^{\lambda }\mu \right\vert ^{2}.
\label{will fail}
\end{equation}%
We now introduce a \emph{stronger} notion of energy reversal which we call
extreme energy reversal. We say that a vector $\mathbf{T}^{\lambda }=\left\{
T_{\ell }^{\lambda }\right\} _{\ell =1}^{2}$ of $\lambda $-fractional
transforms in has \emph{extreme} reversal of $\omega $-energy on a cube $J$
if there is a Haar function $h_{J}^{\omega }\left( x\right) $ and a positive
constant $C_{0}$, such that for all $2\leq \gamma \leq 2^{\mathbf{r}\left(
1-\varepsilon \right) }$ and for all positive measures $\mu $ supported
outside $\gamma J$, we have the inequality,%
\begin{eqnarray}
&&\mathbb{E}_{J}^{\omega }\left[ \left( \mathbf{x}-\mathbb{E}_{J}^{\omega }%
\mathbf{x}\right) ^{2}\right] \left( \frac{\mathrm{P}^{\lambda }\left( J,\mu
\right) }{\left\vert J\right\vert ^{\frac{1}{n}}}\right) ^{2}\left\vert
J\right\vert _{\omega }=\mathsf{E}\left( J,\omega \right) ^{2}\mathrm{P}%
^{\lambda }\left( J,\mu \right) ^{2}\left\vert J\right\vert _{\omega }
\label{will fail extreme} \\
&&\ \ \ \ \ \ \ \ \ \ \leq C\left\vert \int_{J}\int_{\mathbb{R}^{n}\setminus
\gamma J}\left[ \mathbf{K}^{\lambda }\left( x,y\right) -\mathbf{K}^{\lambda
}\left( c_{J},y\right) \right] h_{J}^{\omega }\left( x\right) d\mu \left(
y\right) d\omega \left( x\right) \right\vert ^{2},  \notag
\end{eqnarray}%
and $\mathbf{K}^{\lambda }$ is the kernel of $\mathbf{T}^{\lambda }$. Note
that (\ref{will fail extreme}) is weaker than (\ref{will fail}) in that the
there is no absolute value inside the integral, and the difference of
kernels $\mathbf{K}^{\lambda }\left( x,y\right) -\mathbf{K}^{\lambda }\left(
c_{J},y\right) $ is multiplied by a single Haar function $h_{J}^{\omega
}\left( x\right) $.

Clearly extreme reversal of energy implies strong reversal of energy. We
prove below that extreme reversal of energy holds for the vector $\lambda $%
-fractional Riesz transform $\mathbf{R}^{\lambda ,n}$ in $\mathbb{R}^{n}$.
But first we will use extreme reversal of energy to prove (\ref{claim that}).

\begin{lemma}
Suppose $\sigma $ and $\omega $ are doubling measures on $\mathbb{R}^{n}$.
Then (\ref{claim that}) holds for $1<p<\infty $, $0\leq \lambda <n$ and $f$
and $\mathcal{F}$ as above.
\end{lemma}

\begin{proof}
For each $I^{\prime }$ we first write $F\setminus I^{\prime }=\left( \gamma
I^{\prime }\setminus I^{\prime }\right) \cup \left( F\setminus \gamma
I^{\prime }\right) $ and $\mathrm{P}^{\lambda }\left( I^{\prime },\mathbf{1}%
_{F\setminus I^{\prime }}\sigma \right) =\mathrm{P}^{\lambda }\left(
I^{\prime },\mathbf{1}_{\gamma I^{\prime }\setminus I^{\prime }}\sigma
\right) +\mathrm{P}^{\lambda }\left( I^{\prime },\mathbf{1}_{F\setminus
\gamma I^{\prime }}\sigma \right) $. For convenience, we sometimes write $%
a_{I^{\prime }}=E_{I^{\prime }}^{\sigma }\left\vert \bigtriangleup
_{I}^{\sigma }f\right\vert $, and only use $E_{I^{\prime }}^{\sigma
}\left\vert \bigtriangleup _{I}^{\sigma }f\right\vert $ when it matters.
Because $\sigma $ is doubling we have $\mathrm{P}_{1}^{\lambda }\left(
I^{\prime },\mathbf{1}_{\gamma I^{\prime }\setminus I^{\prime }}\sigma
\right) \approx \frac{\left\vert I^{\prime }\right\vert _{\sigma }}{%
\left\vert I^{\prime }\right\vert ^{1-\frac{\lambda }{n}}}$, and 
\begin{eqnarray*}
&&\left\Vert \left( \sum_{F\in \mathcal{F}}\sum_{I\in \mathcal{C}%
_{F}}\sum_{I^{\prime }\in \mathfrak{C}_{\mathcal{D}}\left( I\right)
}a_{I^{\prime }}^{2}\mathrm{P}^{\lambda }\left( I^{\prime },\mathbf{1}%
_{\gamma I^{\prime }\setminus I^{\prime }}\sigma \right) ^{2}\mathbf{1}%
_{I^{\prime }}\left( x\right) \right) ^{\frac{1}{2}}\right\Vert
_{L^{p}\left( \omega \right) }\approx \left\Vert \left( \sum_{F\in \mathcal{F%
}}\sum_{I\in \mathcal{C}_{F}}\sum_{I^{\prime }\in \mathfrak{C}_{\mathcal{D}%
}\left( I\right) }a_{I^{\prime }}^{2}\left( \frac{\left\vert I^{\prime
}\right\vert _{\sigma }}{\left\vert I^{\prime }\right\vert ^{1-\frac{\lambda 
}{n}}}\right) ^{2}\mathbf{1}_{I^{\prime }}\left( x\right) \right) ^{\frac{1}{%
2}}\right\Vert _{L^{p}\left( \omega \right) } \\
&\leq &A_{p}^{\lambda ,\ell ^{2},\limfunc{offset}}\left( \sigma ,\omega
\right) \left\Vert \left( \sum_{F\in \mathcal{F}}\sum_{I\in \mathcal{C}%
_{F}}\sum_{I^{\prime }\in \mathfrak{C}_{\mathcal{D}}\left( I\right) }\left(
E_{I^{\prime }}^{\sigma }\left\vert \bigtriangleup _{I}^{\sigma
}f\right\vert \right) ^{2}\mathbf{1}_{I^{\prime }}\left( x\right) \right) ^{%
\frac{1}{2}}\right\Vert _{L^{p}\left( \sigma \right) }\leq A_{p}^{\lambda
,\ell ^{2},\limfunc{offset}}\left( \sigma ,\omega \right) \left\Vert
f\right\Vert _{L^{p}\left( \sigma \right) }\ .
\end{eqnarray*}%
To handle the remaining term involving $\mathrm{P}^{\lambda }\left(
I^{\prime },\mathbf{1}_{F\setminus \gamma I^{\prime }}\sigma \right) $ we
will use extreme reversal of energy inequality for the vector $\lambda $%
-fractional Riesz transform $\mathbf{R}^{\lambda ,n}$ in $\mathbb{R}^{n}$.
Since $\omega $ is doubling, we have $\mathsf{E}\left( I^{\prime },\omega
\right) \approx 1$, and so by the Fefferman-Stein vector valued maximal
inequality,%
\begin{eqnarray*}
&&\left\Vert \left( \sum_{F\in \mathcal{F}}\sum_{I\in \mathcal{C}%
_{F}}\sum_{I^{\prime }\in \mathfrak{C}_{\mathcal{D}}\left( I\right)
}a_{I^{\prime }}^{2}\mathrm{P}^{\lambda }\left( I^{\prime },\mathbf{1}%
_{F\setminus \gamma I^{\prime }}\sigma \right) ^{2}\mathbf{1}_{I^{\prime
}}\right) ^{\frac{1}{2}}\right\Vert _{L^{p}\left( \omega \right) } \\
&\approx &\left\Vert \left( \sum_{F\in \mathcal{F}}\sum_{I\in \mathcal{C}%
_{F}}\sum_{I^{\prime }\in \mathfrak{C}_{\mathcal{D}}\left( I\right)
}a_{I^{\prime }}^{2}\mathrm{P}^{\lambda }\left( I^{\prime },\mathbf{1}%
_{F\setminus \gamma I^{\prime }}\sigma \right) ^{2}\mathsf{E}\left(
I^{\prime },\omega \right) ^{2}\mathbf{1}_{I^{\prime }}\right) ^{\frac{1}{2}%
}\right\Vert _{L^{p}\left( \omega \right) } \\
&\lesssim &\left\Vert \left( \sum_{F\in \mathcal{F}}\sum_{I\in \mathcal{C}%
_{F}}\sum_{I^{\prime }\in \mathfrak{C}_{\mathcal{D}}\left( I\right)
}a_{I^{\prime }}^{2}\left\vert \int_{I^{\prime }}\int_{F\setminus \gamma
I^{\prime }}\left[ \mathbf{K}^{\lambda }\left( x,y\right) -\mathbf{K}%
^{\lambda }\left( c_{I_{i}},y\right) \right] \frac{h_{I^{\prime }}^{\omega
}\left( x\right) }{\sqrt{\left\vert I^{\prime }\right\vert _{\omega }}}%
d\sigma \left( y\right) d\omega \left( x\right) \right\vert ^{2}\mathbf{1}%
_{I^{\prime }}\right) ^{\frac{1}{2}}\right\Vert _{L^{p}\left( \omega \right)
} \\
&=&\left\Vert \left( \sum_{F\in \mathcal{F}}\sum_{I\in \mathcal{C}%
_{F}}\sum_{I^{\prime }\in \mathfrak{C}_{\mathcal{D}}\left( I\right)
}a_{I^{\prime }}^{2}\left\vert \int_{I^{\prime }}\int_{F\setminus \gamma
I^{\prime }}\mathbf{K}^{\lambda }\left( x,y\right) \frac{h_{I^{\prime
}}^{\omega }\left( x\right) }{\sqrt{\left\vert I^{\prime }\right\vert
_{\omega }}}d\sigma \left( y\right) d\omega \left( x\right) \right\vert ^{2}%
\mathbf{1}_{I^{\prime }}\right) ^{\frac{1}{2}}\right\Vert _{L^{p}\left(
\omega \right) } \\
&=&\left\Vert \left( \sum_{F\in \mathcal{F}}\sum_{I\in \mathcal{C}%
_{F}}\sum_{I^{\prime }\in \mathfrak{C}_{\mathcal{D}}\left( I\right)
}a_{I^{\prime }}^{2}\left\vert \left\langle \mathbf{R}_{\sigma }^{\lambda }%
\mathbf{1}_{F\setminus \gamma I^{\prime }},\frac{h_{I^{\prime }}^{\omega
}\left( x\right) }{\sqrt{\left\vert I^{\prime }\right\vert _{\omega }}}%
\right\rangle _{\omega }\right\vert ^{2}\mathbf{1}_{I^{\prime }}\right) ^{%
\frac{1}{2}}\right\Vert _{L^{p}\left( \omega \right) },
\end{eqnarray*}%
which is at most%
\begin{eqnarray}
&&\left\Vert \left( \sum_{F\in \mathcal{F}}\sum_{I\in \mathcal{C}%
_{F}}\sum_{I^{\prime }\in \mathfrak{C}_{\mathcal{D}}\left( I\right)
}a_{I^{\prime }}^{2}\left\vert \left\langle \mathbf{R}_{\sigma }^{\lambda }%
\mathbf{1}_{F},\frac{h_{I^{\prime }}^{\omega }\left( x\right) }{\sqrt{%
\left\vert I^{\prime }\right\vert _{\omega }}}\right\rangle _{\omega
}\right\vert ^{2}\mathbf{1}_{I^{\prime }}\right) ^{\frac{1}{2}}\right\Vert
_{L^{p}\left( \omega \right) }  \label{lines} \\
&&+\left\Vert \left( \sum_{F\in \mathcal{F}}\sum_{I\in \mathcal{C}%
_{F}}\sum_{I^{\prime }\in \mathfrak{C}_{\mathcal{D}}\left( I\right)
}a_{I^{\prime }}^{2}\left\vert \left\langle \mathbf{R}_{\sigma }^{\lambda }%
\mathbf{1}_{\gamma I^{\prime }},\frac{h_{I^{\prime }}^{\omega }\left(
x\right) }{\sqrt{\left\vert I^{\prime }\right\vert _{\omega }}}\right\rangle
_{\omega }\right\vert ^{2}\mathbf{1}_{I^{\prime }}\right) ^{\frac{1}{2}%
}\right\Vert _{L^{p}\left( \omega \right) }\equiv A+B.  \notag
\end{eqnarray}%
Then using the Fefferman-Stein vector valued maximal inequality in \cite%
{GrLiYa}, first applied to the dyadic operator $M_{\omega }^{\func{dy}}$,
followed by the quadratic testing condition, and finally another application
of the Fefferman-Stein vector valued maximal inequality applied to the
classical operator $M_{\sigma }$, we obtain%
\begin{eqnarray*}
A &=&\left\Vert \left( \sum_{F\in \mathcal{F}}\sum_{I\in \mathcal{C}%
_{F}}\sum_{I^{\prime }\in \mathfrak{C}_{\mathcal{D}}\left( I\right)
}a_{I^{\prime }}^{2}\left\vert \left\langle \mathbf{R}_{\sigma }^{\lambda }%
\mathbf{1}_{\gamma I^{\prime }},\frac{h_{I^{\prime }}^{\omega }\left(
x\right) }{\sqrt{\left\vert I^{\prime }\right\vert _{\omega }}}\right\rangle
_{\omega }\right\vert ^{2}\mathbf{1}_{I^{\prime }}\right) ^{\frac{1}{2}%
}\right\Vert _{L^{p}\left( \omega \right) } \\
&\lesssim &\left\Vert \left( \sum_{F\in \mathcal{F}}\sum_{I\in \mathcal{C}%
_{F}}\sum_{I^{\prime }\in \mathfrak{C}_{\mathcal{D}}\left( I\right)
}a_{I^{\prime }}^{2}\left\vert M_{\omega }^{\func{dy}}\mathbf{1}_{I^{\prime
}}\mathbf{R}_{\sigma }^{\lambda }\mathbf{1}_{\gamma I^{\prime }}\right\vert
^{2}\mathbf{1}_{I^{\prime }}\right) ^{\frac{1}{2}}\right\Vert _{L^{p}\left(
\omega \right) }\lesssim \left\Vert \left( \sum_{F\in \mathcal{F}}\sum_{I\in 
\mathcal{C}_{F}}\sum_{I^{\prime }\in \mathfrak{C}_{\mathcal{D}}\left(
I\right) }a_{I^{\prime }}^{2}\left\vert \mathbf{1}_{I^{\prime }}\mathbf{R}%
_{\sigma }^{\lambda }\mathbf{1}_{\gamma I^{\prime }}\right\vert ^{2}\mathbf{1%
}_{I^{\prime }}\right) ^{\frac{1}{2}}\right\Vert _{L^{p}\left( \omega
\right) } \\
&\lesssim &\mathfrak{T}_{T^{\lambda },p}^{\ell ^{2};\func{loc}}\left( \sigma
,\omega \right) \left\Vert \left( \sum_{F\in \mathcal{F}}\sum_{I\in \mathcal{%
C}_{F}}\sum_{I^{\prime }\in \mathfrak{C}_{\mathcal{D}}\left( I\right)
}a_{I^{\prime }}^{2}\mathbf{1}_{\gamma I^{\prime }}\right) ^{\frac{1}{2}%
}\right\Vert _{L^{p}\left( \sigma \right) }\lesssim \mathfrak{T}_{T^{\lambda
},p}^{\ell ^{2};\func{loc}}\left( \sigma ,\omega \right) \left\Vert \left(
\sum_{F\in \mathcal{F}}\sum_{I\in \mathcal{C}_{F}}\sum_{I^{\prime }\in 
\mathfrak{C}_{\mathcal{D}}\left( I\right) }a_{I^{\prime }}^{2}M_{\sigma }%
\mathbf{1}_{I^{\prime }}\right) ^{\frac{1}{2}}\right\Vert _{L^{p}\left(
\sigma \right) } \\
&\lesssim &\mathfrak{T}_{T^{\lambda },p}^{\ell ^{2};\func{loc}}\left( \sigma
,\omega \right) \left\Vert \left( \sum_{F\in \mathcal{F}}\sum_{I\in \mathcal{%
C}_{F}}\sum_{I^{\prime }\in \mathfrak{C}_{\mathcal{D}}\left( I\right)
}\left( E_{I^{\prime }}^{\sigma }\left\vert \bigtriangleup _{I}^{\sigma
}f\right\vert \right) ^{2}\mathbf{1}_{I^{\prime }}\right) ^{\frac{1}{2}%
}\right\Vert _{L^{p}\left( \sigma \right) }\lesssim \mathfrak{T}_{T^{\lambda
},p}^{\ell ^{2};\func{loc}}\left( \sigma ,\omega \right) \left\Vert
f\right\Vert _{L^{p}\left( \sigma \right) }.
\end{eqnarray*}

In order to estimate term $B$ in (\ref{lines}), we use $\left\vert
E_{I^{\prime }}^{\sigma }\left\vert \bigtriangleup _{I}^{\sigma
}f\right\vert \right\vert \lesssim \alpha _{\mathcal{F}}\left( F\right) $
for $I^{\prime }\in \mathfrak{C}_{\mathcal{D}}\left( I\right) $ and $I\in 
\mathcal{C}_{F}$, which holds since $\sigma $ is doubling, and the
inequality $\left\vert \mathsf{P}_{\mathcal{C}_{F}}^{\omega }\mathbf{R}%
_{\sigma }^{\lambda }\mathbf{1}_{F}\right\vert \lesssim M_{\omega }^{\func{dy%
}}\left( \mathbf{R}_{\sigma }^{\lambda }\mathbf{1}_{F}\right) $, and the
Fefferman-Stein vector valued maximal inequality in \cite{GrLiYa}, to obtain 
\begin{eqnarray*}
B &=&\left\Vert \left( \sum_{F\in \mathcal{F}}\alpha _{\mathcal{F}}\left(
F\right) ^{2}\sum_{I\in \mathcal{C}_{F}}\sum_{I^{\prime }\in \mathfrak{C}_{%
\mathcal{D}}\left( I\right) }\left\vert \left\langle \mathbf{R}_{\sigma
}^{\lambda }\mathbf{1}_{F},\frac{h_{I^{\prime }}^{\omega }\left( x\right) }{%
\sqrt{\left\vert I^{\prime }\right\vert _{\omega }}}\right\rangle _{\omega
}\right\vert ^{2}\mathbf{1}_{I^{\prime }}\right) ^{\frac{1}{2}}\right\Vert
_{L^{p}\left( \omega \right) } \\
&\lesssim &\left\Vert \left( \sum_{F\in \mathcal{F}}\alpha _{\mathcal{F}%
}\left( F\right) ^{2}\sum_{I\in \mathcal{C}_{F}}\sum_{I^{\prime }\in 
\mathfrak{C}_{\mathcal{D}}\left( I\right) }\left\vert \left\langle \mathbf{R}%
_{\sigma }^{\lambda }\mathbf{1}_{F},\frac{h_{I^{\prime }}^{\omega }\left(
x\right) }{\sqrt{\left\vert I^{\prime }\right\vert _{\omega }}}\right\rangle
_{\omega }\frac{h_{I^{\prime }}^{\omega }\left( x\right) }{\left\Vert
h_{I^{\prime }}^{\omega }\right\Vert _{L^{\infty }\left( \omega \right) }}%
\right\vert ^{2}\mathbf{1}_{I^{\prime }}\right) ^{\frac{1}{2}}\right\Vert
_{L^{p}\left( \omega \right) } \\
&=&\left\Vert \left( \sum_{F\in \mathcal{F}}\sum_{I\in \mathcal{C}%
_{F}}\sum_{I^{\prime }\in \mathfrak{C}_{\mathcal{D}}\left( I\right)
}\left\vert \frac{1}{\sqrt{\left\vert I^{\prime }\right\vert _{\omega }}%
\left\Vert h_{I^{\prime }}^{\omega }\right\Vert _{L^{\infty }\left( \omega
\right) }}\alpha _{\mathcal{F}}\left( F\right) \bigtriangleup _{I^{\prime
}}^{\omega }\mathbf{R}_{\sigma }^{\lambda }\mathbf{1}_{F}\right\vert
^{2}\right) ^{\frac{1}{2}}\right\Vert _{L^{p}\left( \omega \right) },
\end{eqnarray*}%
which is approximately%
\begin{eqnarray*}
&\approx &\left\Vert \left( \sum_{F\in \mathcal{F}}\sum_{I\in \mathcal{C}%
_{F}}\sum_{I^{\prime }\in \mathfrak{C}_{\mathcal{D}}\left( I\right)
}\left\vert \alpha _{\mathcal{F}}\left( F\right) \bigtriangleup _{I^{\prime
}}^{\omega }\mathbf{R}_{\sigma }^{\lambda }\mathbf{1}_{F}\right\vert
^{2}\right) ^{\frac{1}{2}}\right\Vert _{L^{p}\left( \omega \right) }\approx
\left\Vert \sum_{F\in \mathcal{F}}\alpha _{\mathcal{F}}\left( F\right)
\sum_{I\in \mathcal{C}_{F}}\sum_{I^{\prime }\in \mathfrak{C}_{\mathcal{D}%
}\left( I\right) }\bigtriangleup _{I^{\prime }}^{\omega }\mathbf{R}_{\sigma
}^{\lambda }\mathbf{1}_{F}\right\Vert _{L^{p}\left( \omega \right) } \\
&=&\left\Vert \sum_{F\in \mathcal{F}}\alpha _{\mathcal{F}}\left( F\right) 
\mathsf{P}_{\mathcal{C}_{F}}^{\omega }\mathbf{R}_{\sigma }^{\lambda }\mathbf{%
1}_{F}\right\Vert _{L^{p}\left( \omega \right) }\approx \left\Vert \left(
\sum_{F\in \mathcal{F}}\alpha _{\mathcal{F}}\left( F\right) ^{2}\left\vert 
\mathsf{P}_{\mathcal{C}_{F}}^{\omega }\mathbf{R}_{\sigma }^{\lambda }\mathbf{%
1}_{F}\right\vert ^{2}\right) ^{\frac{1}{2}}\right\Vert _{L^{p}\left( \omega
\right) } \\
&\lesssim &\left\Vert \left( \sum_{F\in \mathcal{F}}\alpha _{\mathcal{F}%
}\left( F\right) ^{2}\left\vert M_{\omega }^{\func{dy}}\mathbf{1}_{F}\left( 
\mathbf{R}_{\sigma }^{\lambda }\mathbf{1}_{F}\right) \right\vert ^{2}\right)
^{\frac{1}{2}}\right\Vert _{L^{p}\left( \omega \right) }\lesssim \left\Vert
\left( \sum_{F\in \mathcal{F}}\alpha _{\mathcal{F}}\left( F\right)
^{2}\left\vert \mathbf{1}_{F}\mathbf{R}_{\sigma }^{\lambda }\left( \mathbf{1}%
_{F}\right) \right\vert ^{2}\right) ^{\frac{1}{2}}\right\Vert _{L^{p}\left(
\omega \right) } \\
&\lesssim &\mathfrak{T}_{T^{\lambda },p}^{\ell ^{2};\func{loc}}\left( \sigma
,\omega \right) \left\Vert \left( \sum_{F\in \mathcal{F}}\alpha _{\mathcal{F}%
}\left( F\right) ^{2}\mathbf{1}_{F}\right) ^{\frac{1}{2}}\right\Vert
_{L^{p}\left( \sigma \right) }\lesssim \mathfrak{T}_{T^{\lambda },p}^{\ell
^{2};\func{loc}}\left( \sigma ,\omega \right) \left\Vert f\right\Vert
_{L^{p}\left( \sigma \right) },
\end{eqnarray*}%
where the final inequality follows from Theorem \ref{using Carleson}.
\end{proof}

In order to show that extreme reversal of energy holds for the vector Riesz
transform, we will model our argument on some of the material from \cite%
{SaShUr9}, beginning with a calculation of the Laplacian of powers of $%
\left\vert x\right\vert $. An earlier, and somewhat similar and simpler,
argument can be found in \cite{LaWi}, but we do not see how to immediately
adapt that argument to the setting of $L^{p}$.

\subsubsection{Fractional Riesz transforms}

Now we compute for $\beta $ real that%
\begin{eqnarray*}
\bigtriangleup \left\vert x\right\vert ^{\beta } &=&\nabla \cdot \nabla
\left\vert x\right\vert ^{2\frac{\beta }{2}}=\nabla \cdot \left\{ \frac{%
\beta }{2}\left\vert x\right\vert ^{2\left( \frac{\beta }{2}-1\right)
}2x\right\} =\beta \nabla \cdot \left\{ x\left\vert x\right\vert ^{2\frac{%
\beta -2}{2}}\right\} \\
&=&\beta \left\{ \left( \nabla \cdot x\right) \left\vert x\right\vert ^{2%
\frac{\beta -2}{2}}+x\cdot \nabla \left\vert x\right\vert ^{2\frac{\beta -2}{%
2}}\right\} =\beta \left\{ n\left\vert x\right\vert ^{2\frac{\beta -2}{2}%
}+x\cdot \frac{\beta -2}{2}\left\vert x\right\vert ^{2\left( \frac{\beta -2}{%
2}-1\right) }2x\right\} \\
&=&\beta \left\{ n\left\vert x\right\vert ^{\beta -2}+\left( \beta -2\right)
\left\vert x\right\vert ^{2}\left\vert x\right\vert ^{\beta -4}\right\}
=\beta \left( n+\beta -2\right) \left\vert x\right\vert ^{\beta -2}.
\end{eqnarray*}%
The case of interest for us is when $\beta =\alpha -n+1$, since then 
\begin{equation}
\bigtriangleup \left\vert x\right\vert ^{\beta }=\nabla \cdot \nabla
\left\vert x\right\vert ^{\alpha -n+1}=\nabla \cdot \nabla \left\vert
x\right\vert ^{\alpha -n+1}=c_{\alpha ,n}\nabla \cdot \mathbf{K}^{\alpha
,n}\left( x\right) ,  \label{interest}
\end{equation}%
where $\mathbf{K}^{\alpha ,n}$ is the vector convolution kernel of the $%
\alpha $-fractional Riesz transform $\mathbf{R}^{\alpha ,n}$. We conclude
that $\bigtriangleup \left\vert x\right\vert ^{\beta }$ is of one sign for
all $x$, provided $\beta \neq 0$ and $n+\beta -2\neq 0$, i.e. $\alpha \notin
\left\{ 1,n-1\right\} $. The case $\alpha =1$ is not included since $%
\left\vert x\right\vert ^{\alpha -n+1}=\left\vert x\right\vert ^{2-n}$ is
the fundamental solution of the Laplacian for $n>2$ and constant for $n=2$.
The case $\alpha =n-1$ is not included since $\left\vert x\right\vert
^{\alpha -n+1}=1$ is constant.

Thus $z\in J$, we have from (\ref{interest}) with $\mathbf{I}^{\alpha
+1,n}\mu \left( z\right) \equiv \int_{\mathbb{R}^{n}}\left\vert
z-y\right\vert ^{\alpha +1-n}d\mu \left( y\right) $ denoting the convolution
of $\left\vert x\right\vert ^{\alpha +1-n}$ with $\mu $, that 
\begin{equation}
\left\vert \nabla \mathbf{R}^{\alpha ,n}\mu \left( z\right) \right\vert
\gtrsim \left\vert \limfunc{trace}\nabla \mathbf{R}^{\alpha ,n}\mu \left(
z\right) \right\vert =\left\vert \bigtriangleup \mathbf{I}^{\alpha +1,n}\mu
\left( z\right) \right\vert \approx \int_{\mathbb{R}^{n}}\left\vert
y-z\right\vert ^{\alpha -n-1}d\mu \left( y\right) \approx \frac{\mathrm{P}%
^{\alpha }\left( J,\mu \right) }{\ell \left( J\right) },  \label{L control}
\end{equation}%
where we assume that the positive measure $\mu $ is supported outside the
expanded cube $\gamma J$.

Recall that the trace of a matrix is invariant under conjugation by
rotations, and hence is the sum of the eigenvalues of a symmetric matrix. We
now claim that for every $z\in J$, the full matrix gradient $\nabla \mathbf{R%
}^{\alpha ,n}\mu \left( z\right) $ has at least $1$ eigenvalue of size at
least $c\frac{\mathrm{P}^{\alpha }\left( J,\mu \right) }{\ell \left(
J\right) }$. Indeed, if all eigenvalues of the matrix $\nabla \mathbf{R}%
^{\alpha ,n}\mu \left( z\right) $ have size at most $c\frac{\mathrm{P}%
^{\alpha }\left( J,\mu \right) }{\ell \left( J\right) }$, then $\left\vert
\nabla \mathbf{R}^{\alpha ,n}\mu \left( z\right) \right\vert \leq c\frac{%
\mathrm{P}^{\alpha }\left( J,\mu \right) }{\ell \left( J\right) }$, which
contradicts (\ref{L control}) if $c$ is chosen small enough. This proves our
claim, and moreover, it satisfies the quantitative quadratic estimate%
\begin{equation}
\left\vert \xi \cdot \nabla \mathbf{R}^{\alpha ,n}\mu \left( z\right) \xi
\right\vert \geq c\frac{\mathrm{P}^{\alpha }\left( J,\mu \right) }{\ell
\left( J\right) }\left\vert \xi \right\vert ^{2},\ \ \ \ \ \xi \in \mathsf{S}%
_{z},\ \text{for }z\in J.  \label{form est}
\end{equation}%
where $\mathsf{S}_{z}\equiv \limfunc{Span}\mathbf{v}_{z}$, for some $\mathbf{%
v}_{z}\in \mathbb{S}^{n-1}$. Thus to each $z$ in $J$, there corresponds a
unit vector $\mathbf{v}_{z}$ for which%
\begin{equation*}
\left\vert \mathbf{v}_{z}\cdot \nabla \mathbf{R}^{\alpha ,n}\mu \left(
z\right) \mathbf{v}_{z}\right\vert \geq c\frac{\mathrm{P}^{\alpha }\left(
J,\mu \right) }{\ell \left( J\right) }.
\end{equation*}%
However, for $w\in J$ we have%
\begin{eqnarray*}
&&\left\vert \mathbf{v}_{z}\cdot \nabla \mathbf{R}^{\alpha ,n}\mu \left(
z\right) \mathbf{v}_{z}-\mathbf{v}_{z}\cdot \nabla \mathbf{R}^{\alpha ,n}\mu
\left( w\right) \mathbf{v}_{z}\right\vert \leq \left\Vert \nabla \mathbf{R}%
^{\alpha ,n}\mu \left( z\right) -\nabla \mathbf{R}^{\alpha ,n}\mu \left(
w\right) \right\Vert \\
&\leq &\left\Vert \nabla ^{2}\mathbf{R}^{\alpha ,n}\mu \left( \theta
_{z,w}\right) \right\Vert \left\vert z-w\right\vert \leq \int \left\vert
\nabla ^{2}\mathbf{K}^{\alpha ,n}\left( \theta _{z,w},y\right) \right\vert
d\mu \left( y\right) \ \left\vert z-w\right\vert \\
&\leq &\int_{\mathbb{R}^{n}\setminus \gamma J}\frac{1}{\left\vert \theta
_{z,w}-y\right\vert ^{n-\alpha +2}}d\mu \left( y\right) \ \left\vert
z-w\right\vert =\int_{\mathbb{R}^{n}\setminus \gamma J}\frac{1}{\left\vert
\theta _{z,w}-y\right\vert ^{n-\alpha +1}}\frac{\ell \left( J\right) }{%
\left\vert \theta _{z,w}-y\right\vert }d\mu \left( y\right) \ \frac{%
\left\vert z-w\right\vert }{\ell \left( J\right) } \\
&\leq &C_{\gamma }\int_{\mathbb{R}^{n}\setminus \gamma J}\frac{1}{\left\vert
\theta _{z,w}-y\right\vert ^{n-\alpha +1}}d\mu \left( y\right) \ \frac{%
\left\vert z-w\right\vert }{\ell \left( J\right) }\lesssim C_{\gamma }\frac{%
\mathrm{P}^{\alpha }\left( J,\mu \right) }{\ell \left( J\right) }\frac{%
\left\vert z-w\right\vert }{\ell \left( J\right) },
\end{eqnarray*}%
since $\frac{\ell \left( J\right) }{\left\vert \theta _{z,w}-y\right\vert }%
\leq C_{\gamma }$. Thus there is a fixed $m$ such that for each $m^{th}$
order grandchild $J^{\prime }\in \mathfrak{C}_{\mathcal{D}}^{\left( m\right)
}\left( J\right) $, we have upon replacing $z$ by $c_{J^{\prime }}$ above,%
\begin{equation}
\left\vert \mathbf{v}_{c_{J^{\prime }}}\cdot \nabla \mathbf{R}^{\alpha
,n}\mu \left( w\right) \mathbf{v}_{c_{J^{\prime }}}\right\vert \geq c\frac{%
\mathrm{P}^{\alpha }\left( J,\mu \right) }{\ell \left( J\right) },\ \ \ \ \
w\in J^{\prime }\text{,}  \label{direction}
\end{equation}%
i.e. we can use the same unit vector $\mathbf{v}_{c_{J^{\prime }}}$ in place
of $v_{z}$ for all $z\in J^{\prime }$.

\subsubsection{Extreme reversal of energy}

We now show that (\ref{will fail extreme}) holds for the vector Riesz
transform $\mathbf{R}^{\alpha ,n}$.

\begin{lemma}
\label{extreme rev ener}Let $0\leq \alpha <n$ and suppose $\omega $
doubling. Then the $\alpha $-fractional Riesz transform $\mathbf{R}^{\alpha
,n}=\left\{ R_{\ell }^{n,\alpha }\right\} _{\ell =1}^{n}$ has extreme
reversal of $\omega $-energy (\ref{will fail extreme}) on \emph{all} cubes $%
J $ provided $\gamma $ is chosen large enough depending only on $n$ and $%
\alpha $, i.e.,%
\begin{equation}
\mathbb{E}_{J}^{\omega }\left[ \left( \mathbf{x}-\mathbb{E}_{J}^{\omega }%
\mathbf{x}\right) ^{2}\right] \left( \frac{\mathrm{P}^{\alpha }\left( J,\mu
\right) }{\left\vert J\right\vert ^{\frac{1}{n}}}\right) ^{2}\left\vert
J\right\vert _{\omega }\leq C\left\vert \int_{J}\int_{\mathbb{R}%
^{n}\setminus \gamma J}\left[ \mathbf{K}^{\alpha }\left( x,y\right) -\mathbf{%
K}^{\alpha }\left( c_{J},y\right) \right] \frac{h_{J}^{\omega }\left(
x\right) }{\sqrt{\left\vert J\right\vert _{\omega }}}d\mu \left( y\right)
d\omega \left( x\right) \right\vert ^{2}.  \label{p extreme}
\end{equation}
\end{lemma}

\begin{proof}
It suffices to show that (\ref{p extreme}) holds with 
\begin{equation*}
h_{J}^{\omega }\left( x\right) =\sum_{K\in \mathfrak{C}\left( J\right) }a_{K}%
\mathbf{1}_{K}\left( x\right) \text{, where }\left\{ 
\begin{array}{ccc}
a_{K}>0 & \text{ if } & K\text{ lies to the right of center} \\ 
a_{K}<0 & \text{ if } & K\text{ lies to the left of center}%
\end{array}%
\right. ,
\end{equation*}%
and without loss of generality $\mathbf{v}_{c_{J}}=\mathbf{e}_{1}$. To see
this we compute,%
\begin{eqnarray*}
&&\int_{J}\left[ K_{1}^{\alpha ,n}\left( x,y\right) -K_{1}^{\alpha ,n}\left(
c_{J},y\right) \right] \frac{h_{J}^{\omega }\left( x\right) }{\sqrt{%
\left\vert J\right\vert _{\omega }}}d\omega \left( x\right) \\
&=&\int_{J}\left[ \frac{x_{1}-y_{1}}{\left\vert x-y\right\vert ^{n-\alpha +1}%
}-\frac{\left( c_{J}\right) _{1}-y_{1}}{\left\vert c_{J}-y\right\vert
^{n-\alpha +1}}\right] \frac{h_{J}^{\omega }\left( x\right) }{\sqrt{%
\left\vert J\right\vert _{\omega }}}d\omega \left( x\right) \\
&=&\int_{J}\left( x_{1}-y_{1}\right) \left\{ \frac{1}{\left\vert
x-y\right\vert ^{n-\alpha +1}}-\frac{1}{\left\vert c_{J}-y\right\vert
^{n-\alpha +1}}\right\} \frac{h_{J}^{\omega }\left( x\right) }{\sqrt{%
\left\vert J\right\vert _{\omega }}}d\omega \left( x\right) +\int_{J}\left\{ 
\frac{x_{1}-\left( c_{J}\right) _{1}}{\left\vert c_{J}-y\right\vert
^{n-\alpha +1}}\right\} \frac{h_{J}^{\omega }\left( x\right) }{\sqrt{%
\left\vert J\right\vert _{\omega }}}d\omega \left( x\right) \\
&\equiv &A+B.
\end{eqnarray*}%
Now in term $B$ we have $\left( x_{1}-\left( c_{J}\right) _{1}\right) \frac{%
h_{J}^{\omega }\left( x\right) }{\sqrt{\left\vert J\right\vert _{\omega }}}$
is of one sign and so%
\begin{equation*}
\left\vert B\right\vert =\left\vert \int_{J}\left\{ \frac{x_{1}-\left(
c_{J}\right) _{1}}{\left\vert c_{J}-y\right\vert ^{n-\alpha +1}}\right\} 
\frac{h_{J}^{\omega }\left( x\right) }{\sqrt{\left\vert J\right\vert
_{\omega }}}d\omega \left( x\right) \right\vert =\int_{J}\frac{\left\vert
x_{1}-\left( c_{J}\right) _{1}\right\vert }{\left\vert c_{J}-y\right\vert
^{n-\alpha +1}}\left\vert \frac{h_{J}^{\omega }\left( x\right) }{\sqrt{%
\left\vert J\right\vert _{\omega }}}\right\vert d\omega \left( x\right) \geq
c\frac{\ell \left( J\right) }{\left\vert c_{J}-y\right\vert ^{n-\alpha +1}}%
\sqrt{\left\vert J\right\vert },
\end{equation*}%
because $\omega $\ is doubling. On the other hand,%
\begin{eqnarray*}
\left\vert A\right\vert &\leq &\int_{J}\left\vert x_{1}-y_{1}\right\vert
\left\vert \frac{1}{\left\vert x-y\right\vert ^{n-\alpha +1}}-\frac{1}{%
\left\vert c_{J}-y\right\vert ^{n-\alpha +1}}\right\vert \left\vert \frac{%
h_{J}^{\omega }\left( x\right) }{\sqrt{\left\vert J\right\vert _{\omega }}}%
\right\vert d\omega \left( x\right) \\
&\lesssim &\frac{\ell \left( J\right) ^{2}}{\left\vert c_{J}-y\right\vert
^{n-\alpha +2}}\sqrt{\left\vert J\right\vert _{\omega }}=\frac{\ell \left(
J\right) }{\left\vert c_{J}-y\right\vert }\frac{\ell \left( J\right) }{%
\left\vert c_{J}-y\right\vert ^{n-\alpha +1}}\sqrt{\left\vert J\right\vert
_{\omega }}\leq C\frac{1}{\gamma }\frac{\ell \left( J\right) }{\left\vert
c_{J}-y\right\vert ^{n-\alpha +1}}\sqrt{\left\vert J\right\vert _{\omega }}
\end{eqnarray*}%
and so for $\gamma >1$ chosen sufficiently large, we obtain%
\begin{eqnarray*}
\left\vert \int_{J}\left[ K_{1}^{\alpha ,n}\left( x,y\right) -K_{1}^{\alpha
,n}\left( c_{J},y\right) \right] \frac{h_{J}^{\omega }\left( x\right) }{\sqrt%
[p]{\left\vert J\right\vert _{\omega }}}d\omega \left( x\right) \right\vert
&\gtrsim &\left\vert B\right\vert -\left\vert A\right\vert \geq \left( c-C%
\frac{1}{\gamma }\right) \frac{\ell \left( J\right) }{\left\vert
c_{J}-y\right\vert ^{n-\alpha +1}}\sqrt{\left\vert J\right\vert _{\omega }}
\\
&\geq &\frac{c}{2}\frac{\ell \left( J\right) }{\left\vert c_{J}-y\right\vert
^{n-\alpha +1}}\sqrt{\left\vert J\right\vert _{\omega }}.
\end{eqnarray*}

Since $\int_{J}\left[ K_{1}^{\alpha ,n}\left( x,y\right) -K_{1}^{\alpha
,n}\left( c_{J},y\right) \right] \frac{h_{J}^{\omega }\left( x\right) }{%
\sqrt{\left\vert J\right\vert _{\omega }}}d\omega \left( x\right) $ is also
of one sign, it follows that%
\begin{eqnarray*}
&&\left\vert \int_{J}\int_{\mathbb{R}^{n}\setminus \gamma J}\mathbf{v}%
_{c_{J}}\cdot \left[ \mathbf{K}^{\alpha }\left( x,y\right) -\mathbf{K}%
^{\alpha }\left( c_{J},y\right) \right] \frac{h_{J}^{\omega }\left( x\right) 
}{\sqrt{\left\vert J\right\vert _{\omega }}}d\mu \left( y\right) d\omega
\left( x\right) \right\vert \\
&=&\int_{\mathbb{R}^{n}\setminus \gamma J}\left\vert \int_{J}\left[
K_{1}^{\alpha ,n}\left( x,y\right) -K_{1}^{\alpha ,n}\left( c_{J},y\right) %
\right] \frac{h_{J}^{\omega }\left( x\right) }{\sqrt{\left\vert J\right\vert
_{\omega }}}d\omega \left( x\right) \right\vert d\mu \left( y\right) \\
&\geq &\int_{\mathbb{R}^{n}\setminus \gamma J}\frac{c}{2}\frac{\ell \left(
J\right) }{\left\vert c_{J}-y\right\vert ^{n-\alpha +1}}\sqrt{\left\vert
J\right\vert _{\omega }}d\mu \left( y\right) =\frac{c}{2}\sqrt{\left\vert
J\right\vert _{\omega }}\mathrm{P}^{\alpha }\left( J,\mathbf{1}_{\mathbb{R}%
^{n}\setminus \gamma J}\right) ,
\end{eqnarray*}%
which proves the extreme reversal of energy.
\end{proof}

\subsection{Far below form}

Recall that we decomposed the far below form $\mathsf{T}_{\limfunc{far}%
\limfunc{below}}\left( f,g\right) $ as $\mathsf{T}_{\limfunc{far}\limfunc{%
below}}^{1}\left( f,g\right) +\mathsf{T}_{\limfunc{far}\limfunc{below}%
}^{2}\left( f,g\right) $, where we claimed that the second form $\mathsf{T}_{%
\limfunc{far}\limfunc{below}}^{2}\left( f,g\right) $ was controlled by the
disjoint, comparable and adjacent forms and $\mathsf{B}_{\cap }\left(
f,g\right) $, $\mathsf{B}_{\diagup }\left( f,g\right) $ and $\mathsf{B}_{%
\limfunc{adj},\rho }\left( f,g\right) $, upon noting the porisms following (%
\ref{routine'}) and (\ref{routine''}). Indeed, if $\bigtriangleup
_{J}^{\omega }g$ is not identically zero, then $J$ must be good, and in that
case the condition "$J\subset I$ but $J\not\Subset _{\rho ,\varepsilon }I$"
implies that the pair of cubes $I,J$ is included in \textbf{either} the sum
defining the disjoint down form $\mathsf{B}_{\cap }^{\limfunc{down}}\left(
f,g\right) $ \textbf{or} in the sum defining the comparable below form $%
\mathsf{B}_{\diagup }^{\func{below}}\left( f,g\right) $ \textbf{or} in the
sum defining the adjacent below form $\mathsf{B}_{\limfunc{adj},\rho }^{%
\func{below}}\left( f,g\right) $. The first far below form $\mathsf{T}_{%
\limfunc{far}\limfunc{below}}^{1}\left( f,g\right) $ is handled by the
following Intertwining Proposition.

\begin{proposition}[The Intertwining Proposition]
\label{Int Prop}Suppose $\sigma ,\omega $ are positive locally finite Borel
measures on $\mathbb{R}^{n}$, that $\sigma $ is doubling, and that $\mathcal{%
F}$ satisfies a $\sigma $-Carleson condition. Then for a smooth $\lambda $%
-fractional singular integral $T^{\lambda }$, and for $\limfunc{good}$
functions $f\in L^{2}\left( \sigma \right) \cap L^{p}\left( \sigma \right) $
and $g\in L^{2}\left( \omega \right) \cap L^{p^{\prime }}\left( \omega
\right) $, and with $\kappa \geq 1$ sufficiently large, we have the
following bound for $\mathsf{T}_{\limfunc{far}\limfunc{below}}\left(
f,g\right) =\sum_{F\in \mathcal{F}}\ \sum_{I:\ I\supsetneqq F}\ \left\langle
T_{\sigma }^{\alpha }\bigtriangleup _{I}^{\sigma }f,\mathsf{P}_{\mathcal{C}%
_{F}^{\tau -\limfunc{shift}}}^{\omega }g\right\rangle _{\omega }$: 
\begin{equation}
\left\vert \mathsf{T}_{\limfunc{far}\limfunc{below}}^{1}\left( f,g\right)
\right\vert \lesssim A_{p}^{\lambda ,\ell ^{2},\limfunc{offset}}\ \left\Vert
f\right\Vert _{L^{p}\left( \sigma \right) }\left\Vert g\right\Vert
_{L^{p^{\prime }}\left( \omega \right) }.  \label{far below est}
\end{equation}
\end{proposition}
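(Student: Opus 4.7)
The plan is to exploit the telescoping structure of Alpert wavelets and then apply the Pivotal Lemma together with the quadratic offset Muckenhoupt condition. Observe first that for fixed $G \in \mathcal{F}$ and $J \in \mathcal{C}_G^{\tau-\limfunc{shift}}$, we have $J \subset G$, and the inner sum $\sum_{F \in \mathcal{F}:\,G \subsetneq F}\sum_{I \in \mathcal{C}_F,\,J \subset I}\bigtriangleup_{I;\kappa}^\sigma f$ ranges precisely over all dyadic $I$ with $G \subsetneq I$, because each such $I$ belongs to a unique corona $\mathcal{C}_F$ with $F \supsetneq G$. By the standard Alpert telescoping identity this sum equals $\mathbb{E}_{G;\kappa}^\sigma f$ on $G$ and equals $f$ off $G$, since the complementary piece $\sum_{I \subseteq G}\bigtriangleup_{I;\kappa}^\sigma f$ is supported in $G$. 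Writing $P_G \equiv \mathbf{1}_{G^c}f + \mathbf{1}_G \mathbb{E}_{G;\kappa}^\sigma f$, we obtain the compact representation
\begin{equation*}
\mathsf{T}^1_{\limfunc{far}\limfunc{below}}(f,g)=\sum_{G\in\mathcal{F}}\left\langle T_\sigma^\lambda P_G,\ \mathsf{P}_{\mathcal{C}_G^{\tau-\limfunc{shift}}}^\omega g\right\rangle_\omega.
\end{equation*}

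For each $J\in\mathcal{C}_G^{\tau-\limfunc{shift}}$ I would split $P_G\sigma = P_G\mathbf{1}_{(2J)^c}\sigma + P_G\mathbf{1}_{2J}\sigma$. Goodness of $J$ guarantees $2J \subset G$, so the exterior piece $P_G\mathbf{1}_{(2J)^c}$ is pointwise dominated by $\alpha_\mathcal{F}(G)\mathbf{1}_G + |f|\mathbf{1}_{G^c}$ (using the $L^\infty$ bound (\ref{analogue}) and the Calder\'{o}n-Zygmund stopping time control), and its supporting measure lives outside $2J$; thus the Pivotal Lemma \ref{ener} yields directly
\begin{equation*}
\left|\langle T^\lambda(P_G\mathbf{1}_{(2J)^c}\sigma),\bigtriangleup_{J;\kappa}^\omega g\rangle_\omega\right| \lesssim \mathrm{P}_\kappa^\lambda(J,|P_G|\mathbf{1}_{(2J)^c}\sigma)\int_J|\bigtriangleup_{J;\kappa}^\omega g|\,d\omega.
\end{equation*}
The interior piece $P_G\mathbf{1}_{2J}=(\mathbb{E}_{G;\kappa}^\sigma f)\mathbf{1}_{2J}$ is a polynomial of degree less than $\kappa$ restricted to $2J$; here the Pivotal Lemma does not apply as stated, but repeating the Taylor expansion on the kernel carried out in its proof, together with the vanishing $\omega$-moments of $\bigtriangleup_{J;\kappa}^\omega g$ of order less than $\kappa$, yields the analogous Poisson-type bound with $|P_G|$ replaced by $\alpha_\mathcal{F}(G)\mathbf{1}_{2J}$. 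Since $\sigma$ is doubling and $\kappa$ is chosen large enough for (\ref{kappa large}) to hold, both Poisson factors collapse to offset averages of the form $\alpha_\mathcal{F}(G)\,|G^*_J|_\sigma/\ell(G^*_J)^{n-\lambda}$ for an appropriate dyadic ancestor $G^*_J \supset J$, with the contribution of $f\mathbf{1}_{G^c}$ controlled by geometric decay in the $\mathcal{F}$-chain of ancestors via Lemma \ref{Poisson inequality}.

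Having established a pointwise bound of the form $|\langle T_\sigma^\lambda P_G,\bigtriangleup_{J;\kappa}^\omega g\rangle_\omega|\lesssim b_J\int_J|\bigtriangleup_{J;\kappa}^\omega g|\,d\omega$ with $b_J \lesssim \alpha_\mathcal{F}(\pi_\mathcal{F}(J))\,|G^*_J|_\sigma/\ell(G^*_J)^{n-\lambda}$, I would apply pointwise $\ell^2$ Cauchy-Schwarz in $J$ followed by H\"{o}lder in $L^p(\omega)\times L^{p'}(\omega)$ to reduce matters to
\begin{equation*}
\bigl|\mathsf{T}^1_{\limfunc{far}\limfunc{below}}(f,g)\bigr| \lesssim \left\|\Bigl(\sum_{J\in\mathcal{D}}b_J^{2}\mathbf{1}_J\Bigr)^{1/2}\right\|_{L^p(\omega)}\|\mathcal{S}_{\limfunc{Alpert};\kappa}g\|_{L^{p'}(\omega)}.
\end{equation*}
The second factor is $\approx\|g\|_{L^{p'}(\omega)}$ by Theorem \ref{Alpert square thm}. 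To bound the first factor I would apply the quadratic offset Muckenhoupt condition $A_p^{\lambda,\ell^2,\limfunc{offset}}$ (recognizing $b_J$ as an offset-weighted atom indexed by $G^*_J$), pass the norm to $L^p(\sigma)$, and then use the Fefferman-Stein vector-valued maximal inequality for the doubling measure $\sigma$ together with the $L^p$ Carleson embedding for $\{\alpha_\mathcal{F}(G)\mathbf{1}_G\}_{G\in\mathcal{F}}$ (the $L^p$ analogue of (\ref{q orth})) to dominate the outcome by $\|f\|_{L^p(\sigma)}$.

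The main obstacle will be the interior piece $(\mathbb{E}_{G;\kappa}^\sigma f)\mathbf{1}_{2J}$: because its supporting measure lives inside $2J$ rather than outside, the Pivotal Lemma cannot be invoked as a black box, and the Taylor-polynomial subtraction argument must be executed directly on the kernel $K^\lambda(x,y)$ so that the only constants entering are those of Calder\'{o}n-Zygmund type and the offset Muckenhoupt constant, with no scalar testing or weak boundedness contribution sneaking in. A secondary technical point is matching the offset form, whose atoms carry the small indicator $\mathbf{1}_J$ but the $\sigma$-mass of a larger cube $G^*_J$; here the doubling of $\sigma$ is essential in passing between the $\kappa$-Poisson tails produced by the Pivotal Lemma and the offset averages $|I|_\sigma/\ell(I)^{n-\lambda}$ appearing in (\ref{quad A2 tailless}).
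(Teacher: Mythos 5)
Your overall architecture is close to the paper's (telescope the sum over $I\supsetneq G$, apply the Pivotal Lemma, collapse Poisson tails to offset averages using doubling and $\kappa$ large, finish with the quadratic offset Muckenhoupt condition and Alpert square function estimates), but the treatment of the interior piece is not a technical detail to be filled in later — it is a genuine gap that the paper's own decomposition is specifically designed to avoid.

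The crux is your claim that for $(\mathbb{E}_{G;\kappa}^\sigma f)\mathbf{1}_{2J}\sigma$, ``repeating the Taylor expansion on the kernel carried out in [the Pivotal Lemma's] proof, together with the vanishing $\omega$-moments of $\bigtriangleup_{J;\kappa}^\omega g$, yields the analogous Poisson-type bound.'' It does not. The Taylor argument in Lemma \ref{ener} writes $K^\lambda(x,y)=\limfunc{Tay}(K_y^\lambda)(x,m_J^\kappa)+\text{Rem}$, pairs the polynomial part against $h_{J;\kappa}^{\omega,a}$ to get zero, and bounds the remainder by $|\nabla^\kappa K^\lambda(\theta,y)|\lesssim |\theta-y|^{\lambda-\kappa-n}$ for $\theta\in J$. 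This last step is only useful because $\nu$ is assumed supported in $\mathbb{R}^n\setminus 2J$, so that $|\theta-y|\gtrsim \ell(J)+\limfunc{dist}(y,J)$ uniformly. Once $y\in 2J$, the remainder bound is non-integrable in $y$ against $\sigma$ near $J$, and no Poisson-type estimate with $|G|_\sigma/\ell(J)^{n-\lambda}$-type constants comes out. The quantity $\langle T_\sigma^\lambda((\mathbb{E}_{G;\kappa}^\sigma f)\mathbf{1}_{2J}),\bigtriangleup_{J;\kappa}^\omega g\rangle_\omega$ is a genuinely local singular-integral pairing; in the paper's framework such pairings are controlled by testing or weak boundedness (cf. the commutator and paraproduct sections, and the two-weight estimate (\ref{double int est})), not by $A_p^{\lambda,\ell^2,\limfunc{offset}}$ alone. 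If your decomposition forces you to carry such a piece, the conclusion of the Intertwining Proposition — control by offset Muckenhoupt only — cannot follow.

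The paper avoids this by never producing a piece supported on $F$: it telescopes $f_F=\sum_{I\supsetneq F}\bigtriangleup_{I;\kappa}^\sigma f$ into $\beta_F-\gamma_F$, where $\gamma_F=\sum_m \mathbf{1}_{\pi_\mathcal{F}^{m+1}F\setminus\pi_\mathcal{F}^m F}\,\mathbb{E}_{\pi_\mathcal{F}^{m+1}F;\kappa}^\sigma f$ is manifestly supported off $F$, and $\beta_F=\sum_m\sum_I\mathbf{1}_{\theta(I)}\mathbb{E}_{I;\kappa}^\sigma f$ runs only over the children $\theta(I)$ of intermediate $I$ that do not contain $F$ and hence lies in the annuli $\pi_\mathcal{F}^{m+1}F\setminus\pi_\mathcal{F}^m F$ as well. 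Since $J\in\mathcal{C}_F^{\tau-\limfunc{shift}}$ is $\tau$ levels below $F$ and good, one has $2J\subset F$, so the supports of $\beta_F$ and $\gamma_F$ are disjoint from $2J$ and the Pivotal Lemma applies to both directly. Your $P_G=\mathbf{1}_{G^c}f+\mathbf{1}_G\mathbb{E}_{G;\kappa}^\sigma f$ keeps the polynomial piece on $G$ (and hence on $2J$), which is exactly what the paper's shell decomposition is built to discard.

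Two secondary points. First, the identity $P_G\mathbf{1}_{G^c}=f\mathbf{1}_{G^c}$ is not quite right: for $x\notin G$, $\sum_{I\supsetneq G}\bigtriangleup_{I;\kappa}^\sigma f(x)=\mathbb{E}_{I_x;\kappa}^\sigma f(x)$ where $I_x$ is the smallest dyadic cube containing both $x$ and $G$ (not $f(x)$), because the sum misses all $I$ containing $x$ but disjoint from $G$. This is fixable — the Poisson integral only sees local averages — but it should be stated correctly. Second, even for the exterior piece you will need the shell decomposition by $\mathcal{F}$-generations and Lemma \ref{Poisson inequality} to produce the geometric gain $2^{-m\eta}$ that makes the $\ell^2$ sums converge; that part of your sketch is in line with the paper. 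The repair of the interior piece, however, requires adopting something like the paper's $\beta_F-\gamma_F$ splitting from the start rather than a naive $\mathbf{1}_{2J}$/$\mathbf{1}_{(2J)^c}$ cut.
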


\begin{proof}
For any dyadic cube $I$, let $\theta \left( I\right) $ denote any of the
dyadic siblings of $I$, namely the children of the dyadic parent $\pi I$
other than $I$ itself. We write%
\begin{eqnarray*}
f_{F} &\equiv &\sum_{I:\ I\supsetneqq F}\bigtriangleup _{I}^{\sigma
}f=\sum_{m=1}^{\infty }\sum_{I:\ \pi _{\mathcal{F}}^{m}F\subsetneqq I\subset
\pi _{\mathcal{F}}^{m+1}F}\bigtriangleup _{I}^{\sigma }f \\
&=&\sum_{m=1}^{\infty }\sum_{I:\ \pi _{\mathcal{F}}^{m}F\subsetneqq I\subset
\pi _{\mathcal{F}}^{m+1}F}\mathbf{1}_{\theta \left( I\right) }\left( \mathbb{%
E}_{I}^{\sigma }f-\mathbb{E}_{\pi _{\mathcal{F}}^{m+1}F}^{\sigma }f\right) \\
&=&\sum_{m=1}^{\infty }\sum_{I:\ \pi _{\mathcal{F}}^{m}F\subsetneqq I\subset
\pi _{\mathcal{F}}^{m+1}F}\mathbf{1}_{\theta \left( I\right) }\left( \mathbb{%
E}_{I}^{\sigma }f\right) -\sum_{m=1}^{\infty }\mathbf{1}_{\pi _{\mathcal{F}%
}^{m+1}F\setminus \pi _{\mathcal{F}}^{m}F}\left( \mathbb{E}_{\pi _{\mathcal{F%
}}^{m+1}F}^{\sigma }f\right) \\
&\equiv &\beta _{F}-\gamma _{F}\ ,
\end{eqnarray*}%
and then%
\begin{equation*}
\sum_{F\in \mathcal{F}}\ \left\langle T_{\sigma }^{\lambda
}f_{F},g_{F}\right\rangle _{\omega }=\sum_{F\in \mathcal{F}}\ \left\langle
T_{\sigma }^{\lambda }\beta _{F},g_{F}\right\rangle _{\omega }-\sum_{F\in 
\mathcal{F}}\ \left\langle T_{\sigma }^{\lambda }\gamma
_{F},g_{F}\right\rangle _{\omega }\ .
\end{equation*}%
Now we use the Poisson inequality (\ref{e.Jsimeq}), namely 
\begin{equation*}
\mathrm{P}^{\lambda }\left( J,\sigma \mathbf{1}_{K\setminus I}\right)
\lesssim \left( \frac{\ell \left( J\right) }{\ell \left( I\right) }\right)
^{1-\varepsilon \left( n+1-\lambda \right) }\mathrm{P}^{\lambda }\left(
I,\sigma \mathbf{1}_{K\setminus I}\right) ,
\end{equation*}%
to obtain that%
\begin{eqnarray*}
&&\left\vert \sum_{F\in \mathcal{F}}\left\langle T_{\sigma }^{\lambda
}\gamma _{F},g_{F}\right\rangle _{\omega }\right\vert =\left\vert \sum_{F\in 
\mathcal{F}}\int_{\mathbb{R}^{n}}T_{\sigma }^{\lambda }\left(
\sum_{m=1}^{\infty }\mathbf{1}_{\pi _{\mathcal{F}}^{m+1}F\setminus \pi _{%
\mathcal{F}}^{m}F}\left( \mathbb{E}_{\pi _{\mathcal{F}}^{m+1}F}^{\sigma
}f\right) \right) \left( x\right) \ \left( \sum_{J\in \mathcal{C}%
_{F}^{\omega ,\tau \text{-}\func{shift}}}\bigtriangleup _{J}^{\omega
}g\left( x\right) \right) \ d\omega \left( x\right) \right\vert \\
&=&\left\vert \int_{\mathbb{R}^{n}}\sum_{J\in \mathcal{D}}\left\{ \sum_{F\in 
\mathcal{F}}\bigtriangleup _{J}^{\omega }T_{\sigma }^{\lambda }\left(
\sum_{m=1}^{\infty }\mathbf{1}_{\pi _{\mathcal{F}}^{m+1}F\setminus \pi _{%
\mathcal{F}}^{m}F}\left( \mathbb{E}_{\pi _{\mathcal{F}}^{m+1}F}^{\sigma
}f\right) \right) \left( x\right) \ \bigtriangleup _{J}^{\omega }g\left(
x\right) \right\} d\omega \left( x\right) \right\vert \\
&\leq &\int_{\mathbb{R}^{n}}\left( \sum_{J\in \mathcal{D}}\left\vert
\sum_{F\in \mathcal{F}}\bigtriangleup _{J}^{\omega }T_{\sigma }^{\lambda
}\left( \sum_{m=1}^{\infty }\mathbf{1}_{\pi _{\mathcal{F}}^{m+1}F\setminus
\pi _{\mathcal{F}}^{m}F}\left( \mathbb{E}_{\pi _{\mathcal{F}%
}^{m+1}F}^{\sigma }f\right) \right) \left( x\right) \right\vert ^{2}\right)
^{\frac{1}{2}}\left( \sum_{J\in \mathcal{D}}\left\vert \bigtriangleup
_{J}^{\omega }g\left( x\right) \right\vert ^{2}\right) ^{\frac{1}{2}}d\omega
\left( x\right) \\
&\leq &\left\Vert \left( \sum_{J\in \mathcal{D}}\left\vert \sum_{F\in 
\mathcal{F}}\bigtriangleup _{J}^{\omega }T_{\sigma }^{\lambda }\left(
\sum_{m=1}^{\infty }\mathbf{1}_{\pi _{\mathcal{F}}^{m+1}F\setminus \pi _{%
\mathcal{F}}^{m}F}\left( \mathbb{E}_{\pi _{\mathcal{F}}^{m+1}F}^{\sigma
}f\right) \right) \left( x\right) \right\vert ^{2}\right) ^{\frac{1}{2}%
}\right\Vert _{L^{p}\left( \omega \right) }\left\Vert \left( \sum_{J\in 
\mathcal{D}}\left\vert \bigtriangleup _{J}^{\omega }g\left( x\right)
\right\vert ^{2}\right) ^{\frac{1}{2}}\right\Vert _{L^{p^{\prime }}\left(
\omega \right) },
\end{eqnarray*}%
where the second factor is equivalent to $\left\Vert g\right\Vert
_{L^{p^{\prime }}\left( \omega \right) }$, and then using the Pivotal Lemma %
\ref{ener}, the first factor $S$ is dominated by%
\begin{eqnarray*}
S &\lesssim &\left\Vert \left( \sum_{J\in \mathcal{D}}\left\vert \sum_{F\in 
\mathcal{F}:\ J\in \mathcal{C}_{F}^{\omega ,\tau \text{-}\func{shift}%
}}\sum_{m=1}^{\infty }\mathrm{P}^{\lambda }\left( J,\mathbf{1}_{\pi _{%
\mathcal{F}}^{m+1}F\setminus \pi _{\mathcal{F}}^{m}F}\left\vert \mathbb{E}%
_{\pi _{\mathcal{F}}^{m+1}F}^{\sigma }f\right\vert \sigma \right)
\right\vert ^{2}\mathbf{1}_{J}\right) ^{\frac{1}{2}}\right\Vert
_{L^{p}\left( \omega \right) } \\
&=&\left\Vert \left( \sum_{J\in \mathcal{D}}\left\vert \sum_{m=1}^{\infty
}\sum_{F\in \mathcal{F}:\ J\in \mathcal{C}_{F}^{\omega ,\tau \text{-}\func{%
shift}}}\left\Vert \mathbb{E}_{\pi _{\mathcal{F}}^{m+1}F}^{\sigma
}f\right\Vert _{\infty }\left( \frac{\ell \left( J\right) }{\ell \left( \pi
_{\mathcal{F}}^{m}F\right) }\right) ^{1-\varepsilon \left( n+1-\lambda
\right) }\mathrm{P}_{\kappa }^{\lambda }\left( \pi _{\mathcal{F}}^{m}F,%
\mathbf{1}_{\pi _{\mathcal{F}}^{m+1}F\setminus \pi _{\mathcal{F}%
}^{m}F}\sigma \right) \right\vert ^{2}\mathbf{1}_{J}\right) ^{\frac{1}{2}%
}\right\Vert _{L^{p}\left( \omega \right) } \\
&\leq &\sum_{m=1}^{\infty }\left\Vert \left( \sum_{J\in \mathcal{D}%
}\left\vert \sum_{F\in \mathcal{F}:\ J\in \mathcal{C}_{F}^{\omega ,\tau 
\text{-}\func{shift}}}\left\Vert \mathbb{E}_{\pi _{\mathcal{F}%
}^{m+1}F}^{\sigma }f\right\Vert _{\infty }\left( \frac{\ell \left( J\right) 
}{\ell \left( \pi _{\mathcal{F}}^{m}F\right) }\right) ^{1-\varepsilon \left(
n+1-\lambda \right) }\frac{\left\vert \pi _{\mathcal{F}}^{m}F\right\vert
_{\sigma }}{\left\vert \pi _{\mathcal{F}}^{m}F\right\vert ^{1-\frac{\lambda 
}{n}}}\right\vert ^{2}\mathbf{1}_{J}\left( x\right) \right) ^{\frac{1}{2}%
}\right\Vert _{L^{p}\left( \omega \right) },
\end{eqnarray*}%
where in the last line we have used (\ref{kappa large}). Now we note that
for each $J\in \mathcal{D}$ the number of cubes $F\in \mathcal{F}$ such that 
$J\in \mathcal{C}_{F}^{\tau -\func{shift}}$ is at most $\tau $. So without
loss of generality, we may simply suppose that there is just one such cube
denoted $F\left[ J\right] $. Thus for each $m\in \mathbb{N}$, the above norm
is at most%
\begin{equation*}
\left( 2^{-m}\right) ^{1-\varepsilon \left( n+1-\lambda \right) }\left\Vert
\left( \sum_{J\in \mathcal{D}}\left\vert \left\Vert \mathbb{E}_{\pi _{%
\mathcal{F}}^{m+1}F\left[ J\right] }^{\sigma }f\right\Vert _{\infty }\left( 
\frac{\ell \left( J\right) }{\ell \left( F\left[ J\right] \right) }\right)
^{1-\varepsilon \left( n+1-\lambda \right) }\frac{\left\vert \pi _{\mathcal{F%
}}^{m}F\left[ J\right] \right\vert _{\sigma }}{\left\vert \pi _{\mathcal{F}%
}^{m}F\left[ J\right] \right\vert ^{1-\frac{\lambda }{n}}}\right\vert ^{2}%
\mathbf{1}_{J}\left( x\right) \right) ^{\frac{1}{2}}\right\Vert
_{L^{p}\left( \omega \right) },
\end{equation*}%
and the sum inside the parentheses equals%
\begin{eqnarray*}
&&\sum_{F\in \mathcal{F}}\sum_{J\in \mathcal{C}_{F}^{\omega ,\tau \text{-}%
\func{shift}}:\ x\in J\subset F}\left( \frac{\ell \left( J\right) }{\ell
\left( F\left[ J\right] \right) }\right) ^{1-\varepsilon \left( n+1-\lambda
\right) }\left\vert \left\Vert \mathbb{E}_{\pi _{\mathcal{F}}^{m+1}F\left[ J%
\right] }^{\sigma }f\right\Vert _{\infty }\frac{\left\vert \pi _{\mathcal{F}%
}^{m}F\left[ J\right] \right\vert _{\sigma }}{\left\vert \pi _{\mathcal{F}%
}^{m}F\left[ J\right] \right\vert ^{1-\frac{\lambda }{n}}}\right\vert ^{2}%
\mathbf{1}_{J}\left( x\right) \\
&\lesssim &\sum_{F\in \mathcal{F}}\sum_{J\in \mathcal{C}_{F}^{\omega ,\tau 
\text{-}\func{shift}}:\ x\in J\subset F}\left( \frac{\ell \left( J\right) }{%
\ell \left( F\right) }\right) ^{1-\varepsilon \left( n+1-\lambda \right)
}\left\vert \left\Vert \mathbb{E}_{\pi _{\mathcal{F}}^{m+1}F}^{\sigma
}f\right\Vert _{\infty }\frac{\left\vert \pi _{\mathcal{F}}^{m}F\right\vert
_{\sigma }}{\left\vert \pi _{\mathcal{F}}^{m}F\right\vert ^{1-\frac{\lambda 
}{n}}}\right\vert ^{2}\mathbf{1}_{J}\left( x\right) \\
&\lesssim &\sum_{F\in \mathcal{F}}\left\vert \left\Vert \mathbb{E}_{\pi _{%
\mathcal{F}}^{m+1}F}^{\sigma }f\right\Vert _{\infty }\frac{\left\vert \pi _{%
\mathcal{F}}^{m}F\right\vert _{\sigma }}{\left\vert \pi _{\mathcal{F}%
}^{m}F\right\vert ^{1-\frac{\lambda }{n}}}\right\vert ^{2}\mathbf{1}%
_{J}\left( x\right) .
\end{eqnarray*}%
Altogether then, using the quadratic offset $A_{p}^{\lambda ,\ell ^{2},%
\limfunc{offset}}$ condition and doubling, we have%
\begin{eqnarray*}
S &\lesssim &\sum_{m=1}^{\infty }\left( 2^{-m}\right) ^{1-\varepsilon \left(
n+1-\lambda \right) }\left\Vert \left( \sum_{F\in \mathcal{F}}\left\vert
\left\Vert \mathbb{E}_{\pi _{\mathcal{F}}^{m+1}F}^{\sigma }f\right\Vert
_{\infty }\frac{\left\vert \pi _{\mathcal{F}}^{m}F\right\vert _{\sigma }}{%
\left\vert \pi _{\mathcal{F}}^{m}F\right\vert ^{1-\frac{\lambda }{n}}}%
\right\vert ^{2}\mathbf{1}_{F}\left( x\right) \right) ^{\frac{1}{2}%
}\right\Vert _{L^{p}\left( \omega \right) } \\
&\lesssim &A_{p}^{\lambda ,\ell ^{2},\limfunc{offset}}\left( \sigma ,\omega
\right) \sum_{m=1}^{\infty }\left( 2^{-m}\right) ^{1-\varepsilon \left(
n+1-\lambda \right) }\left\Vert \left( \sum_{F\in \mathcal{F}}\left\vert
\bigtriangleup _{\pi _{\mathcal{F}}^{m+1}F}^{\sigma }f\left( x\right)
\right\vert ^{2}\mathbf{1}_{F}\left( x\right) \right) ^{\frac{1}{2}%
}\right\Vert _{L^{p}\left( \sigma \right) },
\end{eqnarray*}%
and we can continue with%
\begin{eqnarray*}
&=&A_{p}^{\lambda ,\ell ^{2},\limfunc{offset}}\left( \sigma ,\omega \right)
\sum_{m=1}^{\infty }\left( 2^{-m}\right) ^{1-\varepsilon \left( n+1-\lambda
\right) }\left\Vert \left( \sum_{G\in \mathcal{F}}\sum_{F\in \mathcal{F}:\
\pi _{\mathcal{F}}^{m+1}F=G}\left\vert \bigtriangleup _{G}^{\sigma }f\left(
x\right) \right\vert ^{2}\mathbf{1}_{F}\left( x\right) \right) ^{\frac{1}{2}%
}\right\Vert _{L^{p}\left( \sigma \right) } \\
&\leq &A_{p}^{\lambda ,\ell ^{2},\limfunc{offset}}\left( \sigma ,\omega
\right) \sum_{m=1}^{\infty }\left( 2^{-m}\right) ^{1-\varepsilon \left(
n+1-\lambda \right) }\left\Vert \left( \sum_{G\in \mathcal{F}}\left\vert
\bigtriangleup _{G}^{\sigma }f\left( x\right) \right\vert ^{2}\mathbf{1}%
_{G}\left( x\right) \right) ^{\frac{1}{2}}\right\Vert _{L^{p}\left( \sigma
\right) } \\
&\leq &A_{p}^{\lambda ,\ell ^{2},\limfunc{offset}}\left( \sigma ,\omega
\right) \sum_{m=1}^{\infty }\left( 2^{-m}\right) ^{1-\varepsilon \left(
n+1-\lambda \right) }\left\Vert f\right\Vert _{L^{p}\left( \sigma \right)
}=C_{\varepsilon ,\lambda }A_{p}^{\lambda ,\ell ^{2},\limfunc{offset}}\left(
\sigma ,\omega \right) \left\Vert f\right\Vert _{L^{p}\left( \sigma \right)
}.
\end{eqnarray*}%
Thus provided $1-\varepsilon >\varepsilon \left( n-\lambda \right) $, we
have proved the estimate%
\begin{equation*}
\left\vert \sum_{F\in \mathcal{F}}\left\langle T_{\sigma }^{\lambda }\gamma
_{F},g_{F}\right\rangle _{\omega }\right\vert \lesssim A_{p}^{\lambda ,\ell
^{2},\limfunc{offset}}\left( \sigma ,\omega \right) \left\Vert f\right\Vert
_{L^{p}\left( \sigma \right) }\left\Vert g\right\Vert _{L^{p^{\prime
}}\left( \omega \right) }.
\end{equation*}

It remains to bound $\sum_{F\in \mathcal{F}}\left\langle T_{\sigma
}^{\lambda }\beta _{F},g_{F}\right\rangle _{\omega }$ where 
\begin{equation*}
\beta _{F}=\sum_{m=1}^{\infty }\sum_{I:\ \pi _{\mathcal{F}}^{m}F\subsetneqq
I\subset \pi _{\mathcal{F}}^{m+1}F}\mathbf{1}_{\theta \left( I\right)
}\left( \mathbb{E}_{I;\kappa }^{\sigma }f\right) \text{ and }g_{F}\left(
x\right) =\sum_{J\in \mathcal{C}_{F}^{\omega ,\tau \text{-}\func{shift}%
}}\bigtriangleup _{J;\kappa }^{\omega }g\left( x\right) .
\end{equation*}%
The difference between the previous estimate and this one is that the
averages $\mathbf{1}_{\pi _{\mathcal{F}}^{m+1}F\setminus \pi _{\mathcal{F}%
}^{m}F}\left\vert \mathbb{E}_{\pi _{\mathcal{F}}^{m+1}F}^{\sigma
}f\right\vert $ inside the Poisson kernel have been replaced with the sum of
averages $\sum_{I:\ \pi _{\mathcal{F}}^{m}F\subsetneqq I\subset \pi _{%
\mathcal{F}}^{m+1}F}\mathbf{1}_{\theta \left( I\right) }\left\vert \mathbb{E}%
_{I}^{\sigma }f\right\vert $, but where the sum is taken over pairwise
disjoint sets $\left\{ \theta \left( I\right) \right\} _{\pi _{\mathcal{F}%
}^{m}F\subsetneqq I\subset \pi _{\mathcal{F}}^{m+1}F}$. Just as in the
previous estimate we start with%
\begin{eqnarray*}
&&\left\vert \sum_{F\in \mathcal{F}}\left\langle T_{\sigma }^{\lambda }\beta
_{F},g_{F}\right\rangle _{\omega }\right\vert =\left\vert \sum_{F\in 
\mathcal{F}}\int_{\mathbb{R}^{n}}T_{\sigma }^{\lambda }\left(
\sum_{m=1}^{\infty }\sum_{I:\ \pi _{\mathcal{F}}^{m}F\subsetneqq I\subset
\pi _{\mathcal{F}}^{m+1}F}\mathbf{1}_{\theta \left( I\right) }\left( \mathbb{%
E}_{I}^{\sigma }f\right) \right) \left( x\right) \ \left( \sum_{J\in 
\mathcal{C}_{F}^{\omega ,\tau \text{-}\func{shift}}}\bigtriangleup
_{J}^{\omega }g\left( x\right) \right) \ d\omega \left( x\right) \right\vert
\\
&=&\left\vert \int_{\mathbb{R}^{n}}\sum_{J\in \mathcal{D}}\left\{ \sum_{F\in 
\mathcal{F}}\bigtriangleup _{J}^{\omega }T_{\sigma }^{\lambda }\left(
\sum_{m=1}^{\infty }\sum_{I:\ \pi _{\mathcal{F}}^{m}F\subsetneqq I\subset
\pi _{\mathcal{F}}^{m+1}F}\mathbf{1}_{\theta \left( I\right) }\left( \mathbb{%
E}_{I}^{\sigma }f\right) \right) \left( x\right) \ \bigtriangleup
_{J}^{\omega }g\left( x\right) \right\} d\omega \left( x\right) \right\vert
\\
&\leq &\int_{\mathbb{R}^{n}}\left( \sum_{J\in \mathcal{D}}\left\vert
\sum_{F\in \mathcal{F}}\bigtriangleup _{J}^{\omega }T_{\sigma }^{\lambda
}\left( \sum_{m=1}^{\infty }\sum_{I:\ \pi _{\mathcal{F}}^{m}F\subsetneqq
I\subset \pi _{\mathcal{F}}^{m+1}F}\mathbf{1}_{\theta \left( I\right)
}\left( \mathbb{E}_{I}^{\sigma }f\right) \right) \left( x\right) \right\vert
^{2}\right) ^{\frac{1}{2}}\left( \sum_{J\in \mathcal{D}}\left\vert
\bigtriangleup _{J}^{\omega }g\left( x\right) \right\vert ^{2}\right) ^{%
\frac{1}{2}}d\omega \left( x\right) \\
&\leq &\left\Vert \left( \sum_{J\in \mathcal{D}}\left\vert \sum_{F\in 
\mathcal{F}}\bigtriangleup _{J}^{\omega }T_{\sigma }^{\lambda }\left(
\sum_{m=1}^{\infty }\sum_{I:\ \pi _{\mathcal{F}}^{m}F\subsetneqq I\subset
\pi _{\mathcal{F}}^{m+1}F}\mathbf{1}_{\theta \left( I\right) }\left( \mathbb{%
E}_{I}^{\sigma }f\right) \right) \left( x\right) \right\vert ^{2}\right) ^{%
\frac{1}{2}}\right\Vert _{L^{p}\left( \omega \right) }\left\Vert \left(
\sum_{J\in \mathcal{D}}\left\vert \bigtriangleup _{J}^{\omega }g\left(
x\right) \right\vert ^{2}\right) ^{\frac{1}{2}}\right\Vert _{L^{p^{\prime
}}\left( \omega \right) }.
\end{eqnarray*}

The second factor is equivalent to $\left\Vert g\right\Vert _{L^{p^{\prime
}}\left( \omega \right) }$, and the first factor $S$ is dominated by%
\begin{eqnarray*}
S &\lesssim &\left\Vert \left( \sum_{J\in \mathcal{D}}\left\vert \sum_{F\in 
\mathcal{F}:\ J\in \mathcal{C}_{F}^{\omega ,\tau \text{-}\func{shift}%
}}\sum_{m=1}^{\infty }\sum_{I:\ \pi _{\mathcal{F}}^{m}F\subsetneqq I\subset
\pi _{\mathcal{F}}^{m+1}F}\mathrm{P}^{\lambda }\left( J,\mathbf{1}_{\theta
\left( I\right) }\left( \mathbb{E}_{I}^{\sigma }f\right) \sigma \right)
\right\vert ^{2}\mathbf{1}_{J}\right) ^{\frac{1}{2}}\right\Vert
_{L^{p}\left( \omega \right) } \\
&\lesssim &\sum_{m=1}^{\infty }\left\Vert \left( \sum_{J\in \mathcal{D}%
}\left\vert \sum_{F\in \mathcal{F}:\ J\in \mathcal{C}_{F}^{\omega ,\tau 
\text{-}\func{shift}}}\sum_{I:\ \pi _{\mathcal{F}}^{m}F\subsetneqq I\subset
\pi _{\mathcal{F}}^{m+1}F}\left\Vert \mathbb{E}_{I}^{\sigma }f\right\Vert
_{\infty }\mathrm{P}^{\lambda }\left( J,\mathbf{1}_{\theta \left( I\right)
}\sigma \right) \right\vert ^{2}\mathbf{1}_{J}\right) ^{\frac{1}{2}%
}\right\Vert _{L^{p}\left( \omega \right) }.
\end{eqnarray*}%
Then we use%
\begin{eqnarray*}
\sum_{I:\ \pi _{\mathcal{F}}^{m}F\subsetneqq I\subset \pi _{\mathcal{F}%
}^{m+1}F}\left\Vert \mathbb{E}_{I}^{\sigma }f\right\Vert _{\infty }\mathrm{P}%
^{\lambda }\left( J,\mathbf{1}_{\theta \left( I\right) }\sigma \right) &\leq
&\left( \sup_{I:\ \pi _{\mathcal{F}}^{m}F\subsetneqq I\subset \pi _{\mathcal{%
F}}^{m+1}F}\left\Vert \mathbb{E}_{I}^{\sigma }f\right\Vert _{\infty }\right) 
\mathrm{P}^{\lambda }\left( J,\sum_{I:\ \pi _{\mathcal{F}}^{m}F\subsetneqq
I\subset \pi _{\mathcal{F}}^{m+1}F}\mathbf{1}_{\theta \left( I\right)
}\sigma \right) \\
&=&\left( \sup_{I:\ \pi _{\mathcal{F}}^{m}F\subsetneqq I\subset \pi _{%
\mathcal{F}}^{m+1}F}\left\Vert \mathbb{E}_{I}^{\sigma }f\right\Vert _{\infty
}\right) \mathrm{P}^{\lambda }\left( J,\mathbf{1}_{\pi _{\mathcal{F}%
}^{m+1}F\setminus \pi _{\mathcal{F}}^{m}F}\sigma \right) ,
\end{eqnarray*}%
and obtain that%
\begin{equation*}
S\lesssim \sum_{m=1}^{\infty }\left\Vert \left( \sum_{J\in \mathcal{D}%
}\left\vert \sum_{F\in \mathcal{F}:\ J\in \mathcal{C}_{F}^{\omega ,\tau 
\text{-}\func{shift}}}\left( \sup_{I:\ \pi _{\mathcal{F}}^{m}F\subsetneqq
I\subset \pi _{\mathcal{F}}^{m+1}F}\left\Vert \mathbb{E}_{I}^{\sigma
}f\right\Vert _{\infty }\right) \mathrm{P}^{\lambda }\left( J,\mathbf{1}%
_{\pi _{\mathcal{F}}^{m+1}F\setminus \pi _{\mathcal{F}}^{m}F}\sigma \right)
\right\vert ^{2}\mathbf{1}_{J}\right) ^{\frac{1}{2}}\right\Vert
_{L^{p}\left( \omega \right) }.
\end{equation*}%
Now we define $G_{m}\left[ F\right] \in \left( \pi _{\mathcal{F}}^{m}F,\pi _{%
\mathcal{F}}^{m+1}F\right] $ so that $\sup_{I:\ \pi _{\mathcal{F}%
}^{m}F\subsetneqq I\subset \pi _{\mathcal{F}}^{m+1}F}\left\Vert \mathbb{E}%
_{I}^{\sigma }f\right\Vert _{\infty }=\left\Vert \mathbb{E}_{G_{m}\left[ F%
\right] }^{\sigma }f\right\Vert _{\infty }$, and dominate $S$ by%
\begin{eqnarray*}
&&\sum_{m=1}^{\infty }\left\Vert \left( \sum_{J\in \mathcal{D}}\left\vert
\sum_{F\in \mathcal{F}:\ J\in \mathcal{C}_{F}^{\omega ,\tau \text{-}\func{%
shift}}}\left\Vert \mathbb{E}_{G_{m}\left[ F\right] }^{\sigma }f\right\Vert
_{\infty }\mathrm{P}^{\lambda }\left( J,\mathbf{1}_{\pi _{\mathcal{F}%
}^{m+1}F\setminus \pi _{\mathcal{F}}^{m}F}\sigma \right) \right\vert ^{2}%
\mathbf{1}_{J}\right) ^{\frac{1}{2}}\right\Vert _{L^{p}\left( \omega \right)
} \\
&\lesssim &\sum_{m=1}^{\infty }\left\Vert \left( \sum_{J\in \mathcal{D}%
}\left\vert \sum_{F\in \mathcal{F}:\ J\in \mathcal{C}_{F}^{\omega ,\tau 
\text{-}\func{shift}}}\left\Vert \mathbb{E}_{G_{m}\left[ F\right] }^{\sigma
}f\right\Vert _{\infty }\left( \frac{\ell \left( J\right) }{\ell \left( G_{m}%
\left[ F\right] \right) }\right) ^{\eta }\mathrm{P}^{\lambda }\left( G_{m}%
\left[ F\right] ,\mathbf{1}_{\pi _{\mathcal{F}}^{m+1}F\setminus \pi _{%
\mathcal{F}}^{m}F}\sigma \right) \right\vert ^{2}\mathbf{1}_{J}\right) ^{%
\frac{1}{2}}\right\Vert _{L^{p}\left( \omega \right) } \\
&\lesssim &\sum_{m=1}^{\infty }2^{-m\eta }\left\Vert \left( \sum_{J\in 
\mathcal{D}}\left\vert \sum_{F\in \mathcal{F}:\ J\in \mathcal{C}_{F}^{\omega
,\tau \text{-}\func{shift}}}\left\Vert \mathbb{E}_{G_{m}\left[ F\right]
}^{\sigma }f\right\Vert _{\infty }\left( \frac{\ell \left( J\right) }{\ell
\left( F\right) }\right) ^{\eta }\mathrm{P}^{\lambda }\left( G_{m}\left[ F%
\right] ,\mathbf{1}_{\pi _{\mathcal{F}}^{m+1}F\setminus \pi _{\mathcal{F}%
}^{m}F}\sigma \right) \right\vert ^{2}\mathbf{1}_{J}\right) ^{\frac{1}{2}%
}\right\Vert _{L^{p}\left( \omega \right) },
\end{eqnarray*}%
where $\eta =1-\varepsilon \left( n+1-\lambda \right) $ is the constant
appearing in (\ref{e.Jsimeq}).

Just as above we note that for each $J\in \mathcal{D}$ the number of cubes $%
F\in \mathcal{F}$ such that $J\in \mathcal{C}_{F}^{\omega ,\tau \text{-}%
\func{shift}}$ is at most $\tau $. So without loss of generality, we may
simply suppose that there is just one such cube denoted $F\left[ J\right] $.
Thus for each $m\in \mathbb{N}$, the above norm is at most%
\begin{equation*}
\left\Vert \left( \sum_{J\in \mathcal{D}}\left\vert \left\Vert \mathbb{E}%
_{G_{m}\left[ F\left[ J\right] \right] }^{\sigma }f\right\Vert _{\infty
}\left( \frac{\ell \left( J\right) }{\ell \left( F\left[ J\right] \right) }%
\right) ^{\eta }\frac{\left\vert G_{m}\left[ F\left[ J\right] \right]
\right\vert _{\sigma }}{\left\vert G_{m}\left[ F\left[ J\right] \right]
\right\vert ^{1-\frac{\lambda }{n}}}\right\vert ^{2}\mathbf{1}_{J}\right) ^{%
\frac{1}{2}}\right\Vert _{L^{p}\left( \omega \right) },
\end{equation*}%
and the sum inside the parentheses equals%
\begin{eqnarray*}
&&\sum_{J\in \mathcal{D}}\left\vert \left\Vert \mathbb{E}_{G_{m}\left[ F%
\left[ J\right] \right] }^{\sigma }f\right\Vert _{\infty }\frac{\left\vert
G_{m}\left[ F\left[ J\right] \right] \right\vert _{\sigma }}{\left\vert G_{m}%
\left[ F\left[ J\right] \right] \right\vert ^{1-\frac{\lambda }{n}}}%
\right\vert ^{2}\left( \frac{\ell \left( J\right) }{\ell \left( F\left[ J%
\right] \right) }\right) ^{2\eta }\mathbf{1}_{J}\left( x\right)  \\
&\lesssim &\left\vert \left\Vert \mathbb{E}_{G_{m}\left[ F\right] }^{\sigma
}f\right\Vert _{\infty }\frac{\left\vert G_{m}\left[ F\right] \right\vert
_{\sigma }}{\left\vert G_{m}\left[ F\right] \right\vert ^{1-\frac{\lambda }{n%
}}}\right\vert ^{2}\mathbf{1}_{G_{m}\left[ F\right] }\left( x\right) .
\end{eqnarray*}%
Altogether then, using the quadratic offset $A_{p}^{\lambda ,\ell ^{2},%
\limfunc{offset}}$ condition and doubling, we have%
\begin{eqnarray*}
&&S\lesssim A_{p}^{\lambda ,\ell ^{2},\limfunc{offset}}\left( \sigma ,\omega
\right) \sum_{m=1}^{\infty }2^{-m\eta }\left\Vert \left( \sum_{F\in \mathcal{%
F}}\left\vert \left\Vert \mathbb{E}_{G_{m}\left[ F\right] }^{\sigma
}f\right\Vert _{\infty }\frac{\left\vert G_{m}\left[ F\left[ J\right] \right]
\right\vert _{\sigma }}{\left\vert G_{m}\left[ F\left[ J\right] \right]
\right\vert ^{1-\frac{\lambda }{n}}}\right\vert ^{2}\mathbf{1}_{G_{m}\left[ F%
\right] }\left( x\right) \right) ^{\frac{1}{2}}\right\Vert _{L^{p}\left(
\sigma \right) } \\
&\lesssim &A_{p}^{\lambda ,\ell ^{2},\limfunc{offset}}\left( \sigma ,\omega
\right) \sum_{m=1}^{\infty }2^{-m\eta }\left\Vert \left( \sum_{F\in \mathcal{%
F}}\left\vert \left\Vert \mathbb{E}_{G_{m}\left[ F\right] }^{\sigma
}f\right\Vert _{\infty }\frac{\left\vert G_{m}\left[ F\right] \right\vert
_{\sigma }}{\left\vert G_{m}\left[ F\right] \right\vert ^{1-\frac{\lambda }{n%
}}}\right\vert ^{2}\mathbf{1}_{G_{m}\left[ F\right] }\left( x\right) \right)
^{\frac{1}{2}}\right\Vert _{L^{p}\left( \sigma \right) } \\
&\lesssim &A_{p}^{\lambda ,\ell ^{2},\limfunc{offset}}\left( \sigma ,\omega
\right) \sum_{m=1}^{\infty }2^{-m\eta }\left\Vert \left( \sum_{F\in \mathcal{%
F}}\left\vert \bigtriangleup _{G_{m}\left[ F\right] }^{\sigma }f\left(
x\right) \right\vert ^{2}\mathbf{1}_{F}\left( x\right) \right) ^{\frac{1}{2}%
}\right\Vert _{L^{p}\left( \sigma \right) }
\end{eqnarray*}%
and we can continue with%
\begin{eqnarray*}
S &\leq &A_{p}^{\lambda ,\ell ^{2},\limfunc{offset}}\left( \sigma ,\omega
\right) \sum_{m=1}^{\infty }2^{-m\eta }\left\Vert \left( \sum_{G\in \mathcal{%
G}}\sum_{F\in \mathcal{F}:\ G_{m}\left[ F\right] =G}\left\vert
\bigtriangleup _{G}^{\sigma }f\left( x\right) \right\vert ^{2}\mathbf{1}%
_{G}\left( x\right) \right) ^{\frac{1}{2}}\right\Vert _{L^{p}\left( \sigma
\right) } \\
&\leq &A_{p}^{\lambda ,\ell ^{2},\limfunc{offset}}\left( \sigma ,\omega
\right) \sum_{m=1}^{\infty }2^{-m\eta }\left\Vert \left( \sum_{G\in \mathcal{%
G}}\left\vert \bigtriangleup _{G}^{\sigma }f\left( x\right) \right\vert ^{2}%
\mathbf{1}_{G}\left( x\right) \right) ^{\frac{1}{2}}\right\Vert
_{L^{p}\left( \sigma \right) } \\
&\leq &A_{p}^{\lambda ,\ell ^{2},\limfunc{offset}}\left( \sigma ,\omega
\right) \sum_{m=1}^{\infty }2^{-m\eta }\left\Vert f\right\Vert _{L^{p}\left(
\sigma \right) }=C_{\varepsilon ,n,\kappa ,\lambda }A_{p}^{\lambda ,\ell
^{2},\limfunc{offset}}\left\Vert f\right\Vert _{L^{p}\left( \sigma \right) },
\end{eqnarray*}%
provided $\eta =1-\varepsilon \left( n+1-\lambda \right) >0$ holds. Thus we
have proved the estimate%
\begin{equation*}
\left\vert \sum_{F\in \mathcal{F}}\left\langle T_{\sigma }^{\lambda }\beta
_{F},g_{F}\right\rangle _{\omega }\right\vert \lesssim A_{p}^{\lambda ,\ell
^{2},\limfunc{offset}}\left( \sigma ,\omega \right) \left\Vert f\right\Vert
_{L^{p}\left( \sigma \right) }\left\Vert g\right\Vert _{L^{p^{\prime
}}\left( \omega \right) },
\end{equation*}%
which together with the corresponding estimate for $\sum_{F\in \mathcal{F}%
}\left\langle T_{\sigma }^{\lambda }\gamma _{F},g_{F}\right\rangle _{\omega }
$ proved above, completes the proof of the Intertwining Proposition.
\end{proof}

Thus we have controlled both the first and second far below forms $\mathsf{T}%
_{\limfunc{far}\limfunc{below}}^{1}\left( f,g\right) $ and $\mathsf{T}_{%
\limfunc{far}\limfunc{below}}^{2}\left( f,g\right) $ by the quadratic offset
Muckenhoupt constant $A_{p}^{\lambda ,\ell ^{2},\limfunc{offset}}$.

\subsection{Neighbour form}

We begin with $M_{I^{\prime }}=\mathbf{1}_{I^{\prime }}\bigtriangleup
_{I}^{\sigma }f$ to obtain 
\begin{align*}
& \mathsf{B}_{\limfunc{neighbour}}\left( f,g\right) =\sum_{F\in \mathcal{F}}%
\mathsf{B}_{\limfunc{neighbour}}^{F}\left( f,g\right) \\
& =\sum_{F\in \mathcal{F}}\sum_{\substack{ I\in \mathcal{C}_{F}\text{ and }%
J\in \mathcal{C}_{F}^{\tau -\limfunc{shift}}  \\ J\Subset _{\rho
,\varepsilon }I}}\sum_{\theta \left( I_{J}\right) \in \mathfrak{C}_{\mathcal{%
D}}\left( I\right) \setminus \left\{ I_{J}\right\} }\int_{\mathbb{R}%
^{n}}T_{\sigma }^{\lambda }\left( \mathbf{1}_{\theta \left( I_{J}\right)
}\bigtriangleup _{I}^{\sigma }f\right) \left( x\right) \ \bigtriangleup
_{J}^{\omega }g\left( x\right) \ d\omega \left( x\right) \\
& =\int_{\mathbb{R}^{n}}\sum_{J\in \mathcal{D}}\left\{ \sum_{F\in \mathcal{F}%
}\sum_{\substack{ I\in \mathcal{C}_{F}\text{ and }J\in \mathcal{C}_{F}^{\tau
-\limfunc{shift}}  \\ J\Subset _{\rho ,\varepsilon }I}}\sum_{\theta \left(
I_{J}\right) \in \mathfrak{C}_{\mathcal{D}}\left( I\right) \setminus \left\{
I_{J}\right\} }\right\} T_{\sigma }^{\lambda }\left( \mathbf{1}_{\theta
\left( I_{J}\right) }\bigtriangleup _{I}^{\sigma }f\right) \left( x\right) \
\bigtriangleup _{J}^{\omega }g\left( x\right) \ d\omega \left( x\right) \\
& =\int_{\mathbb{R}^{n}}\sum_{J\in \mathcal{D}}\sum_{I\succ J}\bigtriangleup
_{J}^{\omega }T_{\sigma }^{\lambda }\left( \mathbf{1}_{\theta \left(
I_{J}\right) }\bigtriangleup _{I}^{\sigma }f\right) \left( x\right) \
\bigtriangleup _{J}^{\omega }g\left( x\right) \ d\omega \left( x\right) ,
\end{align*}%
where for $J\in \mathcal{D}$ we write $I\succ J$ if $I$ satisfies 
\begin{equation*}
\text{there is }F\in \mathcal{F}\text{ such that }I\in \mathcal{C}_{F}\text{%
, }J\in \mathcal{C}_{F}^{\tau -\limfunc{shift}}\text{ and }J\Subset _{\rho
,\varepsilon }I.
\end{equation*}%
Applying the Cauchy-Schwarz and H\"{o}lder inequalities gives%
\begin{eqnarray*}
&&\left\vert \mathsf{B}_{\limfunc{neighbour}}\left( f,g\right) \right\vert
\leq \int_{\mathbb{R}^{n}}\left( \sum_{J\in \mathcal{D}}\sum_{I\succ
J}\left\vert \bigtriangleup _{J}^{\omega }T_{\sigma }^{\lambda }\left( 
\mathbf{1}_{\theta \left( I_{J}\right) }\bigtriangleup _{I}^{\sigma
}f\right) \left( x\right) \right\vert ^{2}\right) ^{\frac{1}{2}}\left(
\sum_{J\in \mathcal{D}}\sum_{I\succ J}\left\vert \bigtriangleup _{J}^{\omega
}g\left( x\right) \right\vert ^{2}\right) ^{\frac{1}{2}}d\omega \left(
x\right) \\
&\leq &\left\Vert \left( \sum_{J\in \mathcal{D}}\sum_{I\succ J}\left\vert
\bigtriangleup _{J}^{\omega }T_{\sigma }^{\lambda }\left( \mathbf{1}_{\theta
\left( I_{J}\right) }\bigtriangleup _{I}^{\sigma }f\right) \left( x\right)
\right\vert ^{2}\right) ^{\frac{1}{2}}\right\Vert _{L^{p}\left( \omega
\right) }\left\Vert \left( \sum_{J\in \mathcal{D}}\sum_{I\succ J}\left\vert
\bigtriangleup _{J}^{\omega }g\left( x\right) \right\vert ^{2}\right) ^{%
\frac{1}{2}}\right\Vert _{L^{p^{\prime }}\left( \omega \right) },
\end{eqnarray*}%
where the final factor is dominated by $\left\Vert g\right\Vert
_{L^{p^{\prime }}\left( \omega \right) }$. Using the Pivotal Lemma (\ref%
{ener}), and the estimate $\left\Vert M_{I^{\prime }}\right\Vert _{L^{\infty
}\left( \sigma \right) }\approx \frac{1}{\sqrt{\left\vert I^{\prime
}\right\vert _{\sigma }}}\left\vert \widehat{f}\left( I\right) \right\vert $
from (\ref{analogue'}), we have%
\begin{align*}
\left\vert \bigtriangleup _{J}^{\omega }T_{\sigma }^{\lambda }\left(
M_{I^{\prime }}\mathbf{1}_{I^{\prime }}\right) \left( x\right) \right\vert &
\lesssim \mathrm{P}^{\lambda }\left( J,\left\Vert M_{I^{\prime }}\right\Vert
_{L^{\infty }\left( \sigma \right) }\mathbf{1}_{I^{\prime }}\sigma \right) 
\mathbf{1}_{J}\left( x\right) \\
& \lesssim \frac{1}{\sqrt{\left\vert I^{\prime }\right\vert _{\sigma }}}%
\left\vert \widehat{f}\left( I\right) \right\vert \mathrm{P}^{\lambda
}\left( J,\mathbf{1}_{I^{\prime }}\sigma \right) \mathbf{1}_{J}\left(
x\right) .
\end{align*}%
Now we pigeonhole the side lengths of $I$ and $J$ by $\ell \left( J\right)
=2^{-s}\ell \left( I\right) $ and use goodness, followed by (\ref{kappa
large}), to obtain%
\begin{eqnarray*}
&&\left\Vert \left( \sum_{J\in \mathcal{D}}\sum_{I\succ J}\left\vert
\bigtriangleup _{J}^{\omega }T_{\sigma }^{\lambda }\left( \mathbf{1}_{\theta
\left( I_{J}\right) }\bigtriangleup _{I}^{\sigma }f\right) \left( x\right)
\right\vert ^{2}\right) ^{\frac{1}{2}}\right\Vert _{L^{p}\left( \omega
\right) } \\
&\lesssim &\left\Vert \left( \sum_{J\in \mathcal{D}}\sum_{I\succ J:\ \ell
\left( J\right) =2^{-s}\ell \left( I\right) }\left\vert \frac{1}{\sqrt{%
\left\vert I^{\prime }\right\vert _{\sigma }}}\left\vert \widehat{f}\left(
I\right) \right\vert \mathrm{P}^{\lambda }\left( J,\mathbf{1}_{I^{\prime
}}\sigma \right) \mathbf{1}_{J}\left( x\right) \right\vert ^{2}\right) ^{%
\frac{1}{2}}\right\Vert _{L^{p}\left( \omega \right) } \\
&\lesssim &2^{-\eta s}\left\Vert \left( \sum_{J\in \mathcal{D}}\sum_{I\succ
J\ \ell \left( J\right) =2^{-s}\ell \left( I\right) }\left\vert \frac{1}{%
\sqrt{\left\vert I^{\prime }\right\vert _{\sigma }}}\left\vert \widehat{f}%
\left( I\right) \right\vert \mathrm{P}^{\lambda }\left( I_{J},\mathbf{1}%
_{I^{\prime }}\sigma \right) \mathbf{1}_{J}\left( x\right) \right\vert
^{2}\right) ^{\frac{1}{2}}\right\Vert _{L^{p}\left( \omega \right) } \\
&\lesssim &2^{-\eta s}\left\Vert \left( \sum_{I\in \mathcal{D}}\left\vert
\left\Vert \mathbb{E}_{I^{\prime }}^{\sigma }\bigtriangleup _{I}^{\sigma
}f\left( I\right) \right\Vert _{\infty }\frac{\left\vert I^{\prime
}\right\vert _{\sigma }}{\left\vert I^{\prime }\right\vert ^{1-\frac{\lambda 
}{n}}}\mathbf{1}_{I^{\prime }}\left( x\right) \right\vert ^{2}\right) ^{%
\frac{1}{2}}\right\Vert _{L^{p}\left( \omega \right) },
\end{eqnarray*}%
where again $\eta $ is the exponent from (\ref{e.Jsimeq}), which by the
quadratic offset Muckenhoupt condition, is dominated by 
\begin{equation*}
2^{-\eta s}A_{p}^{\lambda ,\ell ^{2},\limfunc{offset}}\left( \sigma ,\omega
\right) \left\Vert \left( \sum_{I\in \mathcal{D}}\left\vert \left\Vert 
\mathbb{E}_{I^{\prime }}^{\sigma }\bigtriangleup _{I}^{\sigma }f\left(
I\right) \right\Vert _{\infty }\mathbf{1}_{I^{\prime }}\left( x\right)
\right\vert ^{2}\right) ^{\frac{1}{2}}\right\Vert _{L^{p}\left( \sigma
\right) }\lesssim 2^{-\eta s}A_{p}^{\lambda ,\ell ^{2},\limfunc{offset}%
}\left( \sigma ,\omega \right) \left\Vert f\right\Vert _{L^{p}\left( \sigma
\right) }.
\end{equation*}%
Summing in $s\geq 0$ proves the required bound for the neighbour form,%
\begin{equation}
\left\vert \mathsf{B}_{\limfunc{neighbour}}\left( f,g\right) \right\vert
\lesssim A_{p}^{\lambda ,\ell ^{2},\limfunc{offset}}\left( \sigma ,\omega
\right) \left\Vert f\right\Vert _{L^{p}\left( \sigma \right) }\left\Vert
g\right\Vert _{L^{p^{\prime }}\left( \omega \right) }.  \label{neigh est}
\end{equation}

\subsection{Conclusion of the proof}

An examination of the schematic diagram at the beginning of the section on
organization of the proof, together with all the estimates proved so far,
completes the proof that 
\begin{equation*}
\left\vert \left\langle T_{\sigma }^{\lambda }f,g\right\rangle _{\omega
}\right\vert \lesssim \left[ \Gamma _{T^{\lambda },p}^{\ell ^{2}}+\mathcal{%
HWBP}_{T^{\lambda },p}^{\ell ^{2},\rho }\left( \sigma ,\omega \right) \right]
\left\Vert f\right\Vert _{L^{p}\left( \sigma \right) }\left\Vert
g\right\Vert _{L^{p^{\prime }}\left( \omega \right) },
\end{equation*}%
where the constant $\Gamma _{T^{\lambda },p}^{\ell ^{2}}$ is the sum of the
scalar testing and quadratic Muckenhoupt offset conditions%
\begin{equation*}
\Gamma _{T^{\lambda },p}^{\ell ^{2}}\equiv \mathfrak{T}_{T^{\lambda
},p}\left( \sigma ,\omega \right) +\mathfrak{T}_{T^{\lambda ,\ast
},p^{\prime }}\left( \omega ,\sigma \right) +A_{p}^{\lambda ,\ell ^{2},%
\limfunc{offset}}\left( \sigma ,\omega \right) +A_{p^{\prime }}^{\lambda
,\ell ^{2},\limfunc{offset}}\left( \omega ,\sigma \right) .
\end{equation*}%
Now we invoke Lemma \ref{stronger} to obtain that for all $0<\varepsilon <1$%
, there is a constant $C_{\varepsilon }$ such that 
\begin{equation*}
\left\vert \left\langle T_{\sigma }^{\lambda }f,g\right\rangle _{\omega
}\right\vert \lesssim \left\{ C_{\varepsilon }\left[ \Gamma _{T^{\lambda
},p}^{\ell ^{2}}+\mathcal{WBP}_{T^{\lambda },p}^{\ell ^{2}}\left( \sigma
,\omega \right) \right] +\varepsilon \mathfrak{N}_{T^{\lambda },p}\left(
\sigma ,\omega \right) \right\} \left\Vert f\right\Vert _{L^{p}\left( \sigma
\right) }\left\Vert g\right\Vert _{L^{p^{\prime }}\left( \omega \right) },
\end{equation*}%
from which we conclude that%
\begin{equation*}
\mathfrak{N}_{T^{\lambda },p}\left( \sigma ,\omega \right) \lesssim \left\{
C_{\varepsilon }\left[ \Gamma _{T^{\lambda },p}^{\ell ^{2}}+\mathcal{WBP}%
_{T^{\lambda },p}^{\ell ^{2}}\left( \sigma ,\omega \right) \right]
+\varepsilon \mathfrak{N}_{T^{\lambda },p}\left( \sigma ,\omega \right)
\right\} \left\Vert f\right\Vert _{L^{p}\left( \sigma \right) }\left\Vert
g\right\Vert _{L^{p^{\prime }}\left( \omega \right) }.
\end{equation*}

At this point, a standard argument using the definition of the two weight
norm inequality (\ref{two weight'}), for which see e.g. \cite[Section 6]%
{AlSaUr}, shows that for any smooth truncation of $T^{\lambda }$, we can
absorb the term $\varepsilon \mathfrak{N}_{T^{\lambda },p}\left( \sigma
,\omega \right) \left\Vert f\right\Vert _{L^{p}\left( \sigma \right)
}\left\Vert g\right\Vert _{L^{p^{\prime }}\left( \omega \right) }$ into the
left hand side and obtain (\ref{main inequ}),%
\begin{equation*}
\mathfrak{N}_{T^{\lambda },p}\left( \sigma ,\omega \right) \lesssim \left[
\Gamma _{T^{\lambda },p}^{\ell ^{2}}+\mathcal{WBP}_{T^{\lambda },p}^{\ell
^{2}}\left( \sigma ,\omega \right) \right] \left\Vert f\right\Vert
_{L^{p}\left( \sigma \right) }\left\Vert g\right\Vert _{L^{p^{\prime
}}\left( \omega \right) }.
\end{equation*}%
This completes the proof of Theorem \ref{main}.

\section{Appendix}

\subsection{A counterexample}

Regarding the quadratic Muckenhoupt condition in the case $p=2$, we clearly
we have 
\begin{equation*}
A_{2}^{\lambda ,\ell ^{2}}\left( \sigma ,\omega \right) +A_{2}^{\lambda
,\ell ^{2}}\left( \omega ,\sigma \right) \leq A_{2}^{\lambda }\left( \sigma
,\omega \right) ,
\end{equation*}%
for any pair of locally finite positive Borel measures. However, this fails
when $1<p<\infty $, $\lambda =0$ and $p\neq 2$ as we now show.

Let $1<p<\infty $, $0<\alpha \leq 1$ and define%
\begin{equation*}
f\left( x\right) \equiv \frac{1}{x\left( \ln \frac{1}{x}\right) ^{1+\alpha }}%
\mathbf{1}_{\left( 0,\frac{1}{2}\right) }\left( x\right) ,
\end{equation*}%
and note that%
\begin{equation*}
Mf\left( x\right) \mathbf{1}_{\left( 0,\frac{1}{2}\right) }\left( x\right)
\approx \frac{1}{x\left( \ln \frac{1}{x}\right) ^{\alpha }}\mathbf{1}%
_{\left( 0,\frac{1}{2}\right) }\left( x\right) .
\end{equation*}%
Then define%
\begin{eqnarray*}
v\left( x\right) &\equiv &f\left( x\right) ^{1-p}dx=\left[ x\left( \ln \frac{%
1}{x}\right) ^{1+\alpha }\right] ^{p-1}\mathbf{1}_{\left( 0,\frac{1}{2}%
\right) }\left( x\right) dx, \\
w\left( x\right) &\equiv &Mf\left( x\right) ^{1-p}\approx \left[ x\left( \ln 
\frac{1}{x}\right) ^{\alpha }\right] ^{p-1}\mathbf{1}_{\left( 0,\frac{1}{2}%
\right) }\left( x\right) dx,
\end{eqnarray*}%
so that%
\begin{eqnarray*}
\int_{0}^{\frac{1}{2}}\left\vert f\left( x\right) \right\vert ^{p}v\left(
x\right) dx &=&\int_{0}^{\frac{1}{2}}f\left( x\right) dx=\int_{0}^{\frac{1}{2%
}}\frac{1}{x\left( \ln \frac{1}{x}\right) ^{1+\alpha }}<\infty , \\
\int_{0}^{\frac{1}{2}}\left\vert Mf\left( x\right) \right\vert ^{p}w\left(
x\right) dx &=&\int_{0}^{\frac{1}{2}}Mf\left( x\right) dx\approx \int_{0}^{%
\frac{1}{2}}\frac{1}{x\left( \ln \frac{1}{x}\right) ^{\alpha }}=\infty .
\end{eqnarray*}%
On the other hand, using $\left( p-1\right) \left( 1-p^{\prime }\right) =-1$
we have for $0<r<\frac{1}{2}$, 
\begin{eqnarray*}
&&\left( \frac{1}{r}\int_{0}^{r}w\left( x\right) dx\right) \left( \frac{1}{r}%
\int_{0}^{r}v\left( x\right) ^{1-p^{\prime }}dx\right) ^{p-1} \\
&=&\left( \frac{1}{r}\int_{0}^{r}\left[ x\left( \ln \frac{1}{x}\right)
^{\alpha }\right] ^{p-1}dx\right) \left( \frac{1}{r}\int_{0}^{r}\frac{1}{%
x\left( \ln \frac{1}{x}\right) ^{1+\alpha }}dx\right) ^{p-1} \\
&\approx &\left( \frac{1}{r}r^{p}\left( \ln \frac{1}{r}\right) ^{\alpha
\left( p-1\right) }\right) \left( \frac{1}{r}\left( \ln \frac{1}{r}\right)
^{-\alpha }\right) ^{p-1}=1,
\end{eqnarray*}%
and it follows easily that 
\begin{equation*}
\sup_{I\subset \left( 0,\frac{1}{2}\right) }\left( \frac{1}{\left\vert
I\right\vert }\int_{I}w\left( x\right) dx\right) \left( \frac{1}{\left\vert
I\right\vert }\int_{I}v\left( x\right) ^{1-p^{\prime }}dx\right)
^{p-1}<\infty .
\end{equation*}

Thus if we set 
\begin{eqnarray*}
d\omega _{p,\alpha }\left( x\right) &\equiv &\left[ x\left( \ln \frac{1}{x}%
\right) ^{\alpha }\right] ^{p-1}\mathbf{1}_{\left( 0,\frac{1}{2}\right)
}\left( x\right) dx, \\
d\sigma _{p,\alpha }\left( x\right) &\equiv &\frac{1}{x\left( \ln \frac{1}{x}%
\right) ^{1+\alpha }}\mathbf{1}_{\left( 0,\frac{1}{2}\right) }\left(
x\right) dx,
\end{eqnarray*}%
then we have both \textbf{finiteness} of the Muckenhoupt constant $A_{p}^{%
\func{local}}\left( \sigma ,\omega \right) $ localized to $\left( 0,\frac{1}{%
2}\right) $, and \textbf{failure} of the norm inequality%
\begin{equation*}
\int_{\mathbb{R}}\left\vert M\left( f\sigma \right) \left( x\right)
\right\vert ^{p}d\omega \left( x\right) \lesssim \int_{\mathbb{R}}\left\vert
f\left( x\right) \right\vert ^{p}d\sigma \left( x\right) .
\end{equation*}%
Now we investigate the local \emph{quadratic} Muckenhoupt constant 
\begin{equation*}
A_{p}^{\ell ^{2},\func{local}}\left( \sigma ,\omega \right) +A_{p^{\prime
}}^{\ell ^{2},\func{local}}\left( \omega ,\sigma \right)
\end{equation*}%
when $\lambda =0$, i.e. where 
\begin{eqnarray*}
\left\Vert \left( \sum_{k=1}^{\infty }\left\vert a_{k}\frac{\left\vert
I_{k}\right\vert _{\sigma }}{\left\vert I_{k}\right\vert }\right\vert ^{2}%
\mathbf{1}_{I_{k}}\right) ^{\frac{1}{2}}\right\Vert _{L^{p}\left( \omega
\right) } &\leq &A_{p}^{\ell ^{2},\func{local}}\left( \sigma ,\omega \right)
\left\Vert \left( \sum_{k=1}^{\infty }\left\vert a_{k}\right\vert ^{2}%
\mathbf{1}_{I_{k}}\right) ^{\frac{1}{2}}\right\Vert _{L^{p}\left( \sigma
\right) }, \\
\left\Vert \left( \sum_{k=1}^{\infty }\left\vert a_{k}\frac{\left\vert
I_{k}\right\vert _{\omega }}{\left\vert I_{k}\right\vert }\right\vert ^{2}%
\mathbf{1}_{I_{k}}\right) ^{\frac{1}{2}}\right\Vert _{L^{p^{\prime }}\left(
\sigma \right) } &\leq &A_{p^{\prime }}^{\ell ^{2},\func{local}}\left(
\omega ,\sigma \right) \left\Vert \left( \sum_{k=1}^{\infty }\left\vert
a_{k}\right\vert ^{2}\mathbf{1}_{I_{k}}\right) ^{\frac{1}{2}}\right\Vert
_{L^{p^{\prime }}\left( \omega \right) },
\end{eqnarray*}%
for all sequences $\left\{ I_{i}\right\} _{i=1}^{\infty }$ of intervals in $%
I_{i}$ and all sequences $\left\{ a_{i}\right\} _{i=1}^{\infty }$ of
numbers. We have%
\begin{eqnarray*}
\left\vert \left[ 0,r\right] \right\vert _{\sigma } &=&\int_{0}^{r}\frac{1}{%
x\left( \ln \frac{1}{x}\right) ^{1+\alpha }}dx\approx \frac{1}{\left( \ln 
\frac{1}{r}\right) ^{\alpha }}, \\
\left\vert \left[ 0,r\right] \right\vert _{\omega } &=&\int_{0}^{r}\left[
x\left( \ln \frac{1}{x}\right) ^{\alpha }\right] ^{p-1}dx\approx r^{p}\left(
\ln \frac{1}{r}\right) ^{\alpha \left( p-1\right) }.
\end{eqnarray*}%
Thus if we take $I_{k}=\left( 0,2^{-k}\right) $, the inequality becomes%
\begin{equation*}
\left\Vert \left( \sum_{k=1}^{\infty }\left\vert a_{k}2^{k}\frac{1}{%
k^{\alpha }}\right\vert ^{2}\mathbf{1}_{\left( 0,2^{-k}\right) }\right) ^{%
\frac{1}{2}}\right\Vert _{L^{p}\left( \omega \right) }\leq A_{p}^{\ell ^{2},%
\func{local}}\left( \sigma ,\omega \right) \left\Vert \left(
\sum_{k=1}^{\infty }\left\vert a_{k}\right\vert ^{2}\mathbf{1}_{\left(
0,2^{-k}\right) }\right) ^{\frac{1}{2}}\right\Vert _{L^{p}\left( \sigma
\right) }.
\end{equation*}%
Now the $p^{th}$ power of the right hand side is%
\begin{eqnarray*}
&&\int_{0}^{\frac{1}{2}}\left( \sum_{k=1}^{\infty }\left\vert
a_{k}\right\vert ^{2}\mathbf{1}_{\left( 0,2^{-k}\right) }\left( x\right)
\right) ^{\frac{p}{2}}\frac{1}{x\left( \ln \frac{1}{x}\right) ^{1+\alpha }}%
dx=\sum_{k=1}^{\infty }\int_{2^{-k-1}}^{2^{-k}}\left( \sum_{\ell
=1}^{k}\left\vert a_{\ell }\right\vert ^{2}\right) ^{\frac{p}{2}}\frac{1}{%
x\left( \ln \frac{1}{x}\right) ^{1+\alpha }}dx \\
&\approx &\sum_{k=1}^{\infty }\left( \sum_{\ell =1}^{k}\left\vert a_{\ell
}\right\vert ^{2}\right) ^{\frac{p}{2}}\left( \frac{1}{k^{\alpha }}-\frac{1}{%
\left( k+1\right) ^{\alpha }}\right) \approx \sum_{k=1}^{\infty }\left(
\sum_{\ell =1}^{k}\left\vert a_{\ell }\right\vert ^{2}\right) ^{\frac{p}{2}}%
\frac{1}{k^{1+\alpha }},
\end{eqnarray*}%
and the $p^{th}$ power of the left hand side is%
\begin{eqnarray*}
&&\int_{0}^{\frac{1}{2}}\left( \sum_{k=1}^{\infty }\left\vert a_{k}2^{k}%
\frac{1}{k^{\alpha }}\right\vert ^{2}\mathbf{1}_{\left( 0,2^{-k}\right)
}\left( x\right) \right) ^{\frac{p}{2}}\left[ x\left( \ln \frac{1}{x}\right)
^{\alpha }\right] ^{p-1}dx \\
&=&\sum_{k=1}^{\infty }\int_{2^{-k-1}}^{2^{-k}}\left( \sum_{\ell
=1}^{k}\left\vert a_{\ell }2^{\ell }\frac{1}{\ell ^{\alpha }}\right\vert
^{2}\right) ^{\frac{p}{2}}\left[ x\left( \ln \frac{1}{x}\right) ^{\alpha }%
\right] ^{p-1}dx\approx \sum_{k=1}^{\infty }\left( \sum_{\ell
=1}^{k}\left\vert a_{\ell }2^{\ell }\frac{1}{\ell ^{\alpha }}\right\vert
^{2}\right) ^{\frac{p}{2}}2^{-kp}k^{\alpha \left( p-1\right) }.
\end{eqnarray*}%
Thus the right hand side will be finite if 
\begin{equation*}
a_{\ell }=\ell ^{\eta },\ \ \ \ \ \text{where }2\eta +1=\left( \alpha
-\varepsilon \right) \frac{2}{p}>0,
\end{equation*}%
since then 
\begin{equation*}
\sum_{\ell =1}^{k}\left\vert a_{\ell }\right\vert ^{2}=\sum_{\ell
=1}^{k}\ell ^{2\eta }\approx k^{2\eta +1}=k^{\left( \alpha -\varepsilon
\right) \frac{2}{p}}\text{ and hence }\left( \sum_{\ell =1}^{k}\left\vert
a_{\ell }\right\vert ^{2}\right) ^{\frac{p}{2}}=\frac{k^{1+\alpha }}{%
k^{1+\varepsilon }},
\end{equation*}%
and so 
\begin{equation*}
\sum_{k=1}^{\infty }\left( \sum_{\ell =1}^{k}\left\vert a_{\ell }\right\vert
^{2}\right) ^{\frac{p}{2}}\frac{1}{k^{1+\alpha }}=\sum_{k=1}^{\infty }\frac{1%
}{k^{1+\varepsilon }}<\infty .
\end{equation*}%
On the other hand, with this choice of $a_{\ell }$, the $p^{th}$ power of
the left hand side is%
\begin{eqnarray*}
&&\sum_{k=1}^{\infty }\left( \sum_{\ell =1}^{k}\left\vert a_{\ell }2^{\ell }%
\frac{1}{\ell ^{\alpha }}\right\vert ^{2}\right) ^{\frac{p}{2}%
}2^{-kp}k^{\alpha \left( p-1\right) }=\sum_{k=1}^{\infty }\left( \sum_{\ell
=1}^{k}\left\vert 2^{\ell }\ell ^{\eta -\alpha }\right\vert ^{2}\right) ^{%
\frac{p}{2}}2^{-kp}k^{\alpha \left( p-1\right) } \\
&\approx &\sum_{k=1}^{\infty }\left( \left\vert 2^{k}k^{\eta -\alpha
}\right\vert ^{2}\right) ^{\frac{p}{2}}2^{-kp}k^{\alpha \left( p-1\right)
}=\sum_{k=1}^{\infty }2^{kp}k^{\left( \eta -\alpha \right)
p}2^{-kp}k^{\alpha \left( p-1\right) }=\sum_{k=1}^{\infty }k^{\eta p-\alpha
p}k^{\alpha p-\alpha }=\sum_{k=1}^{\infty }k^{\eta p-\alpha },
\end{eqnarray*}%
which will be infinite if $\eta p-\alpha >-1$, and since $2\eta +1=\left(
\alpha -\varepsilon \right) \frac{2}{p}$, this will be the case provided%
\begin{eqnarray*}
-1 &<&\eta p-\alpha =\left[ \frac{\left( \alpha -\varepsilon \right) \frac{2%
}{p}-1}{2}\right] p-\alpha =\left( \alpha -\varepsilon \right) -\frac{p}{2}%
-\alpha =-\varepsilon -\frac{p}{2}, \\
\text{i.e. }0 &<&\varepsilon <\frac{2-p}{2}.
\end{eqnarray*}%
Thus we have a counterexample to the implication $A_{p}^{\func{local}}\left(
\sigma ,\omega \right) \Longrightarrow A_{p}^{\ell ^{2},\func{local}}\left(
\sigma ,\omega \right) +A_{p^{\prime }}^{\ell ^{2},\func{local}}\left(
\omega ,\sigma \right) $ when $1<p<2$, provided we choose $\left( \sigma
,\omega \right) =\left( \sigma _{p,\alpha },\omega _{p,\alpha }\right) $
with $0<\alpha \leq 1$.

\begin{proposition}
Let $p\in \left( 1,\infty \right) \setminus \left\{ 2\right\} $. There is a
weight pair $\left( \sigma ,\omega \right) $ such that%
\begin{eqnarray*}
A_{p}^{\func{local}}\left( \sigma ,\omega \right) &<&\infty , \\
A_{p}^{\ell ^{2},\func{local}}\left( \sigma ,\omega \right) +A_{p^{\prime
}}^{\ell ^{2},\func{local}}\left( \omega ,\sigma \right) &=&\infty .
\end{eqnarray*}
\end{proposition}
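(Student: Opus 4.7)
The plan is to combine the explicit calculation already carried out immediately above the proposition statement with a duality/relabeling argument to cover both cases $p\in(1,2)$ and $p\in(2,\infty)$.

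First, for $1<p<2$ I would simply formalize the computation already written out. Choose any $\alpha\in(0,1]$ and take
\begin{equation*}
(\sigma,\omega)=(\sigma_{p,\alpha},\omega_{p,\alpha}),
\end{equation*}
as defined in the body of the excerpt. The estimates
$\lvert[0,r]\rvert_\sigma\approx (\ln\tfrac{1}{r})^{-\alpha}$ and $\lvert[0,r]\rvert_\omega\approx r^{p}(\ln\tfrac{1}{r})^{\alpha(p-1)}$ directly give $A_p^{\func{local}}(\sigma,\omega)<\infty$ by the averaging identity already verified above. To exhibit the failure of the quadratic $A_p^{\ell^2}$ condition, use the sequence of intervals $I_k=(0,2^{-k})$ together with coefficients $a_\ell=\ell^{\eta}$, where $\eta$ is chosen so that $2\eta+1=(\alpha-\varepsilon)\tfrac{2}{p}$ for some $\varepsilon$. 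The right-hand side of the quadratic inequality then reduces, after the dyadic computation already done, to $\sum_k k^{-1-\varepsilon}<\infty$, while the left-hand side reduces to $\sum_k k^{\eta p-\alpha}$, which diverges provided $\eta p-\alpha>-1$, that is, $\varepsilon<\tfrac{2-p}{2}$. Since $1<p<2$, such an $\varepsilon$ exists, and $\alpha\in(0,1]$ can be chosen compatibly, so $A_p^{\ell^2,\func{local}}(\sigma,\omega)=\infty$ as required.

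Next, for $p\in(2,\infty)$ I would argue by duality. The sum $A_p^{\ell^2,\func{local}}(\sigma,\omega)+A_{p'}^{\ell^2,\func{local}}(\omega,\sigma)$ is invariant under the interchange $(\sigma,\omega,p)\longleftrightarrow(\omega,\sigma,p')$, and similarly the scalar condition satisfies $A_p^{\func{local}}(\sigma,\omega)=A_{p'}^{\func{local}}(\omega,\sigma)$ (this is the classical Muckenhoupt duality, immediate from the defining product expression). Setting $q=p'\in(1,2)$ and taking the pair $(\widetilde\sigma,\widetilde\omega)\equiv(\omega_{q,\alpha},\sigma_{q,\alpha})$, the case $1<q<2$ just handled yields $A_q^{\func{local}}(\widetilde\omega,\widetilde\sigma)<\infty$ together with blowup of one of the two quadratic tails. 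Relabeling $(\widetilde\sigma,\widetilde\omega)$ as $(\sigma,\omega)$, this transfers to $A_p^{\func{local}}(\sigma,\omega)<\infty$ and $A_p^{\ell^2,\func{local}}(\sigma,\omega)+A_{p'}^{\ell^2,\func{local}}(\omega,\sigma)=\infty$, as desired.

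The only mildly delicate point, which I would check carefully when writing things up, is the book-keeping in the dyadic computation for $1<p<2$: one must verify that taking $a_\ell=\ell^{\eta}$ really gives $\sum_\ell|a_\ell|^2\approx k^{2\eta+1}$ and that the finite-integral estimate on the $\sigma$-side reduces to the telescoping series $\sum_k k^{-1-\alpha}\bigl(\sum_{\ell\le k}\ell^{2\eta}\bigr)^{p/2}$ evaluated above; both are straightforward partial-summation arguments, so I do not expect a genuine obstacle. Everything else is encapsulated in the already-completed computation and the symmetry observation.
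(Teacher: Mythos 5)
Your proof is correct and follows essentially the same route as the paper: for $1<p<2$ you use the explicit pair $(\sigma_{p,\alpha},\omega_{p,\alpha})$ with the coefficient choice $a_\ell=\ell^\eta$ already worked out in the text, and for $2<p<\infty$ you pass to $q=p'\in(1,2)$ and take $(\sigma,\omega)=(\omega_{q,\alpha},\sigma_{q,\alpha})$, invoking the symmetry of $A_p^{\func{local}}$ and of the sum of quadratic constants under $(\sigma,\omega,p)\leftrightarrow(\omega,\sigma,p')$. The paper simply fixes $\alpha=1$ and states the swap tersely; your write-up makes the book-keeping and the duality invariance explicit, but the content is the same.
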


\begin{proof}
Let $\left( \sigma _{p,\alpha },\omega _{p,\alpha }\right) $ be the weight
pair constructed above. If $1<p<2$, we can take $\left( \sigma ,\omega
\right) =\left( \sigma _{p,1},\omega _{p,1}\right) $. If $2<p<\infty $, then 
$1<p^{\prime }<2$ and we can take $\left( \sigma ,\omega \right) =\left(
\omega _{p^{\prime },1},\sigma _{p^{\prime },1}\right) $.
\end{proof}

\begin{remark}
If we take $0<\alpha \leq 1$, then the two weight norm inequality for the
maximal function fails with weights $\sigma _{p^{\prime },\alpha }$ and $%
\omega _{p^{\prime },\alpha }$.
\end{remark}

\end{document}